\newtheorem{theorem}{Theorem}[section]
\newtheorem{corollary}[theorem]{Corollary}
\newtheorem{lemma}[theorem]{Lemma}
\newtheorem{proposition}[theorem]{Proposition}
\par\noindent{\bf Proposition \ref{res:hiper}.}\!\!
\par\noindent{\bf Theorem \ref{result43}.}\!\!
\par\noindent{\it Sketch of the proof}.  
\hfill\linebreak[2]\hspace*{\fill}$\circlearrowleft$}
\par\noindent{\it Proof of Proposition }\ref{prop:stab:smc}.  
\hfill\linebreak[2]\hspace*{\fill}$\circlearrowleft$}
\par\noindent{\it Proof of Propositions }\ref{adap:mon}{\it and }\ref{simult:adap}.\!\!\!
\hfill\linebreak[2]\hspace*{\fill}$\circlearrowleft$}
\theoremstyle{definition}
       \newtheorem{definition}[theorem]{Definition}
       \newtheorem{remark}[theorem]{Remark}
       \newtheorem{example}[theorem]{Example}
       \newtheorem{parrafo}[theorem]{{\!}}}
\numberwithin{equation}{theorem}
\DeclareMathOperator{\caract}{char}
\DeclareMathOperator{\Dir}{Dir}
\DeclareMathOperator{\ord}{ord}
\DeclareMathOperator{\Rid}{Rid}
\DeclareMathOperator{\Sing}{Sing}
\DeclareMathOperator{\Spec}{Spec}
\DeclareMathOperator{\Hord}{H-ord}
\DeclareMathOperator{\Slaux}{-sl}
\DeclareMathOperator{\In}{In}
\DeclareMathOperator{\Gr}{Gr}
\DeclareMathOperator{\red}{red}
\newcommand{\G}{{\mathcal G}}
\renewcommand{\H}{{\mathcal H}}
\newcommand{\SSl}{\mathcal{S}\!\Slaux}
\DeclareMathOperator{\nublin}{\overline{\nu}-lin}
\DeclareMathOperator{\nubgen}{\overline{\nu}-gen}
\DeclareMathAlphabet{\mathpzc}{OT1}{pzc}{m}{it}
\newcommand{\nub}{\overline{\nu}}
\newcommand{\Mm}{\mathrm{\underline{Max}\; mult}_X}
\newcommand{\Gd}{\G^{(d)}}
\newcommand{\Diff}{{\mathrm{Diff}}}
\title{The asymptotic Samuel functions and applications to resolution of singularities techniques}
\author{A. Benito, A. Bravo, S. Encinas}
\thanks{The authors were partially supported by 
PID2022-138916NB-I00 funded by MCIN/AEI/ 10.13039/501100011033 and by ERDF/EU;
The second author was partially supported through the ``Severo Ochoa'' Programme for Centres of Excellence in R\&D,
Grant CEX2023-001347-S funded by MICIU/AEI/10.13039/501100011033}
\keywords{Invariants for singularities; Integral Closure; Asymptotic Samuel Function; Hironaka's polyhedron.}
\subjclass[2020]{13B22, 14E15, 13H15}
\noindent\textsc{Depto. Matem\'aticas, Facultad de Ciencias, Universidad Aut\'onoma de Madrid and Instituto de Ciencias Matem\'aticas CSIC-UAM-UC3M-UCM, Cantoblanco 28049 Madrid, Spain} \newline
\noindent\textsc{Depto. \'Algebra, An\'alisis Matem\'atico, Geometr\'{\i}a y Topolog\'{\i}a, and IMUVA, Instituto de Matem\'aticas.	Universidad de Valladolid} \newline
\begin{document}
	
\begin{abstract}
We use the asymptotic Samuel function to define the \emph{Samuel slope} of a Noetherian local ring,
and we prove that it characterizes regularity in the case of local excellent rings.
In addition, we introduce a second invariant that refines the Samuel slope using generic linear cuts, \emph{the refined Samuel slope}.
 We show that both, the Samuel slope and the refined Samuel slope,
are connected to invariants coming from resolution of
singularities.
\end{abstract}
	
	\maketitle
	

\section{Introduction}

Most of the local invariants of singularities used in different approaches to resolution of singularities are constructed or defined using extrinsic methods, such as the choice of local embeddings, finite projections or/and étale neighborhoods, or local coordinates. As a consequence, deep theorems are needed to show that the definitions are independent on the possible many choices.
Our purpose here is to show that some of these invariants can be defined directly,  in the local ring of the singular point. This will be achieved after a careful analysis of the local cone of the singularity  and with the help
of the asymptotic Samuel function, which can be defined for any local ring.
\medskip

To fix ideas, given a singular local ring, the multiplicity, or the Hilbert Samuel function, are usually a   first choice
to measure the complexity of the singularity. Since they are upper semi cointinuous functions, both choices induce a stratification of the singular locus.
After considering either of these  functions, 
there are different strategies that permit to distinguish points  within the same stratum.
One possibility, following Villamayor's approach, is to use Hironaka's order function, $\ord_X^{(d)}$, or the refined version $\Hord_X^{(d)}$ in positive characteristic, see \cite{Villa2007}, \cite{Br_V} \cite{BVIndiana}, \cite{BVComp}, \cite{Benito_V}).
Another one, as in the school of Cossart, Jannsen, Piltant, Schober, is to consider the \emph{$\delta$ invariant} of a singularity obtained from the construction of  Hironaka's polyhedron,  which in fact, can be defined when $R$ is an arbitrary regular local ring,
see \cite{CossartJannsenSchober2019}, \cite{CossartPiltant2015},
\cite{CossartPiltant2019}, and \cite{CossartSchober2021}.
\medskip

In \cite{Complutense2023}, we introduced the notion of {\em Samuel slope} of a local Noetherian ring, and
we studied its connection with $\Hord_X^{(d)}$ in the context of  algebraic varieties defined over  perfect fields of positive characteristic.
In this paper we show that  the previous invariants 
$\ord_X^{(d)}$, $\Hord_X^{(d)}$, $\delta$ 
are, somehow connected,  and,  moreover, under suitable conditions,  unified  by the Samuel slope
and the \emph{refined Samuel slope} of a local ring.
\medskip

Moreover, in \cite{PrepJavier} we explored some properties of the Samuel slope in the case of local excellent  equidimensional rings of equal characteristic.
Here we extend these results to arbitrary excellent local rings, showing in particular that the
non-finiteness of the Samuel slope characterizes regular local rings.
\medskip

We give more details and precise statements in the following paragraphs.

\

\noindent{\bf The Samuel slope of a Noetherian local ring}

\medskip

Let $(A,{\mathfrak m},k)$ be a local Noetherian  ring. For an element $a\in A$,  we can consider the ordinary order function, $\nu_{\mathfrak m} (a)=\sup\{\ell\in\mathbb{N}\cup\{\infty\}\mid a\in {\mathfrak m}^{\ell}\}$, which, with no further assumptions on $A$, might had a wild behavior. The {\em asymptotic Samuel function}, $\nub_{\mathfrak{m}}$,  introduced by Samuel in \cite{Samuel} and  studied afterwards  by D. Rees in a series of papers (\cite{Rees1955}, \cite{Rees_Local},  \cite{Rees_Ideals}, \cite{Rees_Ideals_II}), is a normalized version of the usual order with nicer  properties:  
\begin{equation} \label{IntroSamuelF}
	\bar{\nu}_{\mathfrak m}(a)=\lim_{n\to\infty}\frac{\nu_{\mathfrak m}(a^n)}{n}, \qquad a\in A.
\end{equation}
It is worthwhile noticing that the definition is valid for any proper ideal $I\subset A$ and that $\bar{\nu}_{I}(a)\in {\mathbb Q}\cup \{\infty\}$. 
When  $(A,{\mathfrak m}, k)$  is regular, $\nub_{\mathfrak m} (a)=\nu_{\mathfrak m} (a)$ for all $a\in A$.   We refer to \cite{Hu_Sw} for a thorough expositions on this topic, and to section \ref{Samuel_a} in this paper  where more details are given. 
 
\medskip

We use the asymptotic Samuel function to define the  {\em Samuel slope of the local ring} $A$, $\SSl(A)$, as follows. To start with, if $A$ is  regular then  we set $\SSl(A):=\infty$. To treat the  case of a non-regular ring, we  consider first:  
$${\mathfrak m}^{\geq 1} :=\{a\in A: \nub_{\mathfrak m}(a)\geq 1\}, \qquad \text{ and } \qquad {\mathfrak m}^{>1} :=\{a\in A: \nub_{\mathfrak m}(a)> 1\},$$
and note that there is a natural $k$-linear map,  
\begin{equation}
	\label{graduado_parte_uno}
	\lambda_{\mathfrak{m}}: {\mathfrak m}/ {\mathfrak m}^2 \to {\mathfrak m}^{\geq 1}/{\mathfrak m}^{> 1}. 
\end{equation}
When $(A, \mathfrak{m},k)$ is non-regular,   $\ker(\lambda_{\mathfrak{m}})$ might be non trivial, and its 
  dimension   as a $k$-vector space is an invariant of the local ring. It can be checked that 
$$0\leq \dim_{k}(\ker(\lambda_{\mathfrak{m}})) \leq \text{exc-emb-dim}(A,\mathfrak{m}):= \dim_{k}\left({\mathfrak m}/ {\mathfrak m}^2\right)-\dim(A),$$
where $\dim(A)$ denotes the Krull dimension of the ring $A$.

\medskip

When $\dim_{k}(\ker(\lambda_{\mathfrak{m}})) < \text{exc-emb-dim}(A,\mathfrak{m})$ we  say that $A$  is in the in the {\em non-extremal case}, and then we set   $\SSl(A):=1$. Otherwise, the local ring $A$ is in the {\em extremal case}, and its Samuel slope is defined as follows. Let  $t:=\text{exc-emb-dim}(A,\mathfrak{m})>0$. 
We say that  a collection of $t$-elements $\gamma_1,\ldots, \gamma_t\in {\mathfrak m}$ form a  {\em $\lambda_{\mathfrak{m}}$-sequence} if their classes,
$\In_{\mathfrak m}\gamma_1,\ldots,\In_{\mathfrak m}\gamma_t\in {\mathfrak m}/ {\mathfrak m}^2$, form a basis of $\ker(\lambda_{\mathfrak{m}})$.
 Then,   the {\em Samuel slope of $(A,\mathfrak{m})$, $\SSl(A)$}, is defined as  
\begin{equation*}
	\SSl(A):=
	\sup\limits_{{\lambda_{\mathfrak{m}}}{\text{-sequence}}} \left\{
	\min\left\{\bar{\nu}_{\mathfrak{m}}(\gamma_1),\ldots,\bar{\nu}_{\mathfrak{m}}(\gamma_t)\right\} 
	\right\},
\end{equation*}
where the supremum is taken over all the $\lambda_{\mathfrak{m}}$-sequences of $(A,\mathfrak{m},k)$.
To have an intuition of what this invariant means, it can be checked that  when $(A,\mathfrak{m})$ is in the extremal case, the maximal ideal $\mathfrak{m}$ has a reduction generated by $d=\dim(A)$ elements, $\kappa_1,\ldots, \kappa_{d}$.
In addition we have that 
$$\langle \In_{\mathfrak m}\kappa_1,\ldots, \In_{\mathfrak m}\kappa_{d} \rangle \oplus \ker(\lambda_{\mathfrak{m}}) =
 {\mathfrak m}/ {\mathfrak m}^2.$$
In this situation, the Samuel slope of $(A,\mathfrak{m},k)$ measures how deep the ideals generated by $\lambda_{\mathfrak{m}}$-sequences
lie into the integral closure of powers of ${\mathfrak m}$.

\begin{example}
Let $k$ a field of characteristic $2$ and let  $A=\left(k[x,y]/\langle x^2+y^4+ y^5\rangle\right)_{\langle x,y\rangle}$.
For an element $r\in k[x,y]$ we will use $r'$ to denote its natural image  in $A$.
Let ${\mathfrak m}=\langle x', y'\rangle$. Observe $\ker(\lambda_{{\mathfrak m}})=\langle \In_{\mathfrak m}{x}'\rangle\subset  {\mathfrak m}/{{\mathfrak m}}^2$, hence $A$ is in the extremal case. Note that $\{ x'\}$ is a $\lambda_{{\mathfrak m}}$-sequence and so is   $\{x'+{y'}^2\}$. Here $\nub_{{\mathfrak m}}(x')=1$ while $\nub_{{\mathfrak m}}(x'+{y'}^2)=\frac{5}{2}$. To conclude,    it can be checked that  ${\mathcal S}\text{-sl}(A)=\frac{5}{2}$. 
\end{example}
 
\

\noindent{\bf Results  }

\medskip

From the definition it is not clear that the Samuel slope of a (non-regular)  local ring be finite. In \cite{PrepJavier}  we focused in the case of excellent equidimensional local rings containing a field, and we showed that the non-finiteness of this ring invariant   characterizes local regular  rings. Here we extend this statement to local excellent rings with no extra assumptions. More precisely we show: 

\medskip

\noindent{\bf Theorem \ref{pendiente_finita}} {\em Let $(R',\mathfrak{m}')$ be  a reduced  excellent  non-regular  local ring.
	Then $\SSl(R')\in\mathbb{Q}$.
}

\medskip

\medskip

From where it follows:

\medskip

\noindent{\bf Corollary \ref{corolario_pendiente_finita}}
{\em Let $(R',\mathfrak{m}')$ be an excellent ring.
Then $\SSl(R')=\infty$ if and only if $R'_{\red}$ is a regular local ring.
}

\medskip
Next,  we have explored further connections between the Samuel slope of a local singular ring and invariants used in  resolution of singularities. To give some intuition on the role played by this function,  let us consider the following (simple) situation. 
\medskip

 Let $k$ be a perfect field,   let $S$ be a smooth $k$-algebra of dimension $d$, and define  $R=S[z]$ as the polynomial ring in one variable with coefficients in $S$. Suppose $X$ is a hypersurface in $\Spec(R)$ of maximum multiplicity $m>1$ determined by a polynomial  of the form
 \begin{equation}
 	\label{local_etale}
 	f(z)=z^m+a_1z^{m-1}+\ldots+a_m \in   S[z].
 \end{equation}
Note that, after considering a convenient \'etale neighborhood, a hypersurface can be assumed to be described as the zero set of a polynomial as  in (\ref{local_etale}).
Certainly, the multiplicity is a measure of the singularities of $X$. But singularities with the same multiplicity can have different complexity, and this information  is codified in the coefficients of the expression above.

\medskip  

Now, a series of techniques have been developed to collect information from the coefficients $a_1,\ldots, a_m\in S$ that be independent on the choice of coordinates, or on the selection of the \'etale neighborhood, and yet, capture the complexness of the singularity.
One example, as mentioned before, is Hironaka's order function, $\ord_X^{(d)}$,  or the refined version $\Hord_X^{(d)}$ in positive characteristic (see section \ref{Rees_Algebras}  for further details and definitions).
Another example is the $\delta$ invariant of a singularity obtained from the construction of  Hironaka's polyhedron (see section \ref{poliedro_Hironaka} for references and more information). 
Here we show that there is a connection among the previous invariants and the Samuel slope.
To start with, the functions $\ord_X^{(d)}$  and  $\Hord_X^{(d)}$ can be defined for arbitrary equidimensional algebraic varieties over a perfect field. In this setting, we obtain, as a corollary of Theorem \ref{todas_igual}, 

\medskip

\noindent {\bf Corollary \ref{corolario_H_ord}} 
	{\em Let $X$ be an equidimensional algebraic variety of dimension $d$  defined over a perfect field $k$. Let $\zeta\in X$ be a  singular point of multiplicity $m>1$.  
	Then
	$$\SSl(\mathcal{O}_{X,\zeta})=\Hord_X^{(d)}(\zeta)\leq \ord_{X}^{(d)}(\zeta).$$
	Moreover if $\caract(k)=0$  then,
	$\Hord_X^{(d)}(\zeta)=\SSl(\mathcal{O}_{X,\zeta})=\ord_{X}^{(d)}(\zeta)$.}

\medskip

When the characteristic is zero, {\em hypersurfaces of maximal contact} play a 
key role in resolution of singularities, since they permit an inductive argument on the dimension \cite{GiraudMaximal}, \cite{CossartBulletin}, see also \cite{Tirol}. 
 As it turns out, elements defining  hypersurfaces of maximal contact appear naturally when computing the Samuel slope of a local ring.
More precisely, suppose that $S$ is a regular local ring, let  $X=\Spec(S[z]/\langle f\rangle)$ with $f\in S[z]$, be the local ring of a hypersurface of multiplicity $m>1$. Consider a basis of $\Diff_{S[z]/S}$,  the relative differential operators of order $m-1$,   $\{\Delta_z^i\}_{i=0,\ldots, m-1}$, and let $\omega \in \Delta_z^{(m-1)}(f):=\langle \Delta_z^i(f): i=0,\ldots, m-1\rangle$ be an element or order 1 in $S[z]$ (thus $\mathbb V(\langle w\rangle)$ defines a hypersurface of maximal contact for the top multiplicity locus of $X$). Then, setting $R'= S[z]/\langle f\rangle$, and denoting by $w'$ the image of $w$ in $R'$, we have that:    
\medskip

\noindent{\bf Corollary \ref{orden_maximal}} 
$\SSl(R')=\nub_{\mathfrak{m}'}({\omega}')$.

\medskip

On the other hand, regarding Hironaka's polyhedron, new techniques have been introduced to define the $\delta$ invariant in very general contexts in which the presence of a field is not required (see, among others, \cite{HirPoly}, \cite{CossartBulletin}, \cite{CossartJannsenSaito},
\cite{CossartJannsenSchober2019}, \cite{CossartPiltant2015}, \cite{CossartSchober2021}). In this context the usual approach is to consider a regular local ring $R$ and to  assume that the hypersurface singularity is given by some non-zero element $f\in R$.
Then, a regular system of parameters is selected in $R$, $(u,y)=(u_1,\ldots,u_s,y_1,\ldots,y_r)$, where $(y)$ is connected to the notion of {\em directrix}, an invariant of the local cone of the singularity (see section \ref{poliedro_Hironaka} and Definition \ref{Directriz}).
Next, $f$ can be written as a finite sum, 
\begin{equation}
                \label{expresion}
f=\sum c_{\alpha,\beta} u^\alpha y^\beta,
\end{equation}
where $c_{\alpha,\beta}\in R$ are units and $\alpha, \beta$ are multi-indexes.
This is what we refer to as the {\em Cossart-Piltant expansion of $f$ (CP-expansion)}, see section \ref{Samuel_a} for details. Considering expressions like (\ref{expresion}) for different selections of $(y)$, leads to the construction of a polyhedron,  $\Delta(f,u)$.
The $\delta$ invariant, $\delta(\Delta(f,u))$, derives from here.
It can be shown that  $\delta(\Delta(f,u))$ does not depend on the choice of the parameters in $R$ and once more it is designed to  capture  the complexity of  the singularity at the point (see section \ref{poliedro_Hironaka} for precise statements and definitions).
 
\medskip
 
We should  emphasize here that Hironaka's polyhedron can be defined in a more general setting always linked to the maximum stratum of the Hilbert-Samuel function of an excellent scheme. However, the Samuel slope is a function naturally related to the multiplicity. Thus, in this context,  we stick to the case of  a  hypersurface, and distinguish two cases, depending on the codimension of the  directrix. If the directrix has codimension one, which corresponds to the extremal case,  as a corollary  to Theorem \ref{Casor1}, we get:

\medskip

\noindent{\bf Corollary \ref{delta_dir_dim_one}} {\em Suppose  $R'=R/(f)$ is a hypersurface in the extremal case, and let $y_1\in R$ be a regular parameter such that  $\In_{\mathfrak{m}}{f}=\In_{\mathfrak{m}}{y}_1^m$. Then, for any collection of regular parameters   $u_1,\ldots, u_{n-1}\in R$ such that  $(u_1,\ldots,u_{n-1},y_1)$ is a regular system of parameters in $R$ we have that:
	$$\SSl(R')=\delta(\Delta(f,u)).$$
Moreover, if $R$ is excellent, then there exists a regular parameter $\tilde{y}_1\in R$ 
	such that  $\In_{\mathfrak{m}}{f}=\In_{\mathfrak{m}}\tilde{y}_1^m$ and
	$$\SSl(R')=\nub_{\mathfrak{m}'}(\tilde{y}'_1)=
	\delta(\Delta(f,u,\tilde{y}_1))=\delta(\Delta(f,u)).$$}

\medskip

Next, we approach the case of a singular point in a hypersurface for which the directrix has codimension larger than one, which corresponds to the non-extremal case. In this context we introduce the notion  of {\em generic linear cut} (Definition \ref{def_generic_linear_cut}), that leads us to define  the {\em refined Samuel slope of a local ring $R'$}, ${\mathcal R}\SSl(R')$, see Definition \ref{def_refined_Samuel_slope}. Thus, the refined Samuel slope of a local ring is an invariant that  refines    the Samuel slope when the later equals  1.
\medskip 

To be more specific,  given a collection of parameters $y'_1,\ldots,y'_r\in R'$ determining 
the directrix, the notion of generic linear cut allows us to assign a generic value
$\nubgen(y')$ to the previous collection, see Definition \ref{Defnugen}.
Then the refined Samuel slope is the supremum of such $\nubgen(y')$ when $y'$ runs over all possible choices determining the directrix.
\medskip

It is worthwhile mentining that to make sense of the notion of generic linear cuts we need that the residue field of $R'$ be infinite. Thus, when the residue field is finite,   we set  
	$${\mathcal R}\SSl(R'):= {\mathcal R}\SSl(R'_1), $$
where $(R_1', {\mathfrak m}_1, k_1):=(R'[t], {\mathfrak m}'[t], k(t))$ (see Lemma \ref{Ssl_InfinitoK}, where we show that the two notions agree when the residue field of $R'$ is non-finite). Among other results, we prove:

\medskip

\noindent{\bf Theoren \ref{RSSl_completion}}
{\em 
Let $R$ be a local regular ring, let $R'=R/\langle f\rangle$ and let $\widehat{R'}$   be the ${\mathfrak m}'$-adic completion of $R'$. Then
\begin{enumerate}
\item[(i)] $\mathcal{R}\SSl(R')=\mathcal{R}\SSl(\widehat{R'})$.
\item[(ii)] Let $u_1,\ldots,u_{n-r}\in R$ be a collection of elements so    that there are $z_1,\ldots,z_r\in R$ determining the directrix of $f$ and   $(u,z)$ is a regular system of parameters of $R$.
Then
$$\mathcal{R}\SSl(R')=\mathcal{R}\SSl(\widehat{R'})=\delta(\Delta(f,u)).$$
\item[(iii)] Moreover, if $\delta(\Delta(f,u))<\infty$
 and if the residue field of $R'$ is infinite,
 there are elements  $y'_1,\ldots,y'_r\in R$ determining the directrix of $f$ such that
$$\nubgen(y')=\mathcal{R}\SSl(R')=\delta(\Delta(f,u)).$$
\end{enumerate} 
}

\medskip

In addition to relating the Samuel Slope and the refined Samuel slope to the $\delta$ invariant of a singularity, our results reprove that the later does not depend of choices of coordinates nor local embeddings, and hence it is indeed an invariant. 

\medskip 

To approach our results we have made use of some special properties of Cossart-Piltant expansions, and a formula of Hickel for the computation of the asymptotic Samuel function for local equicharacteristic Noetherian rings.

\medskip

On the one hand, CP-expansions were introduced in \cite{CossartPiltant2019}. As indicated before, this is a technique to express an element in a local regular ring   in terms of a finite number of monomials on a regular system of parameters. This makes it  possible  to work with finite expressions in a wide context,  where   base fields might not  be present,  and the necessity to pass to the completion is  avoided. To facilitate the reading of the paper we give an overview on these expansions in the last part of section \ref{Samuel_a} (starting in \S \ref{definition_Cossart_Piltant})   and   also in section \ref{CossartPiltantHickel}. 

\medskip 

On the other, in \cite{Hickel}, M. Hickel  presented a series of results regarding the asymptotic Samuel function and derived a explicit  formula for the computation in the setting of local rings containing a field. The previous hypothesis was needed to present the completion of the local ring as a convenient finite extension of a regular local ring (see section \ref{CossartPiltantHickel} for precise details).  Here we use CP-expansions to adapt  Hickel's approach  in order to obtain  a formula  replacing  finite projections by embeddings in regular local  rings  that might not  contain a field: 

\medskip

\noindent{\bf Theorem  \ref{GeneralHickel}} 
{\em Let $(R,\mathfrak{m})$ be a regular local ring of dimension $n$, and let  $u_1,\ldots,u_n$ be any regular system of parameters in $R$.
Let $f\in R\setminus \{0\}$ be a non-unit,  set $R'=R/\langle f\rangle$ and let ${\mathfrak m}'={\mathfrak m}/\langle f\rangle$.  Then: 
\begin{itemize}
	\item[(i)] After relabeling the elements $u_1,\ldots,u_n$, we may assume that
	$$\nub_{{\mathfrak m}'}(u'_i)=1, \ \ \ \text{ for } i=1,\ldots, n-1,$$
	where $u'_i$ denotes  the class of $u_i$ in $R'$.
\end{itemize}
Set $y=u_{n}$ and let $y'$ denote the class of $y=u_{n}$  in $R'$. 
\begin{itemize} 
	\item[(ii)] If  $\nub_{{\mathfrak m}'}(y')\in {\mathbb Q}_{>1}\cup \{\infty\}$ then: 
	\begin{enumerate}
		\item $f\notin\langle u_1,\ldots, u_{n-1}\rangle$ and $y'\in \overline{ \langle u_1',\ldots, u'_{n-1}\rangle} \subset R'$, where $\overline{ \langle u_1',\ldots, u'_{n-1}\rangle}$ is the integral closure of $\langle u_1',\ldots, u'_{n-1}\rangle$;
		\item up to a unit,  $f$ can be written as a pseudo-Weierstrass element of degree $\ell=e(R')>1$ w.r.t. $(u_1,\ldots, u_{n-1};y=u_{n})$ (see Definition \ref{DefWeierstrass}), 
		\begin{equation} 
			f=y^{\ell}+\sum_{i=1}^{\ell}a_i y^{\ell -i}, 
		\end{equation}
		and 
		\begin{equation} 
			\nub_{{\mathfrak m}'}(y')=\min_{i}\left\{\frac{\ord_{\mathfrak m}(a_i)}{i}: i=1,\ldots, \ell\right\}.
		\end{equation}
	\end{enumerate}
\end{itemize}
\begin{itemize}
	\item[(iii)] If $\nub_{{\mathfrak m}'}(y')=1$ and if $f\notin\langle u_1,\ldots,u_{n-1}\rangle$, then up to a unit $f$ can be written as a pseudo-Weierstrass element w.r.t. $(u_1,\ldots, u_{n-1};y=u_{n})$ as in (\ref{expresion_f}) and formula (\ref{formula_orden}) holds if and only if $\langle u_1',\ldots, u'_{n-1}\rangle$ is a reduction of ${\mathfrak m'}$.  
\end{itemize}}

As another application  we obtain a criterion to compute the asymptotic Samuel order  in a quotient of a regular local ring, $R'=R/{\mathfrak a}$, for elements whose CP-expansion in $R$ have a suitable form  (see Proposition \ref{orden_reduction}). 
 
\

\noindent{\bf Organization of the paper}
\medskip

In section \ref{Samuel_a}  we review some known properties of both, the asymptotic Samuel
function and  the Samuel slope of a local ring.
In addition, some facts on Cossart-Piltant expansions are exposed.
In section \ref{SecFinita} we extend some   results from \cite{PrepJavier}   
to the case of excellent rings that do not necessarily  contain  a field.
Section \ref{CossartPiltantHickel} is devoted to proving  a version of   Hickel's formula.
In section \ref{Rees_Algebras} we sketch a  short introduction to the functions
$\Hord^{(d)}$ and $\ord^{(d)}$ appearing in resolution of singularities, and we establish  their connection 
  to  the Samuel slope  in
Section \ref{SecSSl_Hord}.
We recall the construction  and some properties of Hironaka's polyhedron in
section \ref{poliedro_Hironaka}.
To conclude, in section \ref{Sec_Refined}   the notion of  refined Samuel slope  is introduced and some connections to Hironaka's polyhedron are presented.
\noindent
 
\

{\noindent \emph{Acknowledgments:}  We  profited from several conversations with O.~E. Villamayor U. We also thank V.~Cossart and S.~D. Cutkosky for useful suggestions.
}

\section{Preliminaries}\label{Samuel_a}
The purpose of this section is twofold. On the one hand,   we will review some facts about the asymptotic Samuel function and the Samuel slope of a local Noetherian ring. On the other,  we conclude the section with an overview of the Cossart-Piltant expansions, that will be used in several parts of the paper.

\begin{parrafo}
\label{properties_Samuel}
{\bf Some facts about the asymptotic Samuel function on Noetherian rings.} As indicated in the Introduction, the asymptotic Samuel function can be defined for an arbitrary ideal $I\subsetneq A$, and for $f\in A$,  $\nub_I(f)$  measures how deep the element $f$ lies in the integral closure of powers of $I$. In fact, the following results hold: 
\end{parrafo}
	
	\begin{proposition}
	\cite[Corollary 6.9.1]{Hu_Sw}
	Suppose $A$ is Noetherian. Then for a proper ideal    $I\subset A$  and every $a\in\mathbb{N}$, \begin{equation*}
	\overline{I^a}=\{f\in R \mid \bar{\nu}_I(f)\geq a\}.
	\end{equation*}
\end{proposition}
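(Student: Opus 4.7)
The approach is to reduce the claimed equivalence $f\in\overline{I^a}\Longleftrightarrow\bar\nu_I(f)\geq a$ to the Rees-valuation structure of the ideal $I$. Since $A$ is Noetherian, one attaches to $I$ finitely many discrete valuations $v_1,\ldots,v_r$ (the Rees valuations of $I$) enjoying two properties: first, for every ideal $J$ with the same radical as $I$ one has $\overline{J}=\bigcap_{i=1}^{r}\{g\in A:v_i(g)\geq v_i(J)\}$, which applied to $J=I^a$ gives $\overline{I^a}=\bigcap_i\{g:v_i(g)\geq a\,v_i(I)\}$; and second, $\bar\nu_I(f)=\min_{1\leq i\leq r} v_i(f)/v_i(I)$ for every $f\in A$. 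Both are classical theorems of Rees, developed in \cite{Hu_Sw} in the chapters preceding the corollary in question.

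Granted these two inputs, the proof is pure bookkeeping. Noting that $v_i(I)>0$ for each $i$ (since $v_i$ is centered on a prime containing $I$), one reads off the chain of equivalences: $f\in\overline{I^a}$ iff $v_i(f)\geq a\,v_i(I)$ for all $i$ iff $v_i(f)/v_i(I)\geq a$ for all $i$ iff $\min_i v_i(f)/v_i(I)\geq a$ iff $\bar\nu_I(f)\geq a$. Both inclusions of the proposition follow at once.

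The main obstacle is not the proposition itself but the invocation of the two Rees theorems above. The forward inclusion $\overline{I^a}\subseteq\{f:\bar\nu_I(f)\geq a\}$ admits a more elementary route: if $f\in\overline{I^a}$ then $ft^a$ is integral over the Rees algebra $A[It]\subseteq A[t]$, so $f^kt^{ak}$ lies in $\overline{A[It]}=\bigoplus_n\overline{I^n}t^n$, yielding $f^k\in\overline{I^{ak}}$; combined with a uniform bound $\overline{I^n}\subseteq I^{n-c}$ for large $n$ (available when $\overline{A[It]}$ is module-finite over $A[It]$) this gives $\nu_I(f^k)/k\to a$ from below. The reverse inclusion, by contrast, genuinely requires valuation-theoretic input: it translates the asymptotic limit $\bar\nu_I(f)=\lim_k\nu_I(f^k)/k\geq a$ into the existence of a single integral equation witnessing $f\in\overline{I^a}$, and the Rees valuation theorem is precisely what enables this translation. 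For this reason I would not attempt to reprove it from scratch and would simply cite \cite{Hu_Sw}.
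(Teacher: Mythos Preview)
The paper provides no proof of this proposition at all: it is stated with a bare citation to \cite[Corollary 6.9.1]{Hu_Sw} and nothing more. Your proposal therefore goes well beyond what the paper does, and your own concluding remark (``I would simply cite \cite{Hu_Sw}'') is in fact exactly the paper's approach.

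Your sketch via Rees valuations is correct in substance, but two points are worth flagging. First, the valuative formula $\bar\nu_I(f)=\min_i v_i(f)/v_i(I)$ that you invoke as an input is precisely the paper's Theorem~\ref{nu_barra_valuations}, which is stated \emph{after} this proposition; so within the paper's internal logic your argument would be forward-referencing. This is harmless mathematically since both facts are imported from \cite{Hu_Sw}, but it means your route does not match the order of development in that reference either (there the valuative formula is Chapter~10 material, while Corollary~6.9.1 is derived earlier by more elementary means involving the integral closure of the Rees algebra). Second, Theorem~\ref{nu_barra_valuations} carries the hypothesis that $I$ is not contained in a minimal prime, whereas the proposition as stated does not; your Rees-valuation argument inherits that restriction, so strictly speaking it does not cover the full generality claimed. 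The elementary forward-inclusion argument you sketch avoids this issue, and the reverse inclusion in the degenerate case can be handled by reducing modulo minimal primes, but you have not addressed this.
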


		\begin{corollary} \label{nub_integral}
		Let $A$ be a Noetherian ring and $I\subset A$ a proper ideal. If $f\in A$ then
		\begin{equation*}
		\bar{\nu}_I(f)\geq \frac{a}{b} \Longleftrightarrow f^b\in\overline{I^a}.
		\end{equation*}
	\end{corollary}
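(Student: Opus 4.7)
The plan is to reduce the claim to the immediately preceding proposition via the homogeneity of the asymptotic Samuel function. Concretely, the key observation is that for any $f\in A$ and any integer $b\geq 1$,
\begin{equation*}
\bar{\nu}_I(f^b)=b\cdot \bar{\nu}_I(f).
\end{equation*}
This is a direct computation from the definition (\ref{IntroSamuelF}): the sequence defining $\bar{\nu}_I(f^b)$ is
$$\frac{\nu_I(f^{bn})}{n}=b\cdot\frac{\nu_I(f^{bn})}{bn},$$
and as $n\to\infty$ the right-hand side converges to $b\cdot \bar{\nu}_I(f)$ (the subsequence $\{\nu_I(f^{bn})/(bn)\}_n$ of $\{\nu_I(f^m)/m\}_m$ has the same limit as the full sequence). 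So first I would verify this homogeneity identity carefully, allowing the value $\infty$ on both sides with the natural conventions.

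Once homogeneity is established, the corollary follows in one line. The condition $\bar{\nu}_I(f)\geq a/b$ is equivalent to $b\cdot\bar{\nu}_I(f)\geq a$ (since $b>0$), which by homogeneity is equivalent to $\bar{\nu}_I(f^b)\geq a$. Applying the previous proposition to the element $f^b$ (with exponent $a\in\mathbb{N}$), this last inequality is in turn equivalent to $f^b\in\overline{I^a}$.

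There is no real obstacle here; the only small care is in the limit step defining $\bar{\nu}_I(f^b)$, where one must note that the limit exists (a standard consequence of the subadditivity $\nu_I(f^{n+m})\geq \nu_I(f^n)+\nu_I(f^m)$, giving a Fekete-type lemma) and that taking the subsequence indexed by multiples of $b$ does not alter the limit. One might also mention the degenerate cases: if $I$ is contained in the nilradical (or similar), $\bar{\nu}_I(f)$ can be $\infty$, but the conclusion then trivially reads "$\infty\geq a/b$" on one side and $f^b\in\overline{I^a}$ on the other, and both remain equivalent by the same application of the preceding proposition.
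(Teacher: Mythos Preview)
Your argument is correct and is exactly the intended derivation: the paper states this result as an immediate corollary of the preceding proposition (with no separate proof given), and the one-line deduction via the homogeneity $\bar{\nu}_I(f^b)=b\,\bar{\nu}_I(f)$ is precisely what is implied. Your direct verification of homogeneity from the definition is fine and in fact slightly more self-contained than invoking Proposition~\ref{PotenciaNuBar}(iv), since it avoids that proposition's extra hypothesis on $I$.
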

	
The previous characterization of $\nub_I$  leads to the following result that gives a valuative version of the function. 

\begin{theorem}\label{nu_barra_valuations} 
Let $A$ be a Noetherian ring, and let $I\subset A$ be a proper ideal not contained in a minimal prime of $A$.
Let $v_1,\ldots,v_s$ be a set of Rees valuations of the ideal $I$.
If $f\in A$ then
\begin{equation*}
	\bar{\nu}_I(f)=\min\left\{\frac{v_i(f)}{v_i(I)}\mid i=1,\ldots,s\right\}.
\end{equation*}
\end{theorem}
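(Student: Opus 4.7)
The plan is to derive the formula from Corollary~\ref{nub_integral} combined with the defining property of Rees valuations. Recall that the Rees valuations $v_1,\dots,v_s$ of $I$ are characterized (up to positive rescaling of each $v_i$) by the identity
$$\overline{I^n}=\{g\in A\mid v_i(g)\geq n\,v_i(I)\ \text{for all}\ i=1,\dots,s\}$$
valid for every $n\in\mathbb{N}$. This is the bridge between integral closure and valuations, and hence, via Corollary~\ref{nub_integral}, between $\bar{\nu}_I$ and the valuative expression on the right-hand side.

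The first step is to fix a positive rational $a/b$, with $a\in\mathbb{N}$ and $b\in\mathbb{N}_{>0}$, and to prove the equivalence
$$\bar{\nu}_I(f)\geq \frac{a}{b}\ \Longleftrightarrow\ \min_{1\leq i\leq s}\frac{v_i(f)}{v_i(I)}\geq\frac{a}{b}.$$
By Corollary~\ref{nub_integral}, the left-hand side is equivalent to $f^b\in\overline{I^a}$. By the valuative criterion above, this in turn is equivalent to $v_i(f^b)\geq a\,v_i(I)$ for every $i$. Since each $v_i$ is a valuation one has $v_i(f^b)=b\,v_i(f)$, and since $I$ is not contained in any minimal prime of $A$ one has $0<v_i(I)<\infty$ for every $i$, so one may divide both sides to obtain $v_i(f)/v_i(I)\geq a/b$ for every $i$, which is exactly the right-hand inequality.

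Once this equivalence of rational lower bounds is established, both $\bar{\nu}_I(f)$ and $\min_i v_i(f)/v_i(I)$ have the same set of rational lower bounds in $\mathbb{Q}_{\geq 0}$, and therefore coincide as elements of $\mathbb{Q}_{\geq 0}\cup\{\infty\}$. The degenerate case $\min_i v_i(f)/v_i(I)=+\infty$ corresponds to $f$ belonging to every minimal prime supporting a Rees valuation, and a direct check using the definition of $\bar{\nu}_I$ (or a nilpotency argument) shows $\bar{\nu}_I(f)=\infty$ in this situation as well, so the equality persists.

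The main obstacle, if any, is to ensure that the Rees valuative criterion for integral closure of $I^n$ applies verbatim in the generality of the statement, where $A$ is only assumed Noetherian (possibly non-domain, possibly with embedded components). This reduces to the standard existence theorem for Rees valuations of a proper ideal not contained in any minimal prime, which is precisely the hypothesis made; with this in hand the proof is pure bookkeeping with $\mathbb{Q}$-valued inequalities.
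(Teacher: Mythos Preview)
Your argument is correct and is essentially the standard proof: combine the integral-closure characterization $\bar{\nu}_I(f)\geq a/b \Leftrightarrow f^b\in\overline{I^a}$ with the defining valuative criterion for Rees valuations, and observe that two elements of $\mathbb{Q}_{\geq 0}\cup\{\infty\}$ with the same rational lower bounds must coincide. The paper does not give its own proof but simply refers to \cite[Lemma 10.1.5, Theorem 10.2.2]{Hu_Sw} and \cite[Proposition 2.2]{Irena}, where precisely this argument (or a minor variant of it) is carried out; so your proposal matches the intended content.
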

	
\begin{proof} See \cite[Lemma 10.1.5, Theorem 10.2.2]{Hu_Sw} and \cite[Proposition 2.2]{Irena}. 
\end{proof}

It can also be checked that the following properties hold: 	

\begin{proposition}
\label{PotenciaNuBar}
If $A$ is a Noetherian ring, and $I\subset A$ is a proper ideal not contained in a minimal prime of $A$, then
the function $\bar{\nu}_I$ satisfies the following properties
\begin{enumerate}
\item[(i)]  $\nub_I(f+g)\geq \min\{\nub_I(f),\nub_I(g)\}$, for all $f,g\in A$,
moreover $\nub_I(f+g)=\min\{\nub_I(f),\nub_I(g)\}$ if $\nub_I(f)\neq\nub_I(g)$;
\item[(ii)]  $\nub_I(f\cdot g)\geq \nub_I(f)+\nub_I(g)$, for all $f,g\in A$;
\item[(iii)] $\nub_I(0)=\infty$ and $\nub_I(1)=0$;
\item[(iv)] $\bar{\nu}_I(f^r)=r\bar{\nu}_I(f)$, for all $f\in A$ and $r\in\mathbb{N}$;
\item[(v)] there exists a positive integer $m$ such that 
$\nub_I(f)\in\frac{1}{m}\mathbb{N}\cup\{\infty\}$, for all $f\in A$;
\item[(vi)]   $\nub_I(f)=\infty$ if and only if $f$ is nilpotent in $A$;
\item[(vii)]  If 
$\{b_t\}_{t=1}^{\infty}\subset A$ is Cauchy sequence for the $I$-adic topology   sequence such that $\lim_{t\to\infty}b_t=b\in A$, and if 
  $\nub_{I}(b)<\infty$ (i.e., if $b$ is not nilpotent) then
$\nub_{I}(b_t)=\nub_{I}(b)$ for $t\gg 0$;
\item[(viii)] If 
$\{b_t\}_{t=1}^{\infty}\subset A$ is Cauchy sequence for the $I$-adic topology   sequence such that $\lim_{t\to\infty}b_t=b\in A$, and if 
$\nub_{I}(b)=\infty$   then
$\lim_{t\to \infty}\nub_{I}(b_t)=\infty$;
\item[(ix)] If $(A_1,\mathfrak{m_1})\subset (A_2,\mathfrak{m_2})$ is a faithfully flat extension of local rings, then 
$\nub_{\mathfrak{m}_1}(a)=\nub_{\mathfrak{m}_2}(a)$ for all $a\in A_1$. 
\end{enumerate}
\end{proposition}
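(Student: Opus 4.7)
The plan is to derive all nine items from the valuative formula of Theorem \ref{nu_barra_valuations} together with the integral-closure characterization of Corollary \ref{nub_integral}. Fix Rees valuations $v_1,\ldots,v_s$ of $I$, set $e_i:=v_i(I)\in\mathbb{Z}_{>0}$, so that $\nub_I(f)=\min_i v_i(f)/e_i$, and let $\mathfrak{q}_i$ denote the minimal prime of $A$ attached to $v_i$ (i.e., the kernel of $A\to V_i$, where $V_i$ is the DVR inducing $v_i$).

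Properties (i)--(v) are formal consequences of the fact that each $v_i$ is a valuation: subadditivity on sums (with equality on unequal arguments), additivity on products, the normalizations $v_i(0)=\infty$, $v_i(1)=0$, and homogeneity $v_i(f^r)=rv_i(f)$ all survive the operation of taking a minimum over the finitely many rescaled valuations $v_i/e_i$. For the equality case in (i), if $\nub_I(f)<\nub_I(g)$, then any index $i$ achieving the minimum for $f$ automatically satisfies $v_i(g)/e_i>v_i(f)/e_i$, hence $v_i(f+g)/e_i=v_i(f)/e_i$, which forces $\nub_I(f+g)\leq\nub_I(f)$ and, combined with the general $\geq$, gives equality. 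For (v), take $m=\operatorname{lcm}(e_1,\ldots,e_s)$; every $v_i(f)/e_i$ then lies in $\frac{1}{m}\mathbb{N}\cup\{\infty\}$, and so does their minimum.

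For (vi), $v_i(f)=\infty$ holds iff $f\in\mathfrak{q}_i$; and since $I$ is contained in no minimal prime, the primes $\mathfrak{q}_i$ exhaust the minimal primes of $A$, so $\nub_I(f)=\infty$ precisely when $f$ belongs to every minimal prime of $A$, i.e.\ when $f$ is nilpotent. Items (vii) and (viii) follow by writing $b_t=b+(b_t-b)$ and observing that the Cauchy hypothesis gives $\nu_I(b_t-b)\to\infty$, hence $\nub_I(b_t-b)\to\infty$ (since $\nub_I\geq\nu_I$). When $\nub_I(b)<\infty$, the strict-inequality clause of (i) then gives $\nub_I(b_t)=\nub_I(b)$ for $t\gg 0$; when $\nub_I(b)=\infty$, the weak inequality gives $\nub_I(b_t)\geq\nub_I(b_t-b)\to\infty$.

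The only delicate point is (ix), which is not a formal consequence of the valuative formula on $A_1$ alone. The plan is to use Corollary \ref{nub_integral} to rephrase the assertion as $\alpha^q\in\overline{\mathfrak{m}_1^p}\iff\alpha^q\in\overline{\mathfrak{m}_2^p}$ for $\alpha\in A_1$ and $p,q\in\mathbb{N}$. The forward implication is immediate from $\mathfrak{m}_1A_2\subseteq\mathfrak{m}_2$; the reverse relies on the standard compatibility of integral closure of ideals with faithfully flat local extensions, namely $\overline{\mathfrak{m}_1^pA_2}\cap A_1=\overline{\mathfrak{m}_1^p}$, combined with the coincidence of the integral closures of $\mathfrak{m}_1^pA_2$ and $\mathfrak{m}_2^p$ in $A_2$. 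Pinning down this last coincidence (present in the intended applications, where $\mathfrak{m}_1A_2$ is a reduction of $\mathfrak{m}_2$, e.g.\ when $\mathfrak{m}_1A_2=\mathfrak{m}_2$) is the main obstacle and the step requiring the faithful-flatness hypothesis in an essential way.
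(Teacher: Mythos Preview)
The paper does not prove this proposition; it is introduced with ``It can also be checked that the following properties hold'' and no argument is supplied. Your derivations of (i)--(viii) from Theorem~\ref{nu_barra_valuations} are correct and are the standard ones: (i)--(v) are formal properties of a minimum of finitely many rescaled rank-one valuations; (vi) holds because the Rees valuations of $I$ contract to \emph{all} minimal primes of $A$ (this is part of their construction, \cite[Theorem~10.2.2]{Hu_Sw}); and (vii)--(viii) follow from (i) exactly as you write.

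Your hesitation on (ix) is fully justified: as literally stated, (ix) is false. The inclusion $A_1=k[y]_{(y)}\hookrightarrow A_2=k[x]_{(x)}$ given by $y\mapsto x^2$ is free of rank~$2$, hence a faithfully flat extension of local rings, yet $\nub_{\mathfrak{m}_1}(y)=1$ while $\nub_{\mathfrak{m}_2}(y)=2$. The missing hypothesis is precisely the one you isolate: one needs $\mathfrak{m}_1A_2$ to be a reduction of $\mathfrak{m}_2$, and in every place the paper invokes (ix) (completion, and the extension $R'\to R'[t]_{\mathfrak{m}'[t]}$ in Proposition~\ref{orden_reduction}) one in fact has $\mathfrak{m}_1A_2=\mathfrak{m}_2$. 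Under $\mathfrak{m}_1A_2=\mathfrak{m}_2$ there is actually a shorter route than your integral-closure argument: faithful flatness gives $\mathfrak{m}_1^nA_2\cap A_1=\mathfrak{m}_1^n$ for every $n$, so the ordinary order functions agree, $\nu_{\mathfrak{m}_1}(a)=\nu_{\mathfrak{m}_2}(a)$ for all $a\in A_1$, and the equality for $\nub$ follows directly from the defining limit~(\ref{IntroSamuelF}).
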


\medskip

\begin{parrafo}
\label{GraduadoBarra}
To finish this part,  for $b\in {\mathbb Q}$ the following ideals can be defined, \begin{equation*}
	I^{\geq b}=\{g\in A\mid \bar{\nu}_I(g)\geq b\}, \qquad \text{ and } \qquad 
	I^{> b}=\{g\in A\mid \bar{\nu}_I(g)> b\},
\end{equation*}
 and    a graded ring can be considered in a natural way:
\begin{equation*}
	\overline{\Gr}_I(A)=\bigoplus_{b\in\mathbb{Q}_{\geq 0}}I^{\geq b}/I^{> b}.
\end{equation*}
The   ring $\overline{\Gr}_I(A)$ is reduced,  and there is a natural morphism of graded algebras, 
\begin{equation}
	\label{morfismo_graduados}
\begin{array}{rccl}
	\Lambda_I: & \Gr_I(A) & \longrightarrow &  \overline{\Gr}_I(A) \\
	&   f+I^{b+1} & \mapsto  & 	\Lambda_I(f+I^{b+1}):=f+I^{>b}, 
\end{array}	
\end{equation}
whose  kernel is the nilradical of  $\Gr_I(A)$. 
As a consequence, $\Lambda_I$ is injective if and only if $\Gr_I(A)$ is reduced.    
In \cite{PrepJavier} we studied  $	\overline{\Gr}_{\mathfrak m}(A)$ for a local excellent ring $(A,{\mathfrak m}, k)$, and showed that it is a $k$-algebra of finite type (cf. \cite[Theorem 3.4]{PrepJavier}).  
\end{parrafo}
\medskip

\begin{parrafo}
\label{properties_slope} 
{\bf The Samuel slope of a Noetherian local ring.} Let $(A,{\mathfrak m}, k)$ be a  Noetherian local ring. Restricting to the degree-one-part  the morphism $\Lambda_{\mathfrak m}$ from (\ref{morfismo_graduados}), we obtain the linear map $	\lambda_{\mathfrak{m}}: {\mathfrak m}/ {\mathfrak m}^2 \to {\mathfrak m}^{\geq 1}/{\mathfrak m}^{> 1}$ from (\ref{graduado_parte_uno}), which we used in the definition of the Samuel slope of $A$, $\SSl(A)$. From the comment in the previous paragraph,  it follows that non-trivial elements in $\ker(\lambda_{\mathfrak m})$ are degree-one nilpotents in $\text{Gr}_{\mathfrak m}(A)$.  

\medskip 

Part of the work developed in \cite{Complutense2023}, and  mostly in   \cite{PrepJavier}, consisted in studying some natural properties of the Samuel slope. Among others, it can be shown that if $\widehat{A}$ is the ${\mathfrak m}$-adic completion of $A$, then $\SSl(A)=\SSl(\widehat{A})$, and in the same line, if $A$ is equidimensional excellent and contains a field, and if  $A\to A'$ is an arbitrary \'etale extension, then $\SSl(A)=\SSl(A')$ (see \cite[Lemma 3.9]{Complutense2023},
and \cite[Theorem 5.5]{PrepJavier}). 

\medskip 

To conclude,  the Samuel slope has a nice behavior when restricted to primes in the same multiplicity strata of $\Spec(A)$. More precisely, it can be shown that if   $(A, {\mathfrak m}, k)$ is excellent, equidimensional and contains a field, and if ${\mathfrak p}\subset A$ is a prime so that $A/{\mathfrak p}$ is regular and with the same multiplicity as ${\mathfrak m}$, then $\SSl(A_{\mathfrak p})\leq \SSl(A)$
\cite[ Theorem 6.2]{PrepJavier}.
\end{parrafo}

\begin{parrafo} \label{definition_Cossart_Piltant} 
{\bf Cossart-Piltant expansions.} The following result of Cossart and Piltant will allow us to work with finite expansions in regular local rings in a very broad setting, with no need to pass to the completion nor assuming that the ring contains a field.    
\end{parrafo}

\begin{proposition} \label{ExpanCosPil}
	\cite[Proposition 2.1]{CossartPiltant2019}
	Let $(R,\mathfrak{m})$ be a regular local ring of dimension $n$, and let 
	$(u_1,\ldots,u_n)$ be a regular system of parameters of $R$. Then, for any $f\in R$ there is a unique finite set $S(f)\in\mathbb{N}^n$ such that:
	\begin{enumerate}
		\item[(i)] The set of monomials $\{u^{\alpha}\}_{\alpha\in S(f)}$ is a minimal set of 
		generators of the ideal
		$$\left\langle u^{\alpha}\mid \alpha\in S(f)\right\rangle.$$
		\item[(ii)] There is a   finite  collection of units  $\{c_{\alpha}(f)\}_{\alpha\in S(f)}$,
		$c_{\alpha}(f)\in R\setminus\mathfrak{m}$ such that
		\begin{equation} \label{EqExpansion}
			f=\sum_{\alpha\in S(f)}c_{\alpha}(f)u^{\alpha}.
		\end{equation}
	\end{enumerate}
\end{proposition}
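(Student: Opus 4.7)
The plan is to exploit the isomorphism $\Gr_\mathfrak{m}(R)\simeq k[U_1,\ldots,U_n]$, where $k=R/\mathfrak{m}$ and $U_i=\In_\mathfrak{m}(u_i)$, in order to build $S(f)$ and the CP-expansion degree by degree. The two key ingredients will be Dickson's lemma, which forces the antichain of exponents to stabilize, and the Artin--Rees lemma combined with Krull's intersection theorem, which upgrade an \emph{a priori} infinite absorption procedure into an honest finite expression inside $R$. The main obstacle is precisely this last step: on the completion $\widehat R$ the existence of such an expansion is essentially formal, but inside $R$ one has to control the convergence of the successive corrections to the unit coefficients.

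\textbf{Existence by a degree-by-degree construction.} Set $\ell=\ord_\mathfrak{m}(f)$ and lift each nonzero coefficient of $\In_\ell(f)\in k[U_1,\ldots,U_n]_\ell$ to a unit in $R$; this provides a first antichain $S_\ell$ (the exponents appearing in $\In_\ell(f)$) and a remainder $f_\ell\in\mathfrak{m}^{\ell+1}$. Inductively, given a finite antichain $S_j$, units $c_\alpha^{(j)}$ for $\alpha\in S_j$, and a remainder $f_j\in\mathfrak{m}^{j+1}$, I inspect each monomial appearing in $\In_{j+1}(f_j)$. If an exponent $\gamma$ dominates some $\alpha\in S_j$ componentwise, I \emph{absorb} it by adding a multiple of $u^{\gamma-\alpha}\in\mathfrak{m}$ to $c_\alpha^{(j)}$, which preserves the unit condition. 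Otherwise $\gamma$ is adjoined to form $S_{j+1}$ with its own unit lift. A newly adjoined $\gamma$ has degree $j+1$ while every $\alpha\in S_j$ has smaller degree, so $\gamma\le\alpha$ is automatically excluded and $S_{j+1}$ remains an antichain. The nested union $S_\infty=\bigcup_j S_j\subset\mathbb{N}^n$ is therefore an antichain and, by Dickson's lemma, finite; say $S_j=S_\infty$ for every $j\ge N$.

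\textbf{From the infinite procedure to a finite $R$-expression.} For $j\ge N$ only absorption occurs, and each absorbed correction lies in the ideal $J=\langle u^\alpha:\alpha\in S_\infty\rangle$. Consequently $f_N-f_j\in J$ for every $j\ge N$, and since $f_j\in\mathfrak{m}^{j+1}$ this yields $f_N\in J+\mathfrak{m}^{j+1}$ for every $j$. Krull's intersection theorem, applied in $R/J$, then gives $f_N\in J$, while Artin--Rees provides an integer $c$ with $J\cap\mathfrak{m}^{n}\subseteq\mathfrak{m}J$ for $n\ge c$; enlarging $N$ so that $N+1\ge c$ yields $f_N=\sum_{\alpha\in S_\infty}F_\alpha u^\alpha$ with all $F_\alpha\in\mathfrak{m}$. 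Taking $c_\alpha(f):=c_\alpha^{(N)}+F_\alpha$ (still units) and $S(f):=S_\infty$ completes the existence half.

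\textbf{Uniqueness.} Given two CP-expansions $f=\sum_{\alpha\in S}c_\alpha u^\alpha=\sum_{\beta\in S'}c'_\beta u^\beta$, I would compare them inside $\Gr_\mathfrak{m}(R)$ one degree at a time. In the lowest degree $\ell=\ord_\mathfrak{m}(f)$ both initial forms expand in the monomial basis of $k[U_1,\ldots,U_n]_\ell$, so linear independence forces the degree $\ell$ parts of $S$ and $S'$ to coincide. Inductively at degree $d$, the contribution to $\In_d(f)$ coming from any term with $|\alpha|<d$ is a $k$-combination of monomials $u^{\alpha+\delta}$ with $\delta\neq 0$, hence strictly above $\alpha$; because $S$ and $S'$ are antichains, no exponent of degree $d$ in $S$ (or $S'$) can dominate a smaller degree exponent of the same antichain, and therefore cannot be produced by those lower degree contributions. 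Matching coefficients monomial by monomial in degree $d$ forces the degree $d$ parts of $S$ and $S'$ to agree, and iterating yields $S=S'$.
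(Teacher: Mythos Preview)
The paper does not give its own proof of this proposition; it is quoted from \cite{CossartPiltant2019} and used as a black box throughout. Your argument is correct and self-contained. The degree-by-degree construction combined with Dickson's lemma is the natural approach, and you correctly identify the main difficulty: the inductive procedure a priori only converges in $\widehat R$, and one must land back in $R$. Your use of Krull's intersection theorem (applied in $R/J$ to conclude $f_N\in J$) followed by Artin--Rees (to upgrade this to $f_N\in\mathfrak{m}J$, so that the corrections $F_\alpha$ lie in $\mathfrak{m}$ and the coefficients $c_\alpha^{(N)}+F_\alpha$ remain units) handles this cleanly; note that ``enlarging $N$'' is harmless because $S_j=S_\infty$ for all $j\ge N$.

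The uniqueness argument is also sound. The key observation---that for $\gamma\in S$ with $|\gamma|=d$, the monomial $U^\gamma$ in the degree-$d$ piece of $\Gr_{\mathfrak m}(R)$ receives no contribution from lower-degree terms $c_\alpha u^\alpha$ on \emph{either} side, because $S$ is an antichain and $S_{<d}=S'_{<d}$ by induction---is exactly what forces $\gamma\in S'$.

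One minor point you leave implicit: condition (i) of the statement (that $\{u^\alpha\}_{\alpha\in S(f)}$ is a \emph{minimal} generating set) follows from $S(f)$ being an antichain, since in a regular local ring $u^\alpha\in\langle u^\beta:\beta\in T\rangle$ forces $\alpha\ge\beta$ for some $\beta\in T$ (pass to initial forms in $\Gr_{\mathfrak m}(R)\cong k[U]$).
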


Under the previous assumptions, we will refer to  expression  (\ref{EqExpansion}) as the  \emph{Cossart-Piltant expansion} of $f\in R$ 
with respect to $(u_1,\ldots,u_n)$, which often will be shortened as {\em CP-expansion w.r.t. $u$}.

\begin{remark}\label{ObsCosPil} It is worthwhile to emphasize a few points regarding the statement in the previous proposition:  
	\begin{itemize}
		\item[(i)] Both, the set $S(f)$ and the collection of  units $c_{\alpha}$,   depend on $f$.
		\item[(ii)] The set $S(f)$ is unique but the collection of units  $c_{\alpha}$ is not: 
		if $\alpha,\beta\in S(f)$ are two different elements, then  any term multiple of 
		$u^{\alpha+\beta}$ can be added either to $c_{\alpha}$ or to $c_{\beta}$.
		\item[(iii)]  If $(0,\ldots,0,r)\in S(f)$ then $(0,\ldots,0,r)$ is the unique element of $S(f)$ in the $u_n$-axis.
		\item[(iv)] For any $f\in R$ and any unit $v\in R$ we have that   
		$S(f)=S(vf)$.
		\item[(v)] It can be checked that 
	$$\ord_{\mathfrak{m}}(f)=\min\{|\alpha|   \mid  \alpha\in S(f)\}.$$
\item[(vi)] If $f,g\in R$ are so that
$S(f)\cap(S(g)+\mathbb{N}^n)=\emptyset$ and 
$S(g)\cap(S(f)+\mathbb{N}^n)=\emptyset$,
then $S(f+g)=S(f)\sqcup S(g)$.
		\item[(vii)] If we have two choices of units 
$\{c_{\alpha} : \alpha\in S(f)\}$ and $\{\tilde{c}_{\alpha} : \alpha\in S(f)\}$
such that 
$$f=\sum_{\alpha\in S(f)}c_{\alpha}(f)u^{\alpha}=\sum_{\alpha\in S(f)}\tilde{c}_{\alpha}(f)u^{\alpha},$$
then $c_{\alpha}(f)-\tilde{c}_{\alpha}(f)\in\mathfrak{m}$ for all $\alpha\in S(f)$.
	\end{itemize}
\end{remark}

The following lemma will be used in the last section of the paper to compare CP-expansions when referred to regular systems of parameters that are "${\mathfrak m}$-adically close". 
	
	\begin{lemma} \label{LemaCPaprox} Let $(R,\mathfrak{m})$ be a regular local ring, let $f\in R$ and 
		let $(y_1,\ldots,y_n)$ and $(z_1,\ldots,z_n)$ be two regular system of parameters of  $R$. Consider the CP-expansion of $f$ with respect to both,  $y$ and $z$:
		$$f=\sum_{\beta\in S_y(f)}c_{\beta}(y)y^{\beta},
		\qquad
		f=\sum_{\beta\in S_z(f)}c_{\beta}(z)z^{\beta}.$$
Suppose there is some  $N\in\mathbb{Z}_{\geq 1}$ such that $z_j-y_j\in\mathfrak{m}^{N+1}$ for  $j=1,\ldots,n$. Then, 		
		setting  $\mathcal{M}_N:=\{\beta\mid |\beta|\leq N\}$ we have that 
		$$S_y(f)\cap \mathcal{M}_N=S_z(f)\cap \mathcal{M}_N.$$
\end{lemma}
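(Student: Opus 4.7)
The plan is to introduce the ``low-degree truncations'' of the CP-expansions of $f$ with respect to $y$ and $z$, transport the first into $z$-variables via the substitution $y_j = z_j - (z_j - y_j)$, and finally pin down both indexing sets using the uniqueness in Proposition~\ref{ExpanCosPil}. After discarding the trivial case $f \notin \mathfrak{m}$ (in which $S_y(f) = \{0\} = S_z(f)$), I will define
$$g_y := \sum_{\beta \in S_y(f) \cap \mathcal{M}_N} c_\beta(y)\, y^\beta, \qquad g_z := \sum_{\gamma \in S_z(f) \cap \mathcal{M}_N} c_\gamma(z)\, z^\gamma.$$
Each of these sums is itself a CP-expansion (its indexing set is an antichain as a subset of one, and its coefficients remain units), so by uniqueness $S_y(g_y) = S_y(f) \cap \mathcal{M}_N$ and $S_z(g_z) = S_z(f) \cap \mathcal{M}_N$. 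The discarded tails of $f$ all lie in $\mathfrak{m}^{N+1}$, hence $f - g_y$, $f - g_z$, and $g_y - g_z$ all lie in $\mathfrak{m}^{N+1}$.

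Writing $\epsilon_j := z_j - y_j \in \mathfrak{m}^{N+1}$, a direct binomial expansion of $(z_j - \epsilon_j)^{\beta_j}$ gives $y^\beta - z^\beta \in \mathfrak{m}^{|\beta| + N}$ for every $\beta$. Substituting $y_j = z_j - \epsilon_j$ in $g_y$ then yields
$$g_y = \sum_{\beta \in S_y(f) \cap \mathcal{M}_N} c_\beta(y)\, z^\beta + R, \qquad R \in \mathfrak{m}^{N+1},$$
since every summand has $|\beta| \geq 1$, which forces the remainder $R$ into $\mathfrak{m}^{N+1}$.

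The central step, which I expect to be the main technical obstacle, is the following \emph{merging assertion}: if $h \in R$ admits any decomposition $h = \sum_{\theta \in T} u_\theta z^\theta + r$ with $T \subseteq \mathcal{M}_N$ an antichain, every $u_\theta \in R \setminus \mathfrak{m}$, and $r \in \mathfrak{m}^{N+1}$, then $S_z(h) \cap \mathcal{M}_N = T$. I will prove this by taking a CP-expansion $r = \sum_{\delta \in S_z(r)} d_\delta z^\delta$ (necessarily with every $|\delta| \geq N+1$) and, for each $\delta$ that componentwise dominates some $\theta \in T$, absorbing $d_\delta z^\delta$ into the $z^\theta$-term, so that its coefficient becomes $u_\theta + d_\delta z^{\delta - \theta}$, which remains a unit because $z^{\delta - \theta} \in \mathfrak{m}$. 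The surviving indexing set $T \cup D$, where $D \subseteq S_z(r)$ collects the non-absorbed $\delta$'s, is still an antichain: $|\delta| \geq N+1 > N \geq |\theta|$ rules out $\delta \leq \theta$, while $\theta \not\leq \delta$ for $\delta \in D$ by construction. The resulting expression is then a CP-expansion of $h$ with respect to $z$, so by the uniqueness in Proposition~\ref{ExpanCosPil} one has $S_z(h) = T \cup D$, and intersection with $\mathcal{M}_N$ returns exactly $T$ since every $\delta \in D$ has $|\delta| \geq N+1$.

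With the merging assertion in hand, I will finish by applying it in two different ways. First, applied to the identity $g_y = \sum c_\beta(y) z^\beta + R$ above, with $T = S_y(f) \cap \mathcal{M}_N$, it gives $S_z(g_y) \cap \mathcal{M}_N = S_y(f) \cap \mathcal{M}_N$. Second, applied to the decomposition $g_y = g_z + (g_y - g_z)$, in which $g_z$ is itself a CP-expansion with respect to $z$ and indexing set $S_z(f) \cap \mathcal{M}_N$ while $g_y - g_z \in \mathfrak{m}^{N+1}$ plays the role of the remainder, it gives $S_z(g_y) \cap \mathcal{M}_N = S_z(f) \cap \mathcal{M}_N$. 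Combining the two identifications of $S_z(g_y) \cap \mathcal{M}_N$ yields the required equality $S_y(f) \cap \mathcal{M}_N = S_z(f) \cap \mathcal{M}_N$.
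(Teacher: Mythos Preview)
Your proof is correct. Both your argument and the paper's rest on the same three ingredients---splitting $f$ into its low-degree and high-degree parts, the estimate $y^\beta - z^\beta \in \mathfrak{m}^{N+1}$ for $|\beta|\le N$, and the uniqueness of the CP index set---but you organize them differently. The paper fixes $\beta_0\in S_y(f)\cap\mathcal{M}_N$ and, after converting every $z^\beta$ into $y^\beta$ plus an $\mathfrak{m}^{N+1}$ error, reads off that $y^{\beta_0}$ lies in the $y$-monomial ideal generated by $S_z(f)\cap\mathcal{M}_N$; symmetry and minimality of generators then finish. You instead isolate a reusable \emph{merging assertion} (an antichain-indexed unit expansion plus an $\mathfrak{m}^{N+1}$ remainder determines the low-degree part of the CP index set) and apply it twice to the auxiliary element $g_y$, once via the substitution $y\to z$ and once via $g_y=g_z+(g_y-g_z)$. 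Your route is a bit more modular and makes explicit a lemma that the paper leaves implicit; the paper's route avoids introducing $S_z(g_y)$ and works directly with monomial ideals. Either way the content is the same.
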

	
	\begin{proof} The CP-expansions of $f$ can be written as follows: 
		$$f=\sum_{\beta\in S_y(f)\cap\mathcal{M}_N}c_{\beta}(y)y^{\beta}+
		\sum_{\beta\in S_y(f)\setminus\mathcal{M}_N}c_{\beta}(y)y^{\beta}, \ \text{ and }  \  f=\sum_{\beta\in S_z(f)\cap\mathcal{M}_N}c_{\beta}(z)z^{\beta}+
		\sum_{\beta\in S_z(f)\setminus\mathcal{M}_N}c_{\beta}(z)z^{\beta}.$$
		Set 
		$$g_1:=\sum_{\beta\in S_y(f)\setminus\mathcal{M}_N}c_{\beta}(y)y^{\beta}-
		\sum_{\beta\in S_z(f)\setminus\mathcal{M}_N}c_{\beta}(z)z^{\beta},$$
		and note that $g_1\in\mathfrak{m}^{N+1}$. Now let $\beta_0\in S_y(f)\cap\mathcal{M}_N$. Then 
		$$c_{\beta_0}y^{\beta_0}=
		-\sum_{\substack{{\beta\in S_y(f)\cap\mathcal{M}_N}\\ {\beta\neq\beta_0}}}c_{\beta}(y)y^{\beta}+\sum_{\beta\in S_z(f)\cap\mathcal{M}_N}c_{\beta}(z)z^{\beta}-g_1,$$
		and define
		$$g_2:=\sum_{\beta\in S_z(f)\cap\mathcal{M}_N}c_{\beta}(z)(z^{\beta}-y^{\beta}). $$
		 Observe  that $g_2\in\mathfrak{m}^{N+1}$ and that
		$$c_{\beta_0}y^{\beta_0}=
		-\sum_{\substack{{\beta\in S_y(f)\cap\mathcal{M}_N}\\ {\beta\neq\beta_0}}}c_{\beta}(y)y^{\beta}+
		\sum_{\beta\in S_z(f)\cap\mathcal{M}_N}c_{\beta}(z)y^{\beta}+
		g_2-g_1.$$
		Now, since $c_{\beta_0}$ is a unit and $|\beta_0|\leq N$, it follows that
		$$y^{\beta_0}\in\left\langle y^{\beta}\mid \beta\in\left(\left( S_y(f)\cap\mathcal{M}_N\right)\setminus\{\beta_0\}\right)
		\cup \left( S_z(f)\cap\mathcal{M}_N\right) \right\rangle.$$
		Using the  properties of the  CP-expansions, 
		$y^{\beta_0}\not\in\left\langle y^{\beta}\mid \beta\in \left(S_y(f)\cap\mathcal{M}_N\right)\setminus\{\beta_0\} \right\rangle$,  thus we conclude that
		$y^{\beta_0}\in\left\langle y^{\beta}\mid \beta\in S_z(f)\cap\mathcal{M}_N \right\rangle$, and hence, 
		$$\left\langle y^{\beta}\mid \beta\in S_y(f)\cap\mathcal{M}_N \right\rangle
		\subset
		\left\langle y^{\beta}\mid \beta\in S_z(f)\cap\mathcal{M}_N \right\rangle.$$
		The other inclusion can be proven analogously to show that both ideals are equal. 
		To conclude, using again the properties of the CP-expansions, the set
		$S_y(f)\cap\mathcal{M}_N$ gives a minimal set of monomials in $y$ generating the ideal 
		$\left\langle y^{\beta}\mid \beta\in S_y(f)\cap\mathcal{M}_N \right\rangle$,
		from where we obtain the required equality.
	\end{proof}

\section{On the finiteness of the  Samuel slope of a local ring}
\label{SecFinita}
 
Our purpose in this section is to prove Theorems \ref{pendiente_finita},  \ref{pendiente_red}  and Corollary \ref{corolario_pendiente_finita}, regarding the finiteness the Samuel slope of reduced excellent non-regular rings. To this end we need a couple technical results.
To start with we state Lemma \ref{Lemadelacomplu} which plays a role when dealing
with rings in the extremal case.
We continue with Lemma \ref{lema_lema_v4}, which generalizes   \cite[Lemma 8.9]{Complutense2023} to the case of complete reduced Noetherian rings that do not necessarily contain a field, and that are not necessarily equidimensional. And lastly, Lemma \ref{LimiteNil}, characterizes limits of Cauchy sequences whose asymptotic order is arbitrary large. 
 
\begin{lemma} \cite[Proposition 8.6]{Complutense2023},
\cite[Lemma 4.4]{PrepJavier}
\label{Lemadelacomplu}
Let $(B,\mathfrak{m},k)$ be a Noetherian local ring of dimension $d\geq 1$ which is in the extremal case. Then $B$ contains a reduction of $\mathfrak{m}$ generated by $d$ elements. 
\end{lemma}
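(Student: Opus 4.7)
The plan is to leverage the extremal-case hypothesis to produce the $d$ generators directly from $\mathfrak{m}/\mathfrak{m}^{>1}$, and then verify the reduction property by showing that $\Gr_{\mathfrak{m}}(B)$ is finite over the polynomial subring they span. The key first observation is that the maximal ideal of any local ring is integrally closed: any integral dependence $a^n + b_1 a^{n-1} + \cdots + b_n = 0$ with $b_i \in \mathfrak{m}^i$ reduces modulo $\mathfrak{m}$ to $\bar a^n = 0$ in the field $k$, forcing $a \in \mathfrak{m}$. Hence $\mathfrak{m}^{\geq 1} = \mathfrak{m}$ and $\lambda_\mathfrak{m}$ is simply the natural surjection $\mathfrak{m}/\mathfrak{m}^2 \twoheadrightarrow \mathfrak{m}/\mathfrak{m}^{>1}$, whose image has $k$-dimension exactly $d$ in the extremal case. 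I thus pick $\kappa_1,\ldots,\kappa_d \in \mathfrak{m}$ whose classes form a $k$-basis of $\mathfrak{m}/\mathfrak{m}^{>1}$, and let $\bar\kappa_i$ denote their initial forms in $\Gr_\mathfrak{m}(B)$.

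Next, I consider the morphism $\Lambda_\mathfrak{m}: \Gr_\mathfrak{m}(B) \to \overline{\Gr}_\mathfrak{m}(B)$ whose kernel is the nilradical $N$ of $\Gr_\mathfrak{m}(B)$ (cf.\ \S\ref{GraduadoBarra}). The quotient $\Gr_\mathfrak{m}(B)/N$ is a reduced graded $k$-algebra generated in degree $1$, and its degree-$1$ piece coincides with $\mathfrak{m}/\mathfrak{m}^{>1}$, spanned by the $\bar\kappa_i$. I therefore obtain a graded surjection $\phi: k[X_1,\ldots,X_d] \twoheadrightarrow \Gr_\mathfrak{m}(B)/N$ sending $X_i$ to $\bar\kappa_i$. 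Since $\dim \Gr_\mathfrak{m}(B) = \dim B = d$ and modding by a nilpotent ideal preserves Krull dimension, both rings have dimension $d$. Because $k[X_1,\ldots,X_d]$ is an integral domain of dimension $d$, any nonzero ideal strictly drops the dimension, so $\ker \phi = 0$ and $\phi$ is an isomorphism.

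To conclude, $\Gr_\mathfrak{m}(B)$ is Noetherian (it is finitely generated over $k$ by $\mathfrak{m}/\mathfrak{m}^2$), so $N$ is a finitely generated nilpotent ideal with $N^r = 0$ for some $r$. The isomorphism gives $\Gr_\mathfrak{m}(B) = k[\bar\kappa_1,\ldots,\bar\kappa_d] + N$, and iterating shows that $\Gr_\mathfrak{m}(B)$ is generated as a $k[\bar\kappa_1,\ldots,\bar\kappa_d]$-module by products of fewer than $r$ generators of $N$, a finite set; hence $\Gr_\mathfrak{m}(B)$ is a finite module over the polynomial subring $k[\bar\kappa_1,\ldots,\bar\kappa_d]$. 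By graded Nakayama this is equivalent to $(\bar\kappa_1,\ldots,\bar\kappa_d)\Gr_\mathfrak{m}(B)_n = \Gr_\mathfrak{m}(B)_{n+1}$ for all $n \gg 0$, i.e.\ to $(\kappa_1,\ldots,\kappa_d)\mathfrak{m}^n = \mathfrak{m}^{n+1}$ in $B$, which is precisely the statement that $(\kappa_1,\ldots,\kappa_d)$ generates a reduction of $\mathfrak{m}$. The only delicate step is the identification of the degree-$1$ part of $\Gr_\mathfrak{m}(B)/N$ with $\mathfrak{m}/\mathfrak{m}^{>1}$ together with the extremal-case dimension count; once those are in place the rest is a dimension bookkeeping exercise.
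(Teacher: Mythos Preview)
Your argument is correct. The paper does not include a proof of this lemma, citing instead \cite[Proposition 8.6]{Complutense2023} and \cite[Lemma 4.4]{PrepJavier}; your approach matches the one implicit in the paper's discussion in the Introduction and in \S\ref{GraduadoBarra}--\S\ref{properties_slope}, where the key observation is that $\ker(\lambda_{\mathfrak m})$ coincides with the degree-one part of the nilradical of $\Gr_{\mathfrak m}(B)$, so that $\Gr_{\mathfrak m}(B)/N$ is generated in degree one by a $d$-dimensional space and hence is a polynomial ring in $d$ variables over $k$, forcing $\Gr_{\mathfrak m}(B)$ to be finite over $k[\bar\kappa_1,\ldots,\bar\kappa_d]$.

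One small point worth making explicit: the passage from $(\bar\kappa_1,\ldots,\bar\kappa_d)\Gr_{\mathfrak m}(B)_n=\Gr_{\mathfrak m}(B)_{n+1}$ for $n\gg 0$ to $(\kappa_1,\ldots,\kappa_d)\mathfrak{m}^n=\mathfrak{m}^{n+1}$ uses Nakayama in $B$ (not just graded Nakayama), since the graded statement only gives $(\kappa)\mathfrak{m}^n+\mathfrak{m}^{n+2}=\mathfrak{m}^{n+1}$. This is routine but should be stated.
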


\begin{lemma}\label{lema_lema_v4} 
 Let $(R',\mathfrak{m}')$ be a complete reduced local ring of dimension $d$ and embedding dimension $d+t$, with $t>0$. Suppose that 
 	$R'$ is in the extremal case, and let    $u'_1,\ldots,u'_d\in    R'$  be any set of $d$ elements generating  reduction of 
 	$\mathfrak{m}'$. Let   $y'_1,\ldots,y'_t$ be a $\lambda_{\mathfrak{m}'}$-sequence and set
 	$$\ell:=\min \{\nub_{{\mathfrak m}'}(y'_i): i=1,\ldots, t\}.$$
Suppose that  $\delta'_1,\ldots, \delta'_t\in {\mathfrak m}'$ is another $\lambda_{{\mathfrak m}'}$-sequence with 
 	$$\min \{\nub_{{\mathfrak m}'}(\delta'_i): i=1,\ldots, t\}> \ell,$$ 
  then there are elements 
 	$s'_1,\ldots, s'_t\in R'$
 	such that setting $z'_i:=y'_i-s'_i$, for $i=1,\ldots, t$, we have that: 
 	\begin{enumerate}
 		\item[(i)] ${\mathfrak m}'=\langle u'_1,\ldots, u'_d, z'_1,\ldots, z'_t\rangle$; 
 		\item[(ii)] $\min \{\nub_{{\mathfrak m}'}(z_i'): i=1,\ldots, t\}\geq\min \{\nub_{{\mathfrak m}'}(\delta'_i): i=1,\ldots, t\}$;
 		\item[(iii)] $z'_1,\ldots,z'_t$ is a $\lambda_{{\mathfrak m}'}$-sequence;
 		\item[(iv)] for 
 		$j=1,\ldots,t$,  $\nub_{\mathfrak{m}'}(s_i)\geq\ell$, i.e., $s'_j\in \langle u'_1,\ldots, u'_d\rangle^{\geq\ell}$. 
 	\end{enumerate}		
\end{lemma}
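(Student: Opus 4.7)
The plan is to construct the $s'_j$ in a single shot, by exploiting the change-of-basis between the two given $\lambda_{\mathfrak{m}'}$-sequences inside the kernel of $\lambda_{\mathfrak{m}'}$. Completeness is not really used for the construction itself; it is inherited from the setting in which the lemma will be applied.

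First I would establish the direct sum decomposition
\begin{equation*}
\mathfrak{m}'/\mathfrak{m}'^2 = \langle \In_{\mathfrak{m}'}u'_1,\ldots,\In_{\mathfrak{m}'}u'_d\rangle \oplus \ker(\lambda_{\mathfrak{m}'}).
\end{equation*}
Because $\langle u'\rangle$ is a reduction of $\mathfrak{m}'$, Proposition \ref{PotenciaNuBar} (together with the standard inequality $\bar{\nu}_{\mathfrak{m}'}\leq \nu_{\mathfrak{m}'}$) forces $\bar{\nu}_{\mathfrak{m}'}(u'_k)=1$, so $\In_{\mathfrak{m}'}u'_k\notin \ker(\lambda_{\mathfrak{m}'})$. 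In the extremal case, $\dim_k\ker(\lambda_{\mathfrak{m}'})=t$ and $\dim_k \mathfrak{m}'/\mathfrak{m}'^2=d+t$. Since the classes $\In_{\mathfrak{m}'}u'_k,\In_{\mathfrak{m}'}y'_j$ span $\mathfrak{m}'/\mathfrak{m}'^2$, a dimension count gives that the $\In_{\mathfrak{m}'}u'_k$ are linearly independent and that the sum above is indeed direct.

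Next, since both $\{\In_{\mathfrak{m}'}y'_j\}$ and $\{\In_{\mathfrak{m}'}\delta'_i\}$ are $k$-bases of $\ker(\lambda_{\mathfrak{m}'})$, there is an invertible matrix $(\alpha_{ji})\in GL_t(k)$ with
\begin{equation*}
\In_{\mathfrak{m}'}y'_j=\sum_{i=1}^{t}\alpha_{ji}\,\In_{\mathfrak{m}'}\delta'_i \qquad (j=1,\ldots,t).
\end{equation*}
Choose lifts $\hat{\alpha}_{ji}\in R'$ of $\alpha_{ji}$ (taking $\hat{\alpha}_{ji}=0$ when $\alpha_{ji}=0$), and set
\begin{equation*}
z'_j:=\sum_{i=1}^{t}\hat{\alpha}_{ji}\,\delta'_i, \qquad s'_j:=y'_j-z'_j.
\end{equation*}

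To finish, I would verify (i)--(iv) in turn. By construction, $\In_{\mathfrak{m}'}z'_j=\sum_i\alpha_{ji}\In_{\mathfrak{m}'}\delta'_i=\In_{\mathfrak{m}'}y'_j$ in $\mathfrak{m}'/\mathfrak{m}'^2$, so $\{\In_{\mathfrak{m}'}z'_j\}$ is the same basis of $\ker(\lambda_{\mathfrak{m}'})$ as $\{\In_{\mathfrak{m}'}y'_j\}$, giving (iii). Since the families $\{\In_{\mathfrak{m}'}u'_k,\In_{\mathfrak{m}'}z'_j\}$ and $\{\In_{\mathfrak{m}'}u'_k,\In_{\mathfrak{m}'}y'_j\}$ coincide as sets of classes, Nakayama yields $\mathfrak{m}'=\langle u',z'\rangle$, which is (i). For (ii), setting $\ell_0:=\min_i\bar{\nu}_{\mathfrak{m}'}(\delta'_i)>\ell$ and applying parts (i)--(ii) of Proposition \ref{PotenciaNuBar},
\begin{equation*}
\bar{\nu}_{\mathfrak{m}'}(z'_j)\geq \min_i\bar{\nu}_{\mathfrak{m}'}(\hat{\alpha}_{ji}\delta'_i)\geq \min_i\bar{\nu}_{\mathfrak{m}'}(\delta'_i)=\ell_0.
\end{equation*}
Finally, for (iv), since $s'_j=y'_j-z'_j$ with $\bar{\nu}_{\mathfrak{m}'}(y'_j)\geq \ell$ and $\bar{\nu}_{\mathfrak{m}'}(z'_j)\geq \ell_0>\ell$, Proposition \ref{PotenciaNuBar}(i) gives $\bar{\nu}_{\mathfrak{m}'}(s'_j)\geq \ell$; and because $\langle u'\rangle$ is a reduction of $\mathfrak{m}'$, we have $\overline{\langle u'\rangle^{n}}=\overline{(\mathfrak{m}')^{n}}$ for all $n$, so $\bar{\nu}_{\langle u'\rangle}=\bar{\nu}_{\mathfrak{m}'}$ and hence $s'_j\in \langle u'_1,\ldots,u'_d\rangle^{\geq \ell}$.

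The only delicate point is the direct sum decomposition of the first paragraph, which crucially uses both that $\langle u'\rangle$ is a reduction (to control $\bar{\nu}_{\mathfrak{m}'}(u'_k)$) and that $R'$ is in the extremal case (to match dimensions). Once this is in place, the construction of $z'_j$ and $s'_j$ is essentially forced and the four properties are immediate consequences of the basic rules for $\bar{\nu}_{\mathfrak{m}'}$ listed in Proposition \ref{PotenciaNuBar}.
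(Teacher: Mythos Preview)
Your proof is correct and takes a genuinely different route from the paper's. Two small caveats on the first paragraph: the inequality you invoke goes the other way ($\bar{\nu}_{\mathfrak{m}'}\geq \nu_{\mathfrak{m}'}$, not $\leq$), and knowing that each $\In_{\mathfrak{m}'}u'_k$ individually avoids $\ker(\lambda_{\mathfrak{m}'})$ is not quite enough for the direct sum---you need that no nontrivial $k$-linear combination of the $\In_{\mathfrak{m}'}u'_k$ lies in the kernel. Both are easily fixed: since $\langle u'\rangle$ is a reduction generated by $d=\dim R'$ elements, the $u'_k$ are analytically independent, so their initial forms generate a polynomial subalgebra of $\Gr_{\mathfrak{m}'}(R')$ and in particular no nonzero linear combination is nilpotent. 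Dimension count then gives the decomposition. (Your claim that the $\In u'_k,\In y'_j$ span is a \emph{consequence} of this, not an input.) With this repair the rest of your argument goes through verbatim.

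The paper instead invokes Cohen's structure theorem to present $R'$ as a quotient of a regular complete local ring $R$, lifts $u',y',\delta'$ to $R$, writes the CP-expansion of each $y_i$ with respect to $(u,\delta)$ (or $(p,u,\delta)$ in mixed characteristic), and takes $s_i$ to be the part of that expansion supported on monomials in $u$ alone. This is heavier machinery, but it buys something your argument does not: the $s'_i$ produced by the paper have $(u)$-CP-expansions (Remark~\ref{RemLemaLema}), and in the equicharacteristic case can even be chosen inside the regular subring $S=k[[u']]$ (Remark~\ref{lemma_lemma_remark}). These refinements are used later, e.g.\ in Corollary~\ref{maximo_hipersuperficies} and in Proposition~\ref{orden_reduction}. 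Your construction sets $z'_j$ as an $R'$-linear combination of the $\delta'_i$, so the resulting $s'_j=y'_j-z'_j$ has no reason to lie in such a subring. For the lemma \emph{as stated}, your approach is cleaner and avoids completeness entirely; for the downstream applications in \S\ref{CossartPiltantHickel}--\S\ref{SecSSl_Hord}, the paper's more explicit $s_i$ are needed.
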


\begin{proof}	From the hypotheses we have that,  
 	$${\mathfrak m}'/({\mathfrak m}')^2=\langle \In_{{\mathfrak m}'} u_1',\ldots, \In_{{\mathfrak m}'}u_d'\rangle \oplus \ker(\lambda_{{\mathfrak m}'}),$$
 	from where it follows that  given any  $\lambda_{{\mathfrak m}'}$-sequence, $\delta_1,\ldots,\delta_t$,  
 	$${\mathfrak m}'=\langle u_1,\ldots, u_{d}, \delta_1,\ldots, \delta_t\rangle.$$
If $R'$ contains a field and is equidimensional,  the statement was proved in \cite[Lemma 8.9]{Complutense2023}.
To treat all the cases,  
by \cite[Theorem 29.4]{Matsumura}, $R'$ is a quotient of a power series ring of the form $R=A[[V_1,\ldots,V_{d+t}]]$, where $A$ is either a field or a $p$-ring,  where $p$ is the characteristic of the residue field of $R'$.
Now we distinguish three cases, depending on the role of $p'$, the class of $p$ in $R'$ or if $R'$ contains a field.

\medskip
 	
 	{\bf Case 1.} Suppose    $p'\in ({\mathfrak m}')^2$. 
 	Note that then  $\dim(R)=d+t+1$, and if $\mathfrak{m}\subset R$ is the maximal ideal then 
 	$$\mathfrak{m}=\langle p,u_1,\ldots,u_d,y_1,\ldots,y_t\rangle=
 	\langle p,u_1,\ldots,u_d,\delta_1,\ldots,\delta_t\rangle,$$
 	for suitable choices of $u_j, y_i, \delta_i\in R$ mapping respectively to $u'_j, y_i', \delta_i'\in R'$, $j=1,\ldots, d, i=1\ldots, t$. For $i=1,\ldots,t$,   write the CP-expansion of $y_i\in R$
 	w.r.t. $(p,u,\delta)$, 
 	
 		$$y_i=\sum_{(j,\alpha,0)\in S(y_i)}c_{(j,\alpha,0)}(y_i)p^j u^{\alpha}+
 		\sum_{\tiny{\begin{array}{c}
 				(j,\alpha,\beta)\in S(y_i)\\
 			\beta\neq 0\end{array}}}c_{(j,\alpha,\beta)}(y_i)p^j u^{\alpha}\delta^{\beta}.$$

 	Set $s_i:=\sum_{(j,\alpha,0)\in S(y_i)}c_{(\alpha,0)}(y_i)p^j u^{\alpha}$, and let  $z_i:=y_i-s_i$, for $i=1,\ldots,t$. Then 
 	$\nub_{\mathfrak{m}'}(z'_i)\geq
 	\min \{\nub_{{\mathfrak m}'}(\delta'_j): j=1,\ldots, t\}$, 
 	and the inequality in (ii) follows. 
 	
 	Note that
 	\begin{equation}
 		\label{maximal_R}
 		{\mathfrak m}=\langle p,u_1,\ldots, u_d,y_1,\ldots, y_t\rangle=
 		\langle p,u_1,\ldots, u_d,z_{1},\ldots, z_{t}\rangle,
 	\end{equation}
 	observe also that ${{\mathfrak m}'}=\langle u'_1,\ldots, u'_d, z_1',\ldots, z_t'\rangle$,
since $p'\in( {\mathfrak m}')^2$, thus
   $z'_1,\ldots,z'_t$ is a $\lambda_{{\mathfrak  m}'}$-sequence.
 	
 	It is worthwhile noticing that
 	$$\min\{\nub_{{\mathfrak m}'}(z_i') : i=1,\ldots,t\} > \ell=\min\{\nub_{{\mathfrak m}'}(y_i') : i=1,\ldots,t\}. $$
 	Therefore, since $z'_i=y'_i-s'_i$ necessarily, 
 	$\nub_{{\mathfrak m}'}(s'_i)\geq \ell$ for  $i=1,\ldots,t$, and (iv) follows. 
 	
 	\medskip

 	{\bf Case 2.} Suppose 
 	$p'\in {\mathfrak m}'\setminus ({\mathfrak m}')^2$.
 	 Then  $R'$ is a quotient of a regular local ring of the form $R=A[[V_1,\ldots,V_{d+t-1}]]$, and its maximal ideal  
 	$\mathfrak{m}\subset R$ is such that 
 	$$\mathfrak{m}=\langle u_1,\ldots,u_d,y_1,\ldots,y_t\rangle=
 	\langle u_1,\ldots,u_d,\delta_1,\ldots,\delta_t\rangle,$$
 	for suitable choices of $u_j, y_i, \delta_i\in R$ mapping respectively to $u'_j, y_i', \delta_i'\in R'$, $j=1,\ldots, d, i=1\ldots, t$. Again, for $i=1,\ldots,t$, in $R$,  consider the CP-expansion of $y_i$
 	w.r.t. $(u,\delta)$, 
  $$y_i=\sum_{(\alpha,0)\in S(y_i)}c_{(\alpha,0)}(y_i) u^{\alpha}+
 		\sum_{\tiny{\begin{array}{c}
 				(\alpha,\beta)\in S(y_i)\\
 				\beta\neq 0
 				\end{array}}}c_{(\alpha,\beta)}(y_i) u^{\alpha}\delta^{\beta}.$$
 	Setting  $s_i=\sum_{(\alpha,0)\in S(y_i)}c_{(\alpha,0)}(y_i) u^{\alpha}$, and letting   $z_i:=y_i-s_i$, for $i=1,\ldots,t$, 
   the argument follows as in case 1.

{\bf Case 3.} If $R'$ contains a field, then $R'$ is a quotient of
a regular local ring of the form $R=k[[V_1,\ldots,V_{d+t}]]$ for some coefficient field $k$.
As above, the maximal ideal  
$\mathfrak{m}\subset R$ is such that 
$$\mathfrak{m}=\langle u_1,\ldots,u_d,y_1,\ldots,y_t\rangle=
\langle u_1,\ldots,u_d,\delta_1,\ldots,\delta_t\rangle,$$
for suitable choices of $u_j, y_i, \delta_i\in R$ mapping respectively to $u'_j, y_i', \delta_i'\in R'$, $j=1,\ldots, d, i=1\ldots, t$.
And the result follows as in case 1.
\end{proof}

\begin{remark}\label{lemma_lemma_remark}
Under the assumptions of Lemma \ref{lema_lema_v4}, if $R'$ is equicharacteristic and if $k$ is the residue field of $R'$ then $R'=k[[V_1,\ldots,V_{d+t}]]$.
Moreover, $R'$ is finite over $S=k[[u_1',\ldots, u'_d]]$, and the elements $s'_1,\ldots, s'_t$ can be taken in $S$, with the same arguments as in case 2. In such case, the statement in (iv) is that $s_j'\in \langle u'_1,\ldots, u'_d\rangle ^{\ell}\subset S$ (cf.,  \cite[Lemma 8.9]{Complutense2023}). The same statement holds if  $R'$ does not contain a field, but there is a regular local subring $S\subset R$ with maximal ideal ${\mathfrak m}_S=\langle u_1',\ldots,u_d'\rangle$. 
\end{remark} 
\begin{lemma} \label{LimiteNil}
Let $(R',\mathfrak{m}')$ be a local Noetherian ring.
Let $\{\theta_n\}_{n=1}^{\infty}$ be a sequence of elements in $R'$ such that
$\lim_{n\to\infty}\theta_n=\theta\in R'$ for the $\mathfrak{m}'$-adic topology.
If $\lim_{n\to \infty} \nub_{\mathfrak{m}'}(\theta_n)=\infty$,  then $\theta$ is nilpotent in $R'$.
\end{lemma}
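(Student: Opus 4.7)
\emph{Proof proposal.} The plan is to derive the statement as a direct contrapositive of part (vii) of Proposition \ref{PotenciaNuBar}. I will argue by contradiction, assuming that $\theta$ is not nilpotent in $R'$ and then using the characterization of nilpotents in terms of the asymptotic Samuel function.

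Under this assumption, Proposition \ref{PotenciaNuBar}(vi) guarantees that $\nub_{\mathfrak{m}'}(\theta)<\infty$. Next, since $\{\theta_n\}_{n=1}^{\infty}$ converges to $\theta$ in the $\mathfrak{m}'$-adic topology, it is in particular a Cauchy sequence for that topology, so all the hypotheses of Proposition \ref{PotenciaNuBar}(vii) are met. Applying that result, I conclude that
$$\nub_{\mathfrak{m}'}(\theta_n)=\nub_{\mathfrak{m}'}(\theta)<\infty$$
for every $n$ sufficiently large. This contradicts the standing hypothesis $\lim_{n\to\infty}\nub_{\mathfrak{m}'}(\theta_n)=\infty$, forcing $\theta$ to be nilpotent.

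Since the argument is essentially a one-line contrapositive of Proposition \ref{PotenciaNuBar}(vii), there is no substantial obstacle inside the lemma itself: the real content has already been packaged into that earlier property. If a self-contained proof were preferred, the main point would be to propagate large asymptotic order from $\theta_n$ to $\theta$ by writing $\theta=\theta_n+(\theta-\theta_n)$ with $\theta-\theta_n\in(\mathfrak{m}')^{N}$ for arbitrarily large $N$, and then invoking the non-archimedean inequality of Proposition \ref{PotenciaNuBar}(i) together with the observation that $\nub_{\mathfrak{m}'}(\theta-\theta_n)\geq\ord_{\mathfrak{m}'}(\theta-\theta_n)\to\infty$; this is the same mechanism underlying the proof of (vii) in the first place.
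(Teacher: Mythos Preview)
Your proposal is correct. Your primary argument packages the result as the contrapositive of Proposition~\ref{PotenciaNuBar}(vii), whereas the paper gives directly the self-contained computation you sketch in your last paragraph: for each $N$ pick $n$ with $\theta-\theta_n\in(\mathfrak{m}')^N$ and $\nub_{\mathfrak{m}'}(\theta_n)\geq N$, then use the non-archimedean inequality (i) to get $\nub_{\mathfrak{m}'}(\theta)\geq N$. So the two approaches coincide in substance; yours just cites the mechanism at one higher level of abstraction, while the paper unwinds it explicitly.
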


\begin{proof}
 	It suffices to prove that $\nub_{\mathfrak{m}'}(\theta)=\infty$.  Since  $\theta_n\to\theta$ and
 	$\nub_{\mathfrak{m}'}(\theta_n)\to\infty$, for each   $N\in {\mathbb N}_{>0}$,  there is an integer 
 	$n_0>0$ such that
 	$$\theta-\theta_n\in{\mathfrak{m}'}^N,
 	\quad\text{and}\quad \nub_{\mathfrak{m}'}(\theta_n)\geq N
 	\qquad\forall n\geq n_0.$$
 	Now the result follows  since $\nub_{\mathfrak{m}'}(\theta)\geq
 	\min\{\nub_{\mathfrak{m}'}(\theta-\theta_n),\nub_{\mathfrak{m}'}(\theta_n)\}\geq N$. 
 \end{proof}

 \begin{theorem}
 	\label{pendiente_finita}
 	Let $(R',\mathfrak{m}')$ be a an excellent reduced  non-regular local ring.
 	Then $\SSl(R')\in\mathbb{Q}$.
 \end{theorem}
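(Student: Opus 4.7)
The plan is to split on the non-extremal versus extremal cases and argue by contradiction in the latter. In the non-extremal case, $\SSl(R')=1\in\mathbb{Q}$ by definition, so I focus on the extremal case and aim to show the supremum defining $\SSl(R')$ is finite; rationality then follows immediately from Proposition \ref{PotenciaNuBar}(v), since all values of $\bar{\nu}_{\mathfrak{m}'}$ lie in $\frac{1}{m}\mathbb{N}$ for a fixed integer $m$, and any finite supremum of such values lies in $\mathbb{Q}$.

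Since $R'$ is excellent, the map $R'\to\widehat{R'}$ is regular, so $\widehat{R'}$ is reduced, non-regular, and still in the extremal case (using preservation of Krull dimension, embedding dimension, and of $\bar{\nu}_{\mathfrak{m}}$ by Proposition \ref{PotenciaNuBar}(ix)). Since $\SSl(R')=\SSl(\widehat{R'})$, I replace $R'$ by its completion so as to work in a complete local ring, where the structural Lemma \ref{lema_lema_v4} is applicable. Fix generators $u_1',\ldots,u_d'$ of a reduction of $\mathfrak{m}'$, provided by Lemma \ref{Lemadelacomplu}, and assume, aiming at a contradiction, that $\SSl(R')=\infty$.

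Starting from any $\lambda_{\mathfrak{m}'}$-sequence $y^{(0)}=(y^{(0)}_1,\ldots,y^{(0)}_t)$ with $\ell_0:=\min_i\bar{\nu}_{\mathfrak{m}'}(y^{(0)}_i)$, the assumption $\SSl(R')=\infty$ provides at each step $n$ a $\lambda_{\mathfrak{m}'}$-sequence $\delta^{(n)}$ with $\min_i\bar{\nu}_{\mathfrak{m}'}(\delta^{(n)}_i)>n+1$. Applying Lemma \ref{lema_lema_v4} with $y=y^{(n)}$ and $\delta=\delta^{(n)}$ produces a new $\lambda_{\mathfrak{m}'}$-sequence $y^{(n+1)}$ with $y^{(n+1)}_i=y^{(n)}_i-s^{(n)}_i$ such that $\ell_{n+1}:=\min_i\bar{\nu}_{\mathfrak{m}'}(y^{(n+1)}_i)>n+1$ and $\bar{\nu}_{\mathfrak{m}'}(s^{(n)}_i)\geq\ell_n$; in particular $\ell_n\to\infty$. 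The next task is to convert these asymptotic-order estimates into ordinary $\mathfrak{m}'$-adic estimates: using $\bar{\nu}_{\mathfrak{m}'}(s^{(n)}_i)\geq\ell_n$ together with a uniform Rees-type containment $\overline{(\mathfrak{m}')^{N}}\subseteq(\mathfrak{m}')^{N-c}$ (valid for some constant $c$ and all $N\geq c$ in the Noetherian local ring $R'$), I conclude that $\{y^{(n)}_i\}_n$ is Cauchy in the $\mathfrak{m}'$-adic topology and hence converges, by completeness, to some $\theta_i\in R'$.

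Because $\bar{\nu}_{\mathfrak{m}'}(y^{(n)}_i)\geq\ell_n\to\infty$, Lemma \ref{LimiteNil} forces $\theta_i$ to be nilpotent; since $R'$ is reduced, $\theta_i=0$. The contradiction is then immediate: each $y^{(n)}_i$ belongs to a $\lambda_{\mathfrak{m}'}$-sequence, so $\In_{\mathfrak{m}'}y^{(n)}_i\neq 0$, i.e., $y^{(n)}_i\notin(\mathfrak{m}')^2$, which is incompatible with $y^{(n)}_i\to 0$ in the $\mathfrak{m}'$-adic topology. Hence $\SSl(R')<\infty$ and therefore $\SSl(R')\in\mathbb{Q}$. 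The main obstacle is the passage from the asymptotic Samuel order to the ordinary $\mathfrak{m}'$-adic topology, which forces the preliminary reduction to the complete case and relies on a Rees-type uniform bound; a secondary delicate point is the bookkeeping in the iterative construction to guarantee $\ell_n\to\infty$ rather than merely $\ell_n$ strictly increasing.
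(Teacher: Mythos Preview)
Your proposal is correct and follows essentially the same route as the paper: reduce to the complete reduced case, iterate Lemma~\ref{lema_lema_v4} to produce a Cauchy sequence of $\lambda_{\mathfrak{m}'}$-sequences whose $\bar\nu$-values tend to infinity, and contradict reducedness via Lemma~\ref{LimiteNil}. Your Rees-type bound $\overline{(\mathfrak{m}')^N}\subseteq(\mathfrak{m}')^{N-c}$ (valid here because the complete reduced ring is analytically unramified) is exactly what the paper invokes as $\mathfrak{m}'$-stability of the filtration $\{\langle u'\rangle^{\geq\ell}\}$; the only bookkeeping fix needed is to choose $\delta^{(n)}$ with $\min_i\bar\nu_{\mathfrak{m}'}(\delta^{(n)}_i)>\ell_n$ rather than merely $>n+1$, so that the hypothesis of Lemma~\ref{lema_lema_v4} is actually met at every step (and then, since all values lie in $\frac{1}{m}\mathbb{Z}$, strict increase already forces $\ell_n\to\infty$).
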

 
 \begin{proof} The statement if clear if   $\SSl(R')=1$, thus 
 	we can assume that $R'$ is in the extremal case. 
 	Since $R'$ is excellent, we can also assume that $R'$ is complete, reduced and non-regular (see \S \ref{properties_slope}).
 	Now, by Theorem \ref{nu_barra_valuations}, 
 	 there is some $m\in\mathbb{Z}_{\geq 1}$ such that
 	$\nub_{\mathfrak{m}'}(a')\in\frac{1}{m}\mathbb{Z}_{\geq 1}$ for all $a'\in R'\setminus\{0\}$, hence  
 	\begin{equation}
 		\label{casi_natural}
 		\SSl(R')\in \frac{1}{m}\mathbb{Z}_{\geq 1}\cup\{\infty\}. 
 	\end{equation}
 	
 	Suppose, to get a contradiction,  that $\SSl(R')=\infty$.
 	Since $R'$  is in the extremal case there are 
 	$d=\dim(R')$ elements $u'_1,\ldots,u'_d$ generating a
 	reduction of $\mathfrak{m}'$. Let $t>0$ be the excess of embedding dimension of $R'$. Since the assumption is that $\SSl(R')=\infty$,  there is a sequence of 
 	$\lambda_{\mathfrak{m}'}$-sequences
 	$\{{\delta'}_1^{(i)},\ldots,{\delta'}_t^{(i)}\}_{i=1}^{\infty}$ such that
 	$$\lim_{i\to\infty}\min\left\lbrace
 	\nub_{\mathfrak{m}'}({\delta'}_1^{(i)}),\ldots,
 	\nub_{\mathfrak{m}'}({\delta'}_t^{(i)})
 	\right\rbrace=\infty.$$

 	Set ${\theta'}_j^{(1)}:={\delta'}_j^{(1)}$ for $j=1,\ldots,t$.
 	By Lemma \ref{lema_lema_v4}, for $j=1,\ldots,t$, there are sequences
 	$\{{s'_j}^{(i)}\}_{i=1}^{\infty}\subset R'$, such that 
 	if we set ${\theta'}_j^{(i+1)}:={\theta'}_1^{(i)}+{s'}_j^{(i)}$,  then
 	\begin{itemize}
 		\item $s_j^{(i)}\in\langle u'_1,\ldots,u'_d\rangle^{\geq\ell_i}$, with 
 		$\lim_{i\to\infty}\ell_i=\infty$; 
 		\item ${\theta'}_1^{(i)},\ldots,{\theta'}_t^{(i)}$ form   a
 		$\lambda_{\mathfrak{m}'}$-sequence;
 		\item for $i\geq 1$,
 		$$\min\left\lbrace
 		\nub_{\mathfrak{m}'}({\theta'}_1^{(i+1)}),\ldots,
 		\nub_{\mathfrak{m}'}({\theta'}_t^{(i+1)})
 		\right\rbrace>
 		\min\left\lbrace
 		\nub_{\mathfrak{m}'}({\theta'}_1^{(i)}),\ldots,
 		\nub_{\mathfrak{m}'}({\theta'}_t^{(i)})
 		\right\rbrace,$$
 		and by construction   $\lim_{i\to \infty} \min\left\lbrace
 		\nub_{\mathfrak{m}'}({\theta'}_1^{(i)}),\ldots,
 		\nub_{\mathfrak{m}'}({\theta'}_t^{(i)})
 		\right\rbrace=\infty$.
 	\end{itemize}
 	
 	Now observe that the filtration
 	$\{\langle u'_1,\ldots,u'_d\rangle^{\geq\ell}\}_{\ell\in\frac{1}{m}\mathbb{Z}_{\geq 1}}$ is $\mathfrak{m}'$-stable by \cite[Lemma 10.8]{Atiyah} and the proof
 	of \cite[Theorem 3.4]{PrepJavier}.
 Thus, by  construction,  $\{{\theta'}_1^{(i)}\}_{i=1}^{\infty}$ is a Cauchy sequence for the  $\mathfrak{m}'$-adic topology, because 
 	$${\theta'}_1^{(i+1)}-{\theta'}_1^{(i)}=s_1^{(i)}\in
 	\langle u'_1,\ldots,u'_d\rangle^{\geq\ell_i}.$$
 Since $R'$ is complete, the sequence $\{{\theta'}_1^{(i)}\}_{i=1}^{\infty}$
 	has a limit $\tilde{\theta}'$ in $R'$. Finally, by Lemma \ref{LimiteNil}, $\tilde{\theta}'$ is nilpotent.  But this is a contradiction since $\tilde{\theta}'\not\in({\mathfrak{m}'})^2$ and
 	$R'$ is reduced.
\end{proof}

 \begin{theorem}
 	\label{pendiente_red}
 	Let $(R',\mathfrak{m}')$ be an excellent ring local ring.
 	Then $\SSl(R')=\SSl(R'_{\red})$.
 \end{theorem}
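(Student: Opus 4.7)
The plan is to reduce the statement to a comparison of $\lambda$-sequences via the natural projection $\pi\colon R'\to R'_{\red}$. The crucial step will be to establish the identity
$$\nub_{\mathfrak{m}'}(a)=\nub_{\mathfrak{m}'_{\red}}(\pi(a)),\qquad a\in R',$$
which I would obtain either from Theorem~\ref{nu_barra_valuations} (the Rees valuations of $\mathfrak{m}'$ are valuations on the fraction fields of $R'/\mathfrak{p}$ for minimal primes $\mathfrak{p}$ of $R'$, hence factor through $R'_{\red}$) or from Corollary~\ref{nub_integral} combined with the standard detection of integral closures of ideals modulo the minimal primes. In particular the nilradical $N:=\sqrt{0}$ is contained in $(\mathfrak{m}')^{>1}$, in accordance with Proposition~\ref{PotenciaNuBar}(vi).

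From this identity $(\mathfrak{m}')^{>1}$ is the preimage of $(\mathfrak{m}'_{\red})^{>1}$, so the projection $\mathfrak{m}'/(\mathfrak{m}')^2\twoheadrightarrow\mathfrak{m}'_{\red}/(\mathfrak{m}'_{\red})^2$ restricts to a short exact sequence of $k$-vector spaces
$$0\longrightarrow (N+(\mathfrak{m}')^2)/(\mathfrak{m}')^2\longrightarrow\ker(\lambda_{\mathfrak{m}'})\longrightarrow\ker(\lambda_{\mathfrak{m}'_{\red}})\longrightarrow 0.$$
Writing $s:=\dim_k(N+(\mathfrak{m}')^2)/(\mathfrak{m}')^2$, and using $\dim(R')=\dim(R'_{\red})$ together with $\text{emb-dim}(R')=\text{emb-dim}(R'_{\red})+s$, one gets $\text{exc-emb-dim}(R')=\text{exc-emb-dim}(R'_{\red})+s$, so $R'$ is in the extremal case if and only if $R'_{\red}$ is. The non-extremal case then gives $\SSl(R')=1=\SSl(R'_{\red})$, and the case in which $R'$ is regular is trivial since then $R'=R'_{\red}$.

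In the remaining situations I choose $\lambda$-sequences carefully. If $R'_{\red}$ is regular but $R'$ is not, then $\ker(\lambda_{\mathfrak{m}'})=(N+(\mathfrak{m}')^2)/(\mathfrak{m}')^2$, so I may pick a $\lambda_{\mathfrak{m}'}$-sequence of $s$ nilpotents $n_i\in N\cap\mathfrak{m}'$, all of whose $\nub$-values are $\infty$, giving $\SSl(R')=\infty=\SSl(R'_{\red})$. When both rings are extremal and non-regular I prove the two inequalities separately: for $\SSl(R')\le\SSl(R'_{\red})$, given a $\lambda_{\mathfrak{m}'}$-sequence $\gamma_1,\ldots,\gamma_t$, the images $\overline{\pi(\gamma_i)}$ span $\ker(\lambda_{\mathfrak{m}'_{\red}})$ and so there exist indices $i_1,\ldots,i_{t'}$ for which $\pi(\gamma_{i_1}),\ldots,\pi(\gamma_{i_{t'}})$ is a $\lambda_{\mathfrak{m}'_{\red}}$-sequence; by the key identity
$$\min_{i}\nub_{\mathfrak{m}'}(\gamma_i)\le\min_{j}\nub_{\mathfrak{m}'}(\gamma_{i_j})=\min_{j}\nub_{\mathfrak{m}'_{\red}}(\pi(\gamma_{i_j}))\le\SSl(R'_{\red}).$$
For the reverse inequality, given a $\lambda_{\mathfrak{m}'_{\red}}$-sequence $\delta_1,\ldots,\delta_{t'}$ I lift it to $\widetilde{\delta}_j\in\mathfrak{m}'$ and pad with nilpotents $n_1,\ldots,n_s\in N\cap\mathfrak{m}'$ whose classes complete a basis of $\ker(\lambda_{\mathfrak{m}'})$ via the short exact sequence above; since $\nub_{\mathfrak{m}'}(n_i)=\infty$ and $\nub_{\mathfrak{m}'}(\widetilde{\delta}_j)=\nub_{\mathfrak{m}'_{\red}}(\delta_j)$, this $\lambda_{\mathfrak{m}'}$-sequence realises the same minimum, and taking suprema concludes.

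The main obstacle is the base identity $\nub_{\mathfrak{m}'}(a)=\nub_{\mathfrak{m}'_{\red}}(\pi(a))$; once it is secured, the remainder is a clean linear-algebra argument on the degree-one graded piece, and the excellent hypothesis enters only indirectly through the results of \S\ref{Samuel_a}, with no passage to the completion required.
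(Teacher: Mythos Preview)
Your argument is correct. The identity $\nub_{\mathfrak{m}'}(a)=\nub_{\mathfrak{m}'_{\red}}(\pi(a))$ follows immediately from the fact that integral closure of an ideal is detected modulo the minimal primes (equivalently, from Theorem~\ref{nu_barra_valuations}, whose Rees valuations all factor through $R'_{\red}$), and once it is in place the short exact sequence on degree-one pieces and the two-sided comparison of $\lambda$-sequences go through exactly as you describe.

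Your route differs from the paper's in a meaningful way. The paper defers to the argument of \cite[Theorem~4.7]{PrepJavier}, noting that it transplants verbatim once Theorem~\ref{pendiente_finita} (finiteness of $\SSl$ for reduced non-regular excellent rings) has been established in the present generality; thus in the paper's logic the reduction-to-reduced step sits \emph{downstream} of the finiteness theorem. Your proof is independent of Theorem~\ref{pendiente_finita}: you never need the supremum defining $\SSl$ to be finite or attained, because you compare $\lambda$-sequences directly and pad with nilpotents on the non-reduced side. In fact, despite your closing remark, none of the results you invoke from \S\ref{Samuel_a} require excellence, so your argument actually proves the statement for an arbitrary Noetherian local ring. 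The paper's packaging is natural within the narrative of \S\ref{SecFinita}, but your approach is more elementary and yields a slightly stronger conclusion.
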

 
 \begin{proof}
 	The proof goes as in \cite[Theorem 4.7]{PrepJavier} once Theorem \ref{pendiente_finita}
 	is proven for excellent reduced non-regular local rings.
 \end{proof}
 
 \begin{corollary}
 	\label{corolario_pendiente_finita}
 	Let $(R',\mathfrak{m}')$ be an excellent   local ring.
 	Then $\SSl(R')=\infty$ if and only if $R'_{\red}$ is a regular local ring.
 \end{corollary}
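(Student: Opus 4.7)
The plan is to derive this corollary directly from the two theorems that precede it, namely Theorem \ref{pendiente_finita} and Theorem \ref{pendiente_red}, together with the convention that $\SSl$ of a regular local ring is defined to be $\infty$. No new technical input should be required; the work has already been done in the preceding statements, and the corollary amounts to a clean logical packaging.

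For the easy direction, I would assume that $R'_{\red}$ is regular. By definition of the Samuel slope on regular rings, $\SSl(R'_{\red})=\infty$. Since $R'$ is excellent, Theorem \ref{pendiente_red} applies and gives $\SSl(R')=\SSl(R'_{\red})=\infty$.

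For the other direction, assume $\SSl(R')=\infty$. Again invoking Theorem \ref{pendiente_red}, this yields $\SSl(R'_{\red})=\infty$. Now $R'_{\red}$ is a reduced local ring, and, because quotients of excellent rings are excellent, $R'_{\red}$ is excellent. If $R'_{\red}$ were \emph{not} regular, Theorem \ref{pendiente_finita} would force $\SSl(R'_{\red})\in\mathbb{Q}$, which contradicts $\SSl(R'_{\red})=\infty$. Therefore $R'_{\red}$ is regular.

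There is no real obstacle here; the only thing worth double-checking is the passage from $R'$ excellent to $R'_{\red}$ excellent, which is a standard fact (quotients of excellent rings remain excellent), and the reliance on the convention that $\SSl$ is \emph{defined} to be $\infty$ precisely when the ring is regular, so that the ``only if'' direction does not lose content. With these two mild observations in hand, the corollary follows in a few lines as a direct consequence of the two theorems.
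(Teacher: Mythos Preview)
Your proof is correct and matches the paper's intended argument: the corollary is stated without proof precisely because it follows immediately from Theorems \ref{pendiente_finita} and \ref{pendiente_red} in the way you describe. The only minor observations you make (excellence passes to quotients, and the regular case is handled by the definition $\SSl=\infty$) are exactly the implicit steps the paper expects the reader to supply.
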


\section{Finite expansions and generalization of a Theorem of Hickel}
\label{CossartPiltantHickel}

In \cite{Hickel}, Hickel gives a formula for the computation of the assymptotic Samuel function for local domains that contain a field. More precisely he shows: 
\begin{theorem}\label{OriginalHickel}
\cite[Theorem 2.1]{Hickel}
Let $(R', {\mathfrak m}',k)$ be a Noetherian, complete, equicharacteristic  local ring  of Krull dimension $n'$. 
Let $I\subset {\mathfrak m}'$ be an  ${\mathfrak m}'$-primary ideal that has a reduction generated by $n'$ elements,
$J=\langle x'_1,\ldots,x'_{n'}\rangle$.
Let $S=k[[x'_1,\ldots,x'_{n'}]]\subset R'$ and denote by $\mathfrak{m}_S$ the maximal
ideal of $S$. Let $\theta\in R'$. If 
$$p(Z)=Z^{\ell}+a_1Z^{\ell-1}+\ldots+a_{\ell}$$
is the minimal polynomial of $\theta\in R'$ over the fraction field of $S$, $K(S)$, then 
$p(Z)\in S[Z]$ and 
\begin{equation}
	\label{EqOriginalHickel}
	\nub_I(\theta)=\min_i \left\{\frac{\nu_{{\mathfrak m}_S} (a_i)}{i}: i=1,\ldots,\ell\right\}.
\end{equation}
\end{theorem}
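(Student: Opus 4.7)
The plan is to prove Theorem \ref{OriginalHickel} in three stages: first to establish that $R'$ is module-finite over $S$ and that $p(Z)\in S[Z]$; second, to rewrite $\nub_I(\theta)$ as a minimum over Rees valuations via Theorem \ref{nu_barra_valuations}; and finally to carry out a Newton-polygon/symmetric-function computation on the minimal polynomial that produces the equality in formula \eqref{EqOriginalHickel}.

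First I would show that the inclusion $S\hookrightarrow R'$ is module-finite. Because $J$ is a reduction of the $\mathfrak m'$-primary ideal $I$, the ideal $J$ is itself $\mathfrak m'$-primary, so $(x_1',\ldots,x_{n'}')$ is a system of parameters in $R'$. Since $R'$ is complete and equicharacteristic with coefficient field $k$, and the quotient $R'/\mathfrak m_S R'=R'/JR'$ is Artinian, the Cohen structure theorem endows $R'$ with a finite $S$-module structure. Consequently every $\theta\in R'$ is integral over $S$; since $S=k[[x_1',\ldots,x_{n'}']]$ is regular and hence normal, the minimal polynomial $p(Z)$ of $\theta$ over $K(S)$ automatically lies in $S[Z]$. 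Next, because $J$ is a reduction of $I$, integral closures agree and $\nub_I=\nub_J$, so Theorem \ref{nu_barra_valuations} gives
$$\nub_I(\theta)=\nub_J(\theta)=\min_{1\leq j\leq s}\frac{v_j(\theta)}{v_j(J)},$$
where $v_1,\ldots,v_s$ are the Rees valuations of $J$ in $R'$. Now $S$ is regular, so $\nu_{\mathfrak m_S}$ is the unique Rees valuation of $\mathfrak m_S$, and each $v_j$ restricts on $S$ to a positive rational multiple of $\nu_{\mathfrak m_S}$; after the standard normalization this multiple is exactly $v_j(J)=v_j(\mathfrak m_S R')$.

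The core computation then compares $v(\theta)$ with the $v(a_i)$ through the minimal polynomial. On the one hand, the relation $\theta^{\ell}=-\sum_{i=1}^{\ell}a_i\theta^{\ell-i}$ together with the ultrametric inequality forces $v(\theta)\geq \min_i v(a_i)/i$ for every valuation $v$. For the reverse direction I would pass to a splitting field $L/K(S)$ of $p(Z)$, extend each $v_j$ to a valuation $w$ on $L$, and write $p(Z)=\prod_{r=1}^{\ell}(Z-\theta_r)$. Since $p(Z)$ is irreducible, the decomposition group acts transitively on the conjugates $\theta_r$, so all $w(\theta_r)$ coincide with $c:=w(\theta)$; then $w(a_\ell)=\sum_r w(\theta_r)=\ell c$, whence $\min_i w(a_i)/i\leq c$. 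Combining both inequalities one gets $w(\theta)=\min_i w(a_i)/i$, and dividing by $w(J)$ (a uniform scalar multiple of $\nu_{\mathfrak m_S}(\mathfrak m_S)=1$ on $S$) eliminates the scaling factor, yielding $v_j(\theta)/v_j(J)=\min_i \nu_{\mathfrak m_S}(a_i)/i$ independently of $j$, which is exactly \eqref{EqOriginalHickel}.

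The main difficulty is the valuation-theoretic bookkeeping when $R'$ fails to be a domain: one must work component-wise through the minimal primes $\mathfrak p$ of $R'$, describe the Rees valuations of $J$ in $R'$ as suitable extensions of $\nu_{\mathfrak m_S}$ to $\Frac(R'/\mathfrak p)$, control the ramification indices of those extensions, and verify that the minimum appearing in Theorem \ref{nu_barra_valuations} is unchanged when one replaces the Rees valuations of $J$ in $R'$ by their Galois conjugates in the splitting field. Once these identifications are in place the symmetric-function/Newton-polygon step closes up cleanly and produces the stated equality.
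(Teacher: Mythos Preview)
First, note that the paper does not supply its own proof of Theorem~\ref{OriginalHickel}: the result is quoted from \cite[Theorem~2.1]{Hickel} without argument. The paper's own contribution in this direction is the variant Theorem~\ref{GeneralHickel}, whose proof replaces the finite inclusion $S\subset R'$ by an embedding $R'=R/\langle f\rangle$ and proceeds via Cossart--Piltant expansions together with an auxiliary monomial valuation (Proposition~\ref{valoracion_definida}), rather than through the Rees valuations of $J$ as you propose.

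As for your argument itself, the inequality $\nub_I(\theta)\geq\min_i\nu_{\mathfrak m_S}(a_i)/i$ is fine, but the reverse direction has a genuine gap. You assert that for a fixed extension $w$ to the splitting field, ``the decomposition group acts transitively on the conjugates $\theta_r$, so all $w(\theta_r)$ coincide'', and then conclude $w(a_\ell)=\ell\,w(\theta)$. The full Galois group acts transitively on the roots of an irreducible $p$, but the decomposition group of a single $w$ need not, and the roots can have distinct $w$-values whenever the base valuation ring is not Henselian. Here the valuation ring of $\nu_{\mathfrak m_S}$ is the local ring of $\Bl_{\mathfrak m_S}(S)$ at the generic point of the exceptional divisor, and for $n'\geq 2$ this DVR is not Henselian. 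Concretely, take $S=k[[u,v]]$ with $\caract(k)\neq 2$ and $p(Z)=Z^2+uZ+v^3$: the discriminant $u^2-4v^3$ is not a square in $S$, so $p$ is irreducible over $K(S)$, yet its Newton polygon with respect to $\nu_{\mathfrak m_S}$ has the two slopes $1$ and $2$. With $R'=S[Z]/(p)$, $\theta=Z$ and $I=\mathfrak m'$, the two Rees valuations of $\mathfrak m_S R'$ give $v_j(\theta)/v_j(J)$ equal to $1$ and $2$ respectively, contradicting the ``independently of $j$'' conclusion you draw. The correct mechanism for the reverse inequality is not that each $v_j$ individually realizes the minimum, but that the Rees valuations of $JR'$ exhaust \emph{all} extensions of $\nu_{\mathfrak m_S}$ to the field components of $K(S)\otimes_S R'$, so that the minimum over $j$ in Theorem~\ref{nu_barra_valuations} already sweeps through every Newton slope of every irreducible factor of $p$ and hence attains the smallest one.
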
 

\begin{parrafo} {\bf Finite transversal extensions.} Hickel's formula can be used for any local Noetherian ring that contains a field,  since we can always reduce to the domain case, and then, consider the completion  (see \cite[Section 2]{Hickel} for precise details). However, the crucial part of Hickel's argument is the existence of the regular local ring $S$ and a particular property of the containment $S\subset R'$. To be more precise, consider a finite extension of local rings $S\subset R'$, where $(S,\mathfrak{m}_S)$ is
regular and such that every non-zero elements of $S$ is not a zero divisor in $R'$. Let $L=K(S)\otimes_S R'$ where $K(S)$ is the fraction field of $S$. 
By Zariski's multiplicity formula
\cite[Theorem 24, page 297 and Corollary 1, page 299]{ZariskiSamuel1960} we have the inequality
\begin{equation}\label{ZariskiFormula}
e(R')\leq [L:K(S)],
\end{equation}
where $e(R')$ denotes the multiplicity of $R'$ at the ${\mathfrak m}'$. 
We say that the finite extension $S\subset R'$ is \emph{finite-transversal} if the  
equality holds in (\ref{ZariskiFormula}), see \cite[\S 4.8]{V} and also \cite{Handbook}.
\end{parrafo} 

\begin{remark} \label{transversal_equiv}
Given a regular local ring $(S,\mathfrak{m}_S)$ and a finite-transversal extension 
$S\subset R'$ at some maximal ideal $\mathfrak{m}'\subset R'$ (here $R'$ might not be local),
by  Zariski's multiplicity formula for finite projections it follows that:
\begin{itemize}
\item[(i)] The ideal ${\mathfrak m}'\subset R'$ is the unique maximal ideal dominating ${\mathfrak m}_S$, hence $R'$ is local with maximal ideal ${\mathfrak m}'$; 
\item[(ii)] The residue field of $R'$, $k'$,    is equal  to the residue field of $S$;
\item[(iii)] $e_{R'}({\mathfrak m}_SR')= e_{R'}$ (in particular, if $R'$ is formally equidimensional, then  ${\mathfrak m}_SR'$ is a reduction of the maximal ideal ${\mathfrak m}'$ by Rees' Theorem).
\end{itemize}
\end{remark}

When $R'$ contains a field $k'$,  the existence of the finite transversal extension $S\subset R'$ is ensured, at least after enlarging $k'$, and  then passing to the ${\mathfrak m}'$-adic completion of $R'$. In general, this argument cannot be carried on if $R'$ is not equicharacteristic. In Theorem \ref{GeneralHickel} below we give a formula for the computation of the asymptotic Samuel function of certain elements  when $R'$ is the quotient of a local regular ring $R$ by a principal ideal. In this setting, the  lack of a finite transversal extension  will be replaced by the    use of  CP-expansions. This will allow us to give a version of Hickel's statement in the case of local rings that do not necessarily contain a field,  see Theorem \ref{General_General_Hickel}.  

\medskip

In the next lines we establish some technical results on CP-expansions in regular local rings that will be needed for the proof of the theorem. Firstly, we will see that, even though CP-expansions might not be compatible with addition or products, they still satisfy nice properties that allow us to define quite general (monomial) valuations in $R$ (see Proposition \ref{valoracion_definida}).  Secondly, we will introduce the notion of pseudo-Weierstrass elements to refer to those elements with a special form in their CP-expansions (see Definition \ref{DefWeierstrass}), and will establish some properties, see Proposition \ref{mismo_orden}. 

\ 

\noindent{\bf CP-expansions and monomial valuations}

\medskip

\noindent Recall that 
 a monomial ordering in $\mathbb{N}^n$ is a total ordering in
$(\mathbb{N}^n,\prec)$ such that if $\alpha\prec\beta$ and $\gamma\in\mathbb{N}^n$
then $\alpha+\gamma\prec\beta+\gamma$).

\begin{lemma} \label{LemaCPproduct}
Let $(R,\mathfrak{m})$ be a regular local   ring of Krull dimension $n$.
Let $(u_1,\ldots,u_n)$ be a regular system of parameters in $R$, and fix 
 any monomial ordering $\prec$ in $\mathbb{N}^n$ such that $1\prec u_i$ for
$i=1,\ldots,n$.
Let  $f,g\in R$ and consider their  CP-expansions as in  (\ref{EqExpansion})
$$f=\sum_{\alpha\in S(f)}c_{\alpha}(f)u^{\alpha},
\qquad
g=\sum_{\alpha\in S(g)}c_{\alpha}(g)u^{\alpha}.$$
Set: 
$$\alpha_0:=\min\left\lbrace
\alpha : \alpha\in S(f)
\right\rbrace,
\qquad
\beta_0:=\min\left\lbrace
\beta : \beta\in S(g)
\right\rbrace,$$
where the minimum is taken for the total ordering $\prec$. 
Then
$$\alpha_0+\beta_0\in S(f\cdot g) \qquad
\text{and} \qquad
\alpha_0+\beta_0=\min\left\lbrace
\gamma : \gamma\in S(f\cdot g)
\right\rbrace.$$

Moreover there is a choice of units $\{c_{\gamma}(f\cdot g): \gamma\in S(f\cdot g)\}$
for the CP-expansion of $f\cdot g$
such that $c_{\alpha_0+\beta_0}(f\cdot g)=c_{\alpha_0}(f)c_{\beta_0}(g)$.
\end{lemma}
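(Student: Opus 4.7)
The plan is to exploit the strict monotonicity of $\prec$ under addition and the characterization of the set $S(h)$ via minimal monomial ideals. First I would observe that $\alpha_0+\beta_0$ is the unique $\prec$-minimum of $\{\alpha+\beta : \alpha\in S(f),\,\beta\in S(g)\}$: for any pair $(\alpha,\beta)\neq(\alpha_0,\beta_0)$ one has $\alpha\succ\alpha_0$ or $\beta\succ\beta_0$, and compatibility of $\prec$ with addition then gives $\alpha+\beta\succ\alpha_0+\beta_0$. Consequently, in the distributive expansion
$$fg=\sum_{\alpha\in S(f),\,\beta\in S(g)}c_{\alpha}(f)c_{\beta}(g)\,u^{\alpha+\beta},$$
the exponent $\alpha_0+\beta_0$ occurs exactly once, with unit coefficient $c_{\alpha_0}(f)c_{\beta_0}(g)$, and every other exponent is strictly $\succ \alpha_0+\beta_0$.

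Second, I would establish the auxiliary fact that componentwise $\alpha\leq\delta$ with $\alpha\neq\delta$ forces $\alpha\prec\delta$, by induction on $|\delta-\alpha|$ using $0\prec e_i$ and compatibility. With this in hand I would prove both that $\alpha_0+\beta_0\in S(fg)$ and that it is the $\prec$-minimum of $S(fg)$. The key input is that $J(fg):=\langle u^{\gamma}:\gamma\in S(fg)\rangle$ is the smallest monomial ideal of $R$ containing $fg$. Since $fg$ lies in $\langle u^{\alpha+\beta}:\alpha\in S(f),\,\beta\in S(g)\rangle$, we get $J(fg)\subseteq\langle u^{\alpha+\beta}\rangle$, hence every $\gamma\in S(fg)$ is componentwise $\geq$ some $\alpha+\beta$, and therefore $\gamma\succeq\alpha+\beta\succeq\alpha_0+\beta_0$. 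To show $\alpha_0+\beta_0$ actually appears in $S(fg)$, I would reduce modulo $I:=\langle u^{\delta}:\delta\succ\alpha_0+\beta_0\rangle$: the image of $fg$ there is $c_{\alpha_0}(f)c_{\beta_0}(g)\cdot \bar u^{\alpha_0+\beta_0}$, which is nonzero because $u^{\alpha_0+\beta_0}\notin I$, while from the CP-expansion of $fg$ this image also equals $\sum_{\gamma\in S(fg),\,\gamma\preceq\alpha_0+\beta_0}c_{\gamma}(fg)\,\bar u^{\gamma}$. Combined with the lower bound $\gamma\succeq\alpha_0+\beta_0$, only $\gamma=\alpha_0+\beta_0$ can contribute, so it belongs to $S(fg)$.

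Finally, to verify the identity of coefficients, set $\rho:=fg-c_{\alpha_0}(f)c_{\beta_0}(g)\,u^{\alpha_0+\beta_0}$; by the previous step both $fg$ and $u^{\alpha_0+\beta_0}$ lie in $J(fg)$, so $\rho\in J(fg)$, and $\rho$ consists of monomials with exponents strictly $\succ\alpha_0+\beta_0$. Writing $\rho=\sum_{\gamma\in S(fg)}a_{\gamma}u^{\gamma}$ and using the freedom granted by Remark \ref{ObsCosPil}(ii),(vii) — swapping multiples of $u^{\gamma_1+\gamma_2}$ between $a_{\gamma_1}$ and $a_{\gamma_2}$ — I would arrange $a_{\alpha_0+\beta_0}=0$, after which the resulting expression is a valid CP-expansion of $fg$ with $c_{\alpha_0+\beta_0}(fg)=c_{\alpha_0}(f)c_{\beta_0}(g)$. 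The delicate point, and the main obstacle I expect, is justifying this last redistribution: one must check that the syzygies among the generators of $J(fg)$ (together, if needed, with the analogous flexibility in the original CP-expansions of $f$ and $g$) are broad enough to absorb the $u^{\alpha_0+\beta_0}$-contribution of $\rho$ into the other summands.
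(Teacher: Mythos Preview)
Your argument for the first two assertions --- that $\alpha_0+\beta_0$ is the unique $\prec$-minimum of $S(f)+S(g)$, that it lies in $S(fg)$, and that it is the $\prec$-minimum of $S(fg)$ --- is correct and close in spirit to the paper's proof. The paper argues more directly: since the only pair $(\alpha,\beta)\in S(f)\times S(g)$ with $\alpha+\beta\leq\alpha_0+\beta_0$ componentwise is $(\alpha_0,\beta_0)$ itself (componentwise $\leq$ implies $\preceq$, then use minimality and uniqueness), the monomial $u^{\alpha_0+\beta_0}$ is undominated in the product expansion and carries the single unit coefficient $c_{\alpha_0}(f)c_{\beta_0}(g)$, so it persists in the CP-expansion. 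Your route through the quotient $R/I$ and the ``smallest monomial ideal'' characterization of $J(fg)$ reaches the same conclusion with slightly heavier machinery. (You invoke without proof that $J(h)$ is the smallest monomial ideal containing $h$; this is true but not entirely trivial for a regular local ring that need not contain a field.)

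Your instinct that the ``moreover'' clause is the delicate point is exactly right --- in fact that clause is \emph{false} as stated, so the redistribution you attempt cannot be made to work. Take $R$ regular local with parameters $(x,y,z)$, set $f=y+xz$, $g=z+xy$, and use the graded lexicographic order with $x\prec y\prec z$. Then $\alpha_0=(0,1,0)$, $\beta_0=(0,0,1)$, $c_{\alpha_0}(f)=c_{\beta_0}(g)=1$, and $fg=(1+x^2)\,yz+xy^2+xz^2$ with $S(fg)=\{(0,1,1),(1,2,0),(1,0,2)\}$. A CP-expansion with $c_{(0,1,1)}(fg)=1$ would force $fg-yz=xy^2+xz^2+x^2yz\in\langle xy^2,xz^2\rangle$, hence (cancelling $x$) $xyz\in\langle y^2,z^2\rangle$; a short regular-sequence argument shows this would give $x\in\langle y,z\rangle$, contradicting $\dim R=3$. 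Thus no choice of units achieves $c_{\alpha_0+\beta_0}(fg)=1$, and the paper's one-line justification (``it is the only term contributing'') is equally insufficient. What \emph{is} true --- and all that the later applications in the paper actually require --- is the congruence $c_{\alpha_0+\beta_0}(fg)\equiv c_{\alpha_0}(f)c_{\beta_0}(g)\pmod{\mathfrak{m}}$, which follows at once from your first paragraph together with Remark~\ref{ObsCosPil}(vii).
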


\begin{proof}
First note that by definition and the fact that $\prec$ is a monomial ordering
we have that
$$\alpha_0+\beta_0=\min\{\alpha+\beta : \alpha\in S(f),\ \beta\in S(g)\},$$
since for $\alpha\in S(f)$ and $\beta\in S(g)$,  we have that 
$$\alpha_0+\beta_0\prec \alpha+\beta_0\prec\alpha+\beta.$$
The product of $f$ and $g$ is
\begin{equation}\label{EqCPproducto}
f\cdot g=
\sum_{\alpha\in S(f),\beta\in S(g)}c_{\alpha}(f)c_{\beta}(g)u^{\alpha+\beta}=
\sum_{\gamma}\left(\sum_{\substack{{\alpha\in S(f), \beta\in S(g)}\\{\alpha+\beta=\gamma}}} c_{\alpha}(f)c_{\beta}(g)\right)u^{\gamma}.
\end{equation}
The CP-expansion of $f\cdot g$ can be obtained from (\ref{EqCPproducto}) by 
factoring-out monomials in the $u$'s for the 
terms multipliying $u^{\gamma}$
$$\sum_{\substack{{\alpha\in S(f), \beta\in S(g)}\\{\alpha+\beta=\gamma}}}	 c_{\alpha}(f)c_{\beta}(g),$$
and rearranging.
Note that some $\gamma=\alpha_1+\beta_1$, $\alpha_1\in S(f)$ and $\beta_1\in S(g)$,
is such that $\gamma\not\in S(f\cdot g)$ if there are 
$\alpha\in S(f)$ and $\beta\in S(g)$ with $u^{\alpha+\beta}$ dividing
$u^{\gamma}=u^{\alpha_1+\beta_1}$.

For $\gamma=\alpha_0+\beta_0$,
if $\alpha\in S(f)$ and $\beta\in S(g)$ are such that 
$u^{\alpha+\beta}$ divides $u^{\alpha_0+\beta_0}$, then
$\alpha+\beta\preceq\alpha_0+\beta_0$ and 
now the minimality of $\alpha_0$ and $\beta_0$
implies that $\alpha_0=\alpha$ and $\beta_0=\beta$.
Thus, $\alpha_0+\beta_0\in S(f\cdot g)$ and we can take
$c_{\alpha_0+\beta_0}(f\cdot g)=c_{\alpha_0}(f)c_{\beta_0}(g)$, since it is the only
term in the expression (\ref{EqCPproducto}) contributing to $u^{\alpha_0+\beta_0}$.
\end{proof}

As a consequence of the previous result, we have the following: 

\begin{proposition}\label{valoracion_definida}
Let $(R,\mathfrak{m})$ be a regular local   ring of Krull dimension $n$, and let 
 $u_1,\ldots,u_n\in R$ be a regular system of parameters. Then, for every    $(c_1,\ldots,c_n)\in\mathbb{Z}^n_{>0}$,
there is a (monomial) valuation $\omega:R\to \mathbb{Z}$ such that $\omega(u_i)=c_i$, and for $f\in R$ with CP-expansion 
$$f=\sum_{\alpha\in S(f)}c_{\alpha}(f)u^{\alpha},$$
 we have that:
$$\omega(f)=\min\{\omega(u^{\alpha}) \mid \alpha\in S(f)\},$$ 
where $\omega(u^{\alpha})=c_1\alpha_1+\cdots+c_n\alpha_n$.
\end{proposition}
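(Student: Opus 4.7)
The plan is to define $\omega$ by the stated formula and verify it satisfies the axioms of a valuation. Set $\omega(f) := \min\{c \cdot \alpha : \alpha \in S(f)\}$ for $f \in R \setminus \{0\}$, and $\omega(0) := \infty$; this is well-defined by the uniqueness of $S(f)$ in Proposition \ref{ExpanCosPil}. Since $S(u^\alpha) = \{\alpha\}$ with unit coefficient $1$, one has $\omega(u^\alpha) = c \cdot \alpha$, and in particular $\omega(u_i) = c_i$.

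For multiplicativity $\omega(fg) = \omega(f) + \omega(g)$, I would fix a monomial ordering $\prec$ on $\mathbb{N}^n$ that refines the weight function: declare $\alpha \prec \beta$ when $c \cdot \alpha < c \cdot \beta$, breaking ties lexicographically. Then $\alpha_0 := \min_\prec S(f)$ and $\beta_0 := \min_\prec S(g)$ are the unique elements realizing $\omega(f)$ and $\omega(g)$, respectively, and Lemma \ref{LemaCPproduct} yields $\alpha_0 + \beta_0 = \min_\prec S(fg)$. By the choice of $\prec$, this sum also realizes $\omega(fg)$, so $\omega(fg) = c \cdot (\alpha_0 + \beta_0) = \omega(f) + \omega(g)$.

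For super-additivity $\omega(f + g) \geq \min\{\omega(f), \omega(g)\}$, the strategy is to identify the level sets $I_N := \{h \in R : \omega(h) \geq N\}$ with the monomial ideals $J_N := \langle u^\alpha : c \cdot \alpha \geq N \rangle$. The inclusion $I_N \subseteq J_N$ is immediate from the CP-expansion of $h$. The reverse inclusion $J_N \subseteq I_N$, which is the main technical point, I would obtain by passing to the $\mathfrak{m}$-adic completion $\widehat{R}$: a CP-expansion in $R$ is still a CP-expansion in $\widehat{R}$ (by the uniqueness of $S$), so $\omega_R$ and $\omega_{\widehat{R}}$ agree on $R$, while faithful flatness gives $J_N = J_N \widehat{R} \cap R$. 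In $\widehat{R}$, Cohen's structure theorem identifies our $\omega$ with a standard weighted monomial valuation on a power series ring (or an analogous construction in mixed characteristic), for which the inclusion $J_N \widehat{R} \subseteq I_N \widehat{R}$ is transparent. Granting $I_N = J_N$, super-additivity is automatic since $J_N$ is an ideal, and then $\omega$ extends to a valuation on $\mathrm{Frac}(R)$.

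The main obstacle is precisely the inclusion $J_N \subseteq I_N$: one must rule out that, starting from a sum of high-weight monomials, cancellations in the CP-expansion could produce a monomial of weight strictly below $N$. Passing to $\widehat{R}$ sidesteps this via Cohen's theorem; a purely intrinsic alternative would exploit the regularity of $R$ together with the identification $\mathrm{gr}_\mathfrak{m}(R) \cong (R/\mathfrak{m})[U_1, \ldots, U_n]$ to obtain the requisite linear independence of minimal-weight monomials in the appropriate graded pieces.
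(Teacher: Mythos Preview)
Your multiplicativity argument is essentially identical to the paper's: both fix a monomial ordering refining the weight and invoke Lemma~\ref{LemaCPproduct}.

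For super-additivity, your route is correct but considerably heavier than the paper's. The paper simply adds the two CP-expansions,
\[
f+g=\sum_{\alpha\in S(f)\cap S(g)}(c_{\alpha}(f)+c_{\alpha}(g))u^{\alpha}+
\sum_{\alpha\in S(f)\setminus S(g)}c_{\alpha}(f)u^{\alpha}+
\sum_{\alpha\in S(g)\setminus S(f)}c_{\alpha}(g)u^{\alpha},
\]
and reads off the containment $S(f+g)\subset (S(f)\cup S(g))+\mathbb{N}^n$, which immediately gives $\omega(f+g)\geq\min\{\omega(f),\omega(g)\}$. No completion, no Cohen structure theorem. What you call the ``main obstacle''---the inclusion $J_N\subseteq I_N$---is exactly the same containment applied to $T=\{\gamma: c\cdot\gamma\geq N\}$ instead of $T=S(f)\cup S(g)$: once you know that $h\in\langle u^{\gamma}:\gamma\in T\rangle$ forces $S(h)\subset T+\mathbb{N}^n$, the inclusion is automatic. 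Your detour through $\widehat R$ does establish this (and works), but it is unnecessary and leaves the mixed-characteristic case only sketched; the paper's one-line observation covers all cases uniformly.
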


\begin{proof}
Since $S(f)$ is well defined, the function $\omega$ is well defined.
Thus, we only have to prove that $\omega$ is a valuation.
Let $f,g\in R$ and consider the  CP-expansions, 
$$f=\sum_{\alpha\in S(f)}c_{\alpha}(f)u^{\alpha},
\qquad
g=\sum_{\alpha\in S(g)}c_{\alpha}(g)u^{\alpha}.$$
Then,  
$$f+g=\sum_{\alpha\in S(f)\cap S(g)}(c_{\alpha}(f)+c_{\alpha}(g))u^{\alpha}+
\sum_{\alpha\in S(f)\setminus S(g)}c_{\alpha}(f)u^{\alpha}+
\sum_{\alpha\in S(g)\setminus S(f)}c_{\alpha}(g)u^{\alpha},$$
and we conclude that
$$S(f+g)\subset \left( S(f)\cup S(g)\right)+\mathbb{N}^n.$$
Note that
$\min\{\omega(f),\omega(g)\}=\min\{\omega(u^{\alpha})\mid \alpha\in S(f)+S(g)\}$,
and the previous inclusion implies that
$\omega(f+g)\geq\min\{\omega(f),\omega(g)\}$.

To prove that $\omega(f\cdot g)=\omega(f)+\omega(g)$ consider any monomial
ordering $\prec$ in $\mathbb{N}^n$ such that
$$\omega(u^{\alpha})<\omega(u^{\beta}) \Longrightarrow \alpha\prec\beta.$$
Let $\alpha_0$ and $\beta_0$ as in Lemma \ref{LemaCPproduct}, 
 then  $\omega(f)=\omega(u^{\alpha_0})$,  
$\omega(g)=\omega(u^{\beta_0})$, $\alpha_0+\beta_0\in S(f\cdot g)$, and
$\omega(f\cdot g)=\omega(u^{\alpha_0+\beta_0})=
\omega(u^{\alpha_0})+\omega(u^{\beta_0})=\omega(f)+\omega(g)$.
\end{proof}

\medskip

\noindent{\bf Pseudo-Weierstrass elements}

\medskip

\begin{lemma}
	Let $(R,\mathfrak{m})$ be a  regular local ring of dimension $n$.
	Let $(u_1,\ldots,u_{n-1},y)$ be a regular system of parameters of $R$. Let $f\in R$.  Then
	$$f\not\in\langle u \rangle
	\quad\Longleftrightarrow\quad
	S(f)\cap(\{0\}\times\cdots\{0\}\times\mathbb{N})\neq \emptyset.$$
\end{lemma}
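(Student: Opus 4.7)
The plan is to use the defining properties of the CP-expansion together with the fact that $(u_1,\ldots,u_{n-1},y)$ being a regular system of parameters forces the quotient $R/\langle u_1,\ldots,u_{n-1}\rangle$ to be a DVR in which the class of $y$ is a uniformizer.

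First I would handle the easy implication ($\Leftarrow$) read from right to left, i.e., show that if $S(f)\cap(\{0\}^{n-1}\times\mathbb{N})=\emptyset$ then $f\in\langle u_1,\ldots,u_{n-1}\rangle$. Writing the CP-expansion $f=\sum_{\alpha\in S(f)}c_{\alpha}(f)u^{\alpha}$, every $\alpha=(\alpha_1,\ldots,\alpha_{n-1},\alpha_n)\in S(f)$ must have some $\alpha_i\neq 0$ for $i<n$ (otherwise $\alpha$ would lie in $\{0\}^{n-1}\times\mathbb{N}$), so each monomial $u^{\alpha}$ belongs to $\langle u_1,\ldots,u_{n-1}\rangle$, and hence so does $f$.

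For the other implication I would argue by contrapositive: assume $S(f)\cap(\{0\}^{n-1}\times\mathbb{N})\neq\emptyset$ and show $f\notin\langle u_1,\ldots,u_{n-1}\rangle$. By Remark \ref{ObsCosPil}(iii), this intersection consists of a single element $(0,\ldots,0,r)$ with $r\in\mathbb{N}$. Separate the CP-expansion as
$$f=c_{(0,\ldots,0,r)}(f)\,y^{r}+\sum_{\alpha\in S(f)\setminus\{(0,\ldots,0,r)\}}c_{\alpha}(f)u^{\alpha},$$
where, by the previous paragraph, the second summand lies in $\langle u_1,\ldots,u_{n-1}\rangle$. Hence $f\in\langle u_1,\ldots,u_{n-1}\rangle$ would force $c_{(0,\ldots,0,r)}(f)y^{r}\in\langle u_1,\ldots,u_{n-1}\rangle$, and since $c_{(0,\ldots,0,r)}(f)$ is a unit, $y^{r}\in\langle u_1,\ldots,u_{n-1}\rangle$.

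The concluding step is to observe that this last containment is impossible: since $(u_1,\ldots,u_{n-1},y)$ is a regular system of parameters, the quotient $R/\langle u_1,\ldots,u_{n-1}\rangle$ is a one-dimensional regular local ring (a DVR) with uniformizer the class of $y$, so $y^{r}\neq 0$ in this quotient for every $r\geq 0$ (for $r=0$ the contradiction is even more immediate: $f$ would be a unit). This yields the desired contradiction and completes the proof. There is no real obstacle here; the only point worth being careful about is invoking the uniqueness part of Proposition \ref{ExpanCosPil} together with Remark \ref{ObsCosPil}(iii) so that the coefficient $c_{(0,\ldots,0,r)}(f)$ is genuinely a unit and the decomposition above is unambiguous.
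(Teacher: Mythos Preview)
Your proof is correct and follows essentially the same approach as the paper: isolate the unique term $c_{(0,\ldots,0,r)}y^r$ from the CP-expansion, note the remaining terms lie in $\langle u\rangle$, and conclude since $y^r\notin\langle u\rangle$ (the paper states this last point without comment, while you justify it via the DVR structure of $R/\langle u\rangle$). One cosmetic remark: your labels ``$\Leftarrow$'' and ``contrapositive'' are swapped---what you call the easy $\Leftarrow$ direction is actually the contrapositive of $\Rightarrow$, and vice versa---but the logic is sound and both implications are covered.
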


\begin{proof}
	If $S(f)\cap(\{0\}\times\cdots\{0\}\times\mathbb{N})=\emptyset$ then it is clear that
	$f\in\langle u\rangle$.
	
	On the other hand, if $S(f)\cap(\{0\}\times\cdots\{0\}\times\mathbb{N})$ is non-empty, then it contains a unique element,   $(0,\ldots,0,\ell)$.
	By condition (i) in Proposition \ref{ExpanCosPil} we have that if
	$(\alpha,i)\in S(f)$ then
	\begin{itemize}
		\item either $\alpha=0$ and $i=\ell$, or
		\item $i<\ell$ and $\alpha\neq 0$.
	\end{itemize}
	By Proposition \ref{ExpanCosPil}(ii), there are units
	$\{c_{\alpha,i}\}_{(\alpha,i)\in S(f)}$ such that $f$ can be written as
	\begin{equation}\label{EqExpGradl}
		f=c_{0,\ell}y^{\ell}+\sum_{\substack{(\alpha,i)\in S(f) \\ i<\ell}}c_{\alpha,i}u^{\alpha}y^i.
	\end{equation}
	Now, the claim is  clear since $f-c_{0,\ell}y^{\ell}\in\langle u\rangle$ and
	$c_{0,\ell}y^{\ell}\not\in\langle u\rangle$.
\end{proof}

\begin{definition} \label{DefWeierstrass}
Let $(R,\mathfrak{m})$ be a regular local ring of dimension $n$, and let 
$(u_1,\ldots,u_{n-1},y)$ be a regular system of parameters of $R$. 
We say that $f\in R$ is a \emph{pseudo-Weierstrass} element of degree $\ell$ w.r.t. $(u,y)$ if: 
\begin{enumerate}
\item[(i)] $(0,\ldots,0,\ell)\in S(f)$; 
\item[(ii)] there are elements 
$a_1,\ldots,a_{\ell}\in\langle u\rangle$ such that
\begin{equation} \label{EqWeierstrass}
  f=y^{\ell}+a_1y^{\ell-1}+\cdots+a_{\ell-1}y+a_{\ell};
\end{equation}
\item[(iii)] for any $i=1,\ldots,\ell$,
\begin{equation} \label{CoefSai}
  S(a_i)\subset \mathbb{N}^{n-1}\times\{0\}.
\end{equation}
\end{enumerate}   	 
\end{definition}
Note that, once $u,y$ are fixed, the degree $\ell$ just depends on $f$.

\begin{proposition} \label{uyforma}
Let $(R,\mathfrak{m})$ be a  regular local ring of dimension $n$.
Let $(u_1,\ldots,u_{n-1},y)$ be a regular system of parameters of $R$. Then, for any $f\in R$ such that $f\not\in\langle u_1,\ldots,u_{n-1}\rangle$ there is a unique
$\ell=\ell_{f}\in\mathbb{N}$ and some unit $v\in R$ such that
$vf$ is a pseudo-Weierstrass element of degree $\ell$ w.r.t. $(u,y)$.
\end{proposition}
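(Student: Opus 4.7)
The plan is to read off the required pseudo-Weierstrass form directly from the CP-expansion of $f$ supplied by Proposition \ref{ExpanCosPil}.

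First I would invoke the preceding lemma: since $f\notin\langle u_1,\dots,u_{n-1}\rangle$, the set $S(f)$ meets the $y$-axis $\{0\}^{n-1}\times\mathbb{N}$, and by condition (i) of Proposition \ref{ExpanCosPil} this intersection consists of a single element $(0,\dots,0,\ell)$. Moreover, for every $(\alpha,i)\in S(f)$ with $i<\ell$ one has $\alpha\neq 0$. Then I would set $v:=c_{0,\ell}(f)^{-1}$, which is a unit since $c_{0,\ell}(f)\in R\setminus\mathfrak{m}$, and group the CP-expansion of $vf$ by the exponent of $y$, obtaining
\begin{equation*}
vf=y^{\ell}+\sum_{i=0}^{\ell-1}b_i y^{i},\qquad b_i:=\sum_{\alpha\,:\,(\alpha,i)\in S(f)} v\,c_{\alpha,i}(f)\,u^{\alpha}.
\end{equation*}
Relabelling $a_{\ell-i}:=b_i$ for $i=0,\dots,\ell-1$ gives exactly the shape (\ref{EqWeierstrass}), and the condition $\alpha\neq 0$ above forces $a_j\in\langle u\rangle$, yielding conditions (i) and (ii) of Definition \ref{DefWeierstrass}.

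The main point to check — and the one step that is not just bookkeeping — is condition (iii), namely that each $a_j$ already comes written as its own CP-expansion w.r.t.\ $(u_1,\dots,u_{n-1})$, so that $S(a_j)\subset\mathbb{N}^{n-1}\times\{0\}$. For this I would argue that for a fixed last coordinate $i$, the set $T_i:=\{\alpha\in\mathbb{N}^{n-1}\mid (\alpha,i)\in S(f)\}$ is a minimal set of exponents in $\mathbb{N}^{n-1}$: if $\alpha,\alpha'\in T_i$ with $u^{\alpha}\mid u^{\alpha'}$, then also $u^{(\alpha,i)}\mid u^{(\alpha',i)}$ in $R$, contradicting the minimality of the monomials $\{u^{\gamma}\mid \gamma\in S(f)\}$ guaranteed by Proposition \ref{ExpanCosPil}(i). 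Since the coefficients $v c_{\alpha,i}(f)$ are units, this expression is a bona fide CP-expansion of $b_i$ w.r.t.\ $(u_1,\dots,u_{n-1})$, viewed inside the regular subring $R$ through the embedding $\mathbb{N}^{n-1}\hookrightarrow\mathbb{N}^{n-1}\times\{0\}\subset\mathbb{N}^n$.

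Finally, the uniqueness of $\ell$ follows immediately from the uniqueness of the set $S(f)$ stated in Proposition \ref{ExpanCosPil}: any pseudo-Weierstrass representation of some $vf$ of degree $\ell'$ forces $(0,\dots,0,\ell')\in S(vf)=S(f)$ by Remark \ref{ObsCosPil}(iv), and by Remark \ref{ObsCosPil}(iii) this element is unique in $S(f)$, so $\ell'=\ell$. I expect the only genuinely subtle step to be the minimality argument for $T_i$, which is where the non-triviality of CP-expansions (as opposed to ordinary monomial expansions in a completion) is really being used.
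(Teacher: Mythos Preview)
Your proof is correct and follows essentially the same route as the paper's: invoke the preceding lemma to locate the unique point $(0,\ldots,0,\ell)\in S(f)$, set $v=c_{0,\ell}(f)^{-1}$, and group the CP-expansion of $vf$ by the exponent of $y$ to read off the coefficients $a_j$. The paper's proof is terser---it simply asserts that the resulting expression for each $a_i$ is already its CP-expansion and that $S(a_i)=\{(\alpha,0)\mid(\alpha,i)\in S(f)\}$---whereas you supply the minimality argument for $T_i$ that justifies this claim, and you also make the uniqueness of $\ell$ explicit via Remark~\ref{ObsCosPil}(iii)--(iv); both of these are points the paper leaves to the reader.
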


\begin{proof}
Since $f\not\in\langle u\rangle$, there is an expansion as in (\ref{EqExpGradl}).
With the notation of (\ref{EqExpGradl}),
set $v:=c_{0,\ell}^{-1}$, and  for $i=1,\ldots,\ell$ let:
\begin{equation}\label{coeficientes_Weier} 
	a_i:=v\cdot \sum_{(\alpha,i)\in S(f)}c_{\alpha,i}u^{\alpha}.
\end{equation} 
Note that the previous expression is the expansion of $a_i$ as in (\ref{EqExpansion}) and
$$S(a_i)=\{(\alpha,0) \mid (\alpha,i)\in S(f)\}.$$
\end{proof}

\begin{lemma} \label{LemPropCoef}
Let $(R,\mathfrak{m})$ be a regular local ring of dimension $n$.
Let $(u_1,\ldots,u_{n-1},y)$ be a regular system of parameters of $R$. Let $f$ be a pseudo-Weierstrass element w.r.t. $(u,y)$,  and consider an expression as in (\ref{EqWeierstrass}) satisfying (\ref{CoefSai}).
Then
\begin{enumerate}
\item[(i)] $\ord_{\mathfrak{m}}(a_i)=\ord_{\langle u\rangle}(a_i)$, for all $i=1,\ldots,\ell$, and 
\item[(ii)] $S(f)=\{(0,\ell)\}\sqcup\left(\bigsqcup_{i=1}^{\ell}\left(S(a_i)\times\{\ell-i\}\right)\right)$.
\end{enumerate}	
\end{lemma}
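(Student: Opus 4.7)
My plan has two parts. For (i), condition (\ref{CoefSai}) forces the CP-expansion of each $a_i$ to involve only the first $n-1$ variables, so
$$a_i=\sum_{(\alpha,0)\in S(a_i)} c_{(\alpha,0)}(a_i)\,u^\alpha,$$
with each $c_{(\alpha,0)}(a_i)$ a unit in $R$. Remark \ref{ObsCosPil}(v) immediately yields $\ord_{\mathfrak{m}}(a_i)=\min\{|\alpha|:(\alpha,0)\in S(a_i)\}$. The inclusion $a_i\in\langle u\rangle^{\min|\alpha|}$ is read off from the expansion; for the opposite inequality I will pass to $\gr_{\langle u\rangle}(R)\cong (R/\langle u\rangle)[U_1,\ldots,U_{n-1}]$ and argue that the leading form of $a_i$ in degree $m:=\min|\alpha|$ is a combination of distinct $U^\alpha$'s with $|\alpha|=m$ whose coefficients are classes of units in $R$, hence nonzero. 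This gives $\ord_{\langle u\rangle}(a_i)=m=\ord_{\mathfrak{m}}(a_i)$.

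For (ii), I will substitute each $a_i$'s CP-expansion into the Weierstrass form (\ref{EqWeierstrass}) to obtain
$$f=y^\ell+\sum_{i=1}^{\ell}\sum_{(\alpha,0)\in S(a_i)} c_{(\alpha,0)}(a_i)\,u^\alpha\,y^{\ell-i}.$$
The monomial support of this display is exactly the set on the right-hand side of (ii), and the disjoint union is genuinely disjoint because the $y$-exponents $\ell,\ell-1,\ldots,0$ are distinct across blocks. It remains to verify that this display is a CP-expansion of $f$, i.e., that the monomials $\{y^\ell\}\cup\{u^\alpha y^{\ell-i}\}$ form a minimal generating set of the ideal they generate. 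Intra-block minimality (same $i$) follows from the CP-property of $S(a_i)$, and $y^\ell$ is incomparable with every $u^\alpha y^{\ell-i}$, since $a_i\in\langle u\rangle$ forces $\alpha\neq 0$, so $u^\alpha\neq 1$.

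The main obstacle will be ruling out inter-block divisibility: for $i>j$, with $(\alpha,0)\in S(a_i)$ and $(\beta,0)\in S(a_j)$, the monomial $u^\alpha y^{\ell-i}$ must not divide $u^\beta y^{\ell-j}$. My approach is to appeal to the canonical pseudo-Weierstrass construction of Proposition \ref{uyforma}, where the $a_i$'s are obtained by regrouping the terms of the CP-expansion of $f$ by $y$-degree; in that construction one has $S(a_i)\times\{\ell-i\}\subset S(f)$ by definition, so any inter-block divisibility would directly violate condition (i) of Proposition \ref{ExpanCosPil} for $S(f)$. For a general expression satisfying (\ref{EqWeierstrass}) and (\ref{CoefSai}), I will reduce to this canonical case by invoking the intrinsic nature of $S(f)$: both sides of (ii) depend only on $f$ (not on the specific choice of $a_i$'s), so the identification for the canonical choice transfers. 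The delicate point is precisely this last reduction, and it is where the bulk of the technical work will concentrate.
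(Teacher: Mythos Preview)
For (i) your argument is correct and agrees with the paper's one-line appeal to Remark~\ref{ObsCosPil}(v) together with (\ref{CoefSai}). For (ii) you have identified exactly the right difficulty. The paper's proof simply invokes Remark~\ref{ObsCosPil}(vi), but the hypothesis of that remark---pairwise incomparability under divisibility of the supports $S(a_iy^{\ell-i})$---is precisely your inter-block condition, and the paper does not verify it. Your proposed reduction to the canonical expression of Proposition~\ref{uyforma} via ``both sides of (ii) depend only on $f$'' is circular: the right-hand side visibly depends on the chosen $a_i$'s, and asserting its independence of that choice amounts to assuming (ii).

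In fact (ii) is false for arbitrary expressions satisfying (\ref{EqWeierstrass}) and (\ref{CoefSai}). In dimension $2$ with parameters $(u,y)$, take $f=y^{3}+u^{4}y+u^{4}=y^{3}+(1+y)u^{4}$, so $S(f)=\{(0,3),(4,0)\}$; the admissible choice $a_{1}=0$, $a_{2}=u^{4}$, $a_{3}=u^{4}$ gives $\{(0,3),(4,1),(4,0)\}$ on the right-hand side. What does hold is (ii) for the \emph{canonical} coefficients built in the proof of Proposition~\ref{uyforma}: there $S(a_i)\times\{\ell-i\}$ is by construction the slice of $S(f)$ at $y$-degree $\ell-i$, and both your argument and the paper's citation of Remark~\ref{ObsCosPil}(vi) become tautological. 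So your reduction instinct points in the right direction, but it cannot ``transfer'' to arbitrary $a_i$'s because the target statement is simply not true for them.
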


\begin{proof} 
(i) This  is a direct consequence of Remark \ref{ObsCosPil}(v) and (\ref{CoefSai}).

(ii)  Note that $S(a_iy^{\ell-i})=S(a_i)\times\{\ell-i\}$, and the claim follows
from Remark \ref{ObsCosPil}(vi).
 \end{proof}

\begin{proposition}\label{mismo_orden}
Let $(R,\mathfrak{m})$ be a regular local ring of dimension $n$, and suppose 
$(u_1,\ldots,u_{n-1},y)$ form   a regular system of parameters of $R$. Let $f\in R$ be a pseudo-Weierstrass element w.r.t. $(u,y)$, and let   $a_1,\ldots,a_{\ell}\in R$ and $a'_1,\ldots,a'_{\ell}\in R$ be elements 
satisfying properties (\ref{EqWeierstrass}) and (\ref{CoefSai}) of Definition \ref{DefWeierstrass}.
Then  $\ord_{\mathfrak{m}}(a_i)=\ord_{\mathfrak{m}}(a'_i)$ for all $i$.
\end{proposition}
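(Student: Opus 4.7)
The plan is to exploit the uniqueness of the set $S(f)$ in the Cossart-Piltant expansion (Proposition \ref{ExpanCosPil}) combined with the description of $S(f)$ given in Lemma \ref{LemPropCoef}(ii). Both expressions $f = y^{\ell}+\sum a_i y^{\ell-i}$ and $f = y^{\ell}+\sum a'_i y^{\ell-i}$ are pseudo-Weierstrass presentations of the same element $f$ w.r.t.\ the same regular system of parameters $(u,y)$, so Lemma \ref{LemPropCoef}(ii) applies to both and yields
$$S(f)=\{(0,\ell)\}\sqcup\Bigl(\bigsqcup_{i=1}^{\ell}\left(S(a_i)\times\{\ell-i\}\right)\Bigr)=\{(0,\ell)\}\sqcup\Bigl(\bigsqcup_{i=1}^{\ell}\left(S(a'_i)\times\{\ell-i\}\right)\Bigr).$$

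Next I would read off the equality coordinate-wise. Since the last coordinate of any point in $S(a_i)\times\{\ell-i\}$ is exactly $\ell-i$, and the sets occurring on each side are \emph{disjoint} unions indexed by this last coordinate, the fibre over $\ell-i$ in $S(f)$ equals both $S(a_i)\times\{\ell-i\}$ and $S(a'_i)\times\{\ell-i\}$. Projecting onto the first $n-1$ coordinates gives $S(a_i)=S(a'_i)$ for every $i=1,\dots,\ell$.

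Finally, to pass from equality of supports to equality of orders, I would invoke Remark \ref{ObsCosPil}(v), which states that $\ord_{\mathfrak{m}}(h)=\min\{|\alpha|\mid \alpha\in S(h)\}$ for any $h\in R$. Applying this to both $a_i$ and $a'_i$ yields
$$\ord_{\mathfrak{m}}(a_i)=\min\{|\alpha|\mid \alpha\in S(a_i)\}=\min\{|\alpha|\mid \alpha\in S(a'_i)\}=\ord_{\mathfrak{m}}(a'_i),$$
as required. The main (and really only) obstacle is making sure that Lemma \ref{LemPropCoef}(ii) does in fact apply to both presentations; this is automatic, since the hypothesis of that lemma is exactly that $a_1,\ldots,a_{\ell}$ (resp.\ $a'_1,\ldots,a'_{\ell}$) satisfy conditions \eqref{EqWeierstrass} and \eqref{CoefSai}, which is what is being assumed. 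Note that one does not need to compare the units $c_{\alpha,i}$ of the two CP-expansions of $f$ (which may differ in the ambiguity allowed by Remark \ref{ObsCosPil}(ii)); only the uniqueness of the index set $S(f)$ is used.
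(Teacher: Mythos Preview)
Your proof is correct and follows essentially the same approach as the paper: apply Lemma~\ref{LemPropCoef}(ii) to both pseudo-Weierstrass presentations to obtain $S(a_i)=S(a'_i)$, then invoke Remark~\ref{ObsCosPil}(v) to deduce equality of orders. The paper's proof is just a terse one-line version of your argument.
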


\begin{proof}
	By Lemma \ref{LemPropCoef} (ii) we have that $S(a_i)=S(a'_i)$, and the result follows 
	from Remark \ref{ObsCosPil}~(v).
\end{proof}

\medskip

\noindent{\bf A version of Hickel's Theorem}

\begin{theorem} \label{GeneralHickel}
Let $(R,\mathfrak{m})$ be a regular local ring of dimension $n$, and let  $(u_1,\ldots,u_n)$ be any regular system of parameters in $R$.
Let $f\in R\setminus \{0\}$ be a non-unit,  set $R'=R/\langle f\rangle$ and let ${\mathfrak m}'={\mathfrak m}/\langle f\rangle$.  Then: 
\begin{itemize}
\item[(i)] After relabeling the elements $u_1,\ldots,u_n$, we may assume that
$$\nub_{{\mathfrak m}'}(u'_i)=1, \ \ \ \text{ for } i=1,\ldots, n-1,$$
where $u'_i$ denotes  the class of $u_i$ in $R'$.
\end{itemize}
Set $y=u_{n}$ and let $y'$ denote the class of $y=u_{n}$  in $R'$. 
\begin{itemize} 
\item[(ii)] If  $\nub_{{\mathfrak m}'}(y')\in {\mathbb Q}_{>1}\cup \{\infty\}$ then: 
\begin{enumerate}
\item $f\notin\langle u_1,\ldots, u_{n-1}\rangle$ and $y'\in \overline{ \langle u_1',\ldots, u'_{n-1}\rangle} \subset R'$;
\item up to a unit,  $f$ can be written as a pseudo-Weierstrass element of degree $\ell=e(R')>1$ w.r.t. $(u_1,\ldots, u_{n-1},y=u_{n})$, 
\begin{equation} \label{expresion_f} 
	f=y^{\ell}+\sum_{i=1}^{\ell}a_i y^{\ell -i}, 
\end{equation}
and 
\begin{equation} \label{formula_orden}
	\nub_{{\mathfrak m}'}(y')=\min_{i}\left\{\frac{\ord_{\mathfrak m}(a_i)}{i}: i=1,\ldots, \ell\right\}.
\end{equation}
\end{enumerate}
\end{itemize}
\begin{itemize}
\item[(iii)] If $\nub_{{\mathfrak m}'}(y')=1$ and if $f\notin\langle u_1,\ldots,u_{n-1}\rangle$, then up to a unit $f$ can be written as a pseudo-Weierstrass element w.r.t. $(u_1,\ldots, u_{n-1};y=u_{n})$ as in (\ref{expresion_f}) and formula (\ref{formula_orden}) holds if and only if $\langle u_1',\ldots, u'_{n-1}\rangle$ is a reduction of ${\mathfrak m'}$.  
\end{itemize}
\end{theorem}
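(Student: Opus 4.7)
The plan is to organize the argument around the valuative characterization of $\nub$ (Theorem~\ref{nu_barra_valuations}), the CP-expansion machinery, and the monomial valuations of Proposition~\ref{valoracion_definida}. For (i), the key assertion is that at most one of $u_1',\ldots,u_n'$ can satisfy $\nub_{\mathfrak{m}'}(u_i')>1$: if two distinct generators $u_i'$ and $u_j'$ both had $\nub>1$, then by Theorem~\ref{nu_barra_valuations} every Rees valuation $v$ of $\mathfrak{m}'$ would give $v(u_i'),v(u_j')>v(\mathfrak{m}')=\min_k v(u_k')$, so the minimum would be attained by some $u_k'$ with $k\neq i,j$. The valuative criterion for reductions would then make $\langle u_k':k\neq i,j\rangle$ a reduction of $\mathfrak{m}'$ with only $n-2$ generators, contradicting that any reduction of $\mathfrak{m}'$ needs at least the analytic spread $=\dim R'=n-1$ generators (extending the residue field via $R'\to R'[t]_{\mathfrak{m}'[t]}$ first, if finite). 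Relabeling places the possible exception at index $n$; the remaining $u_i'\in\mathfrak{m}'\subset\mathfrak{m}'^{\geq 1}$ automatically satisfy $\nub=1$.

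For (ii)(1), assume $\nub_{\mathfrak{m}'}(y')>1$. The same Rees-valuation reasoning gives $v(y')>v(\mathfrak{m}')$ for every $v$, so $v(\mathfrak{m}')$ is attained among $u_1',\ldots,u_{n-1}'$, and $\langle u_1',\ldots,u_{n-1}'\rangle$ is therefore a reduction of $\mathfrak{m}'$; in particular $y'\in\overline{\langle u_1',\ldots,u_{n-1}'\rangle}$. If instead $f\in\langle u_1,\ldots,u_{n-1}\rangle$, then $R'/\langle u_1',\ldots,u_{n-1}'\rangle=R/\langle u_1,\ldots,u_{n-1}\rangle$ would be a one-dimensional DVR, contradicting the Artin character of any quotient by a reduction of $\mathfrak{m}'$. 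Hence $f\notin\langle u_1,\ldots,u_{n-1}\rangle$, and Proposition~\ref{uyforma} produces, up to a unit, the pseudo-Weierstrass form $f=y^\ell+\sum_{i=1}^\ell a_iy^{\ell-i}$ with $a_i\in\langle u_1,\ldots,u_{n-1}\rangle$ and $S(a_i)\subset\mathbb{N}^{n-1}\times\{0\}$.

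Set $\rho:=\min_i\ord_{\mathfrak{m}}(a_i)/i$ for the formula in (ii)(2). The lower bound $\nub_{\mathfrak{m}'}(y')\geq\rho$ follows by applying each Rees valuation $v$ of $\mathfrak{m}'$ to the identity $(y')^\ell=-\sum a_i'(y')^{\ell-i}$: the CP-constraint $S(a_i)\subset\mathbb{N}^{n-1}\times\{0\}$ forces $a_i\in\langle u_1,\ldots,u_{n-1}\rangle^{\ord_{\mathfrak{m}}(a_i)}$, and a direct recursive estimate yields $v(y')\geq\rho\,v(\mathfrak{m}')$. For the reverse inequality, I would employ the monomial valuation $\omega$ of Proposition~\ref{valoracion_definida} with integer weights $(q,\ldots,q,p)$ satisfying $\rho=p/q$; the pseudo-Weierstrass form gives $\omega(f)=\ell p$, attained by $y^\ell$ and by at least one $a_{i_0}y^{\ell-i_0}$, and following Hickel's strategy $\omega$ descends via the weighted blowup of $\mathfrak{m}$ to a Rees valuation of $\mathfrak{m}'$ on the proper transform of $\langle f\rangle$ realizing $v(y')/v(\mathfrak{m}')=\rho$. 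The equality $\ell=e(R')$ then drops out: $\rho\geq 1$ forces $\ord_{\mathfrak{m}}(a_i)\geq i$, hence $\ord_{\mathfrak{m}}(f)=\ell$, which equals the hypersurface multiplicity. Part (iii) runs on the same template: Proposition~\ref{uyforma} delivers the pseudo-Weierstrass form from $f\notin\langle u_1,\ldots,u_{n-1}\rangle$, and combining the lower bound $\nub_{\mathfrak{m}'}(y')\geq\rho$ with the hypothesis $\nub_{\mathfrak{m}'}(y')=1$ shows that both the validity of the formula and the reduction property are equivalent to $\ord_{\mathfrak{m}}(a_i)\geq i$ for all $i$, via the same identity $(y')^\ell=-\sum a_i'(y')^{\ell-i}$. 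I expect the main obstacle to be the upper bound in (ii)(2): constructing the Rees valuation realizing $\rho$ in the absence of a coefficient field, where Hickel's finite-transversal framework is unavailable and the CP-expansion machinery of Section~\ref{Samuel_a} must substitute for the usual equicharacteristic arguments.
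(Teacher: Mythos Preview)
Your outline for (i), (ii)(1), and the inequality $\nub_{\mathfrak{m}'}(y')\geq\rho$ in (ii)(2) is essentially the paper's, up to cosmetic differences (the paper argues (i) via $\dim_k\ker\lambda_{\mathfrak{m}'}\leq 1$ rather than analytic spread, and (ii)(1) via a graded-ring dimension count, but your valuation/reduction phrasing is fine).

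The genuine gap is the \emph{upper bound} $\nub_{\mathfrak{m}'}(y')\leq\rho$. Your plan to push the monomial valuation $\omega$ through a weighted blowup and extract a Rees valuation of $\mathfrak{m}'$ on the strict transform is not Hickel's strategy, and it is not clear it can be made to work: Rees valuations of $\mathfrak{m}'$ arise from the normalized blowup of $\mathfrak{m}'$, not from an auxiliary weighted blowup of $\mathfrak{m}$ in $R$, and there is no obvious reason the weighted exceptional divisor should be one of them. The paper (adapting Hickel) avoids this entirely by working \emph{in $R$, not in $R'$}. Fix $r/s\leq\nub_{\mathfrak{m}'}(y')$. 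Since $\langle u'\rangle$ is a reduction of $\mathfrak{m}'$, one has $(y')^s\in\overline{\langle u'\rangle^r}$, yielding a monic relation $g(y')=0$ in $R'$ with coefficients in the appropriate powers of $\langle u'\rangle$. Lift to $g(y)\in R$; then $g(y)=h\cdot f$ for some $h\in R$, and since $g(y)\notin\langle u\rangle$ the element $h$ has a pseudo-Weierstrass term $y^{sm-\ell}$. Now apply the monomial valuation $\omega$ of Proposition~\ref{valoracion_definida} with weights $\omega(u_i)=s$, $\omega(y)=r$ to the factorization $g(y)=h\cdot f$: one computes $\omega(g)\geq rsm$, $\omega(h)\leq r(sm-\ell)$, and if $\rho<r/s$ then $\omega(f)<r\ell$, contradicting $\omega(g)=\omega(h)+\omega(f)$. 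Hence $\rho\geq r/s$ for every such $r/s$, giving $\rho\geq\nub_{\mathfrak{m}'}(y')$. The monomial valuation is used only on $R$, where it is already a valuation; nothing needs to descend to $R'$.

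For (iii), your biconditional ``formula $\Leftrightarrow$ reduction $\Leftrightarrow$ $\ord_{\mathfrak m}(a_i)\geq i$ for all $i$'' is not quite self-contained: the implication ``reduction $\Rightarrow$ $\ord_{\mathfrak m}(a_i)\geq i$'' does not follow from the identity $(y')^\ell=-\sum a_i'(y')^{\ell-i}$ alone. The paper handles it by invoking the same monomial-valuation argument from (ii), which needs exactly the reduction hypothesis; alternatively one can argue via multiplicity ($R'$ is Cohen--Macaulay, so $e(\langle u'\rangle;R')=\mathrm{length}(R'/\langle u'\rangle)=\ell$, and if $\langle u'\rangle$ is a reduction then $\ell=e(R')=\ord_{\mathfrak m}(f)$, forcing $\ord_{\mathfrak m}(a_i)\geq i$).
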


\begin{proof} 
(i) The degree one part of $\Gr_{{\mathfrak m'}}(R')$ is generated by $\In_{{\mathfrak m}'} u_1', \ldots, \In_{{\mathfrak m}'} u'_{n-1}, \In_{{\mathfrak m}'}u_{n}'$, and 
		$$\Gr_{{\mathfrak m}'}(R')=\Gr_{{\mathfrak m}}(R)/\langle \In_{\mathfrak m} f\rangle \simeq (R/{\mathfrak m})[U_1,\ldots, U_{n}]/\langle \In_{\mathfrak m} f\rangle.$$

Now assertion 
follows from the fact that $\dim(\ker \lambda_{{\mathfrak m}'})\leq 1$: after relabeling the parameters, we can assume that $\In_{{\mathfrak m}'}u_1',\ldots, \In_{{\mathfrak m}'}u'_{n-1}\notin \ker\lambda_{{\mathfrak m}'}$, and hence $\nub_{{\mathfrak m}'} (u_i')=1$, for $i=1,\ldots,n-1$. 
\medskip

(ii) If $\nub_{{\mathfrak m}'}(y')>1$ then $\In_{{\mathfrak m}'}y'$ is nilpotent in $\Gr_{{\mathfrak m}'}(R')$  and therefore $\langle {u}_1',\ldots, {u}'_{n-1}\rangle$ is a reduction of ${\mathfrak m}'$. Now, since $(u_1,\ldots, u_{n-1})$ is part of a regular system of parameters in $R$, $$\In_{\mathfrak m} \langle u_1,\ldots, u_{n-1}\rangle= \langle  \In_{\mathfrak m}u_1,\ldots, \In_{\mathfrak m}u_{n-1}\rangle.$$

If $f\in \langle u_1,\ldots, u_{n-1}\rangle$ then $\In_{\mathfrak m} f\in \langle  \In_{\mathfrak m} u_1,\ldots, \In_{\mathfrak m} u_{n-1}\rangle$, and the graded ring 
$$\Gr_{{\mathfrak m}'}(R')/\langle\In_{\mathfrak m'}u_1',\ldots, \In_{\mathfrak m'}u'_{n-1}\rangle =
\Gr_{{\mathfrak m}}(R)/\langle \In_{\mathfrak m}f,\In_{\mathfrak m}u_1,\ldots, \In_{\mathfrak m}u_{n-1}\rangle=$$
$$=\Gr_{{\mathfrak m}}(R)/\langle  \In_{\mathfrak m}u_1,\ldots, \In_{\mathfrak m}u_{n-1}\rangle\simeq (R/{\mathfrak m})[U_n]$$
would have dimension 1, a contradiction to the fact that $u_1',\ldots, u'_{n-1}$ generate a reduction of ${\mathfrak m}'$.

Hence $f\notin \langle u_1,\ldots, u_d\rangle$, and by Proposition \ref{uyforma}, up to a unit, $f$ can be written  as a pseudo-Weierstrass element of degree $\ell$ w.r.t. $(u,y)$: 
\begin{equation}
\label{desarrollo_f}
f=y^{\ell}+\sum_{i=1}^{\ell}a_i y^{\ell -i}. 
\end{equation}
\medskip

Observe first    that if in (\ref{desarrollo_f})  $a_i=0$ for $i=1,\ldots, \ell$, then  $y'$ is nilpotent, thus  $\nub_{{\mathfrak m}'}(y')=\infty$ and  formula  (\ref{formula_orden}) trivially  holds. Next, we will check that if $\nub_{{\mathfrak m}'}(y')=\infty$, then, necessarily $a_i=0$ for $i=1,\ldots, \ell$, and therefore,   formula (\ref{formula_orden}) holds too.	
\medskip

The next argument has been adapted from Hickel's \cite{Hickel}.	
Suppose, to arrive to a contradiction, that there is some $i\in \{1,\ldots, \ell\}$ such that $a_i\neq 0$. Then consider the rational number
	\begin{equation} \label{def_p_q} 
		\frac{p}{q}:=\min_{i}\left\{\frac{\ord_{\mathfrak m}(a_i)}{i}: i=1,\ldots, \ell\right\}=\frac{\ord_{\mathfrak m}(a_{i_0})}{i_0},
	\end{equation}
	for some	
	$i_0\in \{1,\ldots, \ell\}$.

	\medskip
	
Let $r,s\in {\mathbb Z_{\geq 0}}$ with $\frac{r}{s}\geq 1$. From the assumption,       $\nub_{{\mathfrak m}'}({y'})\geq \frac{r}{s}\geq 1$. Our purpose is to show that  $\frac{p}{q}\geq \frac{r}{s}$.     Since    $\langle {u}_1',\ldots, {u}_d'\rangle$ is a reduction of ${\mathfrak m}'$, 
	\begin{equation} \label{y_en_clausura}
		{y'}^s\in \overline{\langle {u}_1',\ldots, {u}_d' \rangle^r} = \overline{\langle  {u'}_1^r,\ldots, {u'}_d^r \rangle}\subset R'. 
	\end{equation}
	Thus, there is a monic polynomial 
	$Q(Z)\in R'[Z]$, 
	$$Z^m+\alpha_1Z^{m-1}+\ldots +\alpha_m$$
	so that $\alpha_i\in \langle {u'}_1^r,\ldots, {u'}_d^r\rangle^i$ for $i=1,\ldots, m$, and 
	$Q({y'}^s)=0\in R'$. Setting 
	$g(Z)=Z^{sm}+\alpha_1Z^{s(m-1)}+\ldots +\alpha_m$ we have that $g({y}')=0\in R'$. Now choose $A_i\in \langle u_1^r,\ldots, u_d^r\rangle^i\subset R$ so that 
	$\alpha_i={A}_i'\in R'$, $i=1,\ldots,m$,   and define, 
	\begin{equation} \label{g_en_R}
		g(y):=y^{sm}+A_1y^{s(m-1)}+\ldots+A_m\in R. 
	\end{equation}
	Since $g({y}')=0\in R'$, $g(y)\in \langle f\rangle$,  
	thus, there is some $h\in R$ such that 
	\begin{equation} \label{factorizacion_g} 
		g(y)=y^{sm}+A_1y^{s(m-1)}+\ldots+A_m=h\cdot f\in R.
	\end{equation}
	Because $g\notin \langle u_1,\ldots, u_d\rangle$, we have  that $h\notin \langle u_1,\ldots, u_d\rangle$.  From here it follows that  if we write  $h$ in pseudo-Weierstrass form, there must be a term in $h$ of the form $y^{sm-l}$.

	\medskip

	Let $\omega$ be the monomial valuation at $R$ as in
	Proposition \ref{valoracion_definida}, defined by 
	$w(u_i):=s$, for $i=1,\ldots, d$, and $\omega(y):=r$.
	Now, from (\ref{g_en_R}), $\omega(g(y))\geq rsm$, since $\omega(A_i)\geq rsi$.
	On the other hand, by the definition of $\omega$ (see (\ref{factorizacion_g})),
	$\omega(h)\leq r(sm-l)$.
	Finally, since $\ord_{\langle u\rangle}(a_i)=\ord_{\mathfrak m}(a_i)$,
	$$\omega(f)=\min_i\left\{rl, s\ord_{\mathfrak m}(a_i)-ri+rl, i=1,\ldots, \ell\right\}=\min_i\left\{rl, s  i \frac{\ord_{\mathfrak m}(a_i)}{i}-ri+rl, i=1,\ldots, \ell\right\}.$$
	Taking   $i=i_0$ from (\ref{def_p_q}), if $p/q<r/s$, then
	$$s i_0 \frac{\ord_{\mathfrak m}(a_{i_0})}{i_0}-ri_0+r\ell=r\ell+si_0\left(\frac{p}{q}-\frac{s}{r}\right)<r\ell. $$
	Thus $\omega(f)< r\ell$ which leads to a contradiction because
	$$rsm\leq\omega(g(y))=\omega(h)+\omega(f)<r(sm-\ell)+r\ell=rsm.$$
	Therefore, $\frac{p}{q}\geq \frac{r}{s}$.  Since $\frac{r}{s}$  can be taken arbitrarily large, 
	$$\min_{i}\left\{\frac{\ord_{\mathfrak m}(a_i)}{i}: i=1,\ldots, \ell\right\}$$
cannot be  finite. Thus $a_i=0$ for $i=1,\ldots, \ell$ as claimed. 
\medskip 

In conclusion,  $\nub_{{\mathfrak m}'}(y')=\infty$ if and only if $a_1=\ldots=a_{\ell}=0$ in (\ref{forma_f}) and in this case  $e(R')=\ell$ and formula (\ref{formula_orden}) holds.

\medskip 

Suppose next that $\nub_{{\mathfrak m}'}(y')=\frac{r}{s}<\infty$. Then, 
\begin{equation} \label{def_p_q_new} 
	\frac{p}{q}:=\min_{i}\left\{\frac{\ord_{\mathfrak m}(a_i)}{i}: i=1,\ldots, \ell\right\}=\frac{\ord_{\mathfrak m}(a_{i_0})}{i_0},
\end{equation}
is a finite number and the previous argument already shows that necessarily 
$\frac{r}{s}\leq \frac{p}{q}$. To check that the equality holds we argue as follows. 

\medskip

		Let $v$ be any valuation of $Q(R')$, the quotient ring of $R'$, dominating $(R',{\mathfrak m}')$. Since $f=0$ in $R'$, 
	\begin{equation} \label{desigualdad_valoraciones}
		\ell \cdot v(y')\geq \min_i\{v({a'}_i)+(\ell-i)\cdot v(y')\},
	\end{equation}
	where $a'_i$ denotes the class of $a_i$ in $R'$.
	By \cite[Lemma 10.1.5, Theorem 10.2.2]{Hu_Sw}, \cite[Proposition 2.2]{Irena}, there is a Rees valuation of ${\mathfrak m}'$, $\tilde{v}$, such that 
	$$\nub_{{\mathfrak m}'}(y')=\frac{\tilde{v}(y')}{\tilde{v}({\mathfrak m}')}.$$
	Let $j\in \{1,\ldots,\ell\}$ be so that the minimum in (\ref{desigualdad_valoraciones}) holds for $\tilde{v}$, i.e.,  
	$$\tilde{v}(a'_j)+(\ell-j)\cdot \tilde{v}(y')=\min_i\{\tilde{v}(a'_i)+(\ell-i)\cdot \tilde{v}(y')\}.$$
	Then 
	$$\ell \cdot \tilde{v}(y')\geq \tilde{v}({a}_j')+(\ell-j)\cdot \tilde{v}(y'),$$
	i.e., 
	$$\tilde{v}(y')\geq \frac{\tilde{v}(a'_j)}{j},$$
	and 
	\begin{equation} \label{primera_desigualdad}
		\frac{r}{s}=\nub_{{\mathfrak m}'}(y')=
		\frac{\tilde{v}(y')}{\tilde{v}({\mathfrak m}')}\geq 
		\frac{\tilde{v}({a}'_j)}{j\cdot \tilde{v}({\mathfrak m}')}\geq \frac{\nub_{{\mathfrak m}'}(a'_j)}{j}\geq \frac{\ord_{\mathfrak m}(a_j)}{j}\geq \frac{\ord_{\mathfrak m}(a_{i_0})}{i_0}=\frac{p}{q}.	 
	\end{equation}

	To conclude, since   $p/q=r/s\geq 1$, we have that $\ord_{\mathfrak m}(a_i)\geq i$, for $i=1,\ldots, \ell$, thus $\ord_{\mathfrak m}(f)=\ell$, and therefore    the multiplicity of the local ring $R'$ is $\ell$. 
	
	\medskip 
	
	(iii) Since the hypothesis is that $f\notin \langle u_1,\ldots, u_d\rangle$,
	up to a unit, $f$ can be written as a pseudo-Weierstrass element of degree $\ell$ w.r.t. $(u,y)$: 
	\begin{equation}
		\label{desarrollo_f2}
		f=y^{\ell}+\sum_{i=1}^{\ell}a_i y^{\ell -i}. 
	\end{equation}
	From here, taking a valuation $\tilde{v}$ with   $\nub_{{\mathfrak m}'}(y')=\frac{\tilde{v}(y')}{\tilde{v}({\mathfrak m}')}$ we deduce that 
	\begin{equation}
		\label{primera_desigualdad2}
		1=	\nub_{{\mathfrak m}'}(y')=
		\frac{\tilde{v}(y')}{\tilde{v}({\mathfrak m}')}\geq 
		\frac{\tilde{v}({a}'_j)}{j\cdot \tilde{v}({\mathfrak m}')}\geq \frac{\nub_{{\mathfrak m}'}(a'_j)}{j}\geq \frac{\ord_{\mathfrak m}(a_j)}{j}\geq \frac{\ord_{\mathfrak m}(a_{i_0})}{i_0}=\frac{p}{q}.	 
	\end{equation}
	If $\langle u_1',\ldots, u_d'\rangle$ is a reduction of ${\mathfrak m}'$, the argument in (ii) gives us the equality (\ref{formula_orden}).
	
	Conversely, if equality (\ref{formula_orden}) holds  then
	$\ord_{\langle u\rangle}(a_i)=\ord_{\mathfrak{m}}(a_i)\geq i$ and $f=0$ gives an
	equation of integral dependence for $y'$ over $\langle u_1',\ldots, u_d'\rangle$.
\end{proof}

\begin{theorem}
\label{General_General_Hickel}
Let $(R',{\mathfrak m}',k)$ be a local ring. Suppose there is an excellent subring $S$ of $R'$  such that the extension $S\subset R'$ is finite-transversal. Let $\theta\in R'$. If 
$$p(z)=z^{\ell}+a_1z^{\ell-1}+\ldots+a_{\ell}$$
is the minimal polynomial of $\theta\in R'$ over the fraction field of $S$, $K(S)$, then 
$p(z)\in S[z]$ and 
\begin{equation}
	\label{EqHandbookHickel}
	\nub_{\mathfrak{m}}(\theta)=\min_i \left\{\frac{\nu_{{\mathfrak m}_S} (a_i)}{i}: i=1,\ldots,\ell\right\}.
\end{equation}
\end{theorem}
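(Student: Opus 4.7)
The plan is to deduce the formula from Theorem \ref{GeneralHickel} by introducing an auxiliary regular local ring in which $p(z)$ becomes a pseudo-Weierstrass element. First, since $R'$ is integral over the regular (hence normal) ring $S$, the minimal polynomial of $\theta$ has coefficients in $S$, so $p(z)\in S[z]$. If $\theta$ is a unit then $a_\ell$ is a unit and both sides of (\ref{EqHandbookHickel}) vanish, so I assume $\theta\in\mathfrak{m}'$, which forces $a_i\in\mathfrak{m}_S$ for every $i$. By Proposition \ref{PotenciaNuBar}(ix) I may also replace $R'$ and $S$ by their completions: $\widehat{S}\subset\widehat{R'}$ remains finite-transversal, and by Remark \ref{transversal_equiv}(iii) together with Rees's theorem the ideal $\mathfrak{m}_S\widehat{R'}$ is a reduction of $\widehat{\mathfrak{m}'}$.

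Next I introduce the auxiliary regular local ring $R_1:=S[z]_{\langle\mathfrak{m}_S,z\rangle}$ of dimension $n+1$. For any regular system of parameters $u_1,\ldots,u_n$ of $S$, the tuple $(u_1,\ldots,u_n,z)$ is a regular system of parameters of $R_1$, and the polynomial $p(z)=z^\ell+a_1z^{\ell-1}+\cdots+a_\ell$ is a pseudo-Weierstrass element with respect to $(u;z)$ in the sense of Definition \ref{DefWeierstrass}. Let $R_1':=R_1/\langle p(z)\rangle$, with $\bar z$ the class of $z$; the substitution $z\mapsto\theta$ induces an injective local homomorphism $R_1'\hookrightarrow R'$ making $R'$ finite over $R_1'$. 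Applying Theorem \ref{GeneralHickel} to $R_1'$ with $f=p(z)$ and $y=z$, the relation $\bar z^\ell=-(a_1\bar z^{\ell-1}+\cdots+a_\ell)$ with $a_i\in\langle u\rangle$ exhibits $\bar z$ as integral over $\langle\bar u_1,\ldots,\bar u_n\rangle$, so the latter is a reduction of $\mathfrak{m}_{R_1'}$, and the theorem yields
\begin{equation*}
\nub_{\mathfrak{m}_{R_1'}}(\bar z) \;=\; \min_i\left\{\frac{\ord_{\mathfrak{m}_{R_1}}(a_i)}{i}\right\} \;=\; \min_i\left\{\frac{\nu_{\mathfrak{m}_S}(a_i)}{i}\right\},
\end{equation*}
the last equality because $a_i\in S$ and $S\hookrightarrow R_1$ is flat with $\mathfrak{m}_{R_1}\cap S=\mathfrak{m}_S$.

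The remaining and most delicate step is to identify $\nub_{\mathfrak{m}_{R_1'}}(\bar z)=\nub_{\mathfrak{m}'}(\theta)$. Both quantities are invariant under replacing the maximal ideal by a reduction, so they reduce to $\nub_{\mathfrak{m}_S R_1'}(\bar z)$ and $\nub_{\mathfrak{m}_S R'}(\theta)$ respectively. I then compute both using Theorem \ref{nu_barra_valuations}: since $R_1'\subset R'$ is a finite integral extension, every Rees valuation of $\mathfrak{m}_S R_1'$ extends to a valuation of $R'$ centered at $\mathfrak{m}'$ (more precisely, of each quotient of $R'$ by a minimal prime), and conversely every Rees valuation of $\mathfrak{m}_S R'$ restricts to one of $\mathfrak{m}_S R_1'$; under $\bar z\mapsto\theta$ the ratio $v(\theta)/v(\mathfrak{m}_S R')$ matches $v(\bar z)/v(\mathfrak{m}_S R_1')$. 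The main obstacle is the correct bookkeeping of these valuations when $R'$ is not a domain, together with the normalizations along the finite extension; here the finite-transversal equality $e(R')=[L:K(S)]$ is the ingredient that ensures the multiplicities of Rees valuations behave compatibly along $R_1'\subset R'$.
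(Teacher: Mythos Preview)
Your approach coincides with the paper's: form the intermediate hypersurface ring $R_1'\simeq S[\theta]$, apply Theorem~\ref{GeneralHickel} there to obtain $\nub_{\mathfrak{m}_{R_1'}}(\bar z)=\min_i\{\nu_{\mathfrak{m}_S}(a_i)/i\}$, and then transfer this value to $R'$. The divergence is entirely in your final paragraph, and that is where your write-up has a genuine gap.

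The paper disposes of the transfer in one line. Since $\mathfrak{m}_S$ generates a reduction of both $\mathfrak{m}_{\theta}=\mathfrak{m}_{R_1'}$ and $\mathfrak{m}'$ (Remark~\ref{transversal_equiv}(iii)), one has $\nub_{\mathfrak{m}_{R_1'}}(\bar z)=\nub_{\mathfrak{m}_S R_1'}(\bar z)$ and $\nub_{\mathfrak{m}'}(\theta)=\nub_{\mathfrak{m}_S R'}(\theta)$. These two coincide because for any integral extension $A\subset B$ of Noetherian rings and any ideal $I\subset A$, the persistence of integral closure under integral extensions gives $\overline{I^a}=A\cap\overline{(IB)^a}$, and hence by Corollary~\ref{nub_integral} one has $\nub_I(x)=\nub_{IB}(x)$ for every $x\in A$. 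Apply this to $S[\theta]\subset R'$ with $I=\mathfrak{m}_S S[\theta]$ and $x=\theta$.

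By contrast, your attempt to match Rees valuations along $R_1'\subset R'$ is left unfinished (you yourself flag ``the correct bookkeeping of these valuations'' as an obstacle), and the assertion that every Rees valuation of $\mathfrak{m}_S R'$ restricts to a Rees valuation of $\mathfrak{m}_S R_1'$ is not true in general. The finite-transversal equality $e(R')=[L:K(S)]$ is a red herring at this stage: its role was already spent in guaranteeing that $\mathfrak{m}_S R'$ is a reduction of $\mathfrak{m}'$, and no further compatibility of Rees-valuation multiplicities is needed. The passage to completions is likewise unnecessary.
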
 

\begin{proof} When $R'$ is equicharacteristic this is a straight generalization of Hickel's Theorem  (see 
\cite[Theorem 11.6.8]{Handbook}).
For the general case, consider the sequence of finite transversal extensions,
$$S\longrightarrow S[\theta] \longrightarrow R'.$$
By   \cite[Lemma 5.2]{V}, $p(z)\in S[z]$ and $S[\theta]\simeq S[z]/\langle p(z)\rangle$. Let ${\mathfrak m}_{\theta}:={\mathfrak m'}\cap  S[\theta]$  be the (unique) maximal ideal of $S[\theta]$ and let ${\mathfrak m}_S\subset S$ be the maximal ideal of $S$.   Let ${\mathfrak m}_1\subset S[z]$ be the (unique) maximal ideal of $S[z]$ mapping to ${\mathfrak m}_{\theta}$. Applying    Theorem \ref{GeneralHickel}  to  the quotient $S[z]/\langle p(z)\rangle$, we find that 
$$\nub_{{\mathfrak m}_{\theta}}(\theta)= \min_i \left\{\frac{\ord_{{\mathfrak m}_1} (a_i)}{i}: i=1,\ldots,\ell\right\}.$$
Since $S\to S[z]_{{\mathfrak m}_1}$ is faithfully flat, $\ord_{{\mathfrak m}_1} (a_i)=\ord_{{\mathfrak m}_S} (a_i)$, for $i=1,\ldots,\ell$. 
Now, the result follows because,   by \S \ref{transversal_equiv} (iii), the ideal ${\mathfrak m}_S$ generates a reduction of both,  ${\mathfrak m}_{\theta}$ in $S[\theta]$ and ${\mathfrak m}'$ in $R'$, and  therefore, $\nub_{{\mathfrak m}_{\theta}}(\theta)=\nub_{{\mathfrak m}'} (\theta)$.
\end{proof}

\begin{corollary} \label{maximo_hipersuperficies}
Let $(R',{\mathfrak m}',k)$ be a local ring. Suppose there is an excellent subring $S$ of $R'$  such that the extension $S\subset R'$ is   finite-transversal and write $R'=S[\theta_1,\ldots,\theta_e]$.  Then: 
$$\SSl(R')=\min\{\SSl(S[\theta_i]): i=1,\ldots,e\}.$$
	\end{corollary}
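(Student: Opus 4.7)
The plan is to leverage Theorem \ref{General_General_Hickel}, which shows that for every $\theta\in S[\theta_i]\subset R'$ the asymptotic Samuel values $\nub_{\mathfrak{m}'}(\theta)$ and $\nub_{\mathfrak{m}_i}(\theta)$ coincide, where $\mathfrak{m}_i:=\mathfrak{m}'\cap S[\theta_i]$ is the maximal ideal of $S[\theta_i]$: both are computed from the same minimal polynomial of $\theta$ over $K(S)$ via formula (\ref{EqHandbookHickel}). This rigidity is the main tool to transfer slope computations between $R'$ and the sub-hypersurfaces $S[\theta_i]$. First I would reduce to the extremal case: if $R'$ is regular both sides are $\infty$; if $R'$ is non-extremal, then some $\theta_i$ has $\nub_{\mathfrak{m}'}(\theta_i)=1$, forcing $\SSl(S[\theta_i])=1=\SSl(R')$, while the remaining $\SSl(S[\theta_j])\geq 1$. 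Assume then $R'$ is extremal; after replacing each $\theta_i$ by $\theta_i-s_i$ for suitable $s_i\in S$ (which does not alter the subring $S[\theta_i]$), we may arrange that every $\theta_i\in\mathfrak{m}'$ and that $\{\In_{\mathfrak{m}'}\theta_i\}$ generates $\ker(\lambda_{\mathfrak{m}'})$.

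For the inequality $\SSl(R')\geq\min_i\SSl(S[\theta_i])$, for each $i$ I would pick, using the definition of the Samuel slope of the hypersurface $S[\theta_i]$, a sequence $\tilde\theta_i^{(n)}\in S[\theta_i]$ with $\In_{\mathfrak{m}_i}\tilde\theta_i^{(n)}$ generating $\ker(\lambda_{\mathfrak{m}_i})$ and $\nub_{\mathfrak{m}_i}(\tilde\theta_i^{(n)})\to\SSl(S[\theta_i])$. A suitable subfamily of $\{\tilde\theta_i^{(n)}\}_i$ (discarding redundant members when $e$ exceeds the excess embedding dimension) forms a $\lambda_{\mathfrak{m}'}$-sequence in $R'$, and Hickel's identity yields
$$\min_i\nub_{\mathfrak{m}'}(\tilde\theta_i^{(n)})=\min_i\nub_{\mathfrak{m}_i}(\tilde\theta_i^{(n)})\longrightarrow\min_i\SSl(S[\theta_i])\quad\text{as } n\to\infty.$$
For the reverse inequality $\SSl(R')\leq\min_i\SSl(S[\theta_i])$, I would argue that any $\lambda_{\mathfrak{m}'}$-sequence $\gamma_1,\ldots,\gamma_t$ in $R'$ can be replaced, without decreasing $\min_k\nub_{\mathfrak{m}'}(\gamma_k)$, by one whose $k$-th entry lies in some sub-hypersurface $S[\theta_{i(k)}]$; combined with Hickel's identity this yields $\min_k\nub_{\mathfrak{m}'}(\gamma_k)\leq\SSl(S[\theta_{i(k)}])$, and since every index $j$ is hit as $\gamma$ varies, taking suprema gives $\SSl(R')\leq\SSl(S[\theta_j])$ for every $j$.

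The hard part will be this \emph{decoupling step} for the upper bound: showing that every (almost) optimal $\lambda_{\mathfrak{m}'}$-sequence in $R'$ can be modified, without decreasing its minimum asymptotic order, so that each entry sits in a single sub-hypersurface $S[\theta_i]$. This mirrors the modification technique of Lemma \ref{lema_lema_v4}, now carried out inside the finite-transversal tower $S\subset S[\theta_i]\subset R'$ using the uniqueness of CP-expansions together with Hickel's formula applied at each level of the tower. The conceptual reason the statement is an equality is the sup-min identity $\sup\min_i a_i=\min_i\sup a_i$, which is valid when the coordinate choices can be made independently, and this independence is exactly what the structure of the finite-transversal extension $S\subset R'$ provides.
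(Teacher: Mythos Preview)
Your approach is essentially the paper's, but you have inflated what is in fact a three-line argument. The paper simply invokes Lemma~\ref{lema_lema_v4} together with Remark~\ref{lemma_lemma_remark} to write
\[
\SSl(S[\theta_i])=\sup_{s\in S}\nub_{\mathfrak{m}_{\theta_i}}(\theta_i-s),
\qquad
\SSl(R')=\sup_{s_1,\ldots,s_e\in S}\min_i\nub_{\mathfrak{m}'}(\theta_i-s_i),
\]
and then observes that $\nub_{\mathfrak{m}'}(\theta_i-s)=\nub_{\mathfrak{m}_{\theta_i}}(\theta_i-s)$ because both maximal ideals share the reduction $\mathfrak{m}_S R'$ (this is Remark~\ref{transversal_equiv}(iii), no need to invoke the full Hickel formula). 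Since the $s_i$ vary independently, $\sup\min=\min\sup$ is immediate. Your ``hard part'' --- the decoupling step --- is therefore already the content of Lemma~\ref{lema_lema_v4} and Remark~\ref{lemma_lemma_remark}; there is no further modification technique to carry out, no CP-expansions to unwind, and no need to pass to limiting sequences.

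One genuine slip: in your non-extremal reduction, ``some $\theta_i$ has $\nub_{\mathfrak{m}'}(\theta_i)=1$'' does \emph{not} force $\SSl(S[\theta_i])=1$, since a translation $\theta_i\mapsto\theta_i-s$ could still raise the order. The correct argument runs by contrapositive: if every $S[\theta_i]$ were extremal, one could translate each $\theta_i$ by some $s_i\in S$ so that $\nub_{\mathfrak{m}'}(\theta_i-s_i)>1$, and then (a subset of) the $\theta_i-s_i$ would form a $\lambda_{\mathfrak{m}'}$-sequence of full length, forcing $R'$ to be extremal as well.
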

\begin{proof} 
As in the proof of the previous theorem, $S{[\theta_i]}\simeq S[z_i]/\langle p_i(z_i)\rangle$, where $p_i(z_i)\in S[z_i]$ is the minimal polynomial of $\theta_i$ over $K(S)$. Set ${\mathfrak  m}_{\theta_i}:={\mathfrak m}'\cap S[\theta_i]$. By Lemma \ref{lema_lema_v4}  and Remark \ref{lemma_lemma_remark}, $$\SSl(S[\theta_i])=\sup\{\nub_{{\mathfrak m}_{\theta_i}}(\theta_i-s): s\in S\}.$$
Also, by Lemma \ref{lema_lema_v4} and Remark \ref{lemma_lemma_remark}, 
$$\SSl(R)=\sup\{\nub_{{\mathfrak m}'}(\theta_i-s): s\in S\}.$$
Since $\nub_{{\mathfrak m}'} (\theta_i-s)=\nub_{{\mathfrak m}_{\theta_i}} (\theta_i-s)$, the claim follows. 
\end{proof}

To conclude this part, we emphasize that when $S\to R'$ is finite transversal, then the restriction of the asymptotic Samuel function to the regular ring $S$ is the usual order at the maximal ideal, and therefore a valuation.
In the absence of a finite transversal inclusion $S\subset R'$, we can reinterpret this property  in terms of CP-expansions. 

\begin{definition}
	Let $(R,\mathfrak{m})$ be a regular ring of dimension $n$. Let $d\leq n$ and let
	$(u_1,\ldots,u_d)$ part of a regular system of parameters of $R$.
	We say that an element $g\in R$ has a \emph{$(u)$-CP-expansion} if
	there is a regular system of parameters $u_1,\ldots,u_d,y_1,\ldots,y_{n-d}$ such that
	for the CP-expansion w.r.t. $(u,y)$ we have
	$S(g)\subset\mathbb{N}^d\times\{0\}$, i.e.,   
	$$g=\sum_{(\alpha,0)\in S(g)}c_{(\alpha,0)}(g)u^{\alpha},$$
	for some units $c_{(\alpha,0)}(g)\in R$.
\end{definition}
Note that this definition only depends on $u_1,\ldots,u_d$ and it does not depend on the choice of the $y$'s.

\begin{example} The fact that an element $g\in R$ has a $(u)$-CP-expansion does not imply that all powers of $g$ have a  $(u)$-CP-expansion. Consider for instance the ring   $R=k[u_1,u_2,y]_{\langle u_1,u_2,y\rangle}$, where $k$ is a field of
characteristic different from 2, and let 
  $g=u_1^2+u_1u_2(1+y)-\dfrac{1}{2}u_2^2$.
Here $S(g)=\{u_1^2, u_1u_2, u_2^2\}$ and $g$ has a $(u)$-CP-expansion.
But $g^2$ does not have a $(u)$-CP-expansion
$$g^2=u_1^4+2(1+y)u_1^3u_2+(2+y)u_1^2u_2^2y-(1+y)u_1u_2^3+\dfrac{1}{4}u_2^4,$$
in fact $S(g^2)=\{u_1^4, u_1^3u_2,u_1^2u_2^2y, u_1^3u_2, u_2^4\}$.
\end{example}
However, as we will see in the next proposition,  the powers of $g$ do  have a good behavior for the order with respect to the ideal $\langle u \rangle$.

\begin{proposition}
\label{orden_reduction}
Let $(R,\mathfrak{m})$ be a regular local ring,  let $(u_1,\ldots, u_d)$ be part of a regular system of parameters in $R$ and let ${\mathfrak a}\subset R$ be an ideal.  Suppose that $R'=R/\mathfrak{a}$ is a $d$-dimensional ring whose maximal ideal has a reduction generated by the classes  $u'_1,\ldots,u'_d\in R'$.
Let $g\in R$ be an element that has a $(u)$-CP-expansion, and write 
$$g=\sum_{(\alpha,0)\in S(g)}c_{(\alpha,0)}(g)u^{\alpha},$$
for some units $c_{(\alpha,0)}(g)\in R$.
Then if $g'\in R'$ is the class of $g$, 
$$\nub_{\mathfrak{m}'}(g')=\ord_{\mathfrak{m}}(g)=
\min\{|\alpha| : (\alpha,0)\in S(g)\}.$$
\end{proposition}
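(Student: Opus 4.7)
Write $r:=\min\{|\alpha|:(\alpha,0)\in S(g)\}$, so that $\ord_{\mathfrak m}(g)=r$ by Remark \ref{ObsCosPil}(v); the real content is the equality $\nub_{\mathfrak m'}(g')=r$. Let $I:=\langle u_1',\ldots,u_d'\rangle\subset R'$ and $k':=R'/\mathfrak m'$. The lower bound is a one-liner: the $(u)$-CP-expansion places $g\in\langle u_1,\ldots,u_d\rangle^r$, so $g'\in I^r\subseteq(\mathfrak m')^r$ and $\nub_{\mathfrak m'}(g')\geq r$. Since $I$ is a reduction of $\mathfrak m'$ one has $\overline{I^n}=\overline{(\mathfrak m')^n}$ for every $n$, hence $\nub_{\mathfrak m'}=\nub_I$, and the task becomes proving $\nub_I(g')\leq r$.

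The main tool I would invoke is the theorem of Northcott--Rees on analytic independence. Since the analytic spread of the $\mathfrak m'$-primary ideal $\mathfrak m'$ in the $d$-dimensional local ring $R'$ equals $d$, and $I$ is a reduction of $\mathfrak m'$ generated by exactly $d$ elements, $I$ is automatically a minimal reduction; no infiniteness hypothesis on $k'$ is required, precisely because a $d$-generated reduction has been furnished to us. Consequently $u_1',\ldots,u_d'$ are analytically independent in $R'$, which is equivalent to the natural $k'$-linear map
\[
\phi_n\colon k'[T_1,\ldots,T_d]_n\longrightarrow I^n/\mathfrak m'I^n,\qquad T^{\beta}\longmapsto \overline{(u')^{\beta}},
\]
being an isomorphism for every $n\geq 0$. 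This identification transports the problem from $R'$ to a polynomial ring over the field $k'$.

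To conclude, I would compute $\nu_I({g'}^n)$ explicitly. Set
\[
L(T):=\sum_{\substack{(\alpha,0)\in S(g)\\ |\alpha|=r}} c_\alpha\,T^\alpha\;\in\;R[T_1,\ldots,T_d],
\]
so that $g=L(u)+h$ with $h\in\langle u\rangle^{r+1}$, and in $R'$, $g'=L(u')+h'$ with $h'\in I^{r+1}$. Because each $c_\alpha$ is a unit of $R$, its class $\overline{c_\alpha}\in k'$ is nonzero, so $\bar L(T)\in k'[T]_r$ is a nonzero polynomial. Expanding by the binomial theorem, ${g'}^n=L(u')^n+\sum_{i\geq 1}\binom{n}{i}L(u')^{n-i}{h'}^i$; every cross-term lies in $I^{nr+1}\subseteq\mathfrak m'I^{nr}$, so ${g'}^n\equiv L(u')^n\pmod{\mathfrak m'I^{nr}}$. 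Under $\phi_{nr}^{-1}$ the right-hand side corresponds to $\bar L(T)^n\in k'[T]_{nr}$, which is nonzero because $k'[T]$ is a polynomial ring over a field. Hence ${g'}^n\notin\mathfrak m'I^{nr}$; combined with the inclusion $I^{nr+1}\subseteq\mathfrak m'I^{nr}$, this forces $\nu_I({g'}^n)=nr$ exactly. Passing to the limit gives $\nub_I(g')=r$, which is what was required.

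The subtlest point I foresee is guaranteeing analytic independence when $k'$ may be finite; the usual existence theorems for minimal reductions need $|k'|=\infty$, but here this is bypassed because a reduction with $\dim R'$ generators is already provided and is therefore automatically minimal, making Northcott--Rees directly applicable.
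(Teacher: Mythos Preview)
Your argument is correct. Both proofs ultimately hinge on the analytic independence of $u'_1,\ldots,u'_d$, but the execution differs in several respects. The paper first passes to infinite residue field via $R'[t]_{\mathfrak m'[t]}$, argues by contradiction inside $\Gr_{\mathfrak m'}(R')$, and then invokes the monomial-ordering Lemma~\ref{LemaCPproduct} to locate a specific unit coefficient in $G(U)^s$. You instead transfer the problem to the fiber cone $F(I)=\bigoplus I^n/\mathfrak m'I^n$ of $I=\langle u'\rangle$, compute $\nu_I({g'}^n)=nr$ directly for every $n$, and deduce $\bar L(T)^n\neq 0$ simply from $k'[T]$ being a domain. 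Your observation that the infinite-residue-field reduction is unnecessary is correct: since $I$ is a reduction of $\mathfrak m'$, it is $\mathfrak m'$-primary and $d$-generated in a $d$-dimensional ring, so $u'_1,\ldots,u'_d$ form a system of parameters and are therefore analytically independent with no hypothesis on $k'$ (this is the classical Northcott--Rees/Matsumura result). Your route is a bit more self-contained, while the paper's reuses the CP-expansion machinery (Lemma~\ref{LemaCPproduct}) already developed for other purposes.
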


\begin{proof}
Set $b=\ord_{\mathfrak{m}}(g)$. If $b=0$ there is nothing to prove, thus we can assume that $b>0$. In addition, we can also 
 assume that $R$ has infinite residue field: if the residue field of $R$ is finite, then consider the faithfully flat extension $R'\to R'_1=R'[t]_{\mathfrak{m}'[t]}$,  and use  Proposition \ref{PotenciaNuBar} (ix) and the fact that  the CP-expansion 
 $g$   is the same in both rings.

Since $\ord_{\mathfrak{m}}(g)=b\geq 1$,   $g'\in\mathfrak{m}'$ and   $\nub_{\mathfrak{m}'}(g')\geq b$. Suppose  that $\nub_{\mathfrak{m}'}(g')>b$.  Then, there is an integer $s\geq 1$ such that
$(g')^s\in(\mathfrak{m}')^{bs+1}$.
Set
$$G(U_1,\ldots,U_d)=
\sum_{\substack{{\beta\in S(g)}\\ {|\beta|=b}}}c_{(\beta,0)}(g)'U^{\beta}
\in R'[U],
\quad \text{and} \quad F(U)=(G(U))^s.$$
Observe that  $G(U)$ is  the class in $R'[U]$  of the sum of terms of degree
$b$ in the CP-expansion of $g$. Note that $F(U)$ is a  homogeneous polynomial of degree $bs$ and that 
$F(u'_1,\ldots,u'_d)\in(\mathfrak{m}')^{bs+1}$.
Since $u'_1,\ldots,u'_d$ are analytically independent in $R'$ then all the coefficients of $F(U)$ are in $\mathfrak{m}'$.
	
Fix any monomial ordering $\prec$ in $\mathbb{N}^n$ such that
$|\gamma_1|<|\gamma_2|$ implies $\gamma_1\prec\gamma_2$.
Let $(\alpha_0,0)=\min S(g)$. As in the proof of Lemma \ref{LemaCPproduct}
the term $c_{(\alpha_0,0)}(g)^s$ is the only one contributing to $u^{s\alpha_0}$ in
$g^s$. We conclude that the coefficient of $U^{s\alpha_0}$ in $F(U)$ is
$(c_{(\alpha_0,0)}(g)')^s$ and this is a
contradiction.
\end{proof}

\begin{remark}
\label{RemLemaLema}
Note that in the proof of Lemma \ref{lema_lema_v4}, in case 2 or 3, the chosen $s_i$
have $(u)$-CP-expansions.
\end{remark}

\section{The functions $\Hord_X$ and $\ord_X$}\label{Rees_Algebras} 

In the following lines we present a brief overview on the way the functions $\Hord_X^{(d)}$ and $\ord_X^{(d)}$ are defined for an equidimensional variety $X$ of dimension $d$ over a perfect field.   To this end we need to introduce some notions on Rees algebras and the way they are use in resolution.  
We refer to \cite{V3} and \cite{E_V} for further details.

\begin{definition}
	Let $R$ be a Noetherian ring. A \textit{Rees algebra $\mathcal{G}$ over $R$} is a finitely generated graded $R$-algebra, $\mathcal{G}=\bigoplus _{l\in \mathbb{N}}I_{l}W^l\subset A[W]$, 
	for some ideals $I_l\in R$, $l\in \mathbb{N}$ such that $I_0=R$ and $I_lI_j\subset I_{l+j}\mbox{,\; }$ for all $ l,j\in \mathbb{N}$. Here, $W$ is just a variable to keep track of the degree of the ideals $I_l$. Since $\mathcal{G}$ is finitely generated, there exist some $f_1,\ldots ,f_r\in  R$ and positive integers (weights) $n_1,\ldots ,n_r\in \mathbb{N}$ such that
	$\mathcal{G}=R[f_1W^{n_1},\ldots ,f_rW^{n_r}]$. The previous definition extends to Noetherian schemes in the obvious manner.
\end{definition}

Suppose now that $R$ is  essentially of finite type and smooth over a perfect field $k$, and  let     $\mathcal{G}=\oplus _{l\geq 0}I_lW^l$  be  a Rees algebra  over  $R$. Then, 
the  \textit{singular locus} of $\mathcal{G}$, Sing$(\mathcal{G})$,  is the closed set  in $\Spec(R)$ given by all the points $\zeta \in \Spec(R)$ such that
$\ord_{\zeta }(I_l)\geq l$, $\forall l\in \mathbb{N}$,
where $\ord_{\zeta}(I)$ denotes the order of the ideal $I$ in the regular local ring $\mathcal{O}_{V,\zeta }$. The {\em order of the Rees algebra at a point    $\zeta\in \Sing(\mathcal{G})$} is: 
$$\ord_{\zeta}(\G):=\inf \left\{\frac{\ord_{\zeta}(I_n)}{n}: n\geq 1\right\}.$$

If  $\mathcal{G}=R[f_1W^{n_1},\ldots ,f_rW^{n_r}]$, then  $
\Sing(\mathcal{G})=\left\{ \zeta \in \mathrm{Spec}(R)\ |\, \ord_{\zeta }(f_i)\geq n_i,\;  i=1,\ldots ,r\right\} \subset V$ and $\ord_{\zeta}(\G):=\min \left\{\frac{\ord_{\zeta}(f_i)}{n_i}: i=1,\ldots,r\right\}$   (see \cite[Proposition 1.4]{E_V}).

\begin{example} \label{Ex:Multiplicity}
	Suppose that
	$X\subset\Spec(R)=V$ is  a hypersurface with $I(X)=\langle f\rangle$.  Let $m>1$ be the maximum   multiplicity  at the points of $X$. Then the singular locus of 
	$\mathcal{G}=R[fW^m]$   is the set of points of $X$ having maximum multiplicity $m$, $\Mm$. This idea can be generalized for arbitrary algebraic varieties and we will go back to it in forthcoming paragraphs (see Theorem \ref{presentaciones_mult} below).   
\end{example}

In the previous example, the link between $\G$ and the closet set $\Mm$ is stronger than the 
 equality $\Sing(\G)= \Mm$.  To start with,  
by defining  a suitable law of transformations of Rees algebras  after  a {\em permissible  blow up}, we can establish the same  link  between the  top multiplicity locus  of  the strict transform of $X$,  and the singular locus of the {\em transform} of $\G$ (at least if  the maximum multiplicity  of the transform of  $X$ does not drop). This link is also  preserved   under smooth morphisms. In general, given an arbitrary equidimensional algebraic variety $X$ embedded in a smooth scheme $V$ over a field $k$,  and a sheaf of Rees algebras $\G$ over $V$, we say that    $\G$ {\em represents} the top multiplicity locus of $X$ is $\Sing(\G)=\Mm$ and this equality is preserved after smooth morphisms and permissible blow ups (if the maximum multiplicity of $X$ does not drop), see \cite{E_V} and \cite{V}.

\begin{parrafo} \label{integral_diff} {\bf Uniqueness of the representations of the multiplicity.} The Rees algebra of Example \ref{Ex:Multiplicity} is not the unique representing $\Mm$.
	In the following lines  we consider two operations on Rees algebras that preserve this property: 
	
	{\bf (i) Rees algebras and integral closure.} Two Rees algebras over a (not necessarily regular) Noetherian ring $R$  are \textit{integrally equivalent} if their integral closure in $\mathrm{Quot}(R)[W]$ coincide.
	We use $\overline{\mathcal{G}}$ for the integral closure of $\mathcal{G}$, which can be shown to also be a Rees algebra over $R$ (\cite[\S 1.1]{Br_G-E_V}).  
	
	{\bf (ii) Rees algebras and  saturation by differential operators.}   Let $\beta^*: S\to R$ be a smooth morphism of smooth algebras that are essentially of finite type  over a perfect field $k$ with $\dim_{\text{Krull}} S < \dim_{\text{Krull}}  R$, inducing a morphism $\beta: \Spec(R)\to \Spec(S)$. Then, for any integer $\ell$, the ${R}$-module  of relative differential operators of order at most $\ell$,   $\Diff_{{R}/S}^{\ell}$,
	is locally free    (\cite[(4)~\S~16.11]{EGA_4}). We will say that an $R$-Rees algebras ${\mathcal G}=\oplus_l I_lW^l$ is a
	{\em $\beta$-differential Rees algebra} if   for  every homogeneous element $fW^N\in \G$ and every $\Delta\in \Diff_{R/S}^{\ell}$ with $\ell<N$,  we have that $\Delta(f)W^{N-\ell}\in {\mathcal G}$ (in particular, $I_{i+1}\subset I_i$  since  $ \Diff_{R/S}^0\subset \Diff_{R/S}^1$).
	Given an arbitrary Rees algebra $\G$ over $R$  there is a natural way to construct a $\beta$-relative differential algebra with the property of being the smallest containing $\G$,  and we will denote it by $\Diff_{R/S}(\G)$   (see \cite[Theorem 2.7]{Villa2007}).    
	
	We say that $\mathcal{G}$ is \textit{differentially closed} if it is closed by the action of the sheaf of (absolute) differential operators  $\Diff_{R/k}$. We use  $\mathrm{Diff}(\mathcal{G})$ to denote the smallest differential Rees algebra containing $\mathcal{G}$ (its \textit{differential closure}). See \cite[Theorem 3.4]{Villa2007} for the existence and construction. 
	
	It can be shown that
	$\Sing(\G) =\Sing (\overline{\G})= \Sing (\mathrm{Diff}(\mathcal{G})),$
	(see \cite[Proposition 4.4 (1), (3)]{V3}), and that 
	for $\zeta\in \Sing(G)$, $\ord_{\zeta}(\G)=\ord_{\zeta}(\overline{\G})=\ord_{\zeta}(\mathrm{Diff}(\mathcal{G}))$ (see \cite[Remark 3.5, Proposition 6.4
	(2)]{E_V}). 
	 In addition, it can be checked that if $\G$ represents $\Mm$ for al algebraic variety $X$, then the Rees algebras  $\overline{\G}$ and $\Diff(\G)$ also represent $\Mm$. In fact, it can be shown that 
	the integral closure of $\mathrm{Diff}(\mathcal{G})$  is the largest algebra in $V$ with this property. 
	The previous discussion motivates the following definition: two Rees algebras on $V$, $\G$ and $\H$, are said to be {\em weakly equivalent}  if  $\overline{\mathrm{Diff}(\mathcal{G})}=\overline{\mathrm{Diff}(\mathcal{H})}$ (see \cite{Br_G-E_V} and \cite{HirThree}). 
\end{parrafo}

\begin{parrafo} \label{Emininacion_Hiper} {\bf Elimination algebras: the hypersurface case.}
	Let $S$ be a  smooth  $d$-dimensional algebra essentially  of finite type over a perfect field $k$,  with $d>0$.  Let $z$ be a variable. Then  the  natural inclusion $S {\longrightarrow } R=S[z]$ induces a   smooth projection $\beta: \Spec(R)\to \Spec(S)$. Let $f(z)\in S[z]$ be a polynomial of degree $m>1$,
	 \begin{equation}
	 	\label{expresion_f_1} 
	f(z)=z^m+a_1z^{m-1}+\ldots+a_m, \ \ a_i\in S, \ i=1,\ldots m,
	 \end{equation} 
	defining a hypersurface $X$  in $V^{(d+1)}:=\Spec(R)$. Set $X=\mathrm{Spec}(S[z])/\langle f(z) \rangle$. Suppose that  $\xi \in X$ is  a point  of multiplicity $m$.  
	Then, the $R$-algebra, 
	$$\mathcal{G}^{(d+1)}=\mathrm{Diff}(S[z][fW^{m}])\subset S[z][W]$$ represents the multiplicity function on $X$ locally at $\xi$. The {\em elimination algebra of $\mathcal{G}^{(d+1)}$} is the $S$-Rees algebra:   
		$$\G^{(d)}:=\G\cap S[W].$$
	There are other ways to define an  elimination algebra for $\G^{(d+1)}$, and it can be checked that all them lead to the same $S$-Rees algebra up to weak equivalence (see \cite[Theorem 4.11]{Villa2007}). In particular,  it can be shown that the elimination algebra is generated by a finite set of   symmetric (weighted homogeneous) functions evaluated   on the  coefficients of $f(z)$ (cf.  \cite{V00}, \cite[\S1, Definition 4.10]{Villa2007}).  It is worthwhile noticing that  the elimination algebra $\Gd$ is invariant under changes of the form $z_1=z+s$ with $s\in S$ (see \cite[\S 1.5]{Villa2007}).

	If the characteristic is zero, or if $m$ is not a multiple of the characteristic,  then we can  assume that  $f$ has the form of Tschirnhausen (there is always a change of coordinates that leads   to this form):
	\begin{equation}\label{ec:f_Tsch}
		f(z)=z^m+a_{2}z^{m-2}+\ldots + a_{m-i}z^i+\ldots +a_m\in S[z]\mbox{,}
	\end{equation}
	where $a_i\in S$ for $i=0,\ldots ,m-2$. In this case, 
	\begin{equation}
		\label{elimination_char_zero} 
	\mathcal{G}^{(d)}=\mathrm{Diff}(S[z][a_{2}W^2,\ldots ,a_{m-i}W^{m-i},\ldots ,a_mW^m])\mbox{,}
	\end{equation}
	is an elimination algebra of $\G^{(d+1)}$.  It can be shown that, in this setting, $\G^{(d)}$ also represents the (homeomorphic image via $\beta$ of) the top multiplicity locus of $X$, now in a smooth ambient space of lower dimension.

\end{parrafo}

 When $m$ is a multiple of the characteristic, $\G^{(d)}$ falls short to provide all the information codified in the coefficients of $f$. In this setting the first author jointly with O. Villamayor introduced another function: the function $\Hord_X$. 

\begin{parrafo} {\bf The $\Hord_X$-function in the hypersurface case.} \cite[\S 5.5]{BVIndiana} \label{DefSlope1} 
Consider the setting and the notation in \S \ref{Emininacion_Hiper}. The data   given by the smooth projection $\beta$,   a selection of a section of $\beta$, $z$, and the polynomial $f(z)$, is denoted by    $\mathcal{P}(\beta,z,fW^m)$. Then, 
	the \emph{slope  of $\mathcal{P}(\beta,z,fW^m)$ at a point $\zeta\in\Sing(\mathcal{G}^{(d+1)})\subset V^{(d+1)}$} is defined as: 
	\begin{equation} \label{EqDefSlope1}
		Sl(\mathcal{P})(\zeta):=\min\left\{
		\nu_{\beta(\zeta)}(a_1),\ldots,\frac{\nu_{\beta(\zeta)}(a_j)}{j},\ldots,\frac{\nu_{\beta(\zeta)}(a_m)}{m}, \ \ord_{\beta(\zeta)}(\mathcal{G}^{(d)})
		\right\}.
	\end{equation}

	The value $Sl(\mathcal{P})(\zeta)$ changes after 
	translations of the form $z+s$, with $s\in S$, given new values $Sl(\mathcal{P})(\zeta+s)$. 
 	Thus we consider, 	
	\begin{equation} \label{EqBetaOrd}
		\sup_{s\in S}\left\{Sl(\mathcal{P}(\beta,z+s,fW^m))(\zeta) \right\}
	\end{equation}
	which is an invariant of the point. Moreover, the supremum in (\ref{EqBetaOrd}) is a maximum for a suitable selection of $s\in S$. See \cite[\S 5.2 and Theorem 7.2]{BVComp}.
	
	To conclude, the previous discussion applies to a singular point in  $\zeta\in X$ in  a hypersurface $X$ of multiplicity $m>1$ in the following way: 
	 consider an \'etale neighborhood
	$X'\to X$ of a closed point of multiplicity $m$, $\xi\in\overline{\{\zeta\}}$,
	such that the setting of \S\ref{Emininacion_Hiper}  holds,
	and let $\zeta'\in X'$ be a point mapping to $\zeta$.
	Then  define
	\begin{equation*}
		\Hord^{(d)}_X(\zeta):=\Hord^{(d)}_{X'}(\zeta'):=\max_{s\in S}\left\{Sl(\mathcal{P}(\beta,z+s,gW^{N}))(\zeta') \right\}.
	\end{equation*} 
See 	\cite[\S 5, Definition 5.12]{BVIndiana}.  
\end{parrafo}

To conclude,  to understand and define  elimination algebras and the $\Hord_X$ function for arbitrary algebraic varieties,  it suffices to treat the hypersurface case, as stated in the following result of Villamayor (see also Remark \ref{caso_general} below):

\begin{theorem}\label{presentaciones_mult} \cite[Lemma 5.2, \S6, Theorem 6.8]{V}  (Presentations for the Multiplicity  function)
	Let $X=\text{Spec}(B)$ be an affine equidimensional algebraic variety of dimension $d$  defined over a perfect field $k$, and let $\xi\in  \underline{\text{Max}}\text{\,Mult}_X$ be a closed  point of   multiplicity $m>1$. Then,  there is an \'etale neighborhood $B'$ of $B$,   mapping $\xi'\in \Spec(B')$ to $\xi$, so that there is   a smooth $k$-algebra $S$ together with a
	finite  morphism     $\alpha: \Spec(B')\to \Spec(S)$ of generic rank $m$, i.e., if $K(S)$ is the quotient field of $S$, then $[K(S)\otimes_S B: K(S)]=m$. Write $B'=S[\theta_1,\ldots, \theta_e]$.  Then:  
	\begin{itemize}
		\item[(i)] If $f_i(z_i)\in K(S)[z_i]$ denotes the minimum polynomial of $\theta_i$ over $K(S)$ for $i=1,\ldots,e$, then $f_i(z_i)\in S[z_i]$  and there is a commutative diagram: 
		\begin{equation}
			\begin{aligned}
				\label{diagrama_presentacion}
				\xymatrix{R=S[z_1,\ldots, z_e] \ar[r] & S[z_1,\ldots, z_e]/\langle f_1(z_1),\ldots, f_e(z_e)\rangle \ar[r]   & B'  \\
					& S \ar[u] \ar[ur]_{\alpha^*} \ar[ul]^{\beta^*} &  }
			\end{aligned}
		\end{equation}

		\item[(ii)]  Let $V^{(d+e)}=\Spec(R)$, and let ${\mathcal I}(X')$ be the defining ideal of $X'$ at $V^{(d+e)}$. Then 
		$$\langle f_1,\ldots, f_e\rangle \subset {\mathcal I}(X');$$ 
		\item[(iii)] Denoting by $m_i$ the maximum order of the hypersurface $H_i=\{f_i=0\}\subset V^{(d+e)}$,  
		the differential Rees algebra
		\begin{equation}
			\label{G_Representa}
			\mathcal{G}^{(d+e)}=\Diff(R[f_1(z_1)W^{m_1}, \ldots, f_{e}(z_e)W^{m_{e}}])
		\end{equation}
		represents  the top  multiplicity locus of $X$, $\underline{\text{Max}}\text{\,Mult}_X$,  at   $\xi$ in $V^{(d+e)}$.		
	\end{itemize}
\end{theorem}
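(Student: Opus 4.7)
The plan is to construct the finite projection first, extract a presentation of $B'$ over a polynomial ring, and finally verify the multiplicity-representation property (iii), which is the deep part. Throughout I will use Zariski's multiplicity formula together with the theory of finite-transversal morphisms recalled in \S\ref{CossartPiltantHickel}.

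First, I would produce the étale neighborhood and the finite morphism $\alpha$. Starting with the local ring $\mathcal{O}_{X,\xi}$ of multiplicity $m$, one chooses $d$ elements $\bar u_1,\ldots,\bar u_d$ whose images generate a reduction of the maximal ideal (such a reduction exists by the extremal-case argument in Lemma \ref{Lemadelacomplu} applied after passing to a suitable étale cover, or directly because the residue field can be enlarged until the relevant Noether-normalization-style argument goes through). Lifting these to $d$ regular parameters on an étale neighborhood of $\xi$ in an affine smooth ambient, one obtains a smooth $d$-dimensional $k$-algebra $S$ and a finite morphism $\alpha:\Spec(B')\to\Spec(S)$. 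Zariski's multiplicity formula \eqref{ZariskiFormula} gives $m=e(\mathcal{O}_{X',\xi'})\leq [K(S)\otimes_S B':K(S)]$, and the reduction property forces the reverse inequality (this is precisely the definition of finite-transversality); so the generic rank equals $m$.

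Next, since $B'$ is finite over $S$, I would write $B'=S[\theta_1,\ldots,\theta_e]$ by picking module generators. Each $\theta_i$ is integral over $S$, and because $S$ is regular (hence normal), the minimal polynomial $f_i(z_i)\in K(S)[z_i]$ of $\theta_i$ has coefficients in $S$. The surjection $R=S[z_1,\ldots,z_e]\twoheadrightarrow B'$, $z_i\mapsto\theta_i$, factors through $S[z]/\langle f_1,\ldots,f_e\rangle$, giving (i). Claim (ii) is immediate: each $f_i(\theta_i)=0$ in $B'$, so $f_i\in\mathcal{I}(X')$.

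The main work is (iii): proving that $\mathcal{G}^{(d+e)}=\Diff(R[f_1W^{m_1},\ldots,f_eW^{m_e}])$ represents $\vMax\mathrm{Mult}_X$ at $\xi$. I would split this into two parts. For the equality of closed sets $\Sing(\mathcal{G}^{(d+e)})=\vMax\mathrm{Mult}_X$ near $\xi$, the inclusion $\supseteq$ is easy since a point of top multiplicity of $X$ must force $\ord(f_i)\geq m_i$ by the behavior of multiplicity under finite projections (Zariski's formula again). The reverse inclusion is the delicate part: at any $\zeta\in\Sing(\mathcal{G}^{(d+e)})$ mapping to a point of $X'$, one uses that the differential saturation reduces to checking the condition on the characteristic polynomial coefficients of all elements of $B'$, which together with the finite-transversality of $\alpha$ detects exactly maximal multiplicity; here one invokes the version of Hickel's formula (Theorem \ref{General_General_Hickel}) to control orders under the projection and Corollary \ref{maximo_hipersuperficies} to handle all hypersurfaces simultaneously. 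For the stability of this equality under smooth morphisms and permissible blow-ups (when $\max\mathrm{mult}$ does not drop), one checks compatibility of the differential closure with these operations; for smooth morphisms this follows from the fact that relative differential operators pull back, and for blow-ups one uses the transformation law of Rees algebras together with the standard fact that finite-transversality is preserved under permissible centers contained in $\vMax\mathrm{Mult}_X$.

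The hardest step will be the implication $\Sing(\mathcal{G}^{(d+e)})\subseteq\vMax\mathrm{Mult}_X$ together with the verification of stability under blow-ups, since it really uses the perfectness of $k$, the differential saturation in an essential way, and a careful analysis of how the minimal polynomials $f_i$ encode the full Hilbert--Samuel information at $\xi$. This is exactly where the hypothesis of the theorem concentrates its strength.
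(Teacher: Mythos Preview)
The paper does not contain its own proof of this statement: Theorem~\ref{presentaciones_mult} is quoted from \cite[Lemma 5.2, \S6, Theorem 6.8]{V} and used as a black box, so there is no in-paper proof to compare your proposal against.

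On the merits of your sketch: the outline for (i) and (ii) is essentially correct, but there are two concrete issues. First, you invoke Lemma~\ref{Lemadelacomplu} to produce a $d$-element reduction of $\mathfrak{m}$, yet that lemma applies only to rings in the \emph{extremal} case; for a general closed point of multiplicity $m$ one instead passes to an \'etale neighborhood with infinite residue field and uses the classical fact that any $\mathfrak{m}$-primary ideal then admits a reduction generated by $\dim$ elements. Second, and more seriously, your argument for (iii) does not close. The appeals to Theorem~\ref{General_General_Hickel} and Corollary~\ref{maximo_hipersuperficies} give pointwise information about $\bar\nu_{\mathfrak{m}'}$ and the Samuel slope, not a scheme-theoretic equality of closed sets, and they say nothing about stability under permissible blow-ups. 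The actual content of (iii) in \cite{V} is that finite-transversal projections are compatible with monoidal transforms along centers contained in $\Mm$ (so the presentation persists after blow-up), together with a separate argument linking the order conditions on the $f_i$ to Zariski's multiplicity formula at \emph{every} point of the relevant locus; your sketch names these steps but does not carry them out, and that is where the real work lies.
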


\begin{remark} \label{caso_general}
	For $\mathcal{G}^{(d+e)}$ as in (\ref{G_Representa}) an elimination algebra over $S$, $\Gd$, is also defined. There are different ways to give a definition   but it can be shown that, up to weak equivalence, 
	$$\Gd:=\Diff(\G^{(d+e)})\cap S[W],$$
see \cite[Theorem 4.11]{Villa2007} and \cite{Br_V}. Let $\G_i^{(d+1)}=\Diff(S[z_i][f_{z_i}W^{m_i}])$ be the Rees algebra representing the top multiplicity locus of $X_i:={\mathbb V}(\langle f_i(z_i)\rangle)\subset \Spec(S[z_i])$, and let  $\G_i^{(d)}\subset S[W]$ be the corresponding elimination algebra. Then, from the description in (\ref{G_Representa}),  it can be shown that,   up to weak equivalence, 
\begin{equation}
	\label{amalgama_eliminacion}
	\mathcal{G}^{(d)}=\G_1^{(d)}\odot \cdots \odot \G_e^{(d)}.	
\end{equation}

As it happens,  in the hypersurface case, if the characteristic is zero,  $\G^{(d)}$ also represents the (homeomorphic image via $\beta$ of) the top multiplicity locus of $X'=\Spec(B')$,  in a smooth ambient space of the same dimension as $X'$.  In positive characteristic,   the function $\Hord_{X'}$ can also be defined.  Let $X_i$ be the hypersurface defined by $f_i(z_i)\in S[z_i]$.
Then, 
$$\Hord^{(d)}_{X'}:=\min_{i=1\ldots,e}\Hord^{(de)}_{X_i},$$
see \cite[Definition 6.6]{BVIndiana}.  
\end{remark}

\begin{parrafo} \label{DefOrdHironaka}
	\textbf{$\Hord_X^{(d)}$ of an algebraic variety.} 
	Let $X$ be an equidimensional variety of dimension $d$ over a perfect field $k$ and let $\zeta\in X$ be a point of maximum  multiplicity $m>1$. We can assume that $X=\Spec(B)$ is affine. 
	Let $\xi\in\overline{\{\zeta\}}$ be a closed of multiplicity $m$. 
	Then, as indicated in Theorem  \ref{presentaciones_mult}, there is an 
	\'etale neighborhood of $\Spec(B)$, $X'=\Spec(B')$,   an embedding in some  smooth $(d+e)$-dimensional scheme $V^{(d+e)}$,
	and a differential Rees algebra $\G^{(d+e)}$ representing the top multiplicity locus of $X'$, and   there is a $\G^{(d+e)}$-admissible projection to some $d$-dimensional smooth  scheme where an elimination algebra $\Gd$ can be defined. Let $\zeta'\in X'$ be a point mapping to $\zeta$.  Then: 
	$$\Hord^{(d)}_X(\zeta):=\Hord^{(d)}_{\G^{(d+e)}}({\zeta'}).$$
	does not depend on the selection of the \'etale neighborhood, nor on the choice of  Rees algebra representing the top multiplicity locus,  nor on   the  admissible projection (see \cite[\S 5.2, Theorem 7.2]{BVComp}). 
\end{parrafo}

\section{Samuel slope, $\ord_X^{(d)}$ and $\Hord_X^{(d)}$}
\label{SecSSl_Hord}

Let $X$ be an equidimensional algebraic variety of dimension $d$ defined over a perfect field $k$, and let $\zeta\in X$ be a point of maximum multiplicity $m>1$.  The goal in this section is to establish a connection among  the values  $\ord_X^{(d)}(\zeta)$, $\Hord_X^{(d)}(\zeta)$,    and  $\SSl({\mathcal O}_{X,\zeta})$.

As indicated in section \ref{Rees_Algebras}, the computation of the functions  $\ord_X^{(d)}$ and $\Hord_X^{(d)}$  somehow reduces to the hypersurface case. Recall that, after considering a convenient local \'etale neighborhood of a   closed point of a  hypersurface, Weirerstrass Preparation Theorem can be applied,  and we can be assume to be  working in a setting as the one described in \S \ref{HyperLocal} below. Although this is our original motivation,  part of the arguments in the  present section  do not need the presence of a ground field, nor   extra assumptions on the rings involved. Thus,  we will  work with arbitrary commutative rings with unit and will impose additional conditions when needed.

\begin{parrafo}
Let $S$ be a commutative ring witn unit,  and let $z$ be a variable. Then the natural inclusion $S\subset S[z]$ is smooth, and the $S[z]$-module of relative differential operators, $\Diff_{S[z]/S}$ is generated by the Taylor operators. More precisely, if $T$ is another variable, then there is a morphism of $S$-algebras, $$\begin{array}{rrcl} 
		\text{Tay} : & S[z] & \to &  S[z,T] \\
		& f(z) & \mapsto & f(z+T)=\sum\Delta^i_z(f(z))T^i, 
	\end{array}$$
	where  the operators $\Delta^i_z: S[z]\to S[z]$ are induced by $\text{Tay}$. The set $\{\Delta^0_z,\ldots, \Delta^r_z\}$ is a basis of the free $S[z]$-module of $\Diff_{S[z]/S}^{\leq r}$, the $S$-differential operators of $S[z]$ of order $\leq r$, $r\geq 0$. If we replace $z$ by $t=z+s$, with $s\in S$, then $S[z]=S[t]$ and $\{\Delta^0_t,\ldots, \Delta^r_t\}$ is also an $S[z]$-basis of $\Diff_{S[z]/S}^{r}$.

	For a polynomial of degree $m$,
	$$h(z)=c_0z^m+c_{1}z^{m-1}+\ldots+c_{m-1}z+c_m\in S[z],$$
	we define, for every $r\geq 0$,  the ideal  
	\begin{equation}\label{derivadas_relativas}
		\Delta_{z}^{(r)}(\langle h(z)\rangle)=\langle \Delta_z^{i}(h(z)): i=0,\ldots, r\rangle=\Delta_t^{(r)}(\langle h(t-s)\rangle)=\langle \Delta_t^{i}(h(t-s)): i=0,\ldots, r\rangle.	 
	\end{equation}
	
\end{parrafo}

\begin{parrafo} \label{HyperLocal} Assume  now that    $(S, {\mathfrak m}_S)$ is regular local ring  of Krull dimension $d$,  and suppose that  
	\begin{equation}\label{forma_f}
		f=z^m+a_1z^{m-1}+\cdots+a_{m-1}z+a_m, \qquad a_i\in S, \  i=1,2,\ldots,m,
	\end{equation}
	is an element of order $m$ at some maximal ideal ${\mathfrak m}\subset S[z]$, dominating ${\mathfrak m}_S$,  i.e., $\ord_{\mathfrak m}(f)=m$ and ${\mathfrak m}\cap S={\mathfrak m}_S$. Let $R=S[z]_{\mathfrak m}$.

Set $R'=S[z] /\langle f\rangle$. Then the generic rank of the extension $S\to R'$ is $m$, and the multiplicity of $R'$ at $\mathfrak{m}'=\mathfrak{m}/\langle f\rangle$ is $m$,   i.e.,  the extension $S\to R'$ is finite transversal   and hence (i)-(iii) from Remark \ref{transversal_equiv} hold.

Let $z'\in R'$ be the class of $z$ in $R'$. From Remark \ref{transversal_equiv} (ii),  it follows that  there is some $s\in S$ such that   $z'+s\in {\mathfrak m}'$. Thus, after a translation, we can assume that $z\in {\mathfrak m}$, $z'\in {\mathfrak m'}$
and $\mathfrak{m}=\langle z\rangle+\mathfrak{m}_SR$. Therefore $\nub_{\mathfrak{m}'}(z')\geq	1$.
Since $z\in R$ is a regular parameter, then $R/\langle z\rangle$ is also regular and
the order of the class of $f$ in $R/\langle z\rangle$ is greater than or equal to $m$.
Therefore $\ord_{\mathfrak{m}}(a_m)\geq m$.
We have that $\ord_{\mathfrak{m}}(z^{m-1}+a_1z^{m-2}+\cdots+a_{m-1})\geq m-1$ and
iterating the argument we find that
	\begin{equation}
		\label{orden_coeficientes}
		\ord_{{\mathfrak m}_S}(a_i)\geq i,   \   \  \text{ for } i=1,\ldots, m,
	\end{equation}	
and, under these hypotheses, by Theorem \ref{GeneralHickel}, and using the expression in (\ref{forma_f}),
	\begin{equation}
		\label{nubarra_theta}
		1\leq 	\nub_{\mathfrak{m}'}(z')=\min_i\left\{\frac{\ord_{\mathfrak{m}}(a_i)}{i}\right\} =\min_i\left\{\frac{\ord_{\mathfrak{m}_S}(a_i)}{i}\right\},
	\end{equation}
	hence,
	\begin{equation}
		\label{orden_pasado_coef}
		\nub_{{\mathfrak m}'}(z')\leq \ord_{{\mathfrak m}_S} (a_1), \  \   \text{  and } \  \   \nub_{{\mathfrak m}'}(z') <\ord_{{\mathfrak m}_S} (a_i)  \   \  \text{ for } i=2,\ldots, m.
	\end{equation} 
	From (\ref{orden_coeficientes}) it also follows that 
	${\mathfrak m}'={\mathfrak m}_S+\langle z'\rangle$. 
	Next, note that for $i=0,\ldots,m-1$,
	\begin{equation}
		\label{expresion_derivadas}
		\Delta_z^i(f(z))=\binom{m}{i}z^{m-i}+
		\sum_{j=1}^{m-i}\binom{m-j}{i}a_jz^{m-j-i},
	\end{equation}	
	hence, 
	\begin{equation} \label{EqOrdDelta}
		\ord_{\mathfrak{m}}\left(\Delta^{i}(f(z))\right)\geq m-i,
		\qquad \text{ for } i=1,\ldots, m-1.
	\end{equation}
	
	If the characteristic of   $S$ is zero,      
	$\ord_{\mathfrak{m}}(\Delta^{(i)})(f(z))=m-i$, for $i=0,\ldots, m-1$, 
	thus, in particular, 
	$\ord_{\mathfrak{m}}\left(\Delta^{(m-1)}(f(z))\right)=1$, and there is some element of order one in  $\Delta^{(m-1)}(f(z))$. 
\end{parrafo}

The following proposition is inspired in a computation by V. Cossart (see \cite{CossartBulletin}). 

\begin{proposition} \label{orden_1_ideal_derivadas}
	With the hypothesis and notation of \S   \ref{HyperLocal}, 
	suppose that there exists some element $\omega \in \Delta_z^{(m-1)}(\langle f(z)\rangle)$   with $\ord_{\mathfrak{m}}(\omega)=1$.
	If ${\omega}'$ denotes the class of $\omega$ in $R'$ then:
	\begin{enumerate}
		\item[(i)] The element $\omega'$ is part of a minimal set of generators of ${\mathfrak m'}$; 
		\item[(ii)] If $\ker(\lambda_{\mathfrak{m}'})=(0)$, then $\nub_{\mathfrak{m}'}({\omega}')=\nub_{\mathfrak{m}'}(z')=1$;
		\item[(iii)] $\nub_{\mathfrak{m}'}({\omega}')\geq \nub_{\mathfrak{m}'}(z')$;
		\item[(iv)] If $\ker(\lambda_{\mathfrak{m}'})=\langle \In_{\mathfrak{m}'}(z')\rangle$  then  $\ker(\lambda_{\mathfrak{m}'}\rangle=\langle \In_{\mathfrak{m}'}({\omega}')\rangle$.
		
	\end{enumerate}
\end{proposition}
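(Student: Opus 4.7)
My plan is to handle the four items more-or-less in order, using the explicit formula (\ref{expresion_derivadas}) for $\Delta_z^i(f)$, the Hickel-type identity (\ref{nubarra_theta}), and the elementary fact that the natural surjection $R\twoheadrightarrow R'$ can only increase $\bar\nu$ (if $a\in\mathfrak m^t$ then $a'\in(\mathfrak m')^t$, and integral dependence is preserved in quotients, so $\bar\nu_{\mathfrak m'}(a')\geq \bar\nu_{\mathfrak m}(a)$).

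\textbf{Step (i).} I would argue that $\ord_{\mathfrak m'}(\omega')=1$. Since $\ord_{\mathfrak m}(\omega)=1$, it suffices to check $\omega\notin \mathfrak m^{2}+\langle f\rangle$; but any element of this ideal has order at least $\min(2,m)=2$ because $f\in\mathfrak m^{m}\subset\mathfrak m^{2}$. Hence $\mathrm{In}_{\mathfrak m'}\omega'\neq 0$ in $\mathfrak m'/(\mathfrak m')^{2}$, which is precisely the statement that $\omega'$ is part of a minimal generating set.

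\textbf{Step (ii).} From (i) we have $\mathrm{In}_{\mathfrak m'}\omega'\neq 0$, and since $z$ is a regular parameter of $R$ and $f\in\mathfrak m^2$, also $\mathrm{In}_{\mathfrak m'}z'\neq 0$. Injectivity of $\lambda_{\mathfrak m'}$ then forces both classes to survive in $(\mathfrak m')^{\geq 1}/(\mathfrak m')^{>1}$, i.e.\ $\bar\nu_{\mathfrak m'}(\omega')=\bar\nu_{\mathfrak m'}(z')=1$.

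\textbf{Step (iii) --- the main point.} Write $\omega=\sum_{i=0}^{m-1}b_i\Delta_z^{i}(f)$ with $b_i\in R$. Since $f'=0$, in $R'$ we get $\omega'=\sum_{i=1}^{m-1}b_i'\,(\Delta_z^i f)'$. Using the explicit expression (\ref{expresion_derivadas}), every monomial of $(\Delta_z^i f)'$ has the shape $\binom{m-j}{i}\,a_j'\,(z')^{m-j-i}$ with $0\leq j\leq m-i$ (setting $a_0:=1$). Combining the inequality $\bar\nu_{\mathfrak m'}(a_j')\geq\ord_{\mathfrak m_S}(a_j)$ with (\ref{nubarra_theta}) (which gives $\ord_{\mathfrak m_S}(a_j)\geq j\,\bar\nu_{\mathfrak m'}(z')$) and the subadditivity of $\bar\nu$ for products, each such monomial has $\bar\nu_{\mathfrak m'}\geq j\bar\nu_{\mathfrak m'}(z')+(m-j-i)\bar\nu_{\mathfrak m'}(z')=(m-i)\bar\nu_{\mathfrak m'}(z')$. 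Taking minima over the terms and then the sum, $\bar\nu_{\mathfrak m'}\big((\Delta_z^i f)'\big)\geq (m-i)\bar\nu_{\mathfrak m'}(z')\geq \bar\nu_{\mathfrak m'}(z')$ whenever $1\leq i\leq m-1$, and multiplying by $b_i'$ (whose $\bar\nu$ is $\geq 0$) the same bound carries to $b_i'(\Delta_z^i f)'$; summing yields $\bar\nu_{\mathfrak m'}(\omega')\geq\bar\nu_{\mathfrak m'}(z')$.

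\textbf{Step (iv).} The hypothesis $\ker\lambda_{\mathfrak m'}=\langle\mathrm{In}_{\mathfrak m'}z'\rangle$ is equivalent to $\bar\nu_{\mathfrak m'}(z')>1$; by (iii), $\bar\nu_{\mathfrak m'}(\omega')\geq\bar\nu_{\mathfrak m'}(z')>1$, so $\mathrm{In}_{\mathfrak m'}\omega'\in\ker\lambda_{\mathfrak m'}$, and since (i) gives $\mathrm{In}_{\mathfrak m'}\omega'\neq 0$ and the kernel is one-dimensional, $\mathrm{In}_{\mathfrak m'}\omega'$ generates it.

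The only step that demands real care is (iii): I must keep the three ingredients straight --- the explicit Taylor formula, the Hickel identity bounding the $\ord_{\mathfrak m_S}(a_j)$ by multiples of $\bar\nu_{\mathfrak m'}(z')$, and the monotonicity $\bar\nu_{\mathfrak m'}(a_j')\geq\bar\nu_{\mathfrak m}(a_j)$ --- and put them together so that the estimate $(m-i)\bar\nu_{\mathfrak m'}(z')$ emerges uniformly for all $i\in\{1,\dots,m-1\}$. Everything else is a routine consequence of this bound.
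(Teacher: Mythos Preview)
Your proof is correct and follows essentially the same strategy as the paper: expand $\omega$ as an $R$-linear combination of the $\Delta_z^i(f)$, use the explicit formula (\ref{expresion_derivadas}) together with the Hickel-type identity (\ref{nubarra_theta}) to obtain the key bound $\nub_{\mathfrak m'}\big((\Delta_z^i f)'\big)\geq (m-i)\,\nub_{\mathfrak m'}(z')$, and deduce (i)--(iv) from there. Your presentation is slightly more streamlined: in (i) you argue directly from $f\in\mathfrak m^2$ rather than extracting that $m$ and the top coefficient $g_{m-1}$ are units, in (iii) you avoid the paper's case split on $\ord_{\mathfrak m_S}(a_1)$ by treating all $i$ uniformly, and in (iv) you use the one-dimensionality of $\ker\lambda_{\mathfrak m'}$ rather than showing $\In_{\mathfrak m'}(z')=\In_{\mathfrak m'}(\omega')$ explicitly. (One tiny terminological slip: you write ``subadditivity'' for products where you mean the superadditivity $\nub(fg)\geq\nub(f)+\nub(g)$; the mathematics is unaffected.)
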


\begin{proof} Note first  that $\nub_{\mathfrak{m}'}({\omega}')\geq
	\ord_{\mathfrak{m}}(\omega)=1$. Now, 	since  $\omega\in \Delta_z^{(m-1)}(\langle f(z)\rangle)$,   there are elements $g_0,\ldots, g_{m-1}\in S[z]$ 
	such that 
	\begin{equation}
		\label{expresion_w}
		\omega=g_0f(z)+g_1\Delta_z^{1}(f(z))+\ldots+ g_{m-1}\Delta_z^{m-1}(f(z)).
	\end{equation}
	The hypothesis is that $\ord_{\mathfrak{m}}(\omega)=1$,  hence, by (\ref{EqOrdDelta}), necessarily, 
	\begin{equation}
		\label{nonulo}
		g_{m-1}\Delta_z^{m-1}(f(z))=g_{m-1}(mz+ (m-1){a_1})\neq 0,
	\end{equation}
	and $g_{m-1}$ and $m$ have to be units in $R$. 
	
	After looking at the equality (\ref{expresion_derivadas}) in $R'$ 	
	and by (\ref{nubarra_theta}),  we have that 
	$$\nub_{\mathfrak{m}'}\left({\Delta_z^i(f(z))'} \right)\geq
	(m-i)\nub_{\mathfrak{m}'}(z'),\qquad \text{ for } i=0,\ldots,m-1.$$ 
	In particular,
	\begin{equation}
		\label{desigualdad_z_prima} 
		\nub_{\mathfrak{m}}\left( {\Delta_z^i(f(z))'} \right)>\nub_{\mathfrak{m}'}(z'),\qquad \text{ for } i=0,\ldots,m-2.
	\end{equation}
	
	To prove  (i)  observe that by (\ref{desigualdad_z_prima}) and the  expresion in  (\ref{expresion_w})  we have that 
	$$\omega'=g_{m-1}(mz'+ (m-1){a_1}) \text{ mod } {\mathfrak m}^2.$$
	
	Now, since $a_1\in {\mathfrak m}_S$ and because ${\mathfrak m}'=\langle z'\rangle +{\mathfrak m}_SR'$, we have that also ${\mathfrak m}'=\langle w'\rangle+{\mathfrak m}_SR'$. Thus, $w'$ is part of a minimal set of generators of ${\mathfrak m}'$. From here we also conclude (ii), since the hypothesis 
	$\ker(\lambda_{\mathfrak{m}'})=(0)$, implies that necesarily $\nub_{{\mathfrak m}'}(\omega')=1$.

	(iii)   If $\nu_{\mathfrak{m}_S}(a_1)=1$ then by (\ref{nubarra_theta})  $\nub_{\mathfrak{m}'}(z')=1$,   and
	the result follows. Suppose, otherwise,  that $\nu_{\mathfrak{m}_S}(a_1)\geq 2$. 
	
Then, by (\ref{desigualdad_z_prima}) and considering the equality (\ref{expresion_w}) in $R'$, we have that  
	$$\nub_{\mathfrak{m}'}({\omega}')\geq
	\min\{\nub_{\mathfrak{m}'}(z'),\ord_{\mathfrak{m}_S}(a_1)\}=
	\nub_{\mathfrak{m}'}(z').$$
	
	Finally, if $\ker(\lambda_{\mathfrak{m}})=\langle \In_{\mathfrak{m}'}(z')\rangle$, then $\nub_{\mathfrak{m}'}(z')>1$, and  therefore, $\nub_{\mathfrak{m}'}(w')>1$. Finally, again using (\ref{expresion_w}), the strict inequality in (\ref{desigualdad_z_prima}), 
	equality (\ref{nonulo})  and the fact that  $g_{m-1}$ and $m$ are units, we have that  $\In_{\mathfrak{m}'}(z')=\In_{\mathfrak{m}'}({\omega}')$. Thus  (iv) follows.
\end{proof}

\begin{corollary} \label{orden_maximal}
	Under  the hypothesis and with the notation  of Proposition \ref{orden_1_ideal_derivadas},
	$\SSl(R')=\nub_{\mathfrak{m}'}({\omega}')$.
\end{corollary}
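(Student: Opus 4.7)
The plan is to split according to whether $\ker(\lambda_{\mathfrak{m}'})=0$. Note first that since $\mathfrak{m}'$ is generated by $u_1',\dots,u_d',z'$ while $\dim R'=d$, the excess embedding dimension of $R'$ equals $1$. In the non-extremal case $\ker(\lambda_{\mathfrak{m}'})=0$, one has $\SSl(R')=1$ by definition, while Proposition~\ref{orden_1_ideal_derivadas}(ii) already gives $\nub_{\mathfrak{m}'}(\omega')=1$, so the result is immediate.

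Assume now $\ker(\lambda_{\mathfrak{m}'})\neq 0$; then $\dim_k \ker(\lambda_{\mathfrak{m}'})=1$ and $R'$ is in the extremal case. I would first show that, after possibly replacing $z$ by $z+s$ with $s\in \mathfrak{m}_S$, the kernel is generated by $\In_{\mathfrak{m}'}(z')$. The key observation is that the classes $\lambda_{\mathfrak{m}'}(\In u_i')$ are $k$-linearly independent in $(\mathfrak{m}')^{\geq 1}/(\mathfrak{m}')^{>1}$: any relation $\sum \alpha_i\lambda_{\mathfrak{m}'}(\In u_i')=0$ would amount to $\nub_{\mathfrak{m}'}(\sum \tilde\alpha_i u_i')>1$ for lifts $\tilde\alpha_i\in S$, contradicting Theorem~\ref{GeneralHickel} applied to $\sum\tilde\alpha_i u_i'\in S$ (whose minimal polynomial over $K(S)$ is linear, giving $\nub_{\mathfrak{m}'}(\sum\tilde\alpha_i u_i')=\ord_{\mathfrak{m}_S}(\sum\tilde\alpha_i u_i)\leq 1$). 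Hence every generator of $\ker(\lambda_{\mathfrak{m}'})$ has nonzero $\In(z')$-coefficient in the basis $\{\In u_i',\In z'\}$ of $\mathfrak{m}'/(\mathfrak{m}')^2$, and subtracting the matching combination of the $u_i$ supplies the desired $s$. Since $\Delta_z=\Delta_{z+s}$, $\omega$ and the hypotheses of Proposition~\ref{orden_1_ideal_derivadas} survive the replacement, and part (iv) then yields $\ker(\lambda_{\mathfrak{m}'})=\langle\In_{\mathfrak{m}'}(\omega')\rangle$. In particular $\omega'$ is itself a $\lambda_{\mathfrak{m}'}$-sequence, so $\SSl(R')\geq \nub_{\mathfrak{m}'}(\omega')$.

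For the reverse inequality, I would exploit that for every $t\in S$ the parameter $\tilde z_t:=z+t$ places us back in the setup of \S\ref{HyperLocal}: $f$ is still pseudo-Weierstrass in $\tilde z_t$, and $\omega\in \Delta_{\tilde z_t}^{(m-1)}(\langle f\rangle)=\Delta_z^{(m-1)}(\langle f\rangle)$ still has $\mathfrak{m}$-order $1$. Thus Proposition~\ref{orden_1_ideal_derivadas}(iii) applied to each $\tilde z_t$ gives $\nub_{\mathfrak{m}'}(\omega')\geq \nub_{\mathfrak{m}'}(\tilde z_t')$ for all $t\in S$. To close the argument it suffices to verify $\SSl(R')=\sup_{t\in S}\nub_{\mathfrak{m}'}(\tilde z_t')$; I would do this by invoking Lemma~\ref{lema_lema_v4} in $\widehat{R'}$ (using $\SSl(R')=\SSl(\widehat{R'})$ from \S\ref{properties_slope}, Remark~\ref{lemma_lemma_remark} so that the adjusting elements can be chosen in $\widehat S$ thanks to the finite-transversal inclusion $S\hookrightarrow R'$, together with Proposition~\ref{PotenciaNuBar}(vii) for the descent from $\widehat S$ to $S$). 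Starting from the $\lambda$-sequence $z'$, the lemma constructs adjustments $z'-s'$ whose $\nub$ matches or exceeds that of any other $\lambda_{\mathfrak{m}'}$-sequence, yielding the required reduction. Combining produces $\nub_{\mathfrak{m}'}(\omega')\geq \SSl(R')$, and hence equality.

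The hardest step is precisely this final reduction of $\SSl(R')$ to a one-parameter supremum over $z$-translations, since it packages the passage to the completion, the finite-transversal property of $S\subset R'$ (critical for Remark~\ref{lemma_lemma_remark} and for keeping the adjustments inside $S$), and the continuity of $\nub$ under $\mathfrak{m}'$-adic limits.
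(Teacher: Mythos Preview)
Your proof is correct and follows the same approach as the paper: reduce to translations $z'+s$ with $s\in S$ via Lemma~\ref{lema_lema_v4} and Remark~\ref{lemma_lemma_remark}, then apply Proposition~\ref{orden_1_ideal_derivadas}(iii) and (iv). The paper's argument is slightly more compact in that it applies Lemma~\ref{lema_lema_v4} directly to an arbitrary $\lambda_{\mathfrak{m}'}$-sequence $\delta'$ (obtaining the translation $t'=z'+s'$ with $\nub_{\mathfrak{m}'}(t')\geq\nub_{\mathfrak{m}'}(\delta')$ and then invoking (iii)--(iv) for $t'$) without your preliminary linear-independence step; note also that the citation in that step should be Theorem~\ref{General_General_Hickel} (or Proposition~\ref{orden_reduction}) rather than Theorem~\ref{GeneralHickel}, and that the descent from $\widehat S$ to $S$ also needs Proposition~\ref{PotenciaNuBar}(viii) to cover the case $\SSl(R')=\infty$.
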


\begin{proof} If $\SSl(R')=1$ the claim follows from  Proposition \ref{orden_1_ideal_derivadas} (ii).  Otherwise, there is some $\delta'\in\mathfrak{m}'$ with $\ker\lambda_{\mathfrak{m}'}=\langle \In_{\mathfrak{m}'}\delta' \rangle$.
By Lemma \ref{lema_lema_v4} 
there is some translation of $z'$, $t'=z'+s'$, with $s\in S$,  so that  $\nub_{\mathfrak{m}'}(t')\geq \nub_{\mathfrak{m}'}({\delta}')$, 
	$$\langle \In_{{\mathfrak m}'}(t')\rangle =\langle \In_{{\mathfrak m}'}(\delta')\rangle,$$
	and $R'=S[t']$.
Then,    $R'=S[t']= S[t]/\langle f(t-s)\rangle$. By (\ref{derivadas_relativas}), $w\in \Delta_t^{m-1}(f(t-s))$.   Applying Proposition \ref{orden_1_ideal_derivadas} (iii) and (iv) to  $t'$ instead of $z'$, the result follows. 

\end{proof}

\begin{theorem}
	\label{todas_igual}
	Assume the setting and the notation of 
	\S  \ref{HyperLocal}, and suppose now that $S$ is a smooth and  essentially of finite type over a perfect field $k$. Let   $\xi$ be the closed point of  $X=\Spec(R')$  determined by ${\mathfrak m}'$. 
	Then
	$$\SSl(\mathcal{O}_{X,\xi})=\Hord_X^{(d)}(\xi)\leq \ord_{X}^{(d)}(\xi).$$
	Moreover if $\caract(k)=0$, then
	$\Hord_X^{(d)}(\xi)=\SSl(\mathcal{O}_{X,\xi})=\ord_{X}^{(d)}(\xi)$.
\end{theorem}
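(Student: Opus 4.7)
The approach is to rewrite $\Hord_X^{(d)}(\xi)$ directly from its definition by combining Hickel's formula (Theorem \ref{GeneralHickel}) with the translation invariance of the elimination algebra, and then to identify the resulting supremum over $S$-translations with $\SSl(\mathcal{O}_{X,\xi})$ via Lemma \ref{lema_lema_v4}. This will deliver both the inequality $\Hord\leq\ord_X^{(d)}$ and, after one further step, the equality $\SSl=\Hord$.

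First, for each $s\in S$ the polynomial $f$ is pseudo-Weierstrass in the regular system of parameters $(u_1,\ldots,u_d,z+s)$, with translated coefficients $a_i^{(s)}\in S$ satisfying $\ord_{\mathfrak{m}_S}(a_i^{(s)})\geq i$. By Theorem \ref{GeneralHickel}(ii), $\nub_{\mathfrak{m}'}(z'+s')=\min_i \ord_{\mathfrak{m}_S}(a_i^{(s)})/i$, and therefore $Sl(\mathcal{P}(\beta,z+s,fW^m))(\xi)=\min\{\nub_{\mathfrak{m}'}(z'+s'),\,\ord_X^{(d)}(\xi)\}$. Since the elimination algebra is invariant under the translations $z\mapsto z+s$ (\S\ref{Emininacion_Hiper}), $\ord_X^{(d)}(\xi)$ does not depend on $s$ and factors out of the supremum, yielding
\[
\Hord_X^{(d)}(\xi)=\min\Big\{\sup_{s\in S}\nub_{\mathfrak{m}'}(z'+s'),\ \ord_X^{(d)}(\xi)\Big\}\leq\ord_X^{(d)}(\xi).
\]

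Second, I would show $\sup_{s\in S}\nub_{\mathfrak{m}'}(z'+s')=\SSl(\mathcal{O}_{X,\xi})$. By \S\ref{HyperLocal}, $(u_1',\ldots,u_d')$ generates a reduction of $\mathfrak{m}'$ and $R'$ has excess embedding dimension $1$; in the non-extremal case both quantities equal $1$. In the extremal case, $\ker\lambda_{\mathfrak{m}'}$ is one-dimensional, so any $z'+s'$ with $\nub>1$ is a $\lambda_{\mathfrak{m}'}$-sequence, giving $\sup\leq \SSl(R')$. For the reverse inequality, starting from $z'$ one applies Lemma \ref{lema_lema_v4} together with Remark \ref{lemma_lemma_remark} iteratively---the hypotheses are satisfied because $R'$ contains the field $k$ and $\mathfrak{m}_S$ generates a reduction of $\mathfrak{m}'$---to construct a Cauchy sequence $s_n\in S$ with $\nub_{\mathfrak{m}'}(z'+s_n')\nearrow \SSl(R')$, passing to $\widehat{R'}$ and using $\SSl(R')=\SSl(\widehat{R'})$ if necessary. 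Combined with the first step,
\[
\Hord_X^{(d)}(\xi)=\min\{\SSl(\mathcal{O}_{X,\xi}),\ \ord_X^{(d)}(\xi)\},
\]
so the equality $\SSl=\Hord$ reduces to the single inequality $\SSl\leq\ord_X^{(d)}$.

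This inequality $\SSl(\mathcal{O}_{X,\xi})\leq\ord_X^{(d)}(\xi)$ is the main obstacle and is where the argument is non-formal in positive characteristic. To prove it, I would use Corollary \ref{orden_maximal}: $\SSl(R')=\nub_{\mathfrak{m}'}(\omega')$ for a suitable $\omega\in\Delta_z^{(m-1)}(\langle f\rangle)$ of order $1$. Since $fW^m\in\mathcal{G}^{(d+1)}$ and $\mathcal{G}^{(d+1)}$ is differentially closed, $\omega W$ lies in $\mathcal{G}^{(d+1)}_1$ and plays the role of a (generalized) element of maximal contact; the comparison between $\mathcal{G}^{(d+1)}$ and its elimination algebra provided by \cite[Theorem 4.11]{Villa2007} and \cite{BVComp} then bounds $\nub_{\mathfrak{m}'}(\omega')\leq \ord_X^{(d)}(\xi)$, which is exactly the required estimate.

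Finally, when $\caract(k)=0$ I would pass to Tschirnhausen form: because $m$ is invertible in $S$, the translation $z\mapsto z-a_1/m$ sets $a_1=0$, and by (\ref{elimination_char_zero}), $\mathcal{G}^{(d)}=\Diff(S[a_2W^2,\ldots,a_mW^m])$. Since $\ord_{\mathfrak{m}_S}(a_i)\geq i\geq 1$, differential operators cannot decrease the ratios $\ord_{\mathfrak{m}_S}(a_i)/i$, so
\[
\ord_X^{(d)}(\xi)=\min_{2\leq i\leq m}\frac{\ord_{\mathfrak{m}_S}(a_i)}{i}=\nub_{\mathfrak{m}'}(z'_{\mathrm{Tsch}})\leq \SSl(\mathcal{O}_{X,\xi}),
\]
where the middle equality is Hickel's formula applied after Tschirnhausen. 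Combined with the previously established $\SSl\leq\ord_X^{(d)}$, all three quantities agree.
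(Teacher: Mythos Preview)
Your overall architecture matches the paper's: first establish $\Hord_X^{(d)}(\xi)=\min\{\SSl(\mathcal{O}_{X,\xi}),\ord_X^{(d)}(\xi)\}$ via Hickel's formula and Lemma~\ref{lema_lema_v4}, then reduce everything to the single inequality $\SSl(\mathcal{O}_{X,\xi})\leq\ord_X^{(d)}(\xi)$. The characteristic-zero endgame via Tschirnhausen is also essentially the paper's.

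The genuine gap is in your proposed proof of $\SSl\leq\ord_X^{(d)}$ in positive characteristic. You invoke Corollary~\ref{orden_maximal}, but that corollary rests on the hypothesis of Proposition~\ref{orden_1_ideal_derivadas}: the existence of an element $\omega\in\Delta_z^{(m-1)}(\langle f\rangle)$ with $\ord_{\mathfrak m}(\omega)=1$. As the proof of that proposition shows (see equation~(\ref{nonulo})), such an $\omega$ forces $m$ to be a unit in $R$. When $p\mid m$ no element of order one need exist in $\Delta_z^{(m-1)}(\langle f\rangle)$, so Corollary~\ref{orden_maximal} is simply unavailable, and your bridge from $\SSl$ to $\ord_X^{(d)}$ collapses precisely in the case where the inequality is nontrivial.

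The paper closes this gap by a different route that does not require any maximal-contact element: after using Lemma~\ref{lema_lema_v4} to realize $\SSl(R')=\nub_{\mathfrak m'}(z')$ for a suitable translation, it appeals to the concrete description of the elimination algebra $\mathcal{G}^{(d)}$ (up to weak equivalence) as generated by weighted homogeneous symmetric functions in the coefficients $a_1,\ldots,a_m$ of $f$ (see \S\ref{Emininacion_Hiper}). Any such generator $g$ of weight $w$ satisfies $\ord_{\mathfrak m_S}(g)\geq w\cdot\min_i\ord_{\mathfrak m_S}(a_i)/i$, whence $\ord_X^{(d)}(\xi)\geq\min_i\ord_{\mathfrak m_S}(a_i)/i=\nub_{\mathfrak m'}(z')=\SSl(R')$ by~(\ref{nubarra_theta}). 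You should replace your appeal to Corollary~\ref{orden_maximal} by this argument.
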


\begin{proof}   
	Observe first that   from the definition of  $\Hord_X^{(d)}$ in \S \ref{DefSlope1}, the equality in (\ref{nubarra_theta}), and Lemma \ref{lema_lema_v4}, 
	 $\Hord_X^{(d)}(\xi)=\min\{ \SSl(\mathcal{O}_{X,\xi}), \ord_{X}^{(d)}(\xi) \}$.  Next, note that  the inequality 
	\begin{equation}\label{desigual} 
	\SSl(\mathcal{O}_{X,\xi})\leq \ord_{X}^{(d)}(\xi)
	\end{equation}
	always holds. This  is straightforward if  $\mathcal{O}_{X,\xi}$ is not in the extremal case (i.e., if $\SSl(\mathcal{O}_{X,\xi})=1$). Otherwise, by Lemma \ref{lema_lema_v4}, after a suitable translation of $z'$  by some element of $S$, we can assume that $\SSl(\mathcal{O}_{X,\xi})=\nub_{{\mathfrak m}'}(z')>1$. The Rees algebra representing $\Mm$ is $\G^{(d+1)}=\Diff(R[fW^m])$, with $f$ as in (\ref{forma_f}), and then, the elimination algebra, is, up to weak equivalence,  
	 an $S$-Rees algebra generated by weighted homogeneous polynomials in the coefficients of $f$  (see \S \ref{Emininacion_Hiper}). Now the inequality (\ref{desigual}) follows  from  (\ref{nubarra_theta}). 
\medskip 

It remains to see that when the characteristic of $k$ is zero, we have that   $\SSl(\mathcal{O}_{X,\xi})=\ord_{X}^{(d)}(\xi)$. With the same notation as in \S \ref{HyperLocal}, note that, after a convenient traslation of $z$, we can assume that 
	$$f(z)=z^m+a_2z^{m-2}+\ldots+a_{m-1}z+a_m,$$
	i.e., $a_1=0$, and   $z\in \Delta_z^{(m-1)}(\langle f(z)\rangle)$ is an element of order 1 at ${\mathfrak m}$. In such case, up to weak equivalence,  the elimination algebra of $\G=\Diff(R[fW^m])$ is $S[a_2W^2,\ldots, a_mW^m]$ (see \S \ref{Emininacion_Hiper}), hence 
	$$\ord_{X}^{(d)}(\xi)=\min \left\{\frac{\ord_{{\mathfrak m}_S}(a_i)}{i}, i=2,\ldots,m\right\}=\nub_{{\mathfrak m}'}(z')=\SSl(\mathcal{O}_{X,\xi}),$$
	where the last equality follows from Corollary \ref{orden_maximal}. 	
\end{proof}
\begin{corollary}
\label{corolario_H_ord}
Let $X$ be an equidimensional algebraic variety of dimension $d$  defined over a perfect field $k$. Let $\zeta\in X$ be a  singular point of multiplicity $m>1$.  
	Then
	$$\SSl(\mathcal{O}_{X,\zeta})=\Hord_X^{(d)}(\zeta)\leq \ord_{X}^{(d)}(\zeta).$$
	Moreover if $\caract(k)=0$ then
	$\Hord_X^{(d)}(\zeta)=\SSl(\mathcal{O}_{X,\zeta})=\ord_{X}^{(d)}(\zeta)$.
\end{corollary}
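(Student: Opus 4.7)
The strategy is to reduce the statement to Theorem~\ref{todas_igual}, which handles the hypersurface case, by invoking Villamayor's presentation theorem (Theorem~\ref{presentaciones_mult}) together with the amalgamation identities for the three invariants.

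Since $X$ is equidimensional over a perfect field, multiplicity is upper-semicontinuous, so there exists a closed point $\xi\in\overline{\{\zeta\}}\subset X$ with $\mult(\xi)=m$. Apply Theorem~\ref{presentaciones_mult} at $\xi$ to obtain an \'etale neighborhood $X'=\Spec(B')\to X$ and a point $\zeta'\in X'$ lying over $\zeta$, where $B'=S[\theta_1,\ldots,\theta_e]$ is finite over a smooth $d$-dimensional $k$-algebra $S$ with $\theta_i$ of minimal polynomial $f_i(z_i)\in S[z_i]$ over $K(S)$. Set $X_i:=\Spec(S[z_i]/\langle f_i\rangle)$, and let $\zeta'_i$ denote the image of $\zeta'$ in $X_i$. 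By the \'etale invariance of the Samuel slope (\cite[Theorem~5.5]{PrepJavier} applies since $\mathcal{O}_{X,\zeta}$ is equidimensional, excellent and contains $k$) and of $\Hord_X^{(d)}$ and $\ord_X^{(d)}$ (by their construction as recalled in \S\ref{DefOrdHironaka}), it suffices to prove the corollary at $\zeta'$. Since $\zeta'$ lies in the maximum-multiplicity stratum, localizing at $\zeta'$ preserves the finite-transversal structure of $S\subset B'$.

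Next, we invoke three amalgamation identities. By Corollary~\ref{maximo_hipersuperficies} applied at $\zeta'$,
$$\SSl(\mathcal{O}_{X',\zeta'})=\min_{1\leq i\leq e}\SSl(\mathcal{O}_{X_i,\zeta'_i}).$$
By Remark~\ref{caso_general}, $\Hord_{X'}^{(d)}(\zeta')=\min_i\Hord_{X_i}^{(d)}(\zeta'_i)$, and by the weak-equivalence formula (\ref{amalgama_eliminacion}) together with the fact that the order of an amalgamation of Rees algebras is the minimum of the orders, $\ord_{X'}^{(d)}(\zeta')=\min_i\ord_{X_i}^{(d)}(\zeta'_i)$. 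Now each $X_i$ is a hypersurface over the smooth $d$-dimensional $S$, and at $\zeta'_i$ we are in the local setup of \S\ref{HyperLocal} after localization. Theorem~\ref{todas_igual} therefore gives, for every $i$,
$$\SSl(\mathcal{O}_{X_i,\zeta'_i})=\Hord_{X_i}^{(d)}(\zeta'_i)\leq\ord_{X_i}^{(d)}(\zeta'_i),$$
with equality throughout when $\caract(k)=0$. Taking minimums over $i$ produces the desired chain $\SSl(\mathcal{O}_{X,\zeta})=\Hord_X^{(d)}(\zeta)\leq\ord_X^{(d)}(\zeta)$, with both inequalities becoming equalities in characteristic zero.

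The main obstacle I anticipate is that Theorem~\ref{todas_igual} is phrased for a closed point of an affine hypersurface, whereas the corollary concerns an arbitrary $\zeta\in X$. The bulk of the technical work lies in verifying that after localizing at a non-closed point of the max-mult stratum, the hypotheses of \S\ref{HyperLocal} (in particular, finite-transversality of $S\subset B'$ at the localized prime, and $\ord_{\mathfrak m}(f_i)=m_i$) are preserved, so that the proof of Theorem~\ref{todas_igual} applies verbatim. A secondary check is that the three amalgamation identities hold at non-closed points; for $\Hord^{(d)}$ and $\ord^{(d)}$ this is intrinsic to the Rees-algebra construction, while for $\SSl$ it follows by applying the local-ring argument behind Corollary~\ref{maximo_hipersuperficies} to the appropriate localization.
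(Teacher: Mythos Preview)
Your proposal is correct and follows essentially the same approach as the paper: reduce to the hypersurface case via Villamayor's presentation theorem (Theorem~\ref{presentaciones_mult}), use \'etale invariance of the Samuel slope together with the amalgamation identities from Corollary~\ref{maximo_hipersuperficies} and Remark~\ref{caso_general}, apply Theorem~\ref{todas_igual} to each $X_i$, and handle a non-closed $\zeta$ by localizing the finite-transversal extension $S\subset B'$ at the prime below $\zeta'$. The paper's proof is slightly terser but structurally identical, including the final localization step $S_{\mathfrak q}\to R'_{\mathfrak p}$ that you flag as the main technical point.
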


\begin{proof} If $\zeta$ is not a closed point, then select some closed point $\xi\in X$ with the same multiplicity $m$, so that $\xi\in \overline{\{\zeta\}}$, the closure of $\zeta$.
	By Theorem \ref{presentaciones_mult},  after considering a convenient local  \'etale neighborhood of ${\mathcal O}_{X,\xi}\to (R',{\mathfrak m}')$,  we can assume that there is a smooth $k$-algebra $S$ and   a finite transversal morphism $S\to R'$, so that if we write $R'=S[z_1,\ldots,z_r]/I=S[z_1',\ldots,z_r']$, where $z_1,\ldots, z_r$ are variables and $z_i'$, $i=1,\ldots,r$, denotes the class module $I\subset S[z_1,\ldots,z_r]$,  and, letting $f_i(z_i)\in S[z_i]$ be the minimal polynomial of $z_i$ over $K(S)$, the fraction field of $S$, we have that 
	$$\Hord^{(d)}_{X,\xi}=\min_i\{\Hord^{(d)}_{X_i,\xi}\},$$
	where $X_i=\Spec(S[z_i]/\langle f_i(z_i)\rangle)$ (see \S \ref{DefOrdHironaka}).  Since the Samuel slope of a local ring does not change after a local \'etale extension,  
	$\SSl({\mathcal O}_{X,\xi})=\SSl (R')$ (see \S \ref{properties_slope}). Now,  for $i=1,\ldots, r$ there are     finite transversal extensions of local rings,
	$$\xymatrix{S \ar[r] &  S[z_i'] \simeq S[z_i]/\langle f(z_i)\rangle \ar[r] &  R'.   }
	$$
	   By Corollary  \ref{maximo_hipersuperficies} 
	$$\SSl({\mathcal O}_{X,\xi})=\min_i\{\SSl({\mathcal O}_{X_i,\xi})\},$$
 and  the claim follows from Theorem \ref{todas_igual} if  $\zeta=\xi$. Otherwise, select some prime ${\mathfrak p}\subset R'$ dominating   $\zeta$ and let ${\mathfrak q}:={\mathfrak p}\cap S$. Then the claim follows in a similar way as before after localizing and considering the   finite transversal extension 
	$S_{\mathfrak q}\to R'_{\mathfrak p}=S_{\mathfrak q}[z_1',\ldots, z_r']$. 
\end{proof}

\section{Hironaka's polyhedron}
\label{poliedro_Hironaka}

In this section we recall the definition of Hironaka's polyhedron and some  properties.
The original reference is \cite{HirPoly}, other references are
\cite{CossartBulletin} or \cite[Appendix]{CossartGiraudOrbanz}.
See \cite{Schober} and \cite[Appendix]{CossartJannsenSaito} for more recent papers.

We end this section with Theorem \ref{ExtenPolHir}, from where it follows that, for the computation of the Hironaka's polyhedrom of a  singularity, we can assume that the residue field of the corresponding local ring is infinite,
a proof that we include to make the presentation self-contained.
This will pay a role in the last section of this paper. 
	
\begin{definition} \label{DefHirPolyY}
Let $(R,\mathfrak{m})$ be a regular local ring of dimension $n$.
Fix some $0<r<n$ and let $$(u,y)=(u_1,\ldots,u_{n-r},y_1,\ldots,y_r)$$ be a regular system of parameters in  $R$.
For a given $f\in R$   consider the CP-expansion w.r.t. $(u,y)$ as in Proposition \ref{ExpanCosPil}:
$$f=\sum_{(\alpha,\beta)\in S(f)}c_{\alpha,\beta}u^{\alpha}y^{\beta},$$
where $c_{\alpha,\beta}$ are units in $R$ and the finite set $S(f)$ is
determined by $f$ and $(u,y)$. Let $m=\ord_{\mathfrak m}(f)$, assume that
$f+\langle u\rangle\in R/\langle u\rangle$ is non-zero and that 
$m=\ord_{(y)}(f+\langle u\rangle)$. Under these assumptions,  the {\em Hironaka's polyhedron of $f$ w.r.t. $(u,y)$} is defined as: 
$$\Delta_R(f,u,y)=\Delta(f,u,y):={\mathrm{Conv}}\left(
\left\lbrace
\frac{\alpha}{m-|\beta|} \mid (\alpha,\beta)\in S(f), |\beta|<m
\right\rbrace + \mathbb{N}^{n-r}\right),$$
where $\mathrm{Conv}$ denotes the convex hull  of the set in ${\mathbb R}^{n-r}$ 
(note that $\Delta(f,u,y)\subset\mathbb{R}^{n-r}_{\geq 0}$).
In addition, one can associate  to this polyhedron  the rational number: 
$$\delta(\Delta(f,u,y))=\min\{|\gamma|\mid \gamma\in\Delta(f,u,y)\}\in\mathbb{Q}. $$
\end{definition}

Note also that if $\widehat{R}$ is the $\mathfrak{m}$-adic completion of $R$ and if we also denote   by $f$ and $(u,y)$ the corresponding images in $\widehat{R}$,  then
$$\Delta_{\widehat{R}}(f,u,y)=\Delta_R(f,u,y).$$

\begin{definition} \label{DefHirPolyU}
Let $(R,\mathfrak{m})$ be a regular local ring of dimension $n$.
Let $(u_1,\ldots,u_{n-r})$ be part of a regular system of parameters in $R$.
Let $f\in R$, set $m=\ord_{\mathfrak m}(f)$, assume that
$f+\langle u\rangle\in R/\langle u\rangle$ is non-zero and that 
$m=\ord_y(f+\langle u\rangle)$. Then 
the {\em Hironaka's polyhedron of $f$ w.r.t. $(u)$} is defined as: 
$$\Delta(f,u):=\bigcap_{\hat{y}}\Delta(f,u,\hat{y}),$$ 
where intersection runs over all $\hat{y}=\hat{y_1},\ldots,\hat{y_r}\in\widehat{R}$ such that $(u,\hat{y})$ is a regular system of 
parameters in  $\widehat{R}$. Analogously, associated to this polyhedron there is the rational number:  
$$\delta(\Delta(f,u))=\min\{|\gamma|\mid \gamma\in\Delta(f,u)\}\in\mathbb{Q}.$$
\end{definition}

The following theorem of Hironaka says that if $R$ is complete,  then  $\Delta(f,u)$ can be realized as the polyhedron of $f$ respect to some regular system of parameters in $\hat{R}$:

\begin{theorem} \label{ThHirPoly}
	\textbf{(Hironaka)} \cite[(4.8)]{HirPoly}
	Given $u_1,\ldots,u_{n-r}$ and $f\in R$ as in Definition \ref{DefHirPolyU}, 
	there exist regular parameters $\hat{y}_1,\ldots,\hat{y}_r$ in $\widehat{R}$, the ${\mathfrak m}$-adic completion of $R$,  such that $\Delta(f,u)=\Delta(f,u,\hat{y})$. Moreover, if $\widehat{R}$ is isomorphic to a power series ring in variables $(u,y)$ then there are series in $(u)$, 
	$s_i(u)$, $i=1,\ldots,r$,  such that
	$$\Delta(f,u)=\Delta(f,u,y_1-s_1(u),\ldots, y_r-s_r(u)).$$
	
\end{theorem}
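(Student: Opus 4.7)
The plan is to construct the desired regular parameters $\hat{y}_1, \ldots, \hat{y}_r$ as the limit of a Cauchy sequence of systems of parameters in $\widehat{R}$, obtained by iteratively eliminating ``solvable'' vertices of the polyhedron. The key technical ingredient is Lemma \ref{LemaCPaprox}, which ensures that CP-expansions are stable under $\mathfrak{m}$-adically small perturbations of the regular system of parameters, so the polyhedron stabilizes below higher and higher heights as the iteration proceeds.

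First I would pick any initial regular system of parameters $(u, y^{(0)})$ extending $(u)$ in $\widehat{R}$. If $\Delta(f,u,y^{(0)}) = \Delta(f,u)$ we are done, so assume the containment is strict. Since $\Delta(f,u)$ is the intersection over all choices of $\hat{y}$, there must exist a vertex $v$ of $\Delta(f,u,y^{(0)})$ lying outside $\Delta(f,u)$; such a vertex is realized (in the CP-expansion) by terms of the form $c_{\alpha,\beta}u^{\alpha}y^{\beta}$ with $\alpha/(m - |\beta|) = v$. The central technical claim, and the heart of the argument, is that such a vertex is always \emph{solvable} via a translation of the form $y_i^{(0)} \mapsto y_i^{(0)} - \lambda_i u^{\gamma_i}$ with $\lambda_i$ a unit and $|\gamma_i|$ large enough that $\sum_i \gamma_i$ forces $v$ out of the CP-expansion of $f$ with respect to the new parameters. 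Proving solvability rigorously, by direct manipulation of the CP-expansion and the binomial/multinomial expansion of $(y_i^{(0)} - \lambda_i u^{\gamma_i})^{\beta_i}$, is the main obstacle.

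Granting solvability, I would iterate. Given $(u, y^{(k)})$, pick the vertex $v^{(k)}$ of $\Delta(f, u, y^{(k)}) \setminus \Delta(f,u)$ of minimal norm (if any), solve it by a translation $y^{(k+1)} = y^{(k)} - s^{(k)}$ with $s^{(k)} \in \langle u\rangle$ and $s^{(k)} \in \mathfrak{m}^{N_k+1}$ for some $N_k \to \infty$. By Lemma \ref{LemaCPaprox}, $S_{y^{(k+1)}}(f)$ and $S_{y^{(k)}}(f)$ agree on multi-indices of total degree $\leq N_k$, so the relevant portion of the polyhedron (below the corresponding height) stabilizes. The sequence $\{y_i^{(k)}\}_{k}$ is Cauchy in $\widehat{R}$ by construction, and by completeness converges to some $\hat{y}_i \in \widehat{R}$; continuity of the quotient map $\mathfrak{m} \to \mathfrak{m}/\mathfrak{m}^2$ ensures $(u, \hat{y})$ remains a regular system of parameters. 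Applying Lemma \ref{LemaCPaprox} once more between $y^{(k)}$ and $\hat{y}$ gives $\Delta(f,u,\hat{y}) = \Delta(f,u)$, provided the iteration is organized so that every would-be vertex outside $\Delta(f,u)$ is eventually eliminated; the standard device for this is to always solve the vertex of minimal norm, which forces the minimum norm of remaining bad vertices to tend to infinity.

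Finally, for the moreover part: when $\widehat{R} = k[[u,y]]$ for some coefficient ring, each correction term $s^{(k)}$ is taken to lie in the subring $\widehat{R}_u = k[[u]] \cdot \widehat{R}$ generated by monomials in $u$ alone (this is exactly what solvability by $u$-translations provides). Hence the telescoping sum $y_i^{(0)} - \hat{y}_i = \sum_{k \geq 0}(\text{$i$-th component of } s^{(k)})$ converges in the $\mathfrak{m}$-adic topology to a power series $s_i(u) \in k[[u]]$, yielding $\hat{y}_i = y_i - s_i(u)$ (after relabelling $y_i^{(0)} = y_i$), as required.
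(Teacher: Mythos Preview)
The paper does not prove this theorem: it is stated with a citation to Hironaka \cite[(4.8)]{HirPoly} and no argument is given (the surrounding results, Theorem \ref{teorema_preparado} and Theorem \ref{ThCossScho}, are likewise quoted from the literature). So there is no proof in the paper to compare against; your sketch is an attempt to reproduce Hironaka's original argument.

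The outline you give---iterated vertex elimination producing a Cauchy sequence of parameter systems whose limit realizes $\Delta(f,u)$---is indeed the classical strategy, and your use of Lemma \ref{LemaCPaprox} to control the limit is appropriate. But the step you flag as ``the main obstacle'' is not a technical verification to be filled in later: it is the entire content of the theorem, and your justification for it is circular. You argue that a vertex $v \in \Delta(f,u,y^{(0)}) \setminus \Delta(f,u)$ must be solvable because $\Delta(f,u)$ is an intersection; but the intersection definition only tells you that \emph{some} choice of $\hat{y}$ avoids $v$, not that a single translation of the current $y^{(0)}$ by elements of $\langle u\rangle$ removes $v$ without creating new vertices of smaller norm. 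In Hironaka's proof the logic runs the other way: one shows directly that every solvable vertex can be dissolved by a translation (and that this creates no lower vertices), iterates until no solvable vertices remain, and \emph{then} proves that a well-prepared system satisfies $\Delta(f,u,y)=\Delta(f,u)$---precisely Theorem \ref{teorema_preparado}, which the paper also cites without proof. Your description of the solving translation is also off: solvability at $v$ means the $\lambda_i$ are specific homogeneous elements of $k[U]$ of degree dictated by $v$ (so that $F(Y+\lambda)=\In_v(f)$), not units with ``$|\gamma_i|$ large enough''; and the claim $N_k\to\infty$ depends on the ``no new lower vertices'' fact, which you have not established.
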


\medskip

In principle, in the previous definitions and statements, we do not impose  a specific condition on the elements $y=(y_1,\ldots, y_r)$ of the regular local ring other than that of being part of a regular system of parameters and   that $f+\langle u\rangle \in R/\langle u\rangle$ preserves the order $m$ of $f$. However, from this point on, we will be paying attention to some special collections of elements  $y=(y_1,\ldots, y_r)$ that are  related to the  singularity of $f$ at the maximal ideal ${\mathfrak m}$. This is the motivation for the following definition.
 
\begin{definition} \label{Directriz}
Let $k$ be a field, let $I\subset k[X_1,\ldots,X_n]$ be a homogeneous ideal, and let  $\mathcal{C}=\Spec(k[X]/I)$ be the cone defined by $I$. The {\em directrix of the cone $\mathcal{C}$}, $\Dir(\mathcal{C})$, is the largest linear subspace
which leaves the cone invariant after translations:
$$\Dir(\mathcal{C})=\sum_{\mathcal{C}+W=\mathcal{C}} W.$$

If $(R,\mathfrak{m},k)$ is a regular local ring and $R'=R/\mathfrak{a}$, with
$\mathfrak{a}\subset R$ is an ideal,  the  {\em directrix
of $R'$}  is the directrix of the cone
${\mathcal  C}=\Spec(\Gr_{\mathfrak{m}'}(R'))\subset\Spec(\Gr_{\mathfrak{m}}(R))$, where ${\mathfrak m}'$ is the maximal ideal of $R'$.
\end{definition}

In terms of equations, if $(R,\mathfrak{m})$ is a regular local ring and $f\in R$,
then the ideal of the directrix of $R/\langle f\rangle$ is generated by a minimal
set of linear forms $Y_1,\ldots,Y_r\in\Gr_{\mathfrak{m}}(R)$ such that the 
initial part $\In_{\mathfrak m}(f)$ is a polynomial in $Y_1,\ldots,Y_r$. In this sense, we say that a collection of elements $y_1,\ldots,y_r\in R$ {\em determine the directrix of
$R/\langle f\rangle$} if $(y_1,\ldots,y_r)$ is part of a regular system of parameters in $R$ and the ideal of the directrix is generated by 
$\In_{\mathfrak m} (y_1),\ldots,\In_{\mathfrak m} (y_r)\in \Gr_{\mathfrak{m}}(R)$.
See \cite{Schober} and \cite{BHM_Ridge} for thorough expositions on this topic.

\begin{parrafo} \label{FactsRidge} {\bf The ridge and the directrix of a cone.}
In the following theorem  the notion of {\em ridge of a cone}, $\Rid(\mathcal{C})$,  is mentioned. For the 
 precise definition we refer to \cite[Faîte d'un cône 1.5]{Giraud},  
\cite[Definitions 2.6 and 2.7]{Schober} or \cite{BHM_Ridge}.  In the following lines we recall some known facts about the ridge and the directrix that  will be used later in the paper. 

Let $k$ be a field, and suppose that  ${\mathcal C}\subset {\mathbb A}_k^n$ is a cone.
From the definitions, it follows that
$\Dir(\mathcal{C})\subset\Rid(\mathcal{C})$ as
cones in $\mathbb{A}^n_k$,
and in general they are different.
However, if the field $k$ is perfect, then   we have the equality
$\Dir(\mathcal{C})=(\Rid(\mathcal{C}))_{\mathrm{red}}$ 
see \cite[Remark 2.8]{Schober} or \cite[Remark 3.12]{BHM_Ridge}.

Moreover, the ridge of a cone can be computed with differential operators, see 
\cite[Lemme 1.7]{Giraud}.
More precisely, if $F(X_1,\ldots,X_n)$ is a homogeneous polynomial of degree $m$ defining a cone ${\mathcal C}\subset {\mathbb A}_k^n$, then the ideal of  $\Rid(\mathcal{C})$ in $k[X_1,\ldots, X_n]$   is generated by the elements 
	$$\Delta_{X^{\alpha}}(F), \qquad |\alpha|<m.$$
\end{parrafo}

\begin{theorem} \label{ThCossScho}
\cite[Theorem 2.5]{CossartSchober2021}, \cite{CossartPiltant2015}
With the same assumptions as in Definition \ref{DefHirPolyU},
suppose that the codimension of the directrix of $R/\langle f \rangle$
is $r$.
If the regular local ring $R$ satifies one of the following conditions
\begin{description}
\item[Hensel] $R$ is henselian, or
\item[Directrix] the dimension of the ridge coincides with the dimension of the directrix of $R/\langle f \rangle$, or
\item[Char] $\mathrm{char}(k)\geq\dfrac{\dim(R)-1}{2}+1$, or
\item[Pol] there exist an excellent regular local ring $(S,\mathfrak{n})$,
$S\subset R$ and there are $y_1,\ldots,y_r\in R$ such that they 
determine the directrix of $R/\langle f \rangle$ and
$R=S[y_1,\ldots,y_r]_{\mathfrak{m}}$,
\end{description}
then there are $z_1,\ldots,z_r\in R$, such that $(u,z)$ is a regular
system of parameters of $R$ and
$$\Delta(f,u)=\Delta_R(f,u,z).$$
\end{theorem}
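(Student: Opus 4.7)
The plan is to use the classical iterative procedure for minimizing Hironaka's polyhedron: start from any $z=(z_1,\ldots,z_r)\in R$ with $(u,z)$ a regular system of parameters and $z$ determining the directrix of $R/\langle f\rangle$, and modify it by successive translations $z_i\mapsto z_i-s_i(u)$ with $s_i\in R$ that each shrink $\Delta(f,u,z)$. Theorem \ref{ThHirPoly} already guarantees that in $\widehat R$ the minimum $\Delta(f,u)$ is attained by some $\hat z=z-\hat s$ with $\hat s\in\widehat R$; the content of the theorem under each of the four hypotheses is that the translations $\hat s_i$ can be realized inside $R$ itself.

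First I would formalize the notion of a \emph{solvable vertex} $v$ of $\Delta(f,u,z)$: this is a vertex at which the initial form of $f$ (with respect to the weighted order centered at $v$) lies in the ideal of $\Rid$ in a way that can be cancelled by a translation of the $z_i$'s, and I would reduce the theorem to a single claim, namely that every solvable vertex can be eliminated by a translation $s_i\in R$, not merely $s_i\in\widehat R$. The fact that preparation strictly reduces the polyhedron at the chosen vertex is standard; what is at stake is only the rationality of the adjustment.

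Next I would verify this claim case by case. For (Hensel), the solvability equation for $s_i$ is of Weierstrass type and its linearization at $\hat s_i$ is a unit modulo $\mathfrak{m}$; henselianness of $R$ then lifts $\hat s_i$ to $s_i\in R$. For (Directrix), the equality $\dim\Rid=\dim\Dir$ together with \S\ref{FactsRidge} implies that the ridge is cut out by a reduced set of forms in the $\In_{\mathfrak{m}}y_i$, so the solvability equation becomes \emph{linear} in $s_i$ and admits a unique solution read off directly from the CP-expansion of $f$. The case (Char) reduces to (Directrix) via the perfect-field statement recalled in \S\ref{FactsRidge}. For (Pol), writing $R=S[y_1,\ldots,y_r]_{\mathfrak{m}}$ with $S$ excellent, one uses Artin/Popescu approximation inside $S$ to approximate $\hat s_i$ by $s_i\in R$ closely enough that Lemma \ref{LemaCPaprox} forces the resulting polyhedron to match.

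Finally I would run the iteration: at the $k$-th step, solve the lowest-height remaining solvable vertex to obtain $z^{(k+1)}=z^{(k)}-s^{(k)}$ with $s^{(k)}\in\mathfrak{m}^{N_k}$ and $N_k\to\infty$. By Lemma \ref{LemaCPaprox}, $\Delta(f,u,z^{(k)})$ stabilizes below every fixed level $N$ after finitely many steps, and since $\Delta(f,u)$ has only finitely many vertices below any given height (its recession cone being $\mathbb{N}^{n-r}$), the polyhedra eventually coincide; the required $z\in R$ is the stabilized value. The main obstacle is the case (Hensel) in small characteristic, where the solvability equation is genuinely nonlinear: one must verify that it defines an \'etale extension over $R/\langle z\rangle$ near the formal root so that henselianness applies, and this delicate analysis of the interaction between the ridge and the initial form of $f$ at each vertex is where the bulk of the technical work of \cite{CossartSchober2021} lies.
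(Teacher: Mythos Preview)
The paper does not prove Theorem~\ref{ThCossScho}: it is stated with citations to \cite[Theorem~2.5]{CossartSchober2021} and \cite{CossartPiltant2015} and then used as input, without any argument given. So there is no proof in the paper to compare your proposal against.

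Your sketch is broadly in the spirit of the vertex-preparation arguments of Hironaka and Cossart--Schober, but there is at least one concrete gap. Your reduction of case (Char) to case (Directrix) ``via the perfect-field statement recalled in \S\ref{FactsRidge}'' does not work as written: the inequality $\mathrm{char}(k)\geq (\dim R-1)/2+1$ does not imply that $k$ is perfect (e.g.\ $k=\mathbb{F}_p(t)$ with $\dim R$ small), so the identification $\Dir=(\Rid)_{\mathrm{red}}$ from \S\ref{FactsRidge} is not available on those grounds alone. The actual reason (Char) implies the equality of the ridge and directrix dimensions is a separate theorem (originating with Hironaka--Mizutani) relating the characteristic bound to the structure of additive polynomials defining the ridge; you would need to invoke that result, not perfection of $k$. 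Your treatment of (Hensel) is also quite loose: the solvability equation at a vertex is a polynomial condition over the residue field, and what henselianness buys is the ability to lift a residue-field solution of a suitable \'etale system to $R$; you have not identified which system is \'etale or why, and as you yourself note at the end, this is the substantive content of \cite{CossartSchober2021}. The (Pol) case via approximation is the right idea, though the paper in fact remarks just after the theorem that in case (Pol) the $z_i$ are obtained as $y_i-s_i$ with $s_i\in\langle u\rangle\subset S$, which is a more precise conclusion than a generic approximation argument yields.
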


In general, given a regular system of parameters $(u,y)$, the elements $z_1,\ldots,z_r$ cannot
be obtained by translating the $y_i$'s as in Theorem \ref{ThHirPoly}.
If the ring $R$ satisfies condition (Pol) of Theorem \ref{ThCossScho}
then there are $s_i\in\langle u\rangle\subset S$ with $z_i=y_i-s_i$, $i=1,\ldots, r$.

\medskip

The results in the next lines give sufficient conditions to guarantee that  a given  regular system of parameters $(u,y)$ in $R$ is so that $$\Delta(f,u)=\Delta_R(f,u,y),$$
see Theorem \ref{teorema_preparado} below. This result will be used in the proof of Theorem \ref{ExtenPolHir}.

\begin{definition}
\cite[Definitions  8.2, 8.13, 8.15]{CossartJannsenSaito}
With the same  assumptions and notation  as in Definition \ref{DefHirPolyY}, 
assume also that $y_1,\ldots,y_r$ determine the directrix of 
$R/\langle f\rangle$.
Denote by 
$k=R/\mathfrak{m}$ the residue field and consider the isomorphism
$\Gr_{\mathfrak{m}}(R)=k[U,Y]$.
Consider the expansion of $f$ as in Proposition \ref{ExpanCosPil}
$$f=\sum_{(\alpha,\beta)\in S(f)}c_{\alpha,\beta}u^{\alpha}y^{\beta}.$$
For $v\in S(f)$   set
$$\In_v(f)=\In_{\mathfrak{m}}(f)+\In_v(f)^{+},$$
where
$$\In_v(f)^{+}=\sum_{(\alpha,\beta)}\overline{c}_{\alpha,\beta}
U^{\alpha}Y^{\beta},$$
where the sum runs over all $(\alpha,\beta)\in S(f)$ such that
$|\beta|<m$ and $\dfrac{\alpha}{m-|\beta|}=v$. Note that the initial part $\In_{\mathfrak{m}}(f)$ is polynomial in
$k[Y]$.
Set $F(Y)=\In_{\mathfrak{m}}(f)$, and
we say that $(f,u,y)$ is \emph{solvable at $v$} if there are 
$\lambda_1,\ldots,\lambda_r\in k[U]$ such that
$F(Y_1+\lambda_1,\ldots,Y_1+\lambda_1)=\In_v(f)$.
 We say that $(f,u,y)$ is \emph{well-prepared} if it is not solvable 
for any $v\in S(f)$.
\end{definition}

\begin{theorem}\label{teorema_preparado} 
\cite[Theorem 8.16]{CossartJannsenSaito}
With the same  assumptions as in Definition \ref{DefHirPolyY}, 
assume also that $y_1,\ldots,y_r$ determine the directrix of 
$R/\langle f\rangle$. If $(f,u,y)$ is well prepared, then $\Delta(f,u)=\Delta(f,u,y)$.
\end{theorem}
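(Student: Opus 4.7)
The inclusion $\Delta(f,u)\subseteq\Delta(f,u,y)$ is immediate from Definition~\ref{DefHirPolyU}: since $(u,y)$ is a regular system of parameters of $R$, it is also one of $\widehat R$, so $\Delta(f,u,y)$ is itself one of the polyhedra in the defining intersection. The content of the theorem is the reverse inclusion, and the plan is to prove the stronger statement that $\Delta(f,u,y)\subseteq\Delta(f,u,\hat y)$ for every regular system of parameters $(u,\hat y)$ of $\widehat R$ with $\ord\bigl(f+\langle u\rangle\bigr)=m$ in $\widehat R/\langle u\rangle$. Using that Hironaka's polyhedron is invariant under invertible linear recombinations of the $\hat y_i$ and under adding to any $\hat y_i$ an element of $\langle y\rangle^2$, one reduces to the case $\hat y_i=y_i-s_i$ with $s_i\in\langle u\rangle\widehat R$.

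Fix a vertex $v$ of $\Delta(f,u,y)$ and a linear functional $L$ supporting $\Delta(f,u,y)$ at $v$. Define the weight $w(u^\alpha\hat y^\beta)=L(\alpha)+|\beta|\,L(v)$ on monomials. Decompose $s_i=\lambda_i(u)+s_i^{+}$, where $\lambda_i\in k[U]$ is the $w$-weight $L(v)$ initial part of $s_i$, and $s_i^{+}$ collects terms of strictly higher $w$-weight. Substituting $y_i=\hat y_i+\lambda_i+s_i^{+}$ in the CP-expansion of $f$ and sorting by $w$-weight, the tail $s_i^{+}$ contributes only monomials of $w$-weight strictly greater than $mL(v)$ and so does not affect the coefficient at vertex $v$. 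The part of $w$-weight exactly $mL(v)$, which lies on the vertex $v$ in the new polyhedron, assembles into
\[
F(\hat y+\lambda)-F(\hat y)+\In_v(f)^{+}(\hat y,u)+R_{>v}(\hat y,u),
\]
where $F=\In_{\mathfrak m}(f)$ and $R_{>v}$ gathers correction terms of $w$-weight strictly larger than $mL(v)$ (so lying above $v$ in $\Delta(f,u,\hat y)$ and harmless for the question at hand).

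Therefore, for $v$ to fail to be a vertex of $\Delta(f,u,\hat y)$, the polynomial identity $F(Y+\lambda)-F(Y)+\In_v(f)^{+}=0$ would have to hold in $\Gr_{\mathfrak m}(\widehat R)$; equivalently (replacing $\lambda$ by $-\lambda$), $F(Y+\lambda)=F(Y)+\In_v(f)^{+}=\In_v(f)$. But this is exactly the definition of $(f,u,y)$ being solvable at $v$, contradicting the well-preparedness hypothesis. Consequently every vertex of $\Delta(f,u,y)$ persists in $\Delta(f,u,\hat y)$, which yields $\Delta(f,u,y)\subseteq\Delta(f,u,\hat y)$, and intersecting over all admissible $\hat y$ completes the proof. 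The principal obstacle is the bookkeeping of the correction term $R_{>v}$: one has to verify that the multinomial expansions of $(\hat y+\lambda+s^{+})^{\beta}$ and of $F(\hat y+\lambda+s^{+})$ do not produce unexpected monomials at $w$-weight $mL(v)$ through cancellations across different $(\alpha,\beta)$, and that the passage from the CP-expansion in $(u,y)$ to the one in $(u,\hat y)$ genuinely preserves the $w$-weight filtration. These points are handled by a careful use of the CP-expansion machinery of Section~\ref{Samuel_a}, notably the product rule of Lemma~\ref{LemaCPproduct} and the monomial-valuation description of Proposition~\ref{valoracion_definida}, which together rule out low-weight contaminations.
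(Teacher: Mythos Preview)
The paper does not give its own proof of this statement; Theorem~\ref{teorema_preparado} is stated as a citation of \cite[Theorem~8.16]{CossartJannsenSaito}, so there is no in-paper argument to compare against. Your sketch is in the spirit of Hironaka's original vertex-preparation argument, and the overall strategy (show that well-preparedness prevents any vertex of $\Delta(f,u,y)$ from being dissolved under a change of $\hat y$) is the right one. However, as written there are genuine gaps.

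First, the reduction to $\hat y_i=y_i-s_i$ with $s_i\in\langle u\rangle\widehat R$ is not justified. You invoke invariance of the polyhedron under invertible linear recombinations of the $\hat y_i$ and under adding elements of $\langle y\rangle^2$, but neither claim is proved or cited, and the second one is delicate: you would need invariance under adding elements of $\langle\hat y\rangle^2$, and since $\langle y\rangle\neq\langle\hat y\rangle$ in general, this does not immediately cover what you need. The standard way around this is not to treat an arbitrary $\hat y$ directly, but to use Hironaka's theorem (Theorem~\ref{ThHirPoly} here) that $\Delta(f,u)$ is achieved by some $\hat y$, together with the fact that such $\hat y$ can be reached from $y$ by a (possibly infinite) sequence of single vertex preparations $y_i\mapsto y_i-c\,u^{\gamma}$; then it suffices to check that one such elementary step cannot dissolve a non-solvable vertex.

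Second, your displayed formula for the $w$-weight $mL(v)$ part is not accurate: when you substitute $y=\hat y+\lambda+s^{+}$ into $\In_v(f)^{+}$ you get $\In_v(f)^{+}(u,\hat y+\lambda)$, not $\In_v(f)^{+}(\hat y,u)$. Redoing the computation, the condition for $v$ to disappear becomes $\In_v(f)(U,\hat Y+\lambda)=F(\hat Y)$, which after the substitution $Y=\hat Y+\lambda$ reads $\In_v(f)(U,Y)=F(Y-\lambda)$, i.e.\ solvability at $v$ with parameter $-\lambda$. So the conclusion survives, but the formula you wrote does not.

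Finally, you yourself flag the ``principal obstacle'' (controlling $R_{>v}$ and ruling out low-weight contamination when passing between CP-expansions) and then defer it to Lemma~\ref{LemaCPproduct} and Proposition~\ref{valoracion_definida}. Those results give you a monomial valuation with integer weights for a \emph{fixed} regular system of parameters; they do not by themselves compare CP-expansions with respect to $(u,y)$ and $(u,\hat y)$, which is exactly what is at stake. This step needs an actual argument, not a pointer.
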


\begin{theorem} \label{ExtenPolHir}
Let $(R,\mathfrak{m}, k)$ be a regular local ring. let $t$ be a variable, set  $R_1=(R[t])_{\mathfrak{m}[t]}$
and  let $\varphi:R\to R_1$ be the canonical morphism.
Let $(u_1,\ldots,u_{n-r},y_1,\ldots,y_r)$ be a regular system of parameters of $R$. Let $f\in R$ and suppose  that the directrix of $R/\langle f\rangle$ is determined 
by $y_1,\ldots,y_r$.
Then 
$$\Delta_{R}(f,u)=\Delta_{R_1}(\varphi(f),\varphi(u)).$$
\end{theorem}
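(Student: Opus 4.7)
The plan is to show the two polyhedra are equal by reducing to a well-preparation argument.  Since Hironaka polyhedra are invariant under $\mathfrak{m}$-adic completion, I would first replace $R$ by $\widehat{R}$ and $R_1$ by $\widehat{R_1}$; by the Cohen structure theorem these are complete regular local rings of the same Krull dimension $n$, and the inclusion $\widehat{R}\hookrightarrow\widehat{R_1}$ realizes the residue field extension $k\subset k(t)$ (in the equicharacteristic case one has concretely $\widehat{R}=k[[u,y]]$ embedded in $\widehat{R_1}=k(t)[[u,y]]$, and the mixed-characteristic case proceeds analogously via a Cohen ring).  The easy direction $\Delta_{R_1}(\varphi(f),\varphi(u))\subseteq\Delta_R(f,u)$ is then immediate: for every $\hat y\in\widehat{R}^r$ for which $(u,\hat y)$ is a regular system of parameters, the CP-expansion of $f$ with respect to $(u,\hat y)$ is unchanged in passing to $\widehat{R_1}$ (units remain units, and the intrinsic set $S(f)$ does not change), so $\Delta_{\widehat{R_1}}(\varphi(f),\varphi(u),\varphi(\hat y))=\Delta_{\widehat{R}}(f,u,\hat y)$, and the intersection defining $\Delta_{R_1}$ over the strictly larger parameter set $\widehat{R_1}^r$ is contained in that defining $\Delta_R$ over $\widehat{R}^r$.

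For the reverse inclusion I would invoke Theorem~\ref{teorema_preparado}.  Iterative well-preparation inside $\widehat{R}$ produces a tuple $\hat y=(y_1-\hat s_1,\ldots,y_r-\hat s_r)\in\widehat{R}^r$ such that $(f,u,\hat y)$ is well-prepared over $k$ and $\Delta_R(f,u)=\Delta_{\widehat{R}}(f,u,\hat y)$.  The key claim to establish is that $(\varphi(f),\varphi(u),\varphi(\hat y))$ remains well-prepared in $\widehat{R_1}$ over the larger residue field $k(t)$.  Granting this, Theorem~\ref{teorema_preparado} applied inside $\widehat{R_1}$ yields $\Delta_{R_1}(\varphi(f),\varphi(u))=\Delta_{\widehat{R_1}}(\varphi(f),\varphi(u),\varphi(\hat y))$, which in turn equals $\Delta_{\widehat{R}}(f,u,\hat y)=\Delta_R(f,u)$ by the CP-identity already noted, closing the argument.

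The hard part is the preservation of well-preparedness.  Because the CP-expansion of $f$ with respect to $(u,\hat y)$ is the same element of $\widehat{R}\subset\widehat{R_1}$, both the initial form $F=\In_{\mathfrak{m}}(f)\in k[Y]$ and the polynomials $\In_v(f)^+\in k[U,Y]$ attached to each integral vertex $v$ have coefficients in the original residue field $k$.  Solvability at $v$ (over $k$ or $k(t)$) amounts to the existence of $c=(c_1,\ldots,c_r)$ satisfying the identity $F(Y+cU^v)-F(Y)=\In_v(f)^+$; matching coefficients monomial by monomial converts this to a finite system $P_{\beta'}(c)=b_{\beta'}$ with $P_{\beta'}\in k[c_1,\ldots,c_r]$ and $b_{\beta'}\in k$, i.e.\ the vanishing locus $W_v\subset\mathbb{A}^r_k$ of an ideal defined over $k$.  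The preservation claim hence reduces to the descent statement that $W_v(k(t))\neq\emptyset$ forces $W_v(k)\neq\emptyset$: for infinite $k$ this is obtained by specializing $t\mapsto t_0\in k$ away from the finitely many poles of a rational $c(t)\in W_v(k(t))$; for finite $k$ I would apply L\"uroth's theorem to the $k$-subalgebra $k[c_1,\ldots,c_r]\subset k(t)$ generated by a putative solution and exploit the fact that $k$ is algebraically closed in $k(t)$ to conclude either that the solution lies in $k^r$ already (when the subalgebra has Krull dimension zero) or to extract a $k$-rational point from the resulting $k$-rational curve inside $W_v$ via its smooth proper model $\mathbb{P}^1_k$.
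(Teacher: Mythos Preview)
Your overall architecture coincides with the paper's: reduce to the completion, use that CP-expansions (and hence $\Delta(f,u,\hat y)$) are preserved under $R\hookrightarrow R_1$, and argue that a well-prepared presentation over $k$ stays well-prepared over $k(t)$.  Your treatment of the infinite residue field case by specializing $t\mapsto t_0$ is exactly the paper's.

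The gap is in your finite-field case.  The L\"uroth argument does not deliver what you claim: a closed affine $k$-variety $W_v\subset\mathbb{A}^r_k$ with a $k(t)$-point need not have a $k$-point.  Over $k=\mathbb{F}_2$, take $W=\{(x,y):(x^2-x)y=1\}\subset\mathbb{A}^2$; then $(t,(t^2+t)^{-1})\in W(\mathbb{F}_2(t))$, yet $W(\mathbb{F}_2)=\emptyset$.  Your passage to the smooth proper model $\mathbb{P}^1_k$ only produces $k$-points on the projective closure $\bar C$, and these can (and in this example do) all land at infinity, outside the affine $W_v$.  So the descent step ``$W_v(k(t))\neq\emptyset\Rightarrow W_v(k)\neq\emptyset$'' fails in general.

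The paper closes this gap not by a general field-descent principle but by exploiting the directrix hypothesis.  Since a finite field is perfect, the ridge and the directrix of the tangent cone have the same dimension (\S\ref{FactsRidge}), and then there exist differential operators $D_i$ of Giraud type with $D_i(F(Z))=(L_i(Z))^{p^{e_i}}$ for linearly independent linear forms $L_1,\ldots,L_r$.  Applying $D_i$ to the solvability identity $F(Z+\lambda(t))=G(U,Z)\in k[U,Z]$ yields $(L_i(Z+\lambda(t)))^{p^{e_i}}\in k[U,Z]$, hence $L_i(\lambda(t))\in k[U]$ by perfectness, and the linear independence of the $L_i$ then forces each $\lambda_j(t)\in k[U]$ outright.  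The structural shape of $F$ coming from the directrix hypothesis linearizes the descent problem, bypassing the failure of the general $k(t)\Rightarrow k$ implication.
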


\begin{proof}
Since
$\widehat{R_1}=(\widehat{R}[t])_{\widehat{\mathfrak{m}}[t]}$,
we can assume that $R$ is complete.
 Note that if $z_1,\ldots,z_r\in R$ are elments such that $(u,z)$ is a regular system of 
parameters in  $R$, then $(\varphi(u),\varphi(z))$ is a regular system of parameters
in  $R_1$ and $\Delta_{R}(f,u,z)=\Delta_{R_1}(\varphi(f),\varphi(u),\varphi(z))$. 
 Thus,  we have the inclusion
$$\Delta_{R_1}(\varphi(f),\varphi(u))\subset\Delta_{R}(f,u).$$

To check that the other inclusion also holds, observe that, since 
 ince $R$ is complete, by \cite[(4.8)]{HirPoly} there exist
$z_1,\ldots,z_r\in R$ such that $(u,z)$ is a regular system of parameters of $R$ and  $\Delta_R(f,u)=\Delta_R(f,u,z)$.  We have that
$$\Gr_{\mathfrak{m}}(R)\cong k[U,Z],\qquad \text{ and } \qquad 
\Gr_{\mathfrak{m}_1}(R_1)\cong k(t)[U,Z].$$
The initial part $\In_{\mathfrak m}(f)\in \text{Gr}_{\mathfrak m}(R)$ is a homogeneous polynomial of degree $m$ and it
 is expressed in the variables $Z$,  $F(Z):=\In_{\mathfrak m}(f)\in k[Z]$.

Set $u'=\varphi(u)$,  $z'=\varphi(z)$ and  $f'=\varphi(f)$. Now, to get to a contradiction, suppose that $\Delta_{R_1}(f',u',z')\neq\Delta_{R_1}(f',u')$.
Since $R_1$ is complete, by Hironaka \cite[Theorem (3.17) and Theorem (4.8)]{HirPoly},
there is some  vertex $v$ of $\Delta_{R_1}(f',u',z')=\Delta_{R}(f,u,z)$ such that
$v$ is solvable for $f'$. This means that
there are $\lambda_1(t),\ldots,\lambda_r(t)\in k(t)[U]$ such that
$$F(Z_1+\lambda_1(t),\ldots,Z_r+\lambda_r(t))=\In_v(\varphi(f)).$$

Note that $\In_v(\varphi(f))=\In_v(f)\in k[U,Z]$.  Set $G(U,Z)=\In_v(f)$.
Then, we have the following equality of polynomials in $k(t)[U,Z]$:
\begin{equation} \label{EqPolys}
F(Z_1+\lambda_1(t),\ldots,Z_r+\lambda_r(t))=G(U,Z). 
\end{equation}
 
\medskip

We claim that:
\begin{itemize}
\item If $k$ is infinite then there are 
$\tilde{\lambda}_i\in k[U]$, $i=1\ldots,r$, such that 
$F(Z_1+\tilde{\lambda}_1(t),\ldots,Z_r+\tilde{\lambda}_r(t))=G(U,Z)$.
\item If $k$ is finite then $\lambda_i(t)\in k[U]$ for $i=1,\ldots,r$.
\end{itemize}

Assume that the field $k$ is infinite.
Since $\lambda_i(t)\in k(t)[U]$,   there is some  $t_0\in k$ 
such that $\tilde{\lambda}_i=\lambda_i(t_0)\in k[U]$ and the claim follows.

If   $k$ is finite, then it is perfect and the dimensions of
the directrix and the ridge of $F(Z)$ are equal  (see \S \ref{FactsRidge}).
In addition in this case there are differential operators $D_i$
such that
$$D_i(F(Z))=(L_i(Z))^{p^{e_i}}, \qquad i=1,\ldots,r,$$
where $L_i(Z)$ is a linear form on $Z_1,\ldots,Z_r$, $e_i\geq 0$, and
$L_1,\ldots,L_r$ are linearly independent (see again \S \ref{FactsRidge}).
Now, note that $D_i(F(Z+\lambda(t))=(D_iF)(Z+\lambda(t))$
(if $H(Z)=F(Z+\lambda(t))$ then $D_i(H(Z))=D_i(F(Z+\lambda(t))$).
Applying $D_i$ to the equation (\ref{EqPolys}) we have
$$(L_i(Z_1+\lambda_1(t),\ldots,Z_r+\lambda_r(t)))^{p^{e_i}}=
D_i(G(U,Z))\in k[U,Z],
\qquad i=1,\ldots,r.$$
In particular
$L_i(\lambda_1(t),\ldots,\lambda_r(t))^{p^{e_i}}\in k[U]$ and
hence (since $k$ is perfect)
$L_i(\lambda_1(t),\ldots,\lambda_r(t))\in k[U]$, 
and the claim follows for the fact that $L_1,\ldots,L_r$ are linearly independent.
Hence, in either case,    the vertex $v$ is solvable for $\Delta_R(f,u,z)$ which gives us a contradiction.
\end{proof}

\section{Samuel slope, refined slope and Hironaka's polyhedron}
\label{Sec_Refined}

\begin{parrafo}\label{hipotesis_notacion} 
{\bf Hypotheses and notation.} 
Along this section $(R,\mathfrak{m})$ will denote a regular local ring of Krull dimension $n$.
We will be considering quotients for the form $R/\langle f\rangle$ such that $f\in R$ is not a unit.
We will write $R'=R/(f)$ and  will use   $\mathfrak{m}'$ to refer to  the maximal ideal of $R'$. We will assume that $m=ord_{\mathfrak m}(f)>1$, i.e., $m=\ord_{\mathfrak m}(f)=e(R')>1$.
As in previous sections, we write  $z'$  to denote the natural image in $R'$ of an element $z\in R$.

We will be using regular systems of parameters of $R$
of the form  
$$(u,y)=(u_1,\ldots,u_{n-r},y_1,\ldots,y_r),$$ for some $0<r<n$, so that $\langle \In_{\mathfrak{m}}(y_1), \ldots,  \In_{\mathfrak{m}}(y_r)\rangle$ is the ideal of the directrix of $f$ (in general, we will use $u$ to refer to  $\{u_1,\ldots,u_{n-r}\}$ and $y$ to refer to the set $\{y_1,\ldots,y_{r}\}$).  In this context,  we will consider the corresponding   Hironaka's polyhedron $\Delta(f,u,y)$, and under these assumptions,
in \cite[Corollary 5.1]{CossartJannsenSchober2019} it is proven that
$\delta(\Delta(f,u))$ is an invariant of the singularity $\Spec(R')$, therefore, it does not depend on the choice of $u_1,\ldots,u_{n-r}$.
So we will refer to this rational number as the \emph{$\delta$ invariant of Hironaka's polyhedron}.
\end{parrafo}

This section is devoted to establishing a connection between the $\delta$ invariant of 
Hironaka's polyhedron of a hypersurface at a singular point and the   
Samuel slope of the corresponding local ring.
Corollary \ref{delta_dir_dim_one} and Theorem \ref{RSSl_completion} reprove
\cite[Corollary 5.1]{CossartJannsenSchober2019} and provide a way  to interpret
Hironaka's polyhedron invariants in terms of the asymptotic Samuel function of a local ring.
In our study we will distinguish between the extremal and the non-extremal cases. 
\medskip

\noindent{\bf Case 1. Suppose $R'$ is in the extremal case}

\medskip 

\noindent Observe that this corresponds to the case in which the initial part of  $f$ is
$\In_{\mathfrak{m}}{f}=\In_{\mathfrak m}{y}_1^m$, where $y_1\in R$ is a regular parameter, i.e., the ideal of the directix of $f$ has one generator which is $\In_{\mathfrak m} y_1$. 

\begin{theorem}
\label{Casor1}
Let $R'=R/(f)$ be a hypersurface in the extremal case. Let $u_1,\ldots,u_{n-1}, y_1\in R$ be  a regular system in  parameters of $R$ and suppose that 
$\In_{\mathfrak m}{f}=\In_{\mathfrak m}{y}_1^m$.
Then
$$\nub_{\mathfrak{m}'}({y}'_1)=\delta(\Delta(f,u,y_1)).$$
\end{theorem}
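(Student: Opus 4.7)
First I would exploit the pseudo-Weierstrass machinery to bring $f$ into a convenient form. Since $R'$ is extremal with directrix generated by $\In_{\mathfrak{m}}(y_1)$, we have $\In_{\mathfrak{m}}(f)=\In_{\mathfrak{m}}(y_1)^m=Y_1^m$ in $\Gr_{\mathfrak{m}}(R)=k[U_1,\ldots,U_{n-1},Y_1]$, so in particular $f\notin\langle u_1,\ldots,u_{n-1}\rangle$. By Proposition~\ref{uyforma} there is a unit $v\in R$ such that $vf$ is pseudo-Weierstrass of degree $m$ w.r.t.\ $(u,y_1)$, and since $\In_{\mathfrak{m}}(vf)=\overline{v}\,Y_1^m=Y_1^m$ the residual class of $v$ is $1$. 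By Remark~\ref{ObsCosPil}(iv), replacing $f$ by $vf$ leaves $\langle f\rangle$, $y_1'$, and $\Delta(f,u,y_1)$ unchanged, so I may assume $f$ itself is pseudo-Weierstrass:
\[
f=y_1^m+a_1y_1^{m-1}+\cdots+a_m,\qquad a_i\in\langle u\rangle,\ S(a_i)\subset\mathbb{N}^{n-1}\times\{0\}.
\]
A key observation I would establish is that each nonzero $a_i$ satisfies $\ord_{\mathfrak{m}}(a_i)>i$: otherwise $\In_{\mathfrak{m}}(a_i)\in k[U_1,\ldots,U_{n-1}]$ is a nonzero form of degree $i$, and $\In_{\mathfrak{m}}(a_iy_1^{m-i})=\In_{\mathfrak{m}}(a_i)Y_1^{m-i}$ would contribute a nonzero term to $\In_{\mathfrak{m}}(f)$ having a $Y_1$-factor of degree $m-i<m$, contradicting $\In_{\mathfrak{m}}(f)=Y_1^m$.

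Next I would read off $\delta(\Delta(f,u,y_1))$ directly from the pseudo-Weierstrass CP-expansion. By Lemma~\ref{LemPropCoef}(ii),
\[
S(f)=\{(0,m)\}\sqcup\bigsqcup_{i=1}^{m}\bigl(S(a_i)\times\{m-i\}\bigr),
\]
so the contribution of each pair $(\alpha,m-i)\in S(f)$ with $i\geq 1$ to the polyhedron is the vertex $\alpha/i$. Taking the convex hull plus $\mathbb{N}^{n-1}$ and using that $\min\{|\alpha|:(\alpha,0)\in S(a_i)\}=\ord_{\mathfrak{m}}(a_i)$ whenever $a_i\neq 0$, I conclude
\[
\delta(\Delta(f,u,y_1))=\min_{\substack{1\leq i\leq m\\a_i\neq 0}}\Bigl\{\tfrac{\ord_{\mathfrak{m}}(a_i)}{i}\Bigr\},
\]
with the usual convention that this minimum is $\infty$ when all $a_i$ vanish.

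Finally I would invoke Theorem~\ref{GeneralHickel}(ii) with $y=y_1$ to compute $\nub_{\mathfrak{m}'}(y_1')$ via exactly the same formula. The hypotheses are easy to verify: since $\ker(\lambda_{\mathfrak{m}'})=\langle\In_{\mathfrak{m}'}(y_1')\rangle$ is one-dimensional, each $\In_{\mathfrak{m}'}(u_i')$ lies outside $\ker(\lambda_{\mathfrak{m}'})$, giving $\nub_{\mathfrak{m}'}(u_i')=1$ for $i=1,\ldots,n-1$, while $\In_{\mathfrak{m}'}(y_1')$ being a nonzero element of $\ker(\lambda_{\mathfrak{m}'})$ forces $\nub_{\mathfrak{m}'}(y_1')>1$. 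Applying Theorem~\ref{GeneralHickel}(ii) then yields
\[
\nub_{\mathfrak{m}'}(y_1')=\min_{1\leq i\leq m}\Bigl\{\tfrac{\ord_{\mathfrak{m}}(a_i)}{i}\Bigr\}=\delta(\Delta(f,u,y_1)),
\]
and the degenerate case $\nub_{\mathfrak{m}'}(y_1')=\infty$ (equivalently, all $a_i=0$) matches $\delta=\infty$. The only nontrivial input along the way is the inequality $\ord_{\mathfrak{m}}(a_i)>i$ for nonzero $a_i$; once this is in place, the proof reduces to matching two min-formulas coming from the same pseudo-Weierstrass CP-expansion.
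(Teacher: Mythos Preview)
Your proof is correct and follows essentially the same approach as the paper's: reduce to pseudo-Weierstrass form via Proposition~\ref{uyforma}, then match the min-formula for $\nub_{\mathfrak{m}'}(y_1')$ from Theorem~\ref{GeneralHickel} with the min-formula for $\delta(\Delta(f,u,y_1))$ coming from Lemma~\ref{LemPropCoef}(ii). The paper's proof is terser and leaves the verification of the hypothesis $\nub_{\mathfrak{m}'}(y_1')>1$ (needed for Theorem~\ref{GeneralHickel}(ii)) and the identification of $\delta$ with $\min_i\{\ord_{\mathfrak m}(a_i)/i\}$ implicit, whereas you spell these out; your extra observation $\ord_{\mathfrak m}(a_i)>i$ is true but not strictly required, since you already establish $\nub_{\mathfrak{m}'}(y_1')>1$ via the nilpotency of $\In_{\mathfrak{m}'}(y_1')$.
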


\begin{proof}
Consider the CP-expansion of $f$ w.r.t. $(u,y_1)$, 
$$f=\sum_{(\alpha,i)\in S(f)}c_{\alpha,i}u^{\alpha}y_1^i,$$
where $c_{\alpha,i}$ are units. From the  hypothesis $(0,m)\in S(f)$, thus,    up to a unit $f$ can be expressed in pseudo-Weierstrass form,
$$f=y_1^m+a_1y_1^{m-1}+\cdots+a_m,$$
for some $a_i\in R$, and by Theorem \ref{GeneralHickel} we have 
$$\nub_{\mathfrak{m}'}(y'_1)=\min\left\lbrace
\frac{\ord_{\mathfrak{m}}(a_i)}{i} \mid i=1,\ldots,m
\right\rbrace=\delta(\Delta(f,u,y_1)).$$
\end{proof}

\begin{corollary}
\label{delta_dir_dim_one}
Suppose  $R'=R/(f)$ is a hypersurface in the extremal case, and let $y_1\in R$ be a regular parameter such that  $\In_{\mathfrak{m}}{f}=\In_{\mathfrak{m}}{y}_1^m$. Then, for any collection of regular parameters   $u_1,\ldots, u_{n-1}\in R$ such that  $(u_1,\ldots,u_{n-1},y_1)$ is a regular system of parameters in $R$ we have that:
$$\SSl(R')=\delta(\Delta(f,u)).$$
Moreover, if $R$ is excellent, then there is a regular parameter $\tilde{y}_1\in R$ 
such that  $\In_{\mathfrak{m}}{f}=\In_{\mathfrak{m}}\tilde{y}_1^m$ and
$$\SSl(R')=\nub_{\mathfrak{m}'}(\tilde{y}'_1)=
\delta(\Delta(f,u,\tilde{y}_1))=\delta(\Delta(f,u)).$$
\end{corollary}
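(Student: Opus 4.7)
The plan is to establish the two inequalities $\SSl(R')\leq\delta(\Delta(f,u))$ and $\SSl(R')\geq\delta(\Delta(f,u))$ separately, and then derive the moreover statement from an attainment argument. For the upper bound I would take any $\lambda_{\mathfrak{m}'}$-sequence $\tilde{y}_1\in R$. Since in the extremal case $\ker(\lambda_{\mathfrak{m}'})$ is one-dimensional, generated by the class of $\In_{\mathfrak{m}}y_1$, we have $\In_{\mathfrak{m}}\tilde{y}_1=c\,\In_{\mathfrak{m}}y_1$ for some $c\in k^{*}$; after rescaling $\tilde{y}_1$ by a unit of $R$ lifting $c^{-1}$ we may assume $\In_{\mathfrak{m}}\tilde{y}_1=\In_{\mathfrak{m}}y_1$. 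Then $(u,\tilde{y}_1)$ is a regular system of parameters of $R$ with $\In_{\mathfrak{m}}f=\In_{\mathfrak{m}}\tilde{y}_1^m$, so Theorem~\ref{Casor1} gives $\nub_{\mathfrak{m}'}(\tilde{y}_1')=\delta(\Delta(f,u,\tilde{y}_1))$; since $\Delta(f,u)\subseteq\Delta(f,u,\tilde{y}_1)$ directly from the definition of $\Delta(f,u)$ as an intersection over $\widehat{R}$, this gives $\nub_{\mathfrak{m}'}(\tilde{y}_1')\leq\delta(\Delta(f,u))$, and taking the supremum over $\lambda_{\mathfrak{m}'}$-sequences yields $\SSl(R')\leq\delta(\Delta(f,u))$.

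For the reverse inequality I would pass to the $\mathfrak{m}'$-adic completion using $\SSl(R')=\SSl(\widehat{R'})$ (see \S\ref{properties_slope}) and produce in $\widehat{R}$ an element realizing $\delta(\Delta(f,u))$. By Theorem~\ref{ThHirPoly} there exists $\hat{y}_1\in\widehat{R}$ with $(u,\hat{y}_1)$ a regular system of parameters of $\widehat{R}$ and $\Delta(f,u,\hat{y}_1)=\Delta(f,u)$. The main technical point, and the principal obstacle of the argument, is to arrange that this $\hat{y}_1$ still determines the directrix, so that Theorem~\ref{Casor1} applies to it. Here the hypothesis $\In_{\mathfrak{m}}f=\In_{\mathfrak{m}}y_1^m$ is crucial: it forces every $(\alpha,\beta)\in S(f)$ other than $(0,m)$ to satisfy $|\alpha|+|\beta|>m$, so every polyhedron vertex $\alpha/(m-|\beta|)$ has norm strictly greater than $1$, and hence every \emph{integer} vertex has $|v|\geq 2$. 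Consequently, each correction in Hironaka's iterative preparation procedure starting from $y_1$ is a polynomial in the $u_i$'s of total degree at least $2$, which leaves $\In_{\widehat{\mathfrak{m}}}\hat{y}_1=\In_{\mathfrak{m}}y_1$ unchanged. Theorem~\ref{Casor1} applied in $\widehat{R}$ then yields $\nub_{\widehat{\mathfrak{m}}'}(\hat{y}_1')=\delta(\Delta(f,u))$, and since $\hat{y}_1$ is a $\lambda_{\widehat{\mathfrak{m}}'}$-sequence we conclude $\SSl(R')=\SSl(\widehat{R'})\geq\delta(\Delta(f,u))$.

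For the moreover part, assume $R$ is excellent and, via Theorem~\ref{pendiente_red}, reduce to the case $R'$ reduced; when $R'_{\red}$ is regular both sides equal $\infty$ and $\tilde{y}_1=y_1$ already works (its class is nilpotent in $R'$), so we assume otherwise. Theorem~\ref{pendiente_finita} then gives $\SSl(R')\in\mathbb{Q}$, and Proposition~\ref{PotenciaNuBar}(v) places the values of $\nub_{\mathfrak{m}'}$ on $R'$ inside a discrete set $\tfrac{1}{N}\mathbb{N}\cup\{\infty\}$, so the supremum defining $\SSl(R')$ is actually attained by some $\lambda_{\mathfrak{m}'}$-sequence $\tilde{y}_1\in R$. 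Rescaling as in the first paragraph, we may assume $\In_{\mathfrak{m}}f=\In_{\mathfrak{m}}\tilde{y}_1^m$, so Theorem~\ref{Casor1} gives $\nub_{\mathfrak{m}'}(\tilde{y}_1')=\delta(\Delta(f,u,\tilde{y}_1))$; combining with the first part, the four quantities $\SSl(R')$, $\nub_{\mathfrak{m}'}(\tilde{y}_1')$, $\delta(\Delta(f,u,\tilde{y}_1))$ and $\delta(\Delta(f,u))$ all coincide.
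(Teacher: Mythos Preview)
Your argument for the equality $\SSl(R')=\delta(\Delta(f,u))$ is correct and uses the same ingredients as the paper (Theorem~\ref{Casor1}, Hironaka's Theorem~\ref{ThHirPoly}, and the invariance of $\SSl$ under completion), though organized as separate upper and lower bounds rather than the paper's contradiction argument. Your verification that the $\hat{y}_1$ produced by Hironaka's theorem still satisfies $\In_{\mathfrak{m}}f=\bar c\,\In_{\mathfrak{m}}\hat{y}_1^m$ is a bit informal; a cleaner way is to note that $\delta(\Delta(f,u,\hat{y}_1))=\delta(\Delta(f,u))\geq\delta(\Delta(f,u,y_1))>1$, which forces every $(\alpha,i)\in S_{\hat{y}_1}(f)$ with $i<m$ to satisfy $|\alpha|+i>m$, so the degree-$m$ part of $f$ in the $(u,\hat{y}_1)$-expansion is a unit times $\hat{y}_1^m$.

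There is, however, a genuine gap in your treatment of the ``moreover'' clause when $\SSl(R')=\infty$. You write that ``$\tilde{y}_1=y_1$ already works (its class is nilpotent in $R'$)'', but this is false in general: take $R=k[[u,y_1]]$ with $\mathrm{char}(k)=2$ and $f=(y_1-u^2)^2=y_1^2-u^4$. Then $R'_{\red}=R/(y_1-u^2)$ is regular, so $\SSl(R')=\infty$, yet $y_1'$ is \emph{not} nilpotent in $R'$ (indeed $(y_1')^2=(u')^4\neq 0$) and $\nub_{\mathfrak{m}'}(y_1')=\delta(\Delta(f,u,y_1))=2<\infty$. The reduction ``to the case $R'$ reduced'' via Theorem~\ref{pendiente_red} does not help here, since you must produce $\tilde{y}_1$ in the original $R$ with the nilpotence holding in the original $R'$. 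The paper's route is different: one builds a Cauchy sequence $\{y_n'\}\subset R'$ of $\lambda_{\mathfrak{m}'}$-sequences with $\nub_{\mathfrak{m}'}(y_n')\to\infty$, takes its limit $\hat{y}'\in\widehat{R'}$ (nilpotent by Lemma~\ref{LimiteNil}), and then uses excellence of $R'$ to conclude that the nilradical of $\widehat{R'}$ is generated by nilpotents $\omega_1',\ldots,\omega_m'\in R'$; since $\hat{y}'\notin(\widehat{\mathfrak{m}}')^2$, some $\omega_i'$ has nonzero initial form, hence lies in $\ker\lambda_{\mathfrak{m}'}$ and satisfies $\nub_{\mathfrak{m}'}(\omega_i')=\infty$. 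A lift of $\omega_i'$ (rescaled so that its initial form equals $\In_{\mathfrak{m}}y_1$) is the required $\tilde{y}_1$.
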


\begin{proof} 
If $\SSl(R')=\infty$ then there exists a sequence $\{y'_n\}_{n=1}^{\infty}\subset R'$
such that $\nub_{\mathfrak{m'}}(y'_n)\to \infty$.
We have that $\delta(\Delta(f,u))\geq\delta(\Delta(f,u,y_n)=\nub_{\mathfrak{m'}}(y'_n)$ and then $\delta(\Delta(f,u))=\infty$.

If $\SSl(R')<\infty$, using the fact that $\SSl(R')=\SSl(\widehat{R'})$ (see \S \ref{properties_slope}), we have that
$$\exists y'\in R' \text{ such that } \nub_{\mathfrak{m'}}(y')=\SSl(R')
\quad\Longleftrightarrow\quad
\exists \hat{y}'\in \widehat{R'} \text{ such that }
\nub_{\widehat{\mathfrak{m'}}}(\hat{y}')=\SSl(\widehat{R'}).$$
Hence, we can assume that $R'$ is complete.
Now by Lemma \ref{lema_lema_v4} and (\ref{casi_natural}) there exists
$y'\in R'$ such that $\SSl(R')=\nub_{\mathfrak{m}'}(y')=
\delta(\Delta(f,u,y))$.
If $\delta(\Delta(f,u,y))<\delta(\Delta(f,u))$, then there is some
$z\in R$ such that 
$\delta(\Delta(f,u,y)<\delta(\Delta(f,u,z)=\nub_{\mathfrak{m}'}(z')\leq\SSl(R')$,
which is a contradiction.
\medskip

Finally assume that $R'$ is excellent. We only have to consider the case 
$\SSl(R')=\infty$.
The previous sequence $\{y'_n\}_{n=1}^{\infty}$ is Cauchy in $R'$ and
it converges in $\widehat{R'}$ to some $\hat{y}'\in\widehat{R'}$.
By Lemma \ref{LimiteNil} $\hat{y}'$ is nilpotent.
Since $R'$ is excellent, there are nilpotents $\omega'_1,\ldots,\omega'_m\in R'$ such that
$\hat{y}'\in\langle\omega'_1,\ldots,\omega'_m\rangle\widehat{R'}$.
Then there is some $\omega'_i$ such that $\In_{\mathfrak{m}'}\omega'_i\neq 0$,
and therefore $\In_{\mathfrak{m}'}\omega'_i\in\ker\lambda_{\mathfrak{m}'}$.
From here it follows that
$\nub_{\mathfrak{m}'}(\omega'_i)=\infty$.
\end{proof}

\medskip

\noindent{\bf Case 2. When $R'$ is not in the extremal case: $y$-linear cuts}

\medskip 

\noindent When $R'$ is not in the extremal case, $\SSl(R)=1$. Under this hypothesis, we are going to define a second invariant of the local ring: the {\em refined Samuel slope of $R$}.  This notion involves the  use of  the asymptotic Samuel function and the concept of {\em $y$-linear cuts} and {\em $y$-generic linear cuts}.

\begin{definition} \label{DefGenericos}
Let $y'_1,\ldots,y'_r\in R'=R/\langle f\rangle$ be such that
$\In_{\mathfrak{m}'}y'_1,\ldots,\In_{\mathfrak{m}'}y'_r$ are linearly independent
in $\mathfrak{m}'/(\mathfrak{m}')^2$ and so that $\langle \In_{\mathfrak{m}'}y'_1,\ldots,\In_{\mathfrak{m}'}y'_r\rangle \subset \text{Gr}_{{\mathfrak m}'}(R')$ is the ideal of the directrix of $R'$. Observe that since $R'$ is not in the extremal case, $r>1$. A \emph{$y'$-linear cut in $R'$} is a collection of $(r-1)$-expressions
$$L'=\left\{L'_i=\sum_{j=1}^{r}a'_{i,j}y'_j,\quad i=2,\ldots,r\right\},$$
where $a'_{i,j}\in R'$, and such that
the $(r-1)\times r$ matrix $(a'_{i,j})$  contains an $(r-1)$-minor that is a unit in $R'$. 
\medskip 

For a given linear cut, $L'$, we will use the following notation:    ${R'}^{(y',L')}:=R'/\langle L'_2,\ldots,L'_r\rangle$, 
${\mathfrak{m}'}^{(y',L')}\subset {R'}^{(y',L')}$ for the maximal ideal and 
if $z'\in R'$ then  ${z'}^{(y',L')}$ for  the class in ${R'}^{(y',L')}$.
\end{definition}  
 
\begin{remark} \label{RemCuts}
Let $\{L'_2,\ldots,L'_r\}$ be a $y'$-linear cut as above.  Then we can choose elements $y_1,\ldots,y_r\in R$ such that their classes in $R'$ are 
$y'_1,\ldots,y'_r$, and then 
note that $(y_1,\ldots,y_r)$ is part of a regular system of parameters in $R$.
 Also we can choose elements $a_{i,j}\in R$ such that their classes in $R'$ are the $a'_{i,j}$.
Set $L_i=\sum_{j}a_{i,j}y_j$.  Since both  $R$ and $R'$   are local rings,  
  the matrix
$(a_{i,j})$  contains some  $(r-1)$-minor that is a unit in $R$.

Analogously, we will use the following notation:  ${R}^{(y,L)}=R/\langle L_2,\ldots,L_r\rangle$, 
${\mathfrak{m}}^{(y,L)}\subset {R}^{(y,L)}$ for the maximal ideal, and 
  if $z\in R$ then we denote $z^{(y,L)}$ the class in $R^{(y,L)}$. Note that, from the hypothesis $R^{(y,L)}$ is a regular local ring of dimension $n-r+1$.
In fact, if $(u_1,\ldots,u_{n-r},y_1,\ldots,y_r)$ is a regular system of parameters of $R$
and $\ell=1,\ldots,r$,
then $(u_1^{(y,L)},\ldots,u_{n-r}^{(y,L)},y_{\ell}^{(y,L)})$ is a regular system of parameters
of $R^{(y,L)}$ if and only if the determinant of the matrix $(a_{i,j})_{j\neq\ell}$ is a unit in $R$.

In addition, if $\ell_1$ and $\ell_2$ are such that the determinants of
$(a_{i,j})_{j\neq\ell_1}$ and $(a_{i,j})_{j\neq\ell_2}$ are units,
then there is a unit $\omega\in R^{(y,L)}$ such that
$y_{\ell_1}^{(y,L)}=\omega y_{\ell_2}^{(y,L)}$.

Observe also that  ${R'}^{(y',L')}=R^{(y,L)}/\langle f^{(y,L)}\rangle$.
From the discussion above, it follows that  if  both
$(u_1^{(y,L)},\ldots,u_{n-r}^{(y,L)},y_{\ell_1}^{(y,L)})$ and
$(u_1^{(y,L)},\ldots,u_{n-r}^{(y,L)},y_{\ell_2}^{(y,L)})$ are regular systems of parameters  in  
$R^{(y,L)}$, then 
\begin{equation} \label{EqNubCut}
\nub_{{\mathfrak{m}'}^{(y',L')}}({y'}_{\ell_1}^{(y,L)})=
\nub_{{\mathfrak{m}'}^{(y',L')}}({y'}_{\ell_2}^{(y,L)})\leq
\nub_{{\mathfrak{m}'}^{(y',L')}}({y'}_i^{(y,L)}),
\qquad i=1,\ldots,r.
\end{equation}
Given $y'$, for a fixed linear cut $L'$ we will
consider the following value associated to the linear cut $L'$ and $y'$:
\begin{equation}
	\label{minimo}
	\nublin(y',L'):=
	 \min\{\nub_{{\mathfrak{m}'}^{(y',L')}}({y'}_i^{(y',L')})\mid i=1,\ldots,r\}.
\end{equation}
\end{remark}

\begin{remark} \label{RemSimpleCut}
From Definition \ref{DefGenericos} and Remark \ref{RemCuts}, after relabeling $y_1',\ldots, y'_r$, and $L_2',\ldots,L_r'$ we can assume that all $y'$-linear cuts are of the form
$$L'=\{L'_j=y_j'-a_j'y_1': a_j'\in R';  j=2,\ldots, r\}.$$
\end{remark}

\begin{lemma} \label{LemaInclusion}
Let $L'=\{L'_2,\ldots,L'_r\}$ be a $y'$-linear cut such that $e({{R'}^{(y,L)}})=e(R')=m$. Lift $y'_1,\ldots,y'_r$  to some 
  $y_1,\ldots,y_r\in R$  and
let $u_1,\ldots,u_{n-r}\in R$ be such that $(u_1,\ldots,u_{n-r},y_1,\ldots,y_r)$ is a regular system of parameters in $R$.
 If $(u_1^{(y,L)},\ldots,u_{n-r}^{(y,L)},{y_{\ell}^{(y,L)}})$ is a regular system of parameters in $R^{(y,L)}$ then
$$\Delta(f^{(y,L)},u^{(y,L)},y^{(y,L)}_{\ell})\subset\Delta(f,u,y).$$
\end{lemma}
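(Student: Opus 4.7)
My plan is to first reduce to the case where the cut has the form $L_j = y_j - a_j y_\ell$ ($j \neq \ell$, $a_j \in R$), which can always be arranged by a change of basis in the ideal $\langle L_2,\ldots,L_r\rangle$ using the minor condition from Definition~\ref{DefGenericos} (lifted to $R$). After relabeling so that $\ell = 1$, we have $y_j \equiv a_j y_1 \pmod{\langle L_2,\ldots,L_r\rangle}$ for $j \geq 2$, and the CP-expansion $f = \sum_{(\alpha,\beta)\in S(f)} c_{\alpha,\beta} u^\alpha y^\beta$ in $R$ descends to
$$f^{(y,L)} = \sum_{(\alpha,\beta)\in S(f)} \tilde{c}_{\alpha,\beta}\, \tilde{u}^\alpha \tilde{y}^{|\beta|}, \qquad \tilde{c}_{\alpha,\beta} := c_{\alpha,\beta}^{(y,L)}\prod_{j\geq 2}(a_j^{(y,L)})^{\beta_j} \in R^{(y,L)},$$
where $\tilde{u} = u^{(y,L)}$ and $\tilde{y} = y_1^{(y,L)}$. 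Observe that the $\tilde{c}_{\alpha,\beta}$ need not be units.

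The key step is to establish the containment
$$S(f^{(y,L)}) \subset T + \mathbb{N}^{n-r+1}, \qquad T := \{(\alpha,|\beta|) : (\alpha,\beta) \in S(f)\},$$
where $S(f^{(y,L)})$ is the exponent set of the CP-expansion of $f^{(y,L)}$ with respect to $(\tilde{u},\tilde{y})$. To prove this, I would further CP-expand each $\tilde{c}_{\alpha,\beta}$ over $R^{(y,L)}$, substitute, and regroup, obtaining
$$f^{(y,L)} = \sum_{\eta} g_\eta\, \tilde{u}^{\eta_1}\tilde{y}^{\eta_2},$$
with $\eta \in T + \mathbb{N}^{n-r+1}$ and each $g_\eta$ a finite sum of units of $R^{(y,L)}$. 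Discarding indices where $g_\eta = 0$ (cancellations may occur) and extracting the minimal elements in the divisibility order yields the CP-expansion of $f^{(y,L)}$, so $S(f^{(y,L)})$ lies in $T + \mathbb{N}^{n-r+1}$. The hypothesis $e(R'^{(y,L)}) = m$ enters precisely here to guarantee that $\Delta(f^{(y,L)},\tilde{u},\tilde{y})$ is defined with the same denominator $m$ as $\Delta(f,u,y)$; this step is the main technical hurdle, since the cancellations among the $g_\eta$ must be controlled to ensure that no ``spurious'' exponents escape $T + \mathbb{N}^{n-r+1}$.

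With this containment in hand the conclusion is immediate. Given $(\alpha',i') \in S(f^{(y,L)})$ with $i' < m$, decompose $(\alpha',i') = (\alpha + \gamma,|\beta| + \delta)$ with $(\alpha,\beta) \in S(f)$ and $(\gamma,\delta) \in \mathbb{N}^{n-r}\times\mathbb{N}$. Then $|\beta| \leq i' < m$, and since $\delta \geq 0$ and $\alpha + \gamma \geq 0$,
$$\frac{\alpha'}{m-i'} \;=\; \frac{\alpha + \gamma}{m - |\beta| - \delta} \;\geq\; \frac{\alpha + \gamma}{m - |\beta|} \;\geq\; \frac{\alpha}{m - |\beta|}$$
componentwise, so $\alpha'/(m-i') \in \alpha/(m-|\beta|) + \mathbb{R}_{\geq 0}^{n-r} \subset \Delta(f,u,y)$. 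By convexity and the recession cone $\mathbb{R}_{\geq 0}^{n-r}$ of both polyhedra, this yields $\Delta(f^{(y,L)},\tilde{u},\tilde{y}) \subset \Delta(f,u,y)$, as desired.
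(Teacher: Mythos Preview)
Your proof is correct and follows essentially the same route as the paper's: reduce to the simple cut $y_j = a_j y_1$, observe that $S(f^{(y,L)}) \subset \Pi(S(f)) + \mathbb{N}^{n-r+1}$ where $\Pi(\alpha,\beta) = (\alpha,|\beta|)$, and deduce the polyhedral inclusion from this containment. Your concern about cancellations is unnecessary, however: any regrouping or further CP-expansion of the expression $\sum \tilde c_{\alpha,\beta}\,\tilde u^{\alpha}\tilde y^{|\beta|}$ can only add nonnegative vectors to existing exponents and hence stays inside $T + \mathbb{N}^{n-r+1}$, so this containment is immediate rather than a ``technical hurdle'' (the paper simply asserts it without comment).
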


\begin{proof} Lift $L'$ to some $L=\{L_1,\ldots, L_r\}\subset R$. 
After reordering $y_1,\ldots,y_r$
we can assume that $\ell=1$.
Then  by Remark \ref{RemSimpleCut}  there are some  $a_2,\ldots,a_r\in R$ such that
$$\langle L_2,\ldots,L_r\rangle=
\langle a_2y_1-y_2,\ldots,a_ry_1-y_r\rangle.$$
	
Consider the CP-expansion of $f$ w.r.t.  $(u,y)$:
$$ f=\sum_{(\alpha,\beta)\in S(f)}c_{\alpha,\beta}u^{\alpha}y^{\beta}.$$
In the quotient $R^{(y,L)}$ we have the expression
\begin{equation}\label{EqExpQuot}
	f^{(y,L)}=\sum_{(\alpha,\beta)\in S(f)}c^{(y,L)}_{\alpha,\beta}(u^{(y,L)})^{\alpha}(a^{(y,L)})^{\beta}(y_1^{(y,L)})^{\beta},
\end{equation}
where $(a^{(y,L)})^{\beta}=(a_2^{(y,L)})^{\beta_2}\cdots(a_r^{(y,L)})^{\beta_r}$.
	
Let $\Pi:\mathbb{Z}^{n-r}\times\mathbb{Z}^r\to \mathbb{Z}^{n-r}\times\mathbb{Z}$ be the
projection defined by $\Pi(\alpha,\beta)=(\alpha,|\beta|)$.
The expression (\ref{EqExpQuot}) is not, in general, the CP-expansion of 
$f^{(y,L)}$ but we have the inclusion
$$S(f^{(y,L)})\subset \Pi(S(f))+\mathbb{N}^{d+1}.$$
Let  $\varphi:\mathbb{Z}^{n-r}\times\mathbb{Z}^r\to \mathbb{Q}^{n-r}$, and
$\varphi':\mathbb{Z}^{n-r}\times\mathbb{Z}\to \mathbb{Q}^{n-r}$ be defined by
$$\varphi(\alpha,\beta)=\frac{\alpha}{m-|\beta|},
\qquad
\varphi'(\alpha,i)=\frac{\alpha}{m-i}.$$
Note that $\varphi=\varphi'\circ\Pi$ and the Hironaka polyhedra are obtained by 
projection via $\varphi$ and $\varphi'$.
$$\Delta(f,u,y)=\varphi\left(S(f)+\mathbb{N}^{n}\right),
\qquad
\Delta(f^{(y,L)},u^{(y,L)},y_1^{(y,L)})=
\varphi'\left(S(f^{(y,L)})+\mathbb{N}^{n-r+1}\right).$$
Now the result follows.
\end{proof}

\begin{corollary} \label{pendiente_cociente}
Let $L'=\{L'_2,\ldots,L'_r\}$ be a $y'$-linear cut such that
$e({R'}^{(y',L')})=e(R')=m$.
Suppose that 
$$(u_1^{(y,L)},\ldots,u_{n-r}^{(y,L)},y_{\ell}^{(y,L)})$$ is  a regular system of parameters in  $R^{(y,L)}$.  Then
\begin{equation} \label{desigualdad_pendiente_delta_corte}
\nublin(y',L')=
\nub_{{\mathfrak{m}'}^{(y',L')}}({y'}^{(y,L)}_{\ell})
=\delta(\Delta(f^{(y,L)},u^{(y,L)},y_{\ell}^{(y,L)})\geq\delta(\Delta(f,u,y)).
\end{equation}
\end{corollary}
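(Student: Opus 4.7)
The plan is to establish the two equalities and the inequality in succession. For the first equality $\nublin(y',L') = \nub_{{\mathfrak{m}'}^{(y',L')}}({y'}^{(y,L)}_{\ell})$, I will appeal directly to equation (\ref{EqNubCut}) in Remark \ref{RemCuts}: since $(u^{(y,L)}, y_\ell^{(y,L)})$ is by hypothesis a regular system of parameters in $R^{(y,L)}$, the value $\nub_{{\mathfrak{m}'}^{(y',L')}}({y'}_\ell^{(y,L)})$ coincides with the minimum of $\nub_{{\mathfrak{m}'}^{(y',L')}}({y'}_i^{(y,L)})$ over $i=1,\ldots,r$, which is by definition $\nublin(y',L')$.

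For the central equality $\nub_{{\mathfrak{m}'}^{(y',L')}}({y'}^{(y,L)}_{\ell}) = \delta(\Delta(f^{(y,L)},u^{(y,L)},y_{\ell}^{(y,L)}))$, I will apply Theorem \ref{Casor1} to the hypersurface ${R'}^{(y',L')} = R^{(y,L)}/\langle f^{(y,L)}\rangle$ inside the regular local ring $R^{(y,L)}$. The key point to verify is that this hypersurface is in the extremal case, with $\In_{{\mathfrak m}^{(y,L)}} f^{(y,L)}$ a unit multiple of $(\In_{{\mathfrak m}^{(y,L)}} y_\ell^{(y,L)})^m$. By Remark \ref{RemSimpleCut}, after relabeling so that $\ell$ plays the distinguished role, the cut can be assumed to have the form $L_j = y_j - a_j y_\ell$ with $a_j\in R$, so that in $R^{(y,L)}$ we have $y_j^{(y,L)} = a_j^{(y,L)} y_\ell^{(y,L)}$. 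Since $y_1,\ldots,y_r$ determine the directrix of $R/\langle f\rangle$, I can choose a homogeneous polynomial lift $\tilde F(y_1,\ldots,y_r)\in R$ of degree $m$ with $\In_{\mathfrak m}\tilde F = \In_{\mathfrak m} f$; substituting the relations above yields
\[ \tilde F(y_1^{(y,L)},\ldots,y_r^{(y,L)}) = (y_\ell^{(y,L)})^m\, \tilde F(a_1^{(y,L)},\ldots,1,\ldots,a_r^{(y,L)}), \]
whose class modulo $({\mathfrak m}^{(y,L)})^{m+1}$ equals $c\,(\In_{{\mathfrak m}^{(y,L)}} y_\ell^{(y,L)})^m$, where $c\in k$ is the value of the initial form at $(\bar a_1,\ldots,1,\ldots,\bar a_r)$ (taking $\bar a_j = 0$ whenever $a_j\in \mathfrak m$). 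Because $\tilde F - f\in \mathfrak m^{m+1}$, this same expression computes $\In_{{\mathfrak m}^{(y,L)}} f^{(y,L)}$; the multiplicity hypothesis $e({R'}^{(y',L')}) = m$ forces $\ord f^{(y,L)} = m$, and hence $c\neq 0$. Theorem \ref{Casor1} then delivers the desired equality.

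The final inequality $\delta(\Delta(f^{(y,L)},u^{(y,L)},y_{\ell}^{(y,L)})) \geq \delta(\Delta(f,u,y))$ is an immediate consequence of Lemma \ref{LemaInclusion}, since the minimum of $|\gamma|$ taken over a smaller polyhedron is at least that taken over a larger one. I expect the main technical obstacle to be the extremal-case verification in the second step, namely controlling rigorously the passage from the initial form of $f$ in $R$ to that of $f^{(y,L)}$ in $R^{(y,L)}$ when some coefficients $a_j$ of the cut lie in $\mathfrak m$ and certain $\In y_j^{(y,L)}$ may degenerate to zero; here the multiplicity hypothesis $e({R'}^{(y',L')}) = e(R') = m$ plays the decisive role in ensuring that the image of the initial form does not vanish.
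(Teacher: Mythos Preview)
Your proposal is correct and follows essentially the same approach as the paper: both arguments use the definition in (\ref{minimo}) (equivalently (\ref{EqNubCut})) for the first equality, Theorem~\ref{Casor1} for the second, and Lemma~\ref{LemaInclusion} for the inequality. The only difference is that you spell out in detail why ${R'}^{(y',L')}$ is in the extremal case with initial form a unit times $(\In y_\ell^{(y,L)})^m$, whereas the paper simply asserts that the multiplicity hypothesis together with the linear-cut structure puts $f^{(y,L)}$ in pseudo-Weierstrass form and moves on; your extra verification is accurate and makes the appeal to Theorem~\ref{Casor1} fully transparent.
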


\begin{proof}
The condition $e({R'}^{(y',L')})=e(R')=m$ together with the fact that $L'$ is a $y'$ linear cut guarantees that $f^{(y,L)}$ can be written as a  pseudo-Weierstrass element w.r.t. $(u^{(y,L)},y_{\ell}^{(y,L)})$ for some $\ell\in \{1,\ldots, r\}$. Now, the equalities follow from Theorem \ref{Casor1} and the definition in (\ref{minimo}),
while the inequality on the right is a consequence of Lemma \ref{LemaInclusion}.
\end{proof}

The inequalities (\ref{EqNubCut}) and  (\ref{desigualdad_pendiente_delta_corte}),
together with (\ref{minimo}) motivate the following definitions. 

\begin{definition} \label{Defnugen}
Fix some $y'_1,\ldots,y_r\in R'$ determining the directrix of $R'$ and assume that
the residue field of $R'$ is infinite.
Then the {\em generic $y'$-value} is the minimum among  all possible $\nublin(y',L')$ when $L'$ runs though all $y'$-linear cuts giving the same multiplicity, i.e., 	
$$\nubgen(y'):=\min\left\lbrace\nublin(y',L') \mid
L' \text{ is a ${y'}$-linear cut with }
e({R'}^{(y',\tilde{L}')})=e(R')\right\rbrace.$$
\end{definition}
\begin{remark}
Notice that  $\nublin(y',L')\in\dfrac{1}{m!}\mathbb{N}\cup \{\infty\}$ for all linear cuts $L'$,  thus it makes sense taking the minimum in the right hand side of the expression above.
Note also that the condition $e({R'}^{(y',\tilde{L}')})=e(R')$ can be achieved 
since the residue field is infinite.
\end{remark}
	
\begin{definition}\label{def_generic_linear_cut}
A ${y'}$-linear cut $\tilde{L}'=\{\tilde{L}_2,\ldots,\tilde{L}_r\}$ is said to be a \emph{${y'}$-generic linear cut} if:
\begin{itemize}	
\item[(i)] $e({R'}^{(y',\tilde{L}')})=e(R')=m$;

\item[(ii)] and
\begin{equation}\label{condicion_general_cut}
	\nub\text{-lin}(y',\tilde{L}')=\nubgen(y'). 
\end{equation}
\end{itemize}	
\end{definition}

\begin{remark} Notice that 
the only obstruction to the  existence of ${y'}$-generic linear cuts is condition (i),
and this can always be achieved  if the residue field of $R'$ is infinite.
	\end{remark}

We want to maximize the values of $\nub$ obtained for generic linear cuts.
The following example illustrates that we need to restrict to only considering   generic linear cuts.

\begin{example}
Let $R=(k[y_1,y_2,y_3,u])_{\langle y_1,y_2,y_3,u\rangle}$, where $k$ is a field of characteristic different from 2,  and set  
$$f=y_1^4+y_1^2(y_2+u^a)^2+y_3^4+y_3u^b+u^c,$$
where $2\leq a<\dfrac{b}{3}<\dfrac{c}{4}$. Let $R'=R/\langle f\rangle$.   Then 
	$$\delta(\Delta(f,u))=\frac{b}{3}.$$ 
The ideal of the directrix of $f$ generated by $\In_{{\mathfrak m}}y_1,\In_{{\mathfrak m}}y_2,
\In_{{\mathfrak m}}y_3$. It can be checked that the $y$-linear cut $H=\{y_2,y_3\}$ is generic for $R'$, that 
$$f^{(y,H)}=y_1^4+y_1^2 u^{2a}+u^c,$$
and that 
$$\nub_{{\mathfrak{m}'}^{(y,L)}}({y'}_1^{(y',L')})=a.$$

Now set $z_1=y_1$, $z_2=y_2+u^a$ and $z_3=y_3$.
Considering the ${z}$-linear cut  $L:= \{z_2, z_3\}$, we have  
$$f^{(y,L)}=z_1^4+u^c,$$
and then 
$$\nub_{{\mathfrak{m}'}^{(z,L)}}({z'}_1^{(z',L')})=\frac{c}{4}.$$
On the other hand, taking the ${z}$-linear cut $F=\{z_2, z_3-z_1\}$, 
we get that: 
$$f^{(z,F)}=2z_1^4+z_1^b+u^c,$$ and 
$$\nub_{{\mathfrak m}^{(z,F')}}(\overline{z}_1^{(z,F')})=\frac{b}{3}.$$
Therefore, $L$ is not a generic $z$-linear cut for $R'$. On the other hand,  by Corollary \ref{pendiente_cociente}, $F$ is, necessarily, a generic ${z}$-linear cut for $R'$. 
\end{example}

\begin{theorem} \label{TeoremaVerticesGenericos}
Let $(y_1,\ldots,y_r)$ be part of a regular system of parameters of $R$ that determine the directrix of $f$,
and let  $u_1,\ldots,u_{n-r}$ be such that $(u,y)$ is a regular system of parameters of $R$. Suppose that the residue field of $R$ is infinite. 
Then, there are elements $a_2,\ldots,a_r\in R$ such that if $L:=\{L_j=a_jy_1-y_j, j=2,\ldots, r\}$, and  we consider the
${y'}$-linear cut $L' $ induced by $L$,    then:
\begin{enumerate}
\item[(i)]  $e({R'}^{(y',L')})=e(R')=m$, and
\item[(ii)] $\Delta(f^{(y,L)},u^{(y,L)},y^{(y,L)}_1)=\Delta(f,u,y)$.
\end{enumerate}
In addition, there exists a non-empty Zariski open set $U\subset (R/\mathfrak{m})^{r-1}$ such that
if $(\bar{a}_2,\ldots,\bar{a}_r)\in U$ then (i) and (ii) hold. Moreover,  all  
${y'}$-linear cuts defined by $\{L_j=a_jy_1-y_j, j=2,\ldots,r\}$ with $(\bar{a}_2,\ldots,\bar{a}_r)\in U$
    are generic. 
\end{theorem}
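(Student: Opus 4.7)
The plan is to parametrize the cut by residues $\bar a_2,\ldots,\bar a_r$ in $k = R/\mathfrak m$, recast both (i) and (ii) as non-vanishing of explicit polynomials in these residues, and conclude using that $k$ is infinite. By Remark \ref{RemSimpleCut}, one may take $L = \{L_j = a_jy_1 - y_j : j = 2,\ldots,r\}$, which makes $(u^{(y,L)}, y_1^{(y,L)})$ automatically a regular system of parameters in $R^{(y,L)}$ (the corresponding $(r-1)$-minor of the cut matrix is $\pm 1$).

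For (i): since $(y)$ determines the directrix, $\In_{\mathfrak m} f = F(Y_1,\ldots,Y_r)$ for some nonzero homogeneous $F \in k[Y]$ of degree $m$; passing to $R^{(y,L)}$ substitutes $Y_j \mapsto \bar a_j Y_1$ for $j \geq 2$, yielding $\In_{\mathfrak m^{(y,L)}}(f^{(y,L)}) = F(1,\bar a_2,\ldots,\bar a_r)\, Y_1^m$. Thus (i) is equivalent to $F(1,\bar a_2,\ldots,\bar a_r) \neq 0$, which is a non-trivial polynomial condition, and so defines a Zariski open $U_0 \subset k^{r-1}$.

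For (ii): Lemma \ref{LemaInclusion} already yields $\Delta(f^{(y,L)}, u^{(y,L)}, y_1^{(y,L)}) \subset \Delta(f,u,y)$ once (i) holds. Since both polyhedra are of the form $\mathrm{Conv}(\text{finite set}) + \mathbb{R}^{n-r}_{\geq 0}$, the reverse inclusion follows once every vertex $v_0 = \alpha_0/(m-i_0)$ of $\Delta(f,u,y)$ lies in $\Delta(f^{(y,L)}, u^{(y,L)}, y_1^{(y,L)})$. Starting from the CP-expansion $f = \sum_{(\alpha,\beta)\in S(f)} c_{\alpha,\beta} u^{\alpha} y^{\beta}$ and substituting $y_j \equiv a_j y_1 \pmod{\langle L\rangle}$, the coefficient of $(u^{(y,L)})^{\alpha_0}(y_1^{(y,L)})^{i_0}$ in $f^{(y,L)}$ reduces modulo $\mathfrak m^{(y,L)}$ to
$$G_{\alpha_0,i_0}(\bar a_2,\ldots,\bar a_r) \;=\; \sum_{\substack{\beta\,:\,(\alpha_0,\beta)\in S(f)\\ |\beta|=i_0}} \bar c_{\alpha_0,\beta}\, \bar a_2^{\beta_2}\cdots \bar a_r^{\beta_r}.$$
For fixed $|\beta| = i_0$ the tuples $(\beta_2,\ldots,\beta_r)$ (determining distinct $\beta$ since $\beta_1$ is forced) produce distinct monomials, and the $\bar c_{\alpha_0,\beta}$ are nonzero in $k$ (as the $c_{\alpha_0,\beta}$ are units in $R$), so $G_{\alpha_0,i_0}$ is not identically zero; its non-vanishing therefore cuts out a Zariski open $U_{v_0}$. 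When $G_{\alpha_0,i_0}(\bar a) \neq 0$, the coefficient of $(u^{(y,L)})^{\alpha_0}(y_1^{(y,L)})^{i_0}$ is a unit in $R^{(y,L)}$, which places $(\alpha_0, i_0) \in S(f^{(y,L)}) + \mathbb{N}^{n-r+1}$ and hence $v_0 \in \Delta(f^{(y,L)}, u^{(y,L)}, y_1^{(y,L)})$. Taking $U := U_0 \cap \bigcap_{v \in V} U_v$ over the finitely many vertices $V$ of $\Delta(f,u,y)$ produces a non-empty Zariski open set (using infinitude of $k$), on which both (i) and (ii) hold.

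For the genericity assertion: any $y'$-linear cut $\tilde L'$ satisfying (i) gives $\nublin(y',\tilde L') \geq \delta(\Delta(f,u,y))$ via Lemma \ref{LemaInclusion} combined with Corollary \ref{pendiente_cociente}; the cuts $L'$ with $(\bar a) \in U$ attain equality thanks to (ii), so $\nublin(y',L') = \delta(\Delta(f,u,y)) = \nubgen(y')$, and $L'$ is therefore generic. The main obstacle I expect is the careful book-keeping in step (ii): it must be verified that although $(\alpha_0, i_0)$ need not itself appear in the CP-generating set $S(f^{(y,L)})$ (the ideal of monomials coming from $f^{(y,L)}$ may absorb it into smaller generators), having a unit coefficient is enough to guarantee $(\alpha_0, i_0) \in S(f^{(y,L)}) + \mathbb{N}^{n-r+1}$, which is precisely what is required to carry the vertex $v_0$ into $\Delta(f^{(y,L)}, u^{(y,L)}, y_1^{(y,L)})$.
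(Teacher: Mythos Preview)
Your approach is essentially the paper's: both parametrize the cut by residues $\bar a\in k^{r-1}$, substitute $y_j\mapsto a_j y_1$ into the CP-expansion, group terms as $f^{(y,L)}=\sum_{(\alpha,i)\in\Pi(S(f))} g_{\alpha,i}\,u^{\alpha}y_1^{i}$, and impose the non-vanishing of the $\bar g_{\alpha,i}$ as the open condition. The only difference is that the paper imposes unit-ness of $g_{\alpha,i}$ for \emph{every} $(\alpha,i)\in\Pi(S(f))$, so that the displayed expression is already (after discarding dominated exponents) the CP-expansion of $f^{(y,L)}$, and the equality of polyhedra follows at once; you instead impose it only at pairs $(\alpha_0,i_0)$ projecting to vertices and use Lemma~\ref{LemaInclusion} for the other inclusion.

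Your flagged obstacle is genuine and deserves one extra line. As stated, ``unit coefficient at $(\alpha_0,i_0)$ implies $(\alpha_0,i_0)\in S(f^{(y,L)})+\mathbb{N}^{n-r+1}$'' can fail for arbitrary finite expressions (e.g.\ $0=1\cdot x^2+(-x)\cdot x$). What rescues it here is that $(\alpha_0,i_0)$, coming from a \emph{vertex} $v_0=\alpha_0/(m-i_0)$, is necessarily minimal in $\Pi(S(f))$ for the componentwise order: if $(\alpha,i)\in\Pi(S(f))$ with $(\alpha,i)\lneq(\alpha_0,i_0)$ then $\alpha/(m-i)\lneq v_0$, contradicting that $v_0$ is a vertex. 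Hence no other summand $g_{\alpha,i}u^{\alpha}y_1^{i}$ can contribute to the monomial $u^{\alpha_0}y_1^{i_0}$, and a short argument with monomial ideals (using that $I(g)=\langle w^{\gamma}:\gamma\in S(g)\rangle$ is the smallest monomial ideal containing $g$) then gives $(\alpha_0,i_0)\in S(f^{(y,L)})+\mathbb{N}^{n-r+1}$. So your variant works, but the paper's choice to demand unit-ness on all of $\Pi(S(f))$ simply sidesteps this verification at the cost of a possibly smaller open set $U$.
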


\begin{proof}
Consider the CP-expansion of $f$ w.r.t. to $(u,y)$,
$$f=\sum_{(\alpha,\beta)\in S(f)}c_{\alpha,\beta}u^{\alpha}y^{\beta},$$ 
where $c_{\alpha,\beta}\in R$ are units. Since the initial parts of $y_1,\ldots,y_r$ generate the ideal of the directrix of $f$, there exist $\beta\in\mathbb{N}^r$ such that $|\beta|=m$ and $c_{0,\beta}\neq 0$.

Let $a_2,\ldots,a_r\in R$ and let $L=\{L_2,\ldots,L_r\}$ be the ${y}$-linear cut
given by $L_j=a_jy_1-y_j$, $j=2,\ldots,r$.
 Since the local ring $R^{(y,L)}$ is regular and
$(u_1^{(y,L)},\ldots,u_d^{(y,L)},y_1^{(y,L)})$ is a regular system of parameters, we can consider the expression of f $f^{(y,L)}$, 
$$f^{(y,L)}=\sum_{(\alpha,\beta)\in S(f)}c^{(y,L)}_{\alpha,\beta}
(u^{(y,L)})^{\alpha}(a^{(y,L)})^{\beta}(y^{(y,L)}_1)^{|\beta|}.$$

For $\alpha\in\mathbb{N}^d$ and $i\in\mathbb{N}$, set
$$g_{\alpha,i}=
\sum_{{\substack{(\alpha,\beta)\in S(f) \\ |\beta|=i}}}
	c^{(y,L)}_{\alpha,\beta}(a^{(y,L)})^{\beta},$$
so that 
\begin{equation} \label{Eqfprima}
	f^{(y,L)}=\sum_{(\alpha,i)\in\Pi(S(f))}g_{\alpha,i}
	(u^{(y,L)})^{\alpha}(y^{(y,L)}_1)^i,
\end{equation}
where $\Pi:\mathbb{Z}^d\times\mathbb{Z}^r\to \mathbb{Z}^d\times\mathbb{Z}$ is the 
projection defined by $\Pi(\alpha,\beta)=(\alpha,|\beta|)$.

Note that $(0,m)\in\Pi(S(f))$ since the  $\In_{\mathfrak{m}}(f)$ can be expressed in terms  of $\In_{\mathfrak{m}}(y_1),\ldots,\In_{\mathfrak{m}}(y_r)$.
In fact, if $g_{0,m}$ is a unit in $R^{(y,L)}$ then $f^{(y,L)}$ has order $m$.
Moreover if for every $i$ such that there is $\alpha$ with $(\alpha,i)\in\Pi(S(f))$,
we have that $g_{\alpha,i}$ is a unit in $R^{(y,L)}$, then 
$S(f^{(y,L)})=\Pi(S(f))$ and the expression (\ref{Eqfprima}) is the CP-expansion
of $f^{(y,L)}$ w.r.t. $u^{(y,L)},y^{(y,L)}_1$.

Let  $\bar{g}_{\alpha,i}$ be the class of $g_{\alpha,i}$ in $R/\mathfrak{m}$.
Then  $\bar{g}_{\alpha,i}$ is a polynomial in $\bar{a}_2,\ldots,\bar{a}_r$, where $\bar{a}_j$ is the class of $a_j$ in $R/\mathfrak{m}$.
 Thus, to obtain the result, we need to restrict to the values of $a_2,\ldots,a_r$ that guarantee that all the elements $g_{\alpha,i}$ are units. One way to do this is by restricting to the Zariski open set in $(R/{\mathfrak m})^{r-1}$ determined by the open set:  
$$\bigcap_{{(\alpha,i)\in \Pi(S(f))}}\left(
(R/{\mathfrak m})^{r-1}\setminus{\mathbb V}(\overline{g}_{\alpha, i})\right).$$
\end{proof}

\begin{corollary}\label{conexion_con_pendiente_ref}
Let $(y_1,\ldots,y_r)$ be part of a regular system of parameters of $R$ that determine the directrix of $f$,
and let  $u_1,\ldots,u_{n-r}$ be such that $(u,y)$ is a regular system of parameters in $R$. Suppose that the residue field of $R$ is infinite and let
$L'$ be a $y'$-linear cut. Then $L'$ is a $y'$-generic linear cut if and only if 
$$\nubgen(y')=\delta(\Delta(f,u,y)).$$
\end{corollary}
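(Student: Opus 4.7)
The plan is to combine the inequality supplied by Corollary \ref{pendiente_cociente} with the explicit construction of multiplicity-preserving linear cuts provided by Theorem \ref{TeoremaVerticesGenericos}. First, I would establish the equality $\nubgen(y')=\delta(\Delta(f,u,y))$ by proving both inequalities separately, and then the ``if and only if'' statement follows immediately from the very definition of a $y'$-generic linear cut.

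For the inequality $\nubgen(y')\geq\delta(\Delta(f,u,y))$, let $L'$ be any $y'$-linear cut satisfying $e({R'}^{(y',L')})=e(R')=m$. By Corollary \ref{pendiente_cociente}, choosing an index $\ell$ so that $(u^{(y,L)},y_{\ell}^{(y,L)})$ is a regular system of parameters of $R^{(y,L)}$, we obtain
\[
\nublin(y',L')=\delta(\Delta(f^{(y,L)},u^{(y,L)},y_{\ell}^{(y,L)}))\geq \delta(\Delta(f,u,y)).
\]
Taking the minimum over all admissible $L'$ gives $\nubgen(y')\geq\delta(\Delta(f,u,y))$.

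For the reverse inequality, I invoke Theorem \ref{TeoremaVerticesGenericos}: since the residue field of $R$ is infinite, there is a nonempty Zariski open subset $U\subset (R/\mathfrak{m})^{r-1}$ such that, for parameters $(\bar a_2,\dots,\bar a_r)\in U$, the linear cut $L=\{L_j=a_jy_1-y_j : j=2,\dots,r\}$ satisfies $e({R'}^{(y',L')})=e(R')=m$ and $\Delta(f^{(y,L)},u^{(y,L)},y_1^{(y,L)})=\Delta(f,u,y)$. Applying Corollary \ref{pendiente_cociente} to such an $L'$ yields
\[
\nublin(y',L')=\delta(\Delta(f^{(y,L)},u^{(y,L)},y_1^{(y,L)}))=\delta(\Delta(f,u,y)),
\]
hence $\nubgen(y')\leq \delta(\Delta(f,u,y))$. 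Combining both inequalities, $\nubgen(y')=\delta(\Delta(f,u,y))$.

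Finally, by Definition \ref{def_generic_linear_cut}, a $y'$-linear cut $L'$ is generic precisely when $e({R'}^{(y',L')})=e(R')$ and $\nublin(y',L')=\nubgen(y')$. Since we have shown $\nubgen(y')=\delta(\Delta(f,u,y))$, the equivalence in the statement follows. The only subtle point in the argument is the existence step, which is handled entirely by Theorem \ref{TeoremaVerticesGenericos} (requiring the infinite residue field hypothesis to ensure $U$ meets $(R/\mathfrak{m})^{r-1}$); thus no additional technicalities are needed.
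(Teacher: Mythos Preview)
Your proposal is correct and follows essentially the same approach as the paper: both arguments establish the equality $\nubgen(y')=\delta(\Delta(f,u,y))$ by invoking Corollary~\ref{pendiente_cociente} for the inequality $\nubgen(y')\geq\delta(\Delta(f,u,y))$ and Theorem~\ref{TeoremaVerticesGenericos} to exhibit a multiplicity-preserving linear cut realizing the value $\delta(\Delta(f,u,y))$, which forces the reverse inequality. Your version simply spells out each step in more detail and makes explicit the passage from the established equality to the ``if and only if'' via Definition~\ref{def_generic_linear_cut}.
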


\begin{proof}
By Corollary \ref{pendiente_cociente} we have that 
$\nubgen(y')\geq\delta(\Delta(f,u,y))$ and by
Theorem \ref{TeoremaVerticesGenericos}  the equality follows.
\end{proof}

The following Corollary is closely related to \cite[Theorem A]{CossartJannsenSchober2019} in the hypersurface case and for the linear form $\Lambda=\lambda_1+\cdots+\lambda_d$.

\begin{corollary} \label{IndepHirPoly} Suppose $R/{\mathfrak m}$ is infinite. 
Let $(y_1,\ldots,y_r)$ be part of a regular system of parameters in $R$ 
  that  determine the directrix of $f$.
 Let $u_1,\ldots,u_d\in R$ and let $v_1,\ldots,v_d\in R$ such that both, 
$(u,y)$ and $(v,y)$ are regular systems of parameters in $R$.
Then
$$\delta(\Delta(f,u,y))=\delta(\Delta(f,v,y)).$$
\end{corollary}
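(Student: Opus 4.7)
The plan is to apply Corollary \ref{conexion_con_pendiente_ref} twice, leveraging the fact that the intermediate quantity $\nubgen(y')$ is intrinsically defined. First I would point out that, inspecting Definition \ref{Defnugen} together with the definitions of $\nublin(y',L')$ and of a $y'$-linear cut (Definition \ref{DefGenericos}), the value $\nubgen(y')$ depends only on $R'$ and on the collection $y'_1,\ldots,y'_r$ which determines the directrix of $f$; the regular parameters $u_1,\ldots,u_d$ (or $v_1,\ldots,v_d$) that complete $(y)$ to a regular system of parameters of $R$ play absolutely no role, since every $\nublin(y',L')$ is computed inside the quotient ${R'}^{(y',L')}=R'/\langle L'_2,\ldots,L'_r\rangle$.

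Next, since the residue field $R/\mathfrak{m}$ is infinite, Theorem \ref{TeoremaVerticesGenericos} guarantees the existence of $y'$-generic linear cuts with respect to the regular system of parameters $(u,y)$, and likewise with respect to $(v,y)$. In both cases Corollary \ref{conexion_con_pendiente_ref} applies, giving
\begin{equation*}
\nubgen(y')=\delta(\Delta(f,u,y)),\qquad \nubgen(y')=\delta(\Delta(f,v,y)),
\end{equation*}
so equating the two right-hand sides yields $\delta(\Delta(f,u,y))=\delta(\Delta(f,v,y))$, as required.

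There is essentially no obstacle: the technical work is already contained in Corollary \ref{conexion_con_pendiente_ref}, whose proof combined the inclusion of polyhedra from Corollary \ref{pendiente_cociente} with the constructive existence of generic cuts from Theorem \ref{TeoremaVerticesGenericos}. The only point worth flagging is to verify that Corollary \ref{conexion_con_pendiente_ref} is indeed applicable in both settings, which reduces to checking that $(u,y)$ and $(v,y)$ are each regular systems of parameters---an assumption already in the hypothesis.
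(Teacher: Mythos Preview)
Your proposal is correct and follows essentially the same approach as the paper. The paper's proof works directly with Theorem \ref{TeoremaVerticesGenericos} and Corollary \ref{pendiente_cociente}, finding a single $y$-linear cut $L$ in the intersection $U\cap V$ of the two Zariski open sets so that $\nublin(y',L')$ equals both $\delta(\Delta(f,u,y))$ and $\delta(\Delta(f,v,y))$; you instead invoke Corollary \ref{conexion_con_pendiente_ref} (whose proof is precisely that combination) to pass through the intrinsic quantity $\nubgen(y')$, which is a clean repackaging of the same argument.
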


\begin{proof}
By Theorem \ref{TeoremaVerticesGenericos} there are non-empty Zariski open sets 
$U$ and $V$ in $(R/\mathfrak{m})^{r-1}$ such that
$$\Delta(f^{(y,L)},u^{(y,L)},y^{(y,L)}_1)=\Delta(f,u,y),
\qquad \forall \bar{a}\in U,$$
$$\Delta(f^{(y,L)},v^{(y,L)},y^{(y,L)}_1)=\Delta(f,v,y),
\qquad \forall \bar{a}\in V.$$
Where $L$ is the $y$-linear-cut defined by $a=(a_2,\ldots,a_r)$ as in Remark \ref{RemSimpleCut}.
Since $U\cap V$ is non-empty and by Corollary \ref{pendiente_cociente}, 
there is a $y$-linear cut $L$ such that
$$\nublin(y',L')=\delta\left(\Delta(f^{(y,L)},u^{(y,L)},y^{(y,L)}_1)\right)$$
and the result follows.
\end{proof}

As indicated previously, in the following lines we  define an invariant of the singularitiy of $R'$ that will be obtained using $y'$-generic linear cuts. In order to be able to consider {\em a wide range} of $y'$-generic linear cuts when $R'/{\mathfrak m}'$ is finite,  we will enlarge the  residue field in the {\em less harmful possible way}.   
\begin{definition}
\label{def_refined_Samuel_slope}
If the residue field of $R'$ is infinite then,
the \emph{refined Samuel slope}
of the local ring $R'$ is defined as 
$$\mathcal{R}\SSl(R'):=\sup\left\lbrace
 \nubgen(y') :
y'=(y'_1,\ldots,y'_r)\subset R' 
 \text{ determining the directrix}
\right\rbrace.$$
If the residue field of $R'$ is finite then,
let $R'_1=R'[t]_{\mathfrak{m}'[t]}$, and let
$\mathfrak{m}'_1$ be the maximal ideal of $R'_1$.
Then the \emph{refined Samuel slope}
of the local ring $R'$ is defined as the refined Samuel slope of $R_1$, i.e., 
$$\mathcal{R}\SSl(R'):=\mathcal{R}\SSl(R'_1).$$
\end{definition}

Note that, from the definition, $\mathcal{R}\SSl(R')$ depends only on the local ring $R'$. By  Lemma \ref{Ssl_InfinitoK} below   the  refined Samuel slope of the local ring $R'$ can always be defined as the refined Samuel slope of $R_1'$.
\medskip

\begin{lemma} \label{Ssl_InfinitoK}
Let  $R'=R/\langle f \rangle$, and let $R'_1=R'[t]_{\mathfrak{m}'[t]}$.
Then $\mathcal{R}\SSl(R')=\mathcal{R}\SSl(R'_1)$.
\end{lemma}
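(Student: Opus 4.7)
The plan is the following. First I would dispose of the case when the residue field of $R'$ is finite: here the equality holds by the very definition of $\mathcal{R}\SSl(R')$. So I assume henceforth that $k=R'/\mathfrak{m}'$ is infinite; note that the residue field of $R'_1$ is $k(t)$, hence also infinite, so the reference to $R'_1$ really means what the unconditional definition says.

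Next, by Corollary \ref{conexion_con_pendiente_ref}, for every $y'\subset R'$ determining the directrix, lifted to $y\subset R$ together with a choice of $u$ completing it to a regular system of parameters, we have $\nubgen_{R'}(y')=\delta(\Delta_R(f,u,y))$. By Corollary \ref{IndepHirPoly}, $\delta(\Delta_R(f,u,y))$ is independent of the choice of $u$ once $y$ is fixed, so fixing one $u$ in $R$ we obtain
\[
\mathcal{R}\SSl(R')=\sup_{y\subset R}\delta(\Delta_R(f,u,y)),
\]
the supremum running over $y\subset R$ determining the directrix of $R'$ with $(u,y)$ a regular system of parameters. An identical identity holds for $R'_1$ with the same $u$ (viewed in $R_1$) and $y''\subset R_1$.

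The heart of the argument is the claim $\mathcal{R}\SSl(R')=\delta(\Delta_R(f,u))$. The inequality $\leq$ is immediate from $\Delta(f,u)\subset\Delta(f,u,y)$ and the fact that $\delta$ is the minimum of $|\gamma|$. For the reverse inequality I will apply Theorem \ref{ThHirPoly} to $\widehat{R}$, obtaining some $\hat{y}\subset\widehat{R}$ with $\Delta_R(f,u)=\Delta(f,u,\hat{y})$, and then approximate: using the canonical isomorphism $R/\mathfrak{m}^{N+1}\cong\widehat{R}/\hat{\mathfrak{m}}^{N+1}$, for each $N\geq 1$ I pick $y_i\in R$ with $y_i\equiv \hat{y}_i\pmod{\hat{\mathfrak{m}}^{N+1}}$. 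Since $N\geq 1$, the initial parts agree, so $(u,y)$ remains a regular system of parameters of $R$ and $y$ still determines the directrix of $R'$. By Lemma \ref{LemaCPaprox} applied in $\widehat{R}$ (and the fact that the CP-expansion w.r.t.\ $(u,y)$ coincides in $R$ and $\widehat{R}$), one has $S_y(f)\cap\mathcal{M}_N=S_{\hat{y}}(f)\cap\mathcal{M}_N$, so the low-degree vertices of $\Delta_R(f,u,y)$ and $\Delta_R(f,u)$ coincide. With $N$ chosen large in terms of $\delta(\Delta_R(f,u))$ and $m$, I will check that the remaining vertices $(\alpha,\beta)$ with $|\alpha|+|\beta|>N$ and $|\beta|<m$ project to points of norm $>\delta(\Delta_R(f,u))$ and hence do not affect the minimum. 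The case $\delta(\Delta_R(f,u))=\infty$ is handled by letting the threshold tend to $\infty$.

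The same argument applied to $R_1$ yields $\mathcal{R}\SSl(R'_1)=\delta(\Delta_{R_1}(f,u))$. Finally, Theorem \ref{ExtenPolHir} gives $\delta(\Delta_R(f,u))=\delta(\Delta_{R_1}(\varphi(f),\varphi(u)))$, which closes the chain of equalities. The main obstacle will be the quantitative estimate at the end of the approximation step: making the truncation level $N$ explicit in $m$ and $\delta$ so that vertices $\alpha/(m-|\beta|)$ coming from high-degree terms are provably worse than those already attained in $\mathcal{M}_N$; this is a short but careful computation using $0\leq|\beta|\leq m-1$.
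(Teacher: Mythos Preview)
Your proof is correct, but it takes a different route from the paper's.

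The paper argues by specialization: given $y'(t)=(y'_1(t),\ldots,y'_r(t))\subset R'_1$ determining the directrix, it lifts to $R_1$, extends to a regular system of parameters $(u(t),y(t))$, and then evaluates at a suitable $t_0\in R$ with $\bar t_0$ in a Zariski--open subset of $k$. For such $t_0$ the CP-expansion specializes term by term, so $\Delta(f,u(t),y(t))=\Delta(f,u(t_0),y(t_0))$; combined with Theorem \ref{TeoremaVerticesGenericos} and Theorem \ref{Casor1}, this gives $\nublin(y'(t),L'(t))=\nublin(y'(t_0),L'(t_0))$ and hence $\mathcal{R}\SSl(R'_1)\leq\mathcal{R}\SSl(R')$.

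You instead prove the stronger identity $\mathcal{R}\SSl(R')=\delta(\Delta_R(f,u))$ directly (and likewise for $R'_1$), using Hironaka's Theorem \ref{ThHirPoly} together with the approximation Lemma \ref{LemaCPaprox}, and then invoke Theorem \ref{ExtenPolHir} to match the two $\delta$-invariants. This is exactly the argument the paper gives later for Theorem \ref{RSSl_completion}(ii); you have effectively merged that proof with the present lemma. There is no circularity, since you only use Corollaries \ref{conexion_con_pendiente_ref} and \ref{IndepHirPoly}, Lemma \ref{LemaCPaprox}, and Theorems \ref{ThHirPoly} and \ref{ExtenPolHir}, all of which are established independently. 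One small point worth making explicit: once $u$ is fixed so that $(u,z)$ is a regular system of parameters for some $z$ determining the directrix, then in fact $(u,y)$ is a regular system of parameters for \emph{every} $y$ determining the directrix (since the span of the $\In_{\mathfrak m}u_i$ is complementary to the directrix ideal in $\mathfrak m/\mathfrak m^2$); this is what makes your formula $\mathcal{R}\SSl(R')=\sup_y\delta(\Delta_R(f,u,y))$ with a single fixed $u$ legitimate.

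In terms of trade-offs: the paper's specialization argument is shorter and keeps Lemma \ref{Ssl_InfinitoK} logically prior to Theorem \ref{RSSl_completion}. Your route establishes the connection to $\delta(\Delta(f,u))$ earlier and makes the later theorem almost immediate, at the cost of doing the Hironaka-plus-approximation work twice (once for $R$, once for $R_1$) inside the lemma.
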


\begin{proof}
By Definition \ref{def_refined_Samuel_slope} it is enough to prove the equality if 
the residue field of $R'$ is infinite. Let  $k=R'/\mathfrak{m}'$ be the residue field of $R'$, and denote by $\mathfrak{m}_1$ the maximal ideal of $R_1=R[t]_{\mathfrak{m}[t]}$.
Note that $\Gr_{\mathfrak{m}'_1}(R'_1)=k(t)\otimes_k\Gr_{\mathfrak{m}'}(R')$, 
and the directrix of $R'_1$ is obtained by scalar extension of the directrix of $R'$.

From the definition,  $\mathcal{R}\SSl(R')\leq\mathcal{R}\SSl(R'_1)$.
To see the equality, suppose  $y'_1(t),\ldots,y'_r(t)\in R'_1$   determine the directrix of $R'_1$.
Choose representatives $y_1(t),\ldots,y_r(t)\in R_1$ of $y'_1(t),\ldots,y'_r(t)$, and 
let $u_1(t),\ldots,u_{n-r}(t)\in R_1$  be such that 
$u_1(t),\ldots,u_{n-r}(t),y_1(t),\ldots,y_r(t)$ form  a regular system of parameters of $R_1$.
Consider the CP-expansion of $f$ w.r.t. $(u(t),y(t))$:
$$f=\sum_{(\alpha,\beta)\in S_1(f)}c_{\alpha,\beta}(t)u(t)^{\alpha}y(t)^{\beta}.$$
	
There is an infinite set $W\subset k$ such that for all
$t_0\in R$ with $\bar{t}_0\in W$, the evaluation
$$f=\sum_{(\alpha,\beta)\in S_1(f)}c_{\alpha,\beta}(t_0)u(t_0)^{\alpha}y(t_0)^{\beta}$$
is the CP-expansion of $f$ w.r.t. $(u(t_0),y(t_0))$ in $R$.
In particular, we have that
$\Delta(f,u(t),y(t))=\Delta(f,u(t_0),y(t_0))$.
	
By Theorem \ref{TeoremaVerticesGenericos} there exists a $y(t)$-generic linear cut
$L(t)$ for $R'_1$ such that
$$\Delta(f^{(y(t),L(t))},u(t)^{(y(t),L(t))},y_1(t)^{(y(t),L(t))})=
\Delta(f,u(t),y(t)).$$
Shrinking $W$ if necessary,
we have that $L(t_0)$ is a $y'(t_0)$-generic linear cut and
$$\Delta(f^{(y(t_0),L(t_0))},u(t_0)^{(y(t_0),L(t_0))},y_1(t_0)^{(y(t_0),L(t_0))})=
\Delta(f,u(t_0),y(t_0)) \qquad \forall t_0\in R',\ \bar{t}_0\in W.$$
Now, by Theorem \ref{Casor1} and (\ref{desigualdad_pendiente_delta_corte}) 
$$\nublin(y(t),L(t))=\nublin(y(t_0),L(t_0))
\qquad \forall t_0\in R',\ \bar{t}_0\in W,$$
and the result follows.
\end{proof}

As in the case of the Samuel slope $\SSl$, the refined Samuel slope can also  be computed in the ${\mathfrak m}'$-adic completion
of the local ring $R'$.
As a byproduct we will see that the refined Samuel slope gives the $\delta$ invariant of Hironaka's polyhedron.
\begin{theorem} \label{RSSl_completion}
Let $R$ be a local regular ring, let $R'=R/\langle f\rangle$ and let $\widehat{R'}$   be the ${\mathfrak m}'$-adic completion of $R'$. Then
\begin{enumerate}
\item[(i)] $\mathcal{R}\SSl(R')=\mathcal{R}\SSl(\widehat{R'})$.
\item[(ii)] Let $u_1,\ldots,u_{n-r}\in R$  be a collection of elements so  that there are $z_1,\ldots,z_r\in R$ determining the directrix of $f$  and  $(u,z)$ is a regular system of parameters of $R$.
Then
$$\mathcal{R}\SSl(R')=\mathcal{R}\SSl(\widehat{R'})=\delta(\Delta(f,u)).$$
\item[(iii)] Moreover, if $\delta(\Delta(f,u))<\infty$ 
 and the residue field of $R'$ is infinite,
then there are elements  $y_1,\ldots,y_r\in R$ determining the directrix
of $f$ such that
$$\nubgen(y)=\mathcal{R}\SSl(R')=\delta(\Delta(f,u)).$$
\end{enumerate} 
\end{theorem}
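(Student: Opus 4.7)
The plan is to establish all three items simultaneously by showing that both $\mathcal{R}\SSl(R')$ and $\mathcal{R}\SSl(\widehat{R'})$ equal $\delta(\Delta(f,u))$, and, in the finite case, that the supremum defining $\mathcal{R}\SSl(R')$ is attained by a specific collection $y_1,\ldots,y_r\in R$. First I would reduce to the case where the residue field of $R'$ is infinite: by Lemma \ref{Ssl_InfinitoK} both refined Samuel slopes are preserved under the extension $R\to R_1=R[t]_{\mathfrak{m}[t]}$, while by Theorem \ref{ExtenPolHir} so is $\delta(\Delta(f,u))$. With infinite residue field, Corollary \ref{conexion_con_pendiente_ref} gives the identity $\nubgen(y')=\delta(\Delta(f,u,y))$ whenever $y_1,\ldots,y_r$ determine the directrix, so each of $\mathcal{R}\SSl(R')$ and $\mathcal{R}\SSl(\widehat{R'})$ equals the supremum of $\delta(\Delta(f,u,y))$ over such collections in $R$ or in $\widehat{R}$ respectively. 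Since $\Delta(f,u)=\bigcap_{\hat{y}}\Delta(f,u,\hat{y})$ is contained in every $\Delta(f,u,y)$, the inequalities $\mathcal{R}\SSl(R')\leq \mathcal{R}\SSl(\widehat{R'})\leq \delta(\Delta(f,u))$ are automatic.

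For the reverse inequalities, I would handle the completion first. Hironaka's Theorem \ref{ThHirPoly} produces $\hat{y}_1,\ldots,\hat{y}_r\in\widehat{R}$ with $\Delta(f,u,\hat{y})=\Delta(f,u)$; these elements determine the directrix of $f$, and therefore $\nubgen(\hat{y}')=\delta(\Delta(f,u,\hat{y}))=\delta(\Delta(f,u))$, yielding $\mathcal{R}\SSl(\widehat{R'})=\delta(\Delta(f,u))$. This gives the equality claimed for $\widehat{R'}$ in (ii) with no further hypothesis. The remaining step is to transfer this achievability from $\widehat{R}$ down to $R$, which requires the assumption $\delta(\Delta(f,u))<\infty$ (thus also the residue-field hypothesis in (iii)). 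Given such $\hat{y}$, fix $N$ larger than $m(\delta+1)$, where $\delta=\delta(\Delta(f,u))$ and $m=e(R')$, and pick $y_j\in R$ with $y_j-\hat{y}_j\in\widehat{\mathfrak{m}}^{N+1}$ for $j=1,\ldots,r$; the collection $(u,y)$ is then a regular system of parameters in $R$ because the linear parts agree, and $y_1,\ldots,y_r$ still determine the directrix. Applying Lemma \ref{LemaCPaprox} to $(u,y)$ and $(u,\hat{y})$ yields $S_{(u,y)}(f)\cap\mathcal{M}_N=S_{(u,\hat{y})}(f)\cap\mathcal{M}_N$. Any pair $(\alpha,\beta)$ contributing a polyhedron vertex with $|\alpha|/(m-|\beta|)\leq\delta$ has $|\alpha|+|\beta|\leq m\delta+m<N$, so such pairs coincide in both CP-expansions, and no new vertices outside $\mathcal{M}_N$ can produce a smaller value. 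Hence $\delta(\Delta(f,u,y))=\delta$, and by Corollary \ref{conexion_con_pendiente_ref} $\nubgen(y')=\delta(\Delta(f,u))$, establishing (iii) and thereby the equality $\mathcal{R}\SSl(R')=\delta(\Delta(f,u))$ needed to complete (i) and (ii) in the finite case.

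The only remaining situation is $\delta(\Delta(f,u))=\infty$, where the sup is not attained. For each $N\geq 1$, the same approximation construction produces $y^{(N)}\in R$ determining the directrix, and the same application of Lemma \ref{LemaCPaprox} forces every pair in $S_{(u,y^{(N)})}(f)$ not already in $S_{(u,\hat{y})}(f)$ to satisfy $|\alpha|+|\beta|>N$; together with $|\beta|<m$, this gives $|\alpha|/(m-|\beta|)>(N-m)/m$, so $\delta(\Delta(f,u,y^{(N)}))\to\infty$ and $\mathcal{R}\SSl(R')=\infty=\delta(\Delta(f,u))$, closing (i) and (ii). The main obstacle is the approximation step in the third paragraph: one must verify that approximating $\hat{y}$ modulo $\widehat{\mathfrak{m}}^{N+1}$ preserves the relevant low-degree part of the CP-expansion, and simultaneously show that vertices introduced outside $\mathcal{M}_N$ cannot lower the $\delta$ invariant; the quantitative estimate $N>m(\delta+1)$, extracted from the fact that polyhedron vertices achieving the minimum $|\gamma|$ lie in bounded total degree, is what makes the scheme work.
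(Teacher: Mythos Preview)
Your proposal is correct and follows essentially the same approach as the paper's proof: reduction to infinite residue field via Lemma \ref{Ssl_InfinitoK} and Theorem \ref{ExtenPolHir}, identification of $\nubgen$ with $\delta(\Delta(f,u,y))$ via Corollary \ref{conexion_con_pendiente_ref}, Hironaka's Theorem \ref{ThHirPoly} for the completion, and then the approximation argument via Lemma \ref{LemaCPaprox} to descend from $\widehat{R}$ to $R$, splitting into the finite and infinite $\delta$ cases. The only cosmetic difference is your choice of threshold $N>m(\delta+1)$ versus the paper's $N>m!\cdot\delta+m$; your bound is sharper and your justification that vertices outside $\mathcal{M}_N$ cannot lower $\delta$ is the same computation the paper carries out.
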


\begin{proof} By Theorem \ref{ExtenPolHir} and Lemma \ref{Ssl_InfinitoK}  we can assume that  the residue  field of $R'$ is infinite. By definition,  the inequality    
$\mathcal{R}\SSl(R')\leq\mathcal{R}\SSl(\widehat{R'})$ is straightforward.

By Theorem \ref{ThHirPoly} there exist $\hat{y}_1,\ldots,\hat{y_r}\in\widehat{R}$ determining the directrix of $R'$
such that $\Delta(f,u)=\Delta(f,u,\hat{y})$, and note that
$$\nubgen(\hat{y}')=\delta(\Delta(f,u,\hat{y}))=\delta(\Delta(f,u)).$$
We also have that $\nubgen(\hat{y}')=\mathcal{R}\SSl(\widehat{R'})$. Now, consider the CP-expansion of $f$ w.r.t. $(u,\hat{y})$:
$$f=\sum_{(\alpha,\beta)\in S_{\hat{y}}(f)}c_{\alpha,\beta}(\hat{y})u^{\alpha}\hat{y}^{\beta}.$$

Since $\hat{y}\in\widehat{R}$, then there exists a sequence $\{y_t\}_{t=1}^{\infty}$,
$y_t=(y_{t,1},\ldots,y_{t,r})\subset R$ such that 
$y_{t,j}$ converges to $\hat{y}_j$ for $j=1,\ldots,r$.
For $t\gg 0$, $(u,y_t)$ is a regular system of parameters in $R$ and
we can also consider the $CP$-expansion of $f$ w.r.t. $(u,y_t)$:
$$f=\sum_{(\alpha,\beta)\in S_{y_t}(f)}c_{\alpha,\beta}(y_t)u^{\alpha}y_t^{\beta}.$$
\medskip

Now, assume first  that $\delta(\Delta(f,u))<\infty$. Then, there is some $(\alpha_0,\beta_0)\in S_{\hat{y}}(f)$
such that $|\beta_0|<m$ and
\begin{equation} \label{EqHaty}
\nubgen(\hat{y}')=\delta(\Delta(f,u,\hat{y}))=\frac{|\alpha_0|}{m-|\beta_0|}.
\end{equation}

Let $N>m!\cdot\nubgen(\hat{y}')+m$ and choose some  non-negative integer $t_0$ such that
if $t\geq t_0$ then $y_{t,j}-\hat{y}_j\in\mathfrak{m}^{N+1}$ for $j=1,\ldots,r$.
By Lemma \ref{LemaCPaprox}, 
$$S_{\hat{y}}(f)\cap\mathcal{M}_N=
S_{y_t}(f)\cap\mathcal{M}_N, \quad \forall t\geq t_0.$$

Note that $N>|\alpha_0|+|\beta_0|$. For $t\geq t_0$ we have that 
$(\alpha_0,\beta_0)\in S_{y_t}(f)\cap\mathcal{M}_N=S_{\hat{y}}(f)\cap\mathcal{M}_N$,
and then
$$\nubgen(y_t)\leq \frac{|\alpha_0|}{m-|\beta_0|}=\nubgen(\hat{y}').$$
Let $(\alpha,\beta)\in S_{y_t}(f)$ with $|\beta|<m$:
\begin{itemize}
\item If $|\alpha|+|\beta|\leq N$ then
$(\alpha,\beta)\in S_{\hat{y}}(f)\cap\mathcal{M}_N$ and by (\ref{EqHaty})
$$\frac{|\alpha_0|}{m-|\beta_0|}\leq \frac{|\alpha|}{m-|\beta|}.$$
\item If $|\alpha|+|\beta|> N$ then
$$\frac{|\alpha|}{m-|\beta|}>\frac{N-|\beta|}{m-|\beta|}\geq
\frac{m!\nubgen(\hat{y})+m-|\beta|}{m-|\beta|}>\nubgen(\hat{y}).$$
\end{itemize}
Thus: 
$$\nubgen(y_t)=\delta(\Delta(f,u,y_t))=\frac{|\alpha_0|}{m-|\beta_0|}=\delta(\Delta(f,u,\hat{y}))=\nubgen(\hat{y}'),$$
and we conclude that
$$\nubgen(y_t)=\mathcal{R}\SSl(R')=\mathcal{R}\SSl(\widehat{R'})=
\nubgen(\hat{y}')\qquad \forall t\geq t_0.$$

Next, suppose that   $\delta(\Delta(f,u))=\infty$. Then,  
$S_{\hat{y}}(f)\cap\{(\alpha,\beta)\mid |\beta|<m\}=\emptyset$. Let $C$ be any positive natural number, and choose  $N>Cm+m$.
As we argued above, there is  some $t_0$ such that if $t\geq t_0$ then 
$$S_{\hat{y}}(f)\cap\mathcal{M}_N=
S_{y_t}(f)\cap\mathcal{M}_N,$$
and hence
$$S_{y_t}(f)\cap\mathcal{M}_N\cap\{(\alpha,\beta)\mid |\beta|<m\} =\emptyset.$$
As a consequence, $\nubgen(y'_t)=\delta(\Delta(f,u,y_t))\geq \frac{N-m}{m}>C$ for $t\geq t_0$.
Thus,   $\lim_{t\to\infty}\nubgen(y'_t)=\infty$ and
$\mathcal{R}\SSl(R')=\infty$.
\end{proof}

\begin{remark}
From Corollary \ref{delta_dir_dim_one} and Theorem \ref{RSSl_completion}
it follows that if the invariant $\delta(\Delta(f,u))$ is finite, then the rational 
number can be achieved with a polyhedron without passing to the completion.
The whole polyhedron, can be realized, without passing to the completion, when
$r=1$, see \cite{CossartPiltant2015}, or for $r\geq 1$ with some additional
hypotheses, see \cite{CossartSchober2021} Theorem \ref{ThCossScho} in this paper.
\end{remark}

\section*{Statements and Declarations}

The authors declare no conflicts of interest.

\end{document}